\newcommand\nt[1]{\textcolor{blue}{#1}}
\newtheorem{theorem}{Theorem}[section]
\newtheorem{lemma}[theorem]{Lemma}
\newtheorem{corollary}[theorem]{Corollary}
\newtheorem{proposition}[theorem]{Proposition}
\theoremstyle{remark}
\newtheorem{definition}[theorem]{Definition}
\newtheorem{remark}[theorem]{Remark}
\newtheorem*{example}{Example}
\newcommand\numberthis{\addtocounter{equation}{1}\tag{\theequation}}
\newcommand{\Qort}{\matr{Q}_{\bot}}
\newcommand{\RR}{\mathbb{R}}
\newcommand{\HH}{\mathscr{H}}
\newcommand{\PP}{\mathscr{P}}
\newcommand{\norm}[1]{\ensuremath{\left\| #1\right\|}}
\newcommand{\vect}[1]{\boldsymbol{#1}}
\newcommand{\matr}[1]{\boldsymbol{#1}}
\newcommand{\eqdef}{\stackrel{\textrm{def}}{=}}
\newcommand{\T}{{\sf T}}                
\newcommand{\E}{\mathbf{E}} \newcommand{\Proba}{\mathbf{P}}
\newcommand{\bu}{\vect{u}}       
\newcommand{\va}{\vect{\alpha}}
\newcommand{\vb}{\vect{\beta}}
\newcommand{\bM}{\matr{M}}
\newcommand{\bA}{\matr{A}}       
\newcommand{\bQ}{\matr{Q}}
\newcommand{\bB}{\matr{B}}       
\newcommand{\bZ}{\matr{Z}}       
\newcommand{\bU}{\matr{U}}
\newcommand{\bI}{\matr{I}}       
\newcommand{\bV}{\matr{V}}       
\newcommand{\bW}{\matr{W}}       
\newcommand{\bK}{\matr{K}}       
\newcommand{\bR}{\matr{R}}       
\newcommand{\bD}{\matr{D}}       
\newcommand{\bL}{\matr{L}}
\newcommand{\R}{\mathbb{R}}       
\newcommand{\tbL}{\widetilde{\matr{L}}}
\newcommand{\tbU}{\widetilde{\matr{U}}}
\newcommand{\tbLam}{\widetilde{\bm{\Lambda}}}
\newcommand{\X}{\mathcal{X}}
\newcommand{\Xe}{\mathcal{X}_\varepsilon}
\newcommand{\Xs}{\mathcal{X}_\star}
\newcommand{\Y}{\mathcal{Y}}
\newcommand{\Z}{\mathcal{Z}}
\newcommand{\flatlim}{\varepsilon\rightarrow0}
\renewcommand{\O}{\mathcal{O}}       
\DeclareMathOperator{\rank}{rank}
\DeclareMathOperator{\mspan}{span}
\DeclareMathOperator{\orth}{orth}
\DeclareMathOperator{\argmin}{argmin}
\DeclareMathOperator{\diag}{diag}
\DeclareMathOperator{\Tr}{Tr}
\definecolor{darkgreen}{rgb}{0,0.6,0}
\newcommand{\fin}{\color{black}}
\newcommand{\ELE}[2]{\begin{pmatrix} #1 ; #2  \end{pmatrix}}
\newcommand{\ones}{\vect{\mathbf{1}}}
\newcommand{\Ind}{\vect{\mathds{I}}}
\newcommand{\magicn}[1]{\mathbb{M}_{#1}}
\newcommand{\mDPP}[1]{|DPP|_{#1}}
\newcommand{\ppDPP}{DPP}
\newcommand{\mppDPP}[1]{|DPP|_{#1}}
\newcommand{\cW}{\mathcal{W}}
\newcommand{\cA}{\mathcal{A}}
\newcommand{\lt}{\tilde{\lambda}}
\begin{document}
\begin{frontmatter}
	
	\title{Determinantal Point Processes in the Flat Limit: Extended L-ensembles, Partial-Projection DPPs and Universality Classes}
	
	\runtitle{Determinantal Point Processes in the Flat Limit}
	
	\begin{aug}
		\author[A]{\fnms{Simon} \snm{Barthelmé}},
		\author[A]{\fnms{Nicolas} \snm{Tremblay}},
		\author[B]{\fnms{Konstantin} \snm{Usevich}}
		\and
		\author[A]{\fnms{Pierre-Olivier} \snm{Amblard}}
		
		\address[A]{CNRS, Univ. Grenoble Alpes,  Grenoble INP, GIPSA-lab}
		
		\address[B]{Universit\'{e} de Lorraine and CNRS, CRAN (Centre de Recherche en
			Automatique en Nancy)}
	\end{aug}
	
	\medskip
	\flushleft{
	\textcolor{red}{This paper has now been divided into two parts:
	\begin{itemize}
		\item Part I details extended L-ensembles as a new representation for DPPs, is entitled ``\emph{Extended L-ensembles: a new representation for Determinantal Point Processes}'' has now been published here~\cite{tremblay_extended_2021}. This first part is independent of the flat limit problem.
		\item Part II studies the flat limit of L-ensembles (that is best described by extended L-ensembles), is entitled ``\emph{Determinantal Point Processes in the Flat Limit}'' has now been published here~\cite{barthelme_determinantal_2021}. 
	\end{itemize}
	In both papers, we have added examples and illustrations, and removed some technical material, in order to clarify our main results.}}
	\medskip
	
	\begin{abstract}
		Determinantal point processes (DPPs) are repulsive point processes where the
		interaction between points depends on the determinant of a positive-semi definite matrix. The contributions of this paper are two-fold. 
		
		First of all, we introduce the concept of extended L-ensemble, a novel representation of DPPs. These extended L-ensembles are interesting objects because they fix some pathologies in the usual formalism of DPPs, for instance the fact that projection DPPs
		are not L-ensembles. Every (fixed-size) DPP is an (fixed-size) extended
		L-ensemble, including projection DPPs. This new formalism enables to introduce and analyze a subclass of DPPs, called partial-projection DPPs. 
		
		Secondly, with these new definitions in hand, we first show that partial-projection DPPs arise as perturbative limits of L-ensembles, that is, limits in $\flatlim$ of L-ensembles based on matrices of the form $\varepsilon \bA + \bB$ where $\bB$ is low-rank. We generalise this result by showing that partial-projection DPPs also arise as the limiting process of L-ensembles based on kernel matrices, when the kernel function becomes flat (so that every point interacts with every other point, in a
		sense). We show that the limiting point process depends mostly on the smoothness of the kernel function. In some cases, the limiting process is even universal, meaning that it does not depend on
		specifics of the kernel function, but only on its degree of smoothness.
	\end{abstract}
\end{frontmatter}

\pagebreak
\tableofcontents
\pagebreak

\section*{Introduction}
\label{sec:intro}

Determinantal point processes are by now perhaps the most famous example of
repulsive point processes. They first appeared as a model for the position of
fermionic particles in an energy potential \cite{Macchi:CoincidenceApproach},
but also occur in random matrix theory and graph theory. More recently they have
been advocated in machine learning as a way of providing samples with guaranteed
diversity \cite{kulesza2012determinantal}. In that framework, one has a set of
$n$ items, and one desires to produce a subset $\X$ of size $m \ll n$ such that
no two items in $\X$ are excessively similar.  A key aspect of DPPs is that
``diversity'' is defined relative to a notion of similarity represented by a
positive-definite kernel. For instance, if the items are vectors in $\R^d$,
similarity may be defined via the squared-exponential (Gaussian) kernel:
\begin{equation}
  \label{eq:squared-exp-kernel}
  \kappa_{\varepsilon}(\vect{x},\vect{y}) = \exp \left( - \varepsilon \norm{\vect{x}-\vect{y}}^2 \right)
\end{equation}
Here $\vect{x}$ and $\vect{y}$ are two items, and similarity is a decreasing
function of distance.

The class of DPPs can be separated into two subclasses: a large subclass called \emph{L-ensembles} grouping the DPPs that can sample the empty set (the probability of sampling the empty set is strictly positive); and a much smaller class grouping DPPs that cannot (the probability is strictly zero). Precise definitions are to be found in section~\ref{sec:definitions}. 

By definition, an L-ensemble based on the $n\times n$ kernel matrix $\bL=[\kappa_{\varepsilon}(\vect{x}_i,\vect{x}_j)]_{i,j}$ is a distribution over random subsets $\X$ such that:
\[ \Proba(\X) \propto \det [\kappa_{\varepsilon}(\vect{x}_i,\vect{x}_j)]_{\vect{x}_i,\vect{x}_j \in \X^2} \]
If two or more points in $\X$ are very similar (in the sense of the kernel function), then the matrix $\bL_\X = [\kappa_{\varepsilon}(\vect{x}_i,\vect{x}_j)]_{\vect{x}_i,\vect{x}_j \in \X^2}$
has rows that are nearly collinear and the determinant is small (see fig. \ref{fig:illus-dpp}). This in turns
makes it unlikely that such a set $\X$ will be selected by the L-ensemble. 

\begin{figure}[!htbp]   
  \begin{center}
    \includegraphics[width=\textwidth]{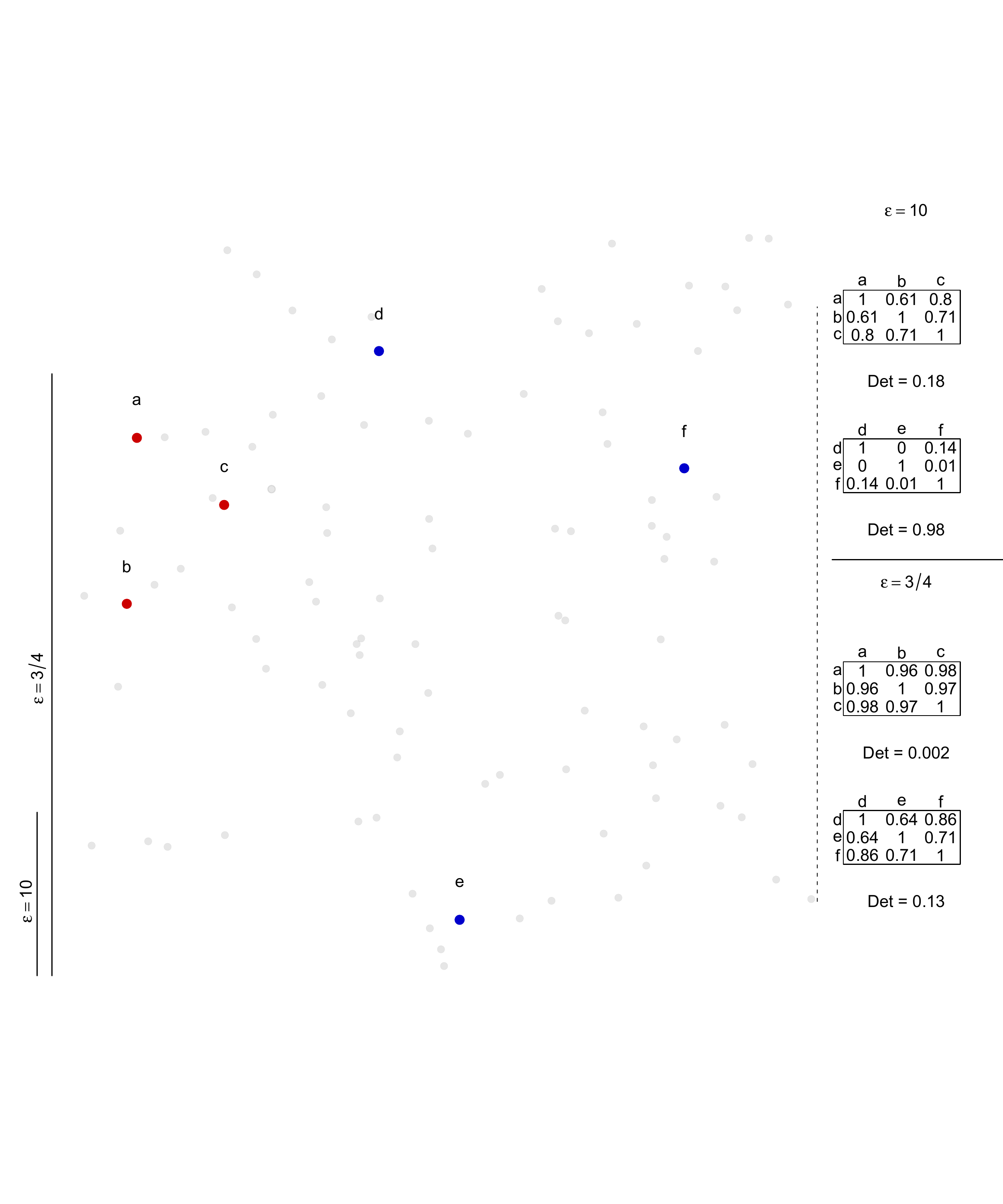}
  \end{center}
\caption{
    L-ensembles generate random subsets with probability
    proportional to the determinant of a kernel matrix. The ground set $\Omega$
    represents the items to sample from: in this figure the points in light
    gray. Two possible subsets of size 3 are represented in blue and red,
    respectively. An L-ensemble may be defined using the Gaussian kernel (eq.
    \ref{eq:squared-exp-kernel}), for instance, and $\varepsilon$ controls the
    length-scale of the kernel (the ``standard deviation'' of the Gaussian
    kernel equals $\frac{1}{2\sqrt{\varepsilon}}$, represented by the two vertical bars
    on the left). On the right, we show the kernel
    matrices corresponding to the two sets, for two values of $\varepsilon$. The
    set $X = \{a,b,c\}$ contains points that are much closer together than the set
    $X' = \{d,e,f\}$: accordingly, the kernel matrix formed from $X'$ is much better
    conditioned than one formed from $X$, which is reflected in the determinant. An L-ensemble is therefore much
    more likely to sample $X'$ than $X$. }
  \label{fig:illus-dpp} 
\end{figure}

\begin{figure}
\begin{center}
    \includegraphics[width=\textwidth]{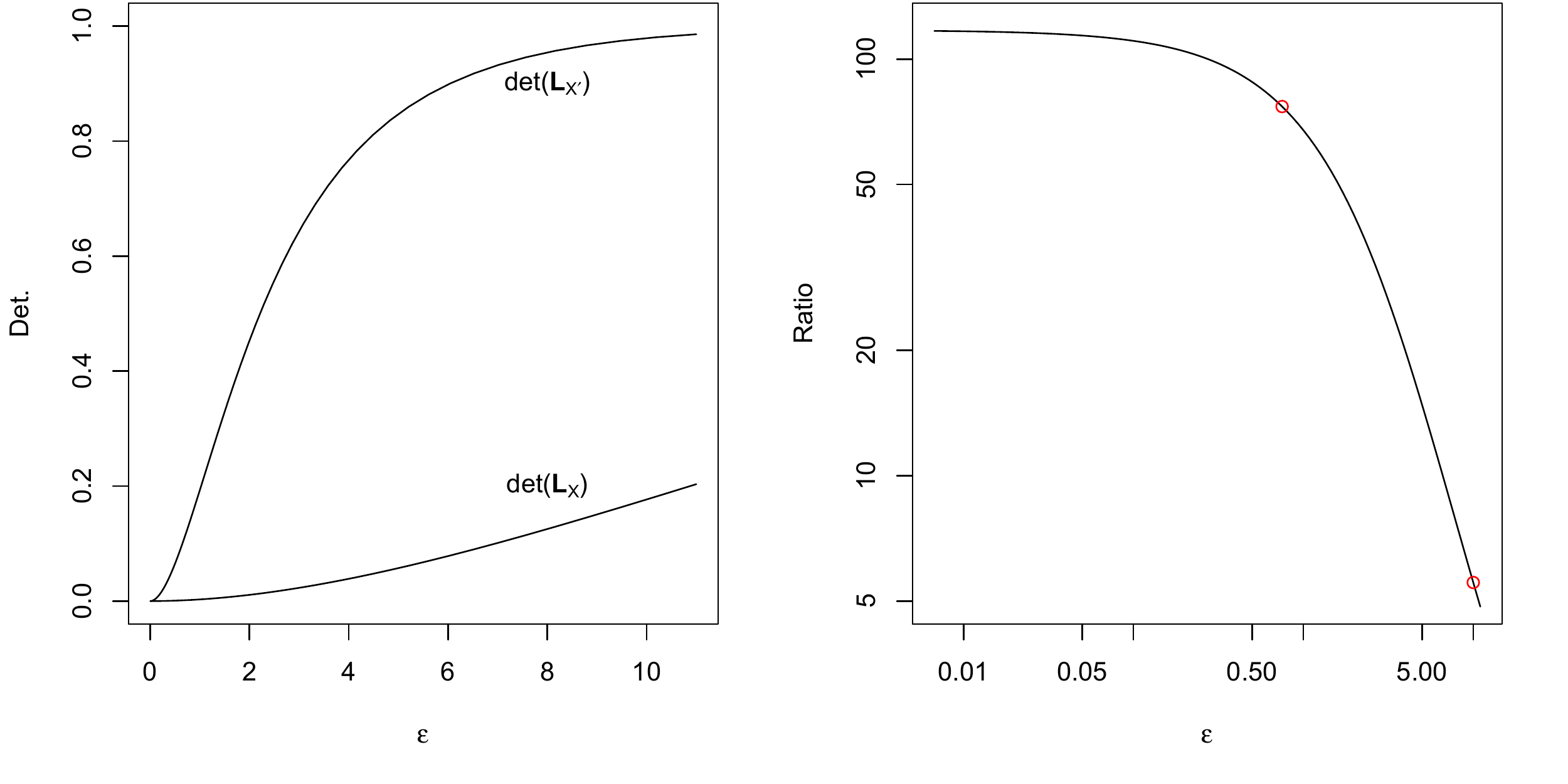}
  \end{center}

  \caption{
In this article, we study the limit of L-ensembles as $\flatlim$, meaning that
the length-scale of the kernel goes to infinity. Although all kernel
matrices are equal to the constant matrix in that limit, and all
determinants go to 0, \emph{ratios} of two determinants go to a fixed quantity.
This is what the figure shows: the left-hand part shows the determinants of
the two kernel matrices from fig. \ref{fig:illus-dpp}  corresponding to $\X$ and $\X'$, as as function of
$\varepsilon$. The right-hand part shows their ratio. The two red dots are for $\varepsilon=10$ and
$\varepsilon=3/4$.
As $\flatlim$, set $\X'$  is roughly 100 times more
    likely than set $\X$ to be sampled. }
  \label{fig:illus-dpp-flat-lim} 
\end{figure}

Importantly, how fast similarity decreases with distance
is determined by the inverse-scale parameter $\varepsilon$. Like other kernel
methods, L-ensembles are plagued with hyperparameters and finding the ``right'' value
for $\varepsilon$ is no easy task. Partial answers to this difficulty may be obtained via the study of the so-called ``flat limit'', originally studied by Driscoll \& Fornberg in Radial Basis
Function interpolation, which simply consists in taking $\varepsilon \rightarrow 0$ in
eq. \eqref{eq:squared-exp-kernel} (or similar kernels). 

This paper addresses the question of the behaviour of L-ensembles based on similarity kernels for which $\varepsilon \rightarrow 0$. To this end, we build upon the work in~\cite{BarthelmeUsevich:KernelsFlatLimit}, where general results on the spectral properties of kernel matrices are established in the flat limit.

\begin{figure}
	\centering
  \includegraphics[width=0.8\textwidth]{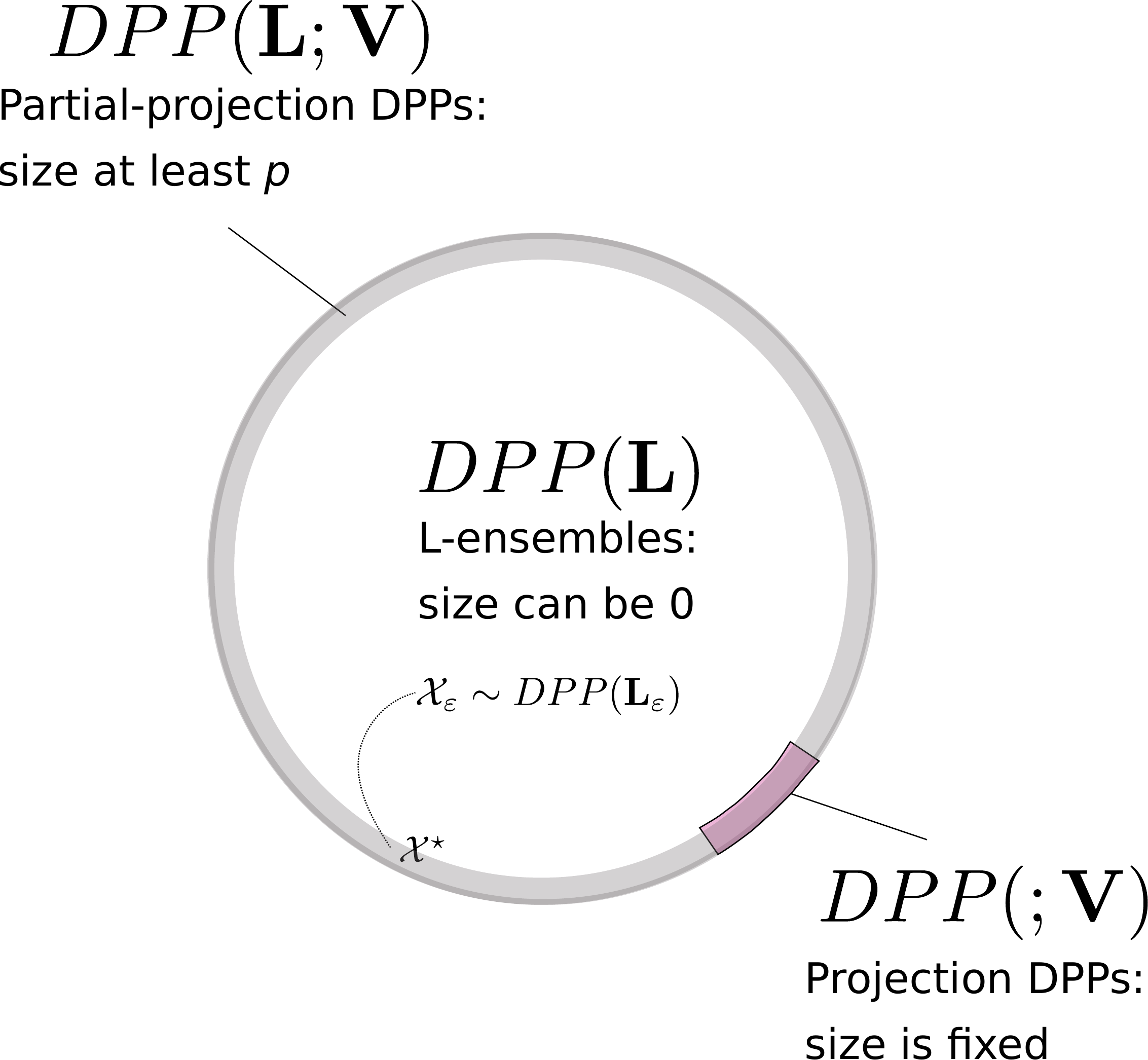}
  \caption{
    An overview of the space of DPPs as studied in this article. The whole sphere represents the class of DPPs. The interior of the sphere represents the (large) subclass of L-ensembles. In L-ensembles, the size of the point process is always allowed to be zero. 
    The gray boundary represents DPPs that do not allow the empty set. In such processes, $|\X| \geq p>0$ almost surely. We call such DPPs ``partial-projection DPPs'' for reasons explained in section
    \ref{sec:ppDPPs}. A special case of partial projection DPPs are the
    projection DPPs, in which $|\X|$ is fixed. While L-ensembles are based on a
    single matrix $\bL$, we show in section 2 that partial projection DPPs can be
    defined based on a pair of matrices $\bL$ and $\bV$. This is
    in fact a valid representation for all DPPs: in L-ensembles the $\bV$ part
    of the pair is empty, and in projection DPPs it is $\bL$ that is empty. We
    call this generic representation of DPPs ``extended L-ensembles''. Sections
    \ref{sec:intro} and \ref{sec:ppDPPs} introduce these concepts. 
    In the second part of the manuscript (section \ref{sec:ppDPP-as-limits} and
    onwards), we study limits of L-ensembles $\Xe$ as $\flatlim$. As illustrated
    here, many interesting limits of L-ensembles ``hit the boundary'' and become
    partial-projection DPPs, which is why we need the extended L-ensemble
    representation.
  }
  \label{fig:venn-diagram} 
\end{figure}

\subsection*{Contributions}
\label{sec:contrib}

Our contributions go beyond a study of the flat limit. As it turns out, the limit processes belong to a specific subclass of DPPs we call ``partial-projection DPPs'', which precisely groups all DPPs that are not L-ensembles (thus sampling sets with size always strictly superior to zero). In order to manipulate joint probability mass functions for DPPs in this subclass, we have to introduce our first contribution: extended L-ensembles.

Section~\ref{sec:ppDPPs} is devoted to the definition of extended
L-ensembles, a novel representation of DPPs that we believe is interesting in
itself. Extended L-ensembles provide a unified description of DPPs: whereas not
all DPPs are L-ensembles, all DPPs are extended L-ensembles. In addition, they
let us write easy-to-understand, explicit formulas for joint probabilities even in cases where the DPP at hand is not an L-ensemble.

With these definitions in hand, we first study the limiting process of an L-ensemble based on the perturbed matrix $\varepsilon \bA + \bB$ (where $\bB$ is low-rank) as $\varepsilon$ tends to zero. We show that this limiting process is a partial projection DPP; meaning that partial-projection DPPs form in a sense the exterior boundary of the space of L-ensembles.  Such perturbative limits form the topic of section~\ref{sec:ppDPP-as-limits}. Figure \ref{fig:venn-diagram} summarises some of the main concepts used here. 

The next sections are devoted to the flat limit proper, that is: the study of the limiting process of an L-ensemble based on a kernel matrix, as $\varepsilon$ tends to zero.  We show the following results:
\begin{itemize}
\item Surprisingly, in the flat limit, such L-ensembles stay well-defined (see fig. \ref{fig:illus-dpp-flat-lim} for an intuitive explanation of why that occurs)
\item The limiting process depends mostly on the smoothness of the kernel function 
\item In particular cases (depending on the dimension $d$), they exhibit universal
  limits, i.e. all kernels within the same smoothness class lead to the same
  limiting L-ensemble 
\end{itemize}

As an example of our results, we can prove the following (the notation is made precise later):
let $\Omega \subset \R$ (a finite set of points on the real line), and $\X$ an L-ensemble on $\Omega$. Let $\kappa_{\varepsilon}$ be a kernel
function that is $C^\infty$ in both $x$ and $y$ at $\vect{0}$ and analytic in $\varepsilon$  (e.g., the Gaussian). Pick
an odd integer $p<2|\Omega|-1$. Then, applying Thm~\ref{thm:Xe_varying_univariate}, as $\flatlim$ the L-ensemble based on the matrix 
$[\varepsilon^{-p}
\kappa_{\varepsilon}({x_i},{x_j})]_{{x}_i,{x}_j \in \Omega^2}$
has the law:
\begin{align}
\label{eq:first_example}
     p\left(\X= \{x_1, \ldots, x_m\} \right) =
  \begin{cases}
    \frac{1}{Z} \prod_{i<j} (x_i -
    x_j)^2 & \mbox{\ if\ } m = \frac{p+1}{2}, \\
    0& \mbox{\ otherwise}.
  \end{cases}
\end{align}
On the other hand, if the kernel function is only once differentiable at 0, e.g.
with $\kappa_\varepsilon(x,y) = \exp(-\varepsilon|x-y|)$, then taking the limit
of the L-ensemble based on the matrix $[\varepsilon^{-1}
\kappa_{\varepsilon}({x}_i,{x}_j)]_{({x}_i,{x}_j) \in
  \Omega^2}$ we obtain a different process, with joint probability:
\[     p\left(\X= \{ x_1, \ldots, x_m \} \right) =
  \begin{cases}
    \frac{1}{Z} \gamma^m \prod_{i=1}^{m-1} (x_{i+1} -
    x_i), & \mbox{\ if\ } m \geq 1, \\
    0, & \mbox{\ otherwise},
  \end{cases}
\]
where we have ordered the points so that $x_1 \leq x_2 \leq \ldots \leq x_m$.
Whereas the previous limit was completely universal, in the sense that the
limiting distribution is the same for all $C^\infty$ kernels, this other limit
is almost universal, but not quite: the limit is the same for all $C^1$ kernels,
except for the value of  $\gamma$ which depends on the kernel. 

Our results are much more general, and the general case involves some
subtleties. The main (and most general) results on the flat
limit are Th. \ref{thm:nd-finite-smooth-magic-case}, Th. \ref{thm:general-case-smooth-fixed-size},
and Th. \ref{thm:Xe_varying_multivariate}, but the statements require that we
set up a bit of notation. In addition, theorem
\ref{thm:equivalence-extended-spectral} is a generalisation of the Cauchy-Binet
lemma which may be of independent interest.

Because the results require a bit of background to explain properly, we show in
fig. \ref{fig:teaser} a teaser meant to motivate the reader to pursue reading at
least until section \ref{sec:universal-limits}, where the key to the mystery is
revealed. The teaser shows counter-intuitive behaviour of L-ensembles in the flat limit
(in dimension 2). 

\begin{figure}
   \begin{center}
\includegraphics[width=0.8\textwidth]{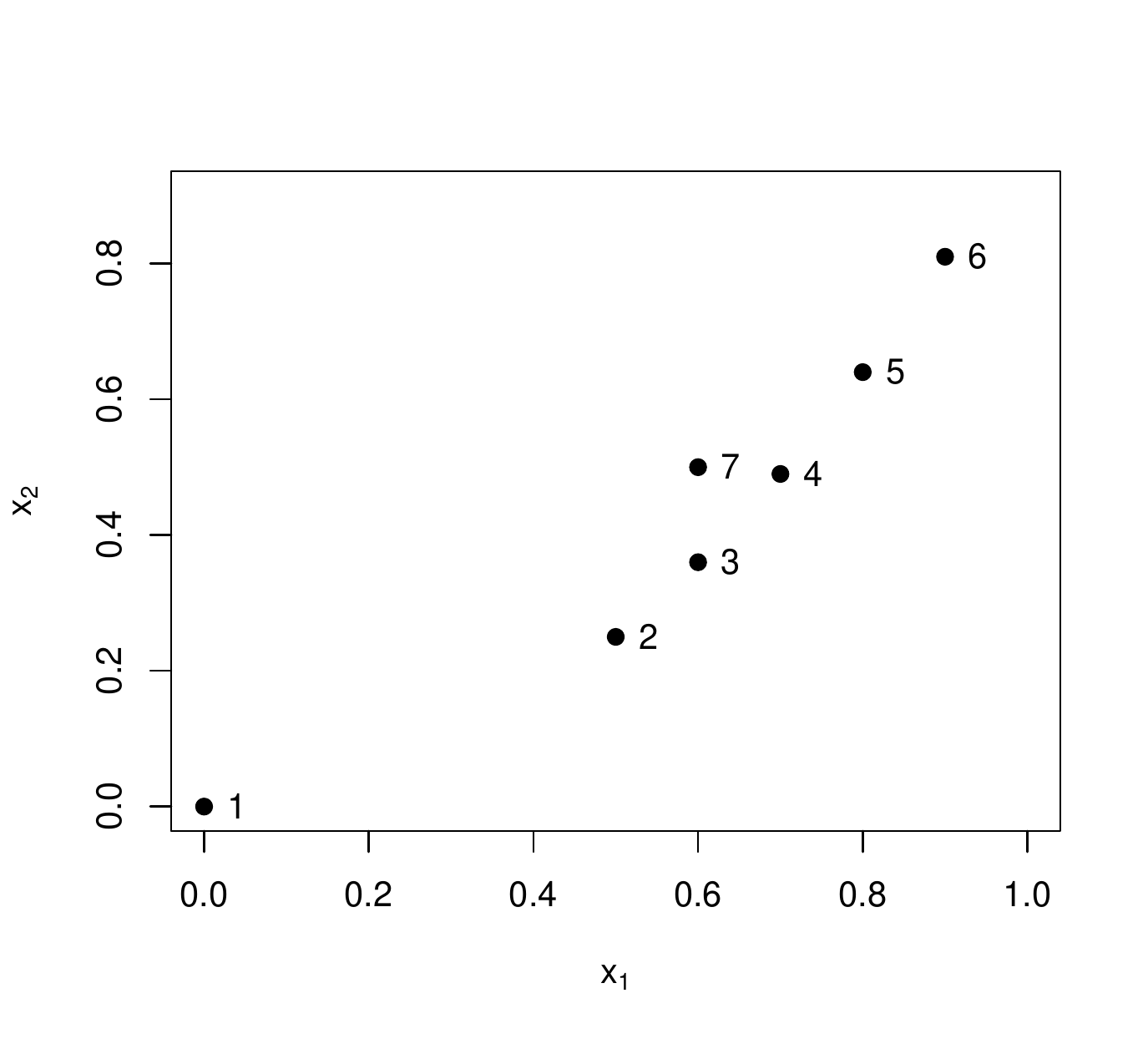}
  \caption{Suppose a (fixed-size) L-ensemble is used to sample 6 of the 7 labelled
    points shown on the figure. With a Gaussian kernel, as $\flatlim$, the set
    $X = \{1,2,3,4,5,6\}$ has a probability 0 of being sampled, while the set
    $X' = \{2,3,4,5,6,7\}$, which is less spread-out, has a small but non-zero
    probability of being sampled. With an exponential kernel, on the other hand,
    both sets have a non-zero probability of being sampled, but in this case $X$
    is much \emph{more} likely to be sampled that $X'$. The explanation for that
    counter-intuitive behaviour is to be found in section
    \ref{sec:universal-limits}.} \label{fig:teaser}
   \end{center}
\end{figure}

The limitations of our results are as follows. We focus on stationary kernels,
and only look at finite DPPs, leaving aside the continuous case. All results
should extend to continuous DPPs on a compact subset of $\R^d$, with the
appropriate change in notation. The case of continuous DPPs on a \emph{non-compact} subspace of $\R^d$ appears to us harder to deal with.

\subsection*{Practical implications}

The practical-minded reader might object to the abstract nature of this work.
However, we stress that flat limits are an elegant way of partially answering
the questions of hyper-parameter tuning, and, to a lesser extent, the choice of
similarity function.

One outcome of this work is that as $\flatlim$, DPPs have limits that
are sensible, repulsive and so should behave reasonably in applications. One
advantage of directly sampling from the limiting DPP is that there is no spatial
scaling parameter to choose from. The only one that remains is how many points
one wishes to sample. This assumes of course that one has chosen a particular
kernel function, which leads us to our secound point. 

The second conclusion of our work is that what the exact kernel is, matters much
less than what its smoothness order is. If one where to speculate based on the
results in the unidimensional case, kernels with low regularity lead to mostly
local repulsion whereas kernels with high regularity lead to a more global form
of repulsion; and this is borne out as well by some numerical evidence. Kernels
with high regularity lead to some surprising long-distance repulsiveness
properties, as fig. \ref{fig:teaser} illustrates.

In addition, we suspect that there are computational implications of our results
as well, enabling faster sampling of DPPs, but we leave this for future work.

\subsection*{Structure of the paper}
We begin with some definitions and background in section \ref{sec:definitions}.
Section \ref{sec:ppDPPs} introduces extended L-ensembles and partial-projection DPPs 
and gives some major properties. Partial-projection DPPs
arise as limits of L-ensembles, and section \ref{sec:ppDPP-as-limits} explains how in a
simple case of an L-ensemble based on a linearly perturbed matrix. Some of the results proved there
should help understand what happens in the flat limit.

For clarity, flat limit results are given in increasing order of complexity. We
begin with results on the limits of \emph{fixed-size} L-ensembles (the ``k-DPPs'' of
\cite{KuleszaTaskar:FixedSizeDPPs}), because these results are much easier to state and
serve as a building block for the case of \emph{variable-size} L-ensembles. Thus,
section \ref{sec:univariate-results} and section \ref{sec:results-multivariate} study fixed-size L-ensembles in the flat limit. For pedagogical reasons, we begin with univariate results (where the points are a
subset of the real line), before giving the results for the multivariate case,
which require some background on multivariate polynomials. Limits of varying-size L-ensembles
are covered in section \ref{sec:varying-size}, which again has a subsection on the univariate case that serves as a
warm-up for the more difficult multivariate case. 

\section{Definitions and background}
\label{sec:definitions}

We briefly recall some  definitions. For details we refer the reader to
\cite{Barthelme:AsEqFixedSizeDPP} and  \cite{KuleszaTaskar:FixedSizeDPPs}. All
of the results below are classical.

DPPs are based on determinants of kernel matrices, so we begin with some
material on kernel functions and determinants. We then introduce DPPs along with
fixed-size DPPs, a useful variant (as well as L-ensembles and fixed-size L-ensembles). Our proofs require that we work with asymptotic
expansions of probability mass functions, which we do via two lemmas that we 
introduce. We then give some very simple results from matrix
perturbation theory. They are not necessary for our proofs but help build an
understanding of the limits we investigate. Finally, we provide the necessary background material
on multivariate polynomials, as they are very important for flat limits and appear here or there in our developments. 

\subsection{Kernels, smoothness orders}
\label{sec:intro-kernels}

We only outline the basic concepts needed to express the results from
\cite{BarthelmeUsevich:KernelsFlatLimit}, which our analysis is based on. For more on kernels the reader is
invited to consult \cite{stein1999interpolation} or \cite{wendland2004scattered}.
A kernel is a positive definite function $\kappa : \R^d \times \R^d \rightarrow \R$. We call the kernel
stationary if $\kappa(\vect{x},\vect{y}) = f(\norm{\vect{x}-\vect{y}}_2)$ for
some function $f$, i.e. it only depends on the (Euclidean) distance between
$\vect{x}$ and $\vect{y}$.
We assume further that $f$ is analytic\footnote{We choose this assumption for
  simplicity, but it can be relaxed to an assumption of differentiability up to a required order. } at 0, and expand it as:
\begin{equation}
  \label{eq:kernel-expansion}
  f(\norm{\vect{x}-\vect{y}}_2) =  f_0 +  f_1 \norm{\vect{x}-\vect{y}}_2 + f_2 \norm{\vect{x}-\vect{y}}_2^2  + f_3 \norm{\vect{x}-\vect{y}}_2^3 +  \ldots
\end{equation}
where $f_i = \frac{f^{(i)}(0)}{i!} $, i.e. the rescaled derivatives at 0 of $f$. The
smoothness order of the kernel is defined with respect to the \emph{odd}
derivatives of $f$ at 0. Specifically:
\begin{definition}
  The smoothness order $r$ of a stationary kernel $\kappa(\vect{x},\vect{y}) =
  f(\norm{\vect{x}-\vect{y}}_2)$ is defined as:
  \begin{equation}
    \label{eq:smoothness-order}
    r = \min \{r | f_{2r-1} \neq 0\}
  \end{equation}
  i.e, the smallest $r$ such that the $r$-th \emph{odd} derivative is non-zero. 
\end{definition}

A kernel like the squared-exponential (eq. \eqref{eq:squared-exp-kernel})
depends on the squared distance and so has $r = \infty$. We call such kernels
\emph{completely smooth}. Kernels with finite values of $r$ are called
finitely smooth (f.s.). An example of a kernel with $r=1$ is the exponential
kernel:
\begin{equation}
  \label{eq:exp-kernel}
  \kappa_{\varepsilon}(\vect{x},\vect{y}) = \exp \left( -\varepsilon \norm{\vect{x}-\vect{y}}_2 \right)
\end{equation}
An example of a kernel with $r=2$ is:
\begin{equation}
  \label{eq:r2-kernel}
  \kappa_{\varepsilon}(\vect{x},\vect{y}) = \left(1+\varepsilon \norm{\vect{x}-\vect{y}}_2\right)\exp \left( -\varepsilon \norm{\vect{x}-\vect{y}}_2 \right)
\end{equation}
The Mat{\`e}rn kernels \cite{stein1999interpolation}, popular in spatial statistics, are a generic family of
kernels which have $r$ as a parameter. Other examples of finitely-smooth kernels
can be found in our numerical results, for instance in fig. \ref{fig:convergence-cond-1d}. 

\subsection{Some determinant lemmas}
\label{sec:determinant-lemmas}

 Let $\matr{A}$ be a $n \times n$ matrix, and
$Y$, $Z$ be two subsets of indices. Then $\matr{A}_{Y,Z}$ is the submatrix
of $\matr{A}$ formed by retaining the rows in $Y$ and the columns in $Z$.  Furthermore, $\matr{A}_{:,Y}$ (resp. $\matr{A}_{Y,:}$) is the matrix made of the full columns (resp. rows) indexed by $Y$. Finally, we
let $\matr{A}_Y = \matr{A}_{Y,Y}$.
Also, for a matrix $\matr{V}$, by $\mspan(\matr{V})$ we denote its column span, and by $\orth(\matr{V})$ the orthogonal complement of $\mspan(V)$. 

We shall need a number of basic results on determinants. The Cauchy-Binet lemma is central to the theory of DPPs and generalises the
well-known relationship $\det(\matr{A}\matr{B})=\det(\matr{A})\det(\matr{B})$
(for square $\matr{A}$ and $\matr{B}$) to rectangular matrices. 

\begin{lemma}[Cauchy-Binet]
  \label{lem:cauchy-binet}
  Let $\matr{M} = \matr{A} \matr{B}$, with $\matr{A}$ a $m \times n$ matrix, $\matr{B}$ a $n \times m$ matrix. Then:
  \begin{equation}
    \label{eq:CauchyBinet}
    \det \matr{M} = \sum_{Y, |Y| = m} \det \matr{A}_{:,Y} \det \matr{B}_{Y,:}
  \end{equation}
  where the sum is over all subsets $Y \subseteq \{1, \ldots, n\}$ of size $m$.
\end{lemma}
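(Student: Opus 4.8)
The plan is to prove this by exploiting the fact that the determinant is a multilinear, alternating function of the columns of a matrix, and expanding $\det\matr{M}$ column by column. First I would dispose of the trivial edge case: if $m>n$, then $\matr{M}=\matr{A}\matr{B}$ has rank at most $n<m$, so $\det\matr{M}=0$, while the right-hand side is an empty sum (there are no $m$-subsets of $\{1,\dots,n\}$); both sides vanish and the identity holds. Henceforth I assume $m\le n$.

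For the main case, observe that the $j$-th column of $\matr{M}$ is $\matr{M}_{:,j}=\sum_{k=1}^{n}\matr{A}_{:,k}\,\matr{B}_{k,j}$, i.e. a linear combination of the columns of $\matr{A}$ with coefficients read off from $\matr{B}$. Expanding the determinant by multilinearity in each of the $m$ columns gives
\begin{equation*}
  \det\matr{M}=\sum_{k_1=1}^{n}\cdots\sum_{k_m=1}^{n}\matr{B}_{k_1,1}\cdots\matr{B}_{k_m,m}\,\det\bigl[\matr{A}_{:,k_1},\dots,\matr{A}_{:,k_m}\bigr],
\end{equation*}
a sum over all $n^m$ index tuples $(k_1,\dots,k_m)$. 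Because the determinant is alternating, every tuple with a repeated index contributes zero, so only tuples of \emph{distinct} indices survive.

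The final step is to regroup the surviving tuples. Each tuple of distinct indices is exactly a choice of an $m$-subset $Y=\{k_1,\dots,k_m\}\subseteq\{1,\dots,n\}$ together with an ordering of its elements, which I encode as a permutation $\sigma$ of the sorted elements $y_1<\dots<y_m$ of $Y$. Using the alternating property once more, $\det[\matr{A}_{:,k_1},\dots,\matr{A}_{:,k_m}]=\operatorname{sgn}(\sigma)\det\matr{A}_{:,Y}$, and factoring this out of the sum over orderings leaves
\begin{equation*}
  \det\matr{M}=\sum_{Y,\,|Y|=m}\det\matr{A}_{:,Y}\;\sum_{\sigma}\operatorname{sgn}(\sigma)\prod_{i=1}^{m}\matr{B}_{y_{\sigma(i)},i}.
\end{equation*}
The inner sum is precisely the Leibniz expansion of $\det\matr{B}_{Y,:}$, which yields the claimed formula.

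The computation is routine; the only point that needs care—and the part I expect to be the main (if minor) obstacle—is the combinatorial bookkeeping in this last regrouping: one must check that the sign $\operatorname{sgn}(\sigma)$ produced by reordering the columns of $\matr{A}$ matches the sign appearing in the Leibniz formula for $\det\matr{B}_{Y,:}$, so that the two determinants combine cleanly. An alternative, sign-free route I would keep in reserve is the block-matrix argument: compute $\det\begin{pmatrix}\matr{I}_n & -\matr{B}\\ \matr{A} & \matr{0}\end{pmatrix}$ in two ways—by Schur-complement elimination, giving $\det\matr{M}$, and by a generalised Laplace expansion along the first $n$ rows, giving the right-hand side—which hides the permutation bookkeeping inside a single Laplace expansion.
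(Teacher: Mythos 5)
Your proof is correct and complete. Note that the paper itself offers no proof of this lemma: it is stated as a classical background fact, so there is no in-paper argument to compare against. What you give is the standard multilinearity proof, and the one point you flag as delicate does go through cleanly: writing each surviving tuple as $k_i = y_{\sigma(i)}$ with $y_1 < \dots < y_m$ the sorted elements of $Y$, sorting the columns of $\matr{A}$ costs exactly $\operatorname{sgn}(\sigma)$, and since $(\matr{B}_{Y,:})_{j,i} = \matr{B}_{y_j,i}$, the inner sum $\sum_{\sigma}\operatorname{sgn}(\sigma)\prod_{i=1}^{m}\matr{B}_{y_{\sigma(i)},i}$ is verbatim the Leibniz expansion of $\det \matr{B}_{Y,:}$ along its columns — no residual sign discrepancy. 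Your handling of the edge case $m>n$ (rank bound versus empty sum) is also right, and worth including since the paper's statement does not exclude it. The block-matrix alternative you hold in reserve is equally valid: the Schur complement of the $\matr{I}_n$ block in $\begin{pmatrix}\matr{I}_n & -\matr{B}\\ \matr{A} & \matr{0}\end{pmatrix}$ gives $\det(\matr{A}\matr{B})$, while generalised Laplace expansion along the first $n$ rows produces the right-hand side, with the Laplace signs cancelling against the sign of the $-\matr{B}$ block; it trades explicit permutation bookkeeping for the (equally classical) generalised Laplace expansion, so neither route is strictly more elementary than the other.
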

We will also frequently use the following simple corollary of the Cauchy-Binet lemma.
\begin{corollary}
Let $\matr{M} =  \matr{U} \matr{\Lambda} \matr{U}^{\top}$, where $\matr{U}$ is $m\times n$ , $n \ge m$ and $\matr{\Lambda}$ is a diagonal matrix.
Then:
\[
\det \matr{M} = \sum_{Y, |Y| = m} (\det (\matr{U}_{:,Y}))^2 \det(\matr{\Lambda}_{Y}).
\]
\end{corollary}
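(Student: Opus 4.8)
The plan is to reduce the statement to a direct application of the Cauchy-Binet lemma (Lemma~\ref{lem:cauchy-binet}) by absorbing the diagonal factor into one of the two rectangular factors. Specifically, I would write $\matr{M} = \matr{A}\matr{B}$ with $\matr{A} = \matr{U}\matr{\Lambda}$ (of size $m \times n$) and $\matr{B} = \matr{U}^{\top}$ (of size $n \times m$), so that the hypotheses of Cauchy-Binet are met verbatim, yielding
\[
\det \matr{M} = \sum_{Y,\,|Y|=m} \det \matr{A}_{:,Y}\, \det \matr{B}_{Y,:}.
\]

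The next step is to simplify each factor in the summand. Because $\matr{\Lambda}$ is diagonal, right-multiplication by $\matr{\Lambda}$ scales column $j$ of $\matr{U}$ by $\matr{\Lambda}_{jj}$; hence selecting the columns indexed by $Y$ from $\matr{A} = \matr{U}\matr{\Lambda}$ gives $\matr{A}_{:,Y} = \matr{U}_{:,Y}\,\matr{\Lambda}_{Y}$, where $\matr{\Lambda}_{Y}$ is the $Y \times Y$ diagonal block. Multiplicativity of the determinant on these square $m \times m$ matrices then gives $\det \matr{A}_{:,Y} = \det(\matr{U}_{:,Y})\,\det(\matr{\Lambda}_{Y})$. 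For the second factor, I observe that $\matr{B}_{Y,:} = (\matr{U}^{\top})_{Y,:} = (\matr{U}_{:,Y})^{\top}$, so $\det \matr{B}_{Y,:} = \det(\matr{U}_{:,Y})$.

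Substituting these two identities into the Cauchy-Binet sum collapses each summand to $(\det \matr{U}_{:,Y})^2 \det(\matr{\Lambda}_{Y})$, which is exactly the claimed formula.

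As for obstacles: there is no genuine difficulty here, since the result is purely a bookkeeping consequence of Cauchy-Binet. The only point that requires a moment's care is the interaction of the column-selection operation with the diagonal factor --- that is, confirming that $(\matr{U}\matr{\Lambda})_{:,Y} = \matr{U}_{:,Y}\matr{\Lambda}_{Y}$ rather than some larger block of $\matr{\Lambda}$. This clean factorization is precisely what makes the route via $\matr{A}=\matr{U}\matr{\Lambda}$ preferable to splitting $\matr{\Lambda} = \matr{\Lambda}^{1/2}\matr{\Lambda}^{1/2}$, which would require a positivity assumption on the diagonal entries that the hypotheses do not supply.
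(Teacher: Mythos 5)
Your proof is correct and is precisely the argument the paper intends: the corollary is stated without proof as a direct consequence of Lemma~\ref{lem:cauchy-binet}, obtained by applying Cauchy--Binet to the factorization $\matr{M} = (\matr{U}\matr{\Lambda})\matr{U}^{\top}$ and noting $(\matr{U}\matr{\Lambda})_{:,Y} = \matr{U}_{:,Y}\matr{\Lambda}_{Y}$. Your remark that absorbing $\matr{\Lambda}$ into one factor avoids the positivity assumption a $\matr{\Lambda}^{1/2}$ splitting would require is a sound observation, consistent with the paper's hypothesis of an arbitrary diagonal $\matr{\Lambda}$.
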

The next result is a well-known determinantal counterpart of the
Sherman-Woodbury-Morrisson lemma:

\begin{lemma}
  \label{lem:low-rank-update-det}
  Let $\matr{A}$ be an invertible matrix of size $n \times n$, $\matr{U}$
  of size $n \times m$, and $\matr{W}$ an invertible matrix of size $m \times m$. Then it holds that:
  \begin{equation}
    \label{eq:determinant-update}
    \det(\matr{A}+\matr{UWU}^\top)=\det(\matr{A})\det(\matr{W})\det(\bW^{-1}+\matr{U}^\top\matr{A}^{-1}\matr{U}).
  \end{equation}
\end{lemma}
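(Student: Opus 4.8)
The plan is to prove the identity by forming a single $(n+m)\times(n+m)$ block matrix and evaluating its determinant in two different ways via the Schur-complement formula. Concretely, I would consider
\[
\matr{M} = \begin{pmatrix} \matr{A} & \matr{U} \\ -\matr{W}\matr{U}^\top & \matr{I}_m \end{pmatrix},
\]
where $\matr{I}_m$ is the $m\times m$ identity. The whole argument then reduces to applying the standard block-determinant formula $\det\begin{pmatrix}\matr{P}&\matr{Q}\\\matr{R}&\matr{S}\end{pmatrix}=\det(\matr{P})\det(\matr{S}-\matr{R}\matr{P}^{-1}\matr{Q})$ (valid when $\matr{P}$ is invertible) together with its mirror image, obtained by eliminating on the lower-right block instead of the upper-left one.

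First I would eliminate on the lower-right block $\matr{I}_m$, which is trivially invertible. This gives $\det(\matr{M}) = \det(\matr{I}_m)\,\det\big(\matr{A} - \matr{U}\,\matr{I}_m^{-1}(-\matr{W}\matr{U}^\top)\big) = \det(\matr{A} + \matr{U}\matr{W}\matr{U}^\top)$, which is exactly the left-hand side we want. Next I would eliminate on the upper-left block $\matr{A}$, using the hypothesis that $\matr{A}$ is invertible, to obtain $\det(\matr{M}) = \det(\matr{A})\,\det\big(\matr{I}_m + \matr{W}\matr{U}^\top\matr{A}^{-1}\matr{U}\big)$. Equating the two evaluations yields the intermediate identity $\det(\matr{A} + \matr{U}\matr{W}\matr{U}^\top) = \det(\matr{A})\,\det(\matr{I}_m + \matr{W}\matr{U}^\top\matr{A}^{-1}\matr{U})$.

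The final step is purely algebraic: since $\matr{W}$ is invertible, I factor it out of the last determinant by writing $\matr{I}_m + \matr{W}\matr{U}^\top\matr{A}^{-1}\matr{U} = \matr{W}\big(\matr{W}^{-1} + \matr{U}^\top\matr{A}^{-1}\matr{U}\big)$ and applying multiplicativity, $\det(\matr{W}\matr{X}) = \det(\matr{W})\det(\matr{X})$. This reproduces the claimed right-hand side $\det(\matr{A})\det(\matr{W})\det(\matr{W}^{-1} + \matr{U}^\top\matr{A}^{-1}\matr{U})$.

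There is no real obstacle here: the hypotheses are used only in the two Schur eliminations (invertibility of $\matr{A}$, and the cost-free invertibility of $\matr{I}_m$) and in the final factoring (invertibility of $\matr{W}$), so both stated assumptions are exactly what is needed and nothing more. The only point requiring minor care is the sign convention in the off-diagonal block of $\matr{M}$, which is chosen precisely so that the two Schur complements produce $+\matr{U}\matr{W}\matr{U}^\top$ and $+\matr{W}\matr{U}^\top\matr{A}^{-1}\matr{U}$ with the correct signs. An alternative route avoids block matrices altogether: one factors $\det(\matr{A})$ out first, $\det(\matr{A}+\matr{U}\matr{W}\matr{U}^\top) = \det(\matr{A})\det(\matr{I}_n + \matr{A}^{-1}\matr{U}\matr{W}\matr{U}^\top)$, and then applies the Weinstein--Aronszajn (``Sylvester'') identity $\det(\matr{I}_n + \matr{P}\matr{Q}) = \det(\matr{I}_m + \matr{Q}\matr{P})$ to move the rank-$m$ factor to the small side before factoring out $\matr{W}$; but the block-matrix derivation is the most transparent and does not require any positivity or square-root assumption on $\matr{A}$.
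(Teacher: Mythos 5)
Your proof is correct, and every step checks out: the two Schur eliminations on $\matr{M} = \left(\begin{smallmatrix} \matr{A} & \matr{U} \\ -\matr{W}\matr{U}^\top & \matr{I}_m \end{smallmatrix}\right)$ give $\det(\matr{M}) = \det(\matr{A}+\matr{U}\matr{W}\matr{U}^\top)$ and $\det(\matr{M}) = \det(\matr{A})\det(\matr{I}_m + \matr{W}\matr{U}^\top\matr{A}^{-1}\matr{U})$ respectively, the sign in the lower-left block is handled correctly, and factoring out the invertible $\matr{W}$ finishes the identity. For the comparison you should know that the paper offers no proof at all here: the lemma is stated as a well-known determinantal counterpart of the Sherman--Morrison--Woodbury formula, part of the classical background of Section 2, so there is no in-paper argument to measure yours against. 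Your derivation is the standard one and is self-contained; note also that it only relies on the general (non-symmetric) form of the block-determinant formula, which is a mild extension of the paper's Lemma 2.4 (stated there with blocks $\matr{U}$, $\matr{U}^\top$), and your closing remark about the Weinstein--Aronszajn alternative is likewise valid. The one point worth stating explicitly if this were written out in full is the general block formula itself, e.g.\ via the factorisation $\left(\begin{smallmatrix}\matr{P}&\matr{Q}\\\matr{R}&\matr{S}\end{smallmatrix}\right) = \left(\begin{smallmatrix}\matr{I}&\matr{Q}\matr{S}^{-1}\\ \matr{0}&\matr{I}\end{smallmatrix}\right)\left(\begin{smallmatrix}\matr{P}-\matr{Q}\matr{S}^{-1}\matr{R}&\matr{0}\\ \matr{R}&\matr{S}\end{smallmatrix}\right)$, since the paper's stated version does not literally cover your $\matr{M}$.
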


Finally, a related lemma is useful for block matrices:

\begin{lemma}
  \label{lem:block-det}
  Let $\matr{M} =
  \begin{pmatrix}
    \matr{A} & \matr{U} \\
    \matr{U}^\top & \matr{W}
  \end{pmatrix}
  $,  with $\matr{A}$ invertible.  Then
  \begin{equation}
    \label{eq:block-det}
    \det(\matr{M}) = \det(\matr{A})\det(\matr{W}-\matr{U}^\top\matr{A}^{-1}\matr{U}).
  \end{equation}
\end{lemma}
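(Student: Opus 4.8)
The plan is to prove \eqref{eq:block-det} by exhibiting an explicit block-LU (Schur-complement) factorisation of $\matr{M}$ and then invoking multiplicativity of the determinant. The invertibility of $\matr{A}$ is precisely what allows the factorisation to be written down, since it is needed to form $\matr{A}^{-1}$. Concretely, I would start from the identity
\[
\begin{pmatrix} \matr{A} & \matr{U} \\ \matr{U}^\top & \matr{W}\end{pmatrix}
=
\begin{pmatrix} \matr{I} & \matr{0} \\ \matr{U}^\top \matr{A}^{-1} & \matr{I}\end{pmatrix}
\begin{pmatrix} \matr{A} & \matr{0} \\ \matr{0} & \matr{S}\end{pmatrix}
\begin{pmatrix} \matr{I} & \matr{A}^{-1}\matr{U} \\ \matr{0} & \matr{I}\end{pmatrix},
\]
where $\matr{S} = \matr{W} - \matr{U}^\top \matr{A}^{-1}\matr{U}$ is the Schur complement of $\matr{A}$ in $\matr{M}$.

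First I would verify this factorisation by direct block multiplication: multiplying the last two factors yields $\begin{pmatrix}\matr{A} & \matr{U}\\ \matr{0} & \matr{S}\end{pmatrix}$, and left-multiplying by the unit lower-triangular factor then reproduces $\matr{M}$, the bottom-right block collapsing to $\matr{U}^\top\matr{A}^{-1}\matr{U} + \matr{S} = \matr{W}$ by the definition of $\matr{S}$. Taking determinants and using $\det(\matr{X}\matr{Y})=\det(\matr{X})\det(\matr{Y})$, together with the standard fact that a block-triangular matrix has determinant equal to the product of the determinants of its diagonal blocks, the two unit-triangular factors each contribute $\det(\matr{I})\det(\matr{I})=1$, while the block-diagonal middle factor contributes $\det(\matr{A})\det(\matr{S})$. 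This gives exactly \eqref{eq:block-det}.

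For a statement this elementary there is no genuine obstacle; the only point meriting care is the auxiliary fact about determinants of block-triangular matrices. I would record it as a short sub-step, noting that $\det\begin{pmatrix}\matr{I}&\matr{0}\\ \matr{C}&\matr{I}\end{pmatrix}=1$ follows at once from the Leibniz expansion (or from elementary column operations), and that $\det\begin{pmatrix}\matr{A}&\matr{0}\\\matr{0}&\matr{S}\end{pmatrix}=\det(\matr{A})\det(\matr{S})$ likewise. I prefer the factorisation route over trying to reduce to Lemma~\ref{lem:low-rank-update-det}, because the present lemma assumes only that $\matr{A}$ is invertible, whereas any reduction through the Sherman--Woodbury--Morrison determinant identity would additionally force $\matr{W}$ to be invertible; the block-LU argument uses nothing beyond the hypotheses as stated.
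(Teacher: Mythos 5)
Your proof is correct and complete; the paper itself states Lemma~\ref{lem:block-det} without proof, treating it as a classical fact, and the block-LU/Schur-complement factorisation you give is exactly the standard argument one would supply, with the verification by block multiplication and the determinant bookkeeping all in order. Your closing remark is also apt: a reduction through Lemma~\ref{lem:low-rank-update-det} would indeed impose the extraneous hypothesis that $\matr{W}$ be invertible, so the factorisation route is the right one under the stated assumptions.
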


The next two lemmas concern so-called ``saddle-point matrices'', and are proved in
\cite[Appendix A]{BarthelmeUsevich:KernelsFlatLimit}.  

\begin{lemma}[{\cite[Lemma 3.10]{BarthelmeUsevich:KernelsFlatLimit}}]
  \label{lem:det-saddlepoint}
  Let $\bL \in \R^{n \times n}, \bV \in \R^{n \times p}$, 
with $\bV$ of full column rank and $p \leq n$. 
  Let 
  $\bQ \in \R^{n \times (n-p)}$ be an  orthonormal basis for $\orth({\bV})$ (i.e., $\bQ^{\top} \matr{V} = \matr{0}$, $\rank(\bQ) = n-p$).  
  Then:
  \begin{equation}
    \label{eq:det-saddlepoint}
  \det       \begin{pmatrix}
    \bL & \bV \\
    \bV^\top & \matr{0} 
  \end{pmatrix} = (-1)^p \det(\bV^\top\bV)\det(\bQ^\top\bL\bQ).
\end{equation}
\end{lemma}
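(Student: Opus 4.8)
The plan is to reduce the saddle-point matrix to block-diagonal form by an orthogonal change of basis followed by determinant-preserving block elimination, carefully avoiding any use of $\bL^{-1}$. This is the crucial structural point: $\bL$ is not assumed invertible, so Lemma~\ref{lem:block-det} cannot be applied with $\bL$ as the pivot block. Instead, all elimination will be driven by the invertible block coming from $\bV$.

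First I would complete $\bQ$ to an orthonormal basis of $\R^n$: fix an orthonormal basis $\bP \in \R^{n\times p}$ of $\mspan(\bV)$, so that $\bU = (\bP \ \bQ)$ is an $n\times n$ orthogonal matrix. Setting $\bR = \bP^\top\bV$, the projection identity $\bP\bP^\top\bV = \bV$ gives $\bV = \bP\bR$, and since $\bV$ has full column rank $\bR$ is invertible, with $\bV^\top\bV = \bR^\top\bP^\top\bP\bR = \bR^\top\bR$. Conjugating the saddle-point matrix by $\diag(\bU,\bI_p)$ leaves its determinant unchanged, because $(\det\bU)^2 = 1$. Using $\bQ^\top\bV = \matr{0}$ and $\bP^\top\bV = \bR$, and expressing $\bU^\top\bL\bU$ in its $\bP/\bQ$ blocks, the conjugated matrix becomes the $3\times 3$ block matrix (block sizes $p$, $n-p$, $p$)
\[
\begin{pmatrix}
\bP^\top\bL\bP & \bP^\top\bL\bQ & \bR \\
\bQ^\top\bL\bP & \bQ^\top\bL\bQ & \matr{0} \\
\bR^\top & \matr{0} & \matr{0}
\end{pmatrix}.
\]

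Next, exploiting that $\bR$ is invertible, I would use the last block row $(\bR^\top,\matr{0},\matr{0})$ to clear the entire first block column of the top two block rows, and symmetrically use the last block column to clear $\bP^\top\bL\bQ$ from the first block row. These are standard determinant-preserving block operations, and they annihilate every block coupling the $\bP$-part to $\bL$, leaving only $\bR$, $\bR^\top$, and $\bQ^\top\bL\bQ$. The result is the block ``anti-diagonal'' matrix with $\bR$ in position $(1,3)$, $\bQ^\top\bL\bQ$ in position $(2,2)$, and $\bR^\top$ in position $(3,1)$, all other blocks zero.

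Finally I would interchange the first and third block rows, reaching the block-diagonal form $\diag(\bR^\top,\ \bQ^\top\bL\bQ,\ \bR)$, whose determinant is $\det(\bR^\top\bR)\det(\bQ^\top\bL\bQ) = \det(\bV^\top\bV)\det(\bQ^\top\bL\bQ)$. The swap is realized by the $p$ disjoint transpositions sending row $i$ to row $n+i$ for $i = 1,\dots,p$ (the middle block is untouched), so it contributes the sign $(-1)^p$, giving exactly the claimed formula. The step I would check most carefully is precisely this sign bookkeeping: confirming that interchanging the two outer size-$p$ blocks while holding the size-$(n-p)$ middle block fixed contributes $(-1)^p$, and is not contaminated by a factor depending on $n-p$. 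This is the only delicate point; everything else is routine block linear algebra.
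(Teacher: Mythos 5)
Your proof is correct: the conjugation by $\diag(\bU,\bI_p)$ is determinant-preserving since $(\det\bU)^2=1$, the block eliminations pivot only on the invertible $\bR$ (so the argument is valid even when $\bL$ is singular, which is the whole point), and the sign count is right — the swap of the two outer $p\times p$ blocks is a product of $p$ disjoint transpositions, giving exactly $(-1)^p$ with no dependence on $n-p$. Note that the paper itself gives no proof, deferring to Lemma 3.10 and Appendix A of \cite{BarthelmeUsevich:KernelsFlatLimit}; your argument is essentially the standard one used there (orthogonal change of basis splitting $\mspan(\bV)$ from $\orth(\bV)$, then reduction to a block anti-diagonal form), so there is nothing to flag.
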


In the next lemma, we use $[t^r] g(t)$ to denote the coefficient corresponding to $t^r$ in the power series $g$. 
For instance, if $g(t) = 1-t^2+2t^3$, then $[t^0]g(t) = 1$ and $[t^3]g(t) = 2$. 
\begin{lemma}[{\cite[Lemma 3.11]{BarthelmeUsevich:KernelsFlatLimit}}]\label{lem:det-coef-polynomial}
Let $\bL \in \R^{n \times n}$ and $\bV \in \R^{n\times p}$.
Then:
\[
[t^p]\det( \bL +t \bV \bV^\top ) =  (-1)^{p} \det\begin{pmatrix}
      \matr{L} &  \matr{V} \\
      \matr{V}^{\T} & 0 
    \end{pmatrix}. 
\] 
\end{lemma}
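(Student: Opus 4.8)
The plan is to reduce the whole statement to a single application of the Schur complement, by inserting a scalar parameter into the zero block of the saddle-point matrix and then comparing two polynomials in that parameter.

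First I would pin down the basic structure of the left-hand side. Since $\bV\bV^\top$ has rank at most $p$, expanding $\det(\bL+t\bV\bV^\top)$ by multilinearity in the columns shows that every term of order $t^k$ with $k>p$ requires $k$ columns drawn from $\bV\bV^\top$, which are then linearly dependent; hence $\det(\bL+t\bV\bV^\top)$ is a polynomial in $t$ of degree at most $p$, which I write as $\det(\bL+t\bV\bV^\top)=\sum_{k=0}^{p} c_k t^k$. The object to identify is therefore $c_p=[t^p]\det(\bL+t\bV\bV^\top)$.

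Next I would introduce the parametrised matrix $\begin{pmatrix}\bL & \bV\\ \bV^\top & -u\bI\end{pmatrix}$ for a scalar $u$. For $u\neq 0$ the lower-right block $-u\bI$ is invertible, so the Schur-complement version of the block-determinant identity taken with respect to the lower-right block (the standard companion of Lemma~\ref{lem:block-det}) gives $\det\begin{pmatrix}\bL & \bV\\ \bV^\top & -u\bI\end{pmatrix}=\det(-u\bI)\,\det(\bL+\tfrac{1}{u}\bV\bV^\top)=(-u)^p\det(\bL+\tfrac1u\bV\bV^\top)$. Substituting the expansion with $t=1/u$ turns the right-hand side into $(-u)^p\sum_{k=0}^p c_k u^{-k}=(-1)^p\sum_{k=0}^p c_k u^{\,p-k}$. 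Both sides are now polynomials in $u$ (the left one because the entries of the block matrix are affine in $u$, the right one because $p-k\ge 0$), and they agree for all $u\neq 0$, hence identically. Evaluating at $u=0$ annihilates every term with $p-k>0$ on the right, leaving $(-1)^p c_p$, while the left side collapses to the saddle-point determinant with the zero block. This yields $\det\begin{pmatrix}\bL & \bV\\ \bV^\top & 0\end{pmatrix}=(-1)^p c_p$, i.e. exactly the claim.

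The only delicate point is the bookkeeping of the powers of $u$ together with the degree bound, which is what guarantees that no negative powers appear and that setting $u=0$ is legitimate; there is no analytic subtlety since everything in sight is polynomial. As a cross-check I would also note an alternative route that avoids the lower-right Schur complement and instead uses the two lemmas exactly as stated: when $\bL$ is invertible, Lemma~\ref{lem:low-rank-update-det} with $\bW=t\bI$ gives $\det(\bL+t\bV\bV^\top)=\det(\bL)\det(\bI+t\bV^\top\bL^{-1}\bV)$, whose $t^p$-coefficient is $\det(\bL)\det(\bV^\top\bL^{-1}\bV)$, while Lemma~\ref{lem:block-det} gives $\det\begin{pmatrix}\bL & \bV\\ \bV^\top & 0\end{pmatrix}=(-1)^p\det(\bL)\det(\bV^\top\bL^{-1}\bV)$; the identity then holds for invertible $\bL$ and extends to all $\bL$ by density, since both sides are polynomial in the entries of $\bL$.
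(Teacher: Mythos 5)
Your proof is correct. One thing to be aware of: this paper does not actually contain its own proof of Lemma~\ref{lem:det-coef-polynomial} --- it imports the statement from \cite{BarthelmeUsevich:KernelsFlatLimit} and defers to Appendix~A there --- so there is no in-paper argument to match step by step; your proposal has to stand on its own, and it does. The degree bound via multilinearity in the columns is right (any $k>p$ columns drawn from $t\bV\bV^\top$ are linearly dependent), the Schur complement of the lower-right block gives $\det\bigl(\begin{smallmatrix}\bL & \bV\\ \bV^\top & -u\bI_p\end{smallmatrix}\bigr)=(-u)^p\det(\bL+\tfrac1u\bV\bV^\top)=(-1)^p\sum_{k=0}^p c_k\,u^{p-k}$ for $u\neq0$, and since both sides are polynomials in $u$ agreeing on $u\neq 0$ they agree identically, so evaluating at $u=0$ is legitimate and yields $\det\bigl(\begin{smallmatrix}\bL & \bV\\ \bV^\top & 0\end{smallmatrix}\bigr)=(-1)^p c_p$, which is the claim. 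A pleasant byproduct of your structure is that the first step is precisely the degree bound recorded in Remark~\ref{rem:det-coef-degree}, which the paper points out is missing from the original statement and must otherwise be extracted by inspecting the external proof; your argument delivers the lemma and the remark in one pass. Your cross-check is also sound: with $\bW=t\bI_p$, Lemma~\ref{lem:low-rank-update-det} gives $\det(\bL+t\bV\bV^\top)=\det(\bL)\det(\bI+t\bV^\top\bL^{-1}\bV)$, whose $t^p$-coefficient is $\det(\bL)\det(\bV^\top\bL^{-1}\bV)$, matching $(-1)^p$ times the saddle-point determinant computed by Lemma~\ref{lem:block-det}; the extension from invertible $\bL$ to general $\bL$ by density is valid because both sides are polynomial in the entries of $\bL$.
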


\begin{remark}\label{rem:det-coef-degree}
The polynomial  $g(t) = det( \bL +t \bV \bV^\top )$ is of degree at most $p$, i.e., lemma~\ref{lem:det-coef-polynomial} gives the coefficient for the highest possible power of $t$.
While this remark is missing in the original statement of
lemma~\ref{lem:det-coef-polynomial} (see \cite[Lemma
3.11]{BarthelmeUsevich:KernelsFlatLimit}), it can be easily verified by
inspecting the proof of the lemma in \cite[Appendix
A]{BarthelmeUsevich:KernelsFlatLimit}.
\end{remark}

\subsection{Determinantal processes}
\label{sec:def-DPPs}
\subsubsection{DPPs}
Let $\Omega=\{ \vect{x}_1, \ldots, \vect{x}_n\} \subset \R^d$ be a collection of
vectors called the \emph{ground set}. A finite point process $\X$ is a random
subset $\X \subseteq \Omega$. Abusing notation, we sometimes use $\X$ to
designate the indices of the items, rather than the items themselves. Which one we mean should be clear from context.

\begin{definition}[Determinantal Point Process]\label{def:dpp}
	Let $\bK \in \R^{n \times n}$ be
	a positive semi-definite matrix verifying $\bm{0}\preceq \bK \preceq \bm{I}$. 
	In this context, $\bK$  is called a marginal kernel. Then, $\X$ is a DPP with
	marginal kernel $\bK$ if
	\begin{equation}
	\label{eq:def-dpp}
	\forall A\subseteq\Omega\qquad\Proba(A\subseteq\mathcal{X}) = \det \bK_A,
	\end{equation}
	where by convention, $\det \bK_{\varnothing} = 1$. 
\end{definition}
This definition is the historical one \cite{Macchi:CoincidenceApproach} and
determines what we will refer to as the \emph{class of DPPs}. However,
manipulating inclusion probabilities rather than the joint probability
distribution itself is often cumbersome. This usually leads authors to consider
a slightly less general class of DPPs: the L-ensembles \cite{borodin2005eynard}. 
\begin{definition}[L-ensemble]\label{def:dpp_via_L}
	Let $\bL \in \R^{n \times n}$ designate
	a positive semi-definite matrix. An L-ensemble based on $\bL$ is a point process $\X$ defined as
	\begin{equation}
	\label{eq:def-dpp_via_L}
	\Proba(\X=X) = \frac{\det \bL_X}{Z},
	\end{equation}
	where by convention, $\det \bL_{\varnothing} = 1$. Thus: $\Proba(\X=\varnothing) =1/Z>0$. 
\end{definition}
In Eq. \eqref{eq:def-dpp_via_L}, \fin $Z = \sum_{X \subseteq \Omega} \det \bL_X$ is a normalisation constant and can be shown~\cite{kulesza2012determinantal} to equal
$\det(\bI + \bL)$. 

L-ensembles are indeed a subclass of DPPs:
\begin{lemma}\label{lem:LisDPP}
	An L-ensemble based on the positive semi-definite matrix $\bL$ is a DPP. It is noted  $\X\sim DPP(\bL)$ and its marginal kernel verifies
	\begin{align}
	\label{eq:connection_KL}
	\bK=\bL(\bI+\bL)^{-1}.
	\end{align}
\end{lemma}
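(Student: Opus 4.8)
The plan is to verify the inclusion probability formula \eqref{eq:def-dpp} directly, showing that the marginal kernel $\bK = \bL(\bI+\bL)^{-1}$ reproduces the correct inclusion probabilities $\Proba(A \subseteq \X) = \det \bK_A$ for every $A \subseteq \Omega$. The starting point is the definition of the L-ensemble: $\Proba(\X = X) = \det \bL_X / Z$ with $Z = \det(\bI + \bL)$. First I would write the inclusion probability as a sum over all supersets of $A$,
\begin{equation}
\Proba(A \subseteq \X) = \sum_{X : A \subseteq X \subseteq \Omega} \frac{\det \bL_X}{\det(\bI + \bL)}.
\end{equation}
The goal is to show this equals $\det \bK_A$. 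Since $\bL$ is positive semi-definite and $\bK = \bL(\bI+\bL)^{-1}$, note that $\bK$ is well-defined (the matrix $\bI + \bL$ is always invertible because $\bL \succeq \bm{0}$), symmetric, and satisfies $\bm{0} \preceq \bK \preceq \bI$, so the claimed marginal kernel is legitimate.

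The key algebraic identity I would exploit is the complementary relation $\bI - \bK = (\bI + \bL)^{-1}$, which follows immediately from $\bI - \bL(\bI+\bL)^{-1} = (\bI+\bL-\bL)(\bI+\bL)^{-1}$. Rather than summing supersets term by term, I would reformulate the inclusion probability using the complement. A clean route is to first establish the generating-function identity
\begin{equation}
\sum_{X \subseteq \Omega} \det \bL_X \, \mathbf{1}[A \subseteq X] = \det(\bI + \bL) \det \bK_A,
\end{equation}
equivalently $\sum_{A \subseteq X} \det \bL_X = \det(\bI+\bL)\det\bK_A$. One standard way to produce the factor $\mathbf{1}[A \subseteq X]$ is to use the minor expansion: fixing the indices in $A$ to be present and summing over the remaining free indices corresponds, via the Cauchy-Binet-type identity behind $Z = \det(\bI + \bL)$, to a determinant over the complement of $A$. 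Concretely, I would relate $\sum_{A \subseteq X} \det \bL_X$ to $\det\bigl(\bI + \bL\bigr)$ evaluated with the rows and columns of $\bI$ indexed by $A$ suppressed; using the block/Schur identity (Lemma~\ref{lem:block-det}) on the partition $\Omega = A \cup A^c$ then expresses this suppressed determinant in terms of a Schur complement that factors as $\det(\bI+\bL)$ times a minor of $\bL(\bI+\bL)^{-1} = \bK$.

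The main obstacle will be the combinatorial bookkeeping that converts the superset sum $\sum_{A \subseteq X}\det\bL_X$ into a single determinant. The cleanest execution is to observe that $\sum_{A \subseteq X \subseteq \Omega}\det\bL_X$ equals $\det(\bI_{A^c} + \bL)$, where $\bI_{A^c}$ is the diagonal matrix with ones on the diagonal entries indexed by $A^c$ and zeros on those in $A$: this is because expanding $\det(\bI_{A^c}+\bL)$ by multilinearity over the $A^c$ diagonal entries selects exactly the principal minors $\det\bL_X$ with $A \subseteq X$. Then I would apply the block-determinant Lemma~\ref{lem:block-det} with $\Omega$ partitioned as $A^c$ (where the added identity sits) and $A$ (where it does not), factoring $\det(\bI_{A^c}+\bL) = \det\bigl((\bI+\bL)_{A^c}\bigr)\det\bigl(\bL_A - \bL_{A,A^c}((\bI+\bL)_{A^c})^{-1}\bL_{A^c,A}\bigr)$. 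Finally, I would identify the Schur complement with $\det(\bI+\bL)\,\det\bK_A$ by invoking the cofactor/inverse formula relating the $A$-block of $\bL(\bI+\bL)^{-1}$ to exactly this Schur complement divided by $\det\bigl((\bI+\bL)_{A^c}\bigr)$, completing the proof that $\Proba(A \subseteq \X) = \det\bK_A$. The delicate point throughout is keeping the index partitions and Schur complements consistent; the positive semi-definiteness of $\bL$ guarantees every inverse and minor appearing is well-defined.
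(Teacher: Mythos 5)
Your proof is correct, but it follows a genuinely different route from the paper's. The paper does not prove this lemma inline: it cites Thm~2.2 of \cite{kulesza2012determinantal} and points to Appendix~\ref{sec:marginal-kernel-ppDPPs-proof}, where the marginal kernel is derived \emph{spectrally}: diagonalise $\bL=\bU\matr{\Lambda}\bU^\top$, invoke the mixture representation (eigenvector $i$ included independently with probability $\pi_i=\lambda_i/(1+\lambda_i)$), and compute inclusion probabilities by two applications of Cauchy--Binet, arriving at $\bK=\bU\diag(\pi_1,\ldots,\pi_n)\bU^\top=\bL(\bI+\bL)^{-1}$. You instead verify $\Proba(A\subseteq\X)=\det\bK_A$ by direct determinantal algebra, and both of your key steps check out: (i) multilinearity of the determinant in the columns gives $\sum_{A\subseteq X}\det\bL_X=\det(\bI_{A^c}+\bL)$, and the special case $A=\varnothing$ even recovers the normalisation $Z=\det(\bI+\bL)$ that you assumed; (ii) writing $M=\bI+\bL$ and $T=\bL_A-\bL_{A,A^c}(M_{A^c})^{-1}\bL_{A^c,A}$, lemma~\ref{lem:block-det} gives $\det(\bI_{A^c}+\bL)=\det(M_{A^c})\det T$, while the Schur complement of $M_{A^c}$ in $M$ is $S=\bI+T$, so that $(M^{-1})_A=S^{-1}$ and hence $\bK_A=\bI-(M^{-1})_A=T(\bI+T)^{-1}$, giving $\det(\bI+\bL)\det\bK_A=\det(M_{A^c})\det(S)\det(T)/\det(S)=\det(M_{A^c})\det T$ as required; positive semidefiniteness makes $M\succ 0$, so $M_{A^c}$ is invertible and every quantity is well defined. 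In terms of what each approach buys: your argument is elementary and self-contained (it is essentially the Kulesza--Taskar proof the paper cites, though they finish more quickly by computing $\det\bigl((\bI_{A^c}+\bL)M^{-1}\bigr)=\det\bigl(\bI-\bI_A(\bI-\bK)\bigr)=\det\bK_A$, avoiding Schur complements and Jacobi's identity altogether), whereas the paper's spectral/mixture derivation is the one that generalises directly to extended L-ensembles and partial-projection DPPs (Theorem~\ref{thm:L-ens_to_K} reuses exactly the same appendix computation), which is why the paper routes the proof through it.
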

\begin{proof}
	See, \emph{e.g.}, Thm 2.2 of~\cite{kulesza2012determinantal}; or the discussion in  Appendix~\ref{sec:marginal-kernel-ppDPPs-proof}.
\end{proof}

L-ensembles are in fact a strict subset of all DPPs:
\begin{lemma}
	\label{lem:DPPsubclass}
	A DPP with marginal kernel $\bK$ is an L-ensemble if and only if $\bK$ verifies $\bm{0}\preceq\bK\prec\bI$ (note the $\prec$ sign, implying that no eigenvalue of $\bK$ is allowed to be equal to one). If $\X$ is a DPP with such a marginal kernel, then $\X\sim DPP(\bL)$, with $\bL$ verifying:
	$$\bL=\bK(\bI-\bK)^{-1}.$$
\end{lemma}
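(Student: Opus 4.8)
The plan is to exploit the scalar correspondence $\lambda \mapsto \lambda/(1+\lambda)$ between the eigenvalues of a kernel matrix and those of the associated marginal kernel, together with the fact that a DPP is completely determined by its marginal kernel. Indeed, the inclusion probabilities $\Proba(A\subseteq\X) = \det \bK_A$ of Definition~\ref{def:dpp}, ranging over all $A \subseteq \Omega$, fix the full joint law by inclusion--exclusion; hence to identify two DPPs it suffices to match their marginal kernels. Throughout I would diagonalise the relevant kernel in an orthonormal eigenbasis, writing it as $\bU\matr{\Lambda}\bU^\top$ with $\bU$ orthogonal and $\matr{\Lambda}$ diagonal, since both directions of the equivalence reduce to a statement about eigenvalues.

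For necessity, suppose $\X$ is an L-ensemble based on a positive semi-definite $\bL$. By Lemma~\ref{lem:LisDPP} its marginal kernel is $\bK = \bL(\bI+\bL)^{-1}$. Writing $\bL = \bU\matr{\Lambda}\bU^\top$ with eigenvalues $\lambda_i \geq 0$, the two matrices are simultaneously diagonalisable and $\bK = \bU \diag\!\big(\lambda_i/(1+\lambda_i)\big)\bU^\top$, so every eigenvalue of $\bK$ lies in $[0,1)$. Thus $\bm{0} \preceq \bK \prec \bI$, and in particular no eigenvalue of $\bK$ equals one.

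For sufficiency, assume $\bm{0} \preceq \bK \prec \bI$. Then $\bI - \bK \succ \bm{0}$ is invertible, so $\bL \eqdef \bK(\bI-\bK)^{-1}$ is well defined; diagonalising shows its eigenvalues are $\mu_i/(1-\mu_i) \geq 0$, so $\bL$ is positive semi-definite and defines a genuine L-ensemble. It remains to check that this L-ensemble has marginal kernel exactly $\bK$. From $\bI + \bL = (\bI-\bK)(\bI-\bK)^{-1} + \bK(\bI-\bK)^{-1} = (\bI-\bK)^{-1}$ we get $(\bI+\bL)^{-1} = \bI - \bK$, whence by Lemma~\ref{lem:LisDPP} the marginal kernel is $\bL(\bI+\bL)^{-1} = \bK(\bI-\bK)^{-1}(\bI-\bK) = \bK$. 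By the uniqueness noted above, the DPP with marginal kernel $\bK$ coincides with $DPP(\bL)$, which is the desired L-ensemble.

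The only genuine subtlety is the uniqueness step used to pass from ``same marginal kernel'' to ``same process'': one must justify that the inclusion probabilities $\det \bK_A$ determine the joint distribution. I would settle this once via M\"obius inversion on the subset lattice, $\Proba(\X = X) = \sum_{X \subseteq A} (-1)^{|A \setminus X|}\,\Proba(A \subseteq \X)$; all remaining steps are elementary spectral calculus built on the two reciprocal scalar maps $\lambda \mapsto \lambda/(1+\lambda)$ and $\mu \mapsto \mu/(1-\mu)$, which are mutually inverse bijections between $[0,\infty)$ and $[0,1)$.
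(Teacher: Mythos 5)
Your proof is correct, but your route through the necessity direction differs from the paper's. For sufficiency you do essentially what the paper does—invert Eq.~\eqref{eq:connection_KL} to get $\bL=\bK(\bI-\bK)^{-1}$—except that you spell out what the paper leaves implicit: that $\bL$ is indeed positive semi-definite (eigenvalues $\mu_i/(1-\mu_i)\geq 0$), that $(\bI+\bL)^{-1}=\bI-\bK$ so the resulting L-ensemble has marginal kernel exactly $\bK$, and that matching marginal kernels suffices to identify the processes, which you justify once via M\"obius inversion. For necessity, however, the paper argues by contraposition using Lemma~\ref{lemma:bernoulli}: if $\bK$ has an eigenvalue equal to one, then $|\X|\geq 1$ almost surely, so $\Proba(\X=\varnothing)=0$, contradicting the defining property $\Proba(\X=\varnothing)=1/Z>0$ of L-ensembles. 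You instead argue directly: an L-ensemble has marginal kernel $\bL(\bI+\bL)^{-1}$, whose eigenvalues $\lambda_i/(1+\lambda_i)$ lie in $[0,1)$. Both are valid; the paper's probabilistic route explains \emph{why} an eigenvalue equal to one is an obstruction (it forces a deterministic component in the size, prefiguring the partial-projection viewpoint developed later), while yours is purely spectral and self-contained. One caveat your direct argument glosses over: Definition~\ref{def:dpp} only pins down the marginal kernel through its principal minors, so strictly the matrix is determined only up to signed diagonal conjugation $\bK\mapsto \matr{D}\bK\matr{D}$; since such conjugation preserves the spectrum (indeed the principal minors determine all elementary symmetric polynomials, hence the characteristic polynomial), your eigenvalue conclusion survives, but the paper's contrapositive sidesteps this identification issue entirely by reasoning about probabilities rather than matrices.
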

\begin{proof}
	($\Leftarrow$) If $\bK$ does not contain any eigenvalue equal to 1, then Eq.~\eqref{eq:connection_KL} inverts as $\bL=\bK(\bI-\bK)^{-1}.$ ($\Rightarrow$) We show the contraposition. If $\X$ is a DPP with a marginal kernel $\bK$ containing at least one eigenvalue equal to one, then its size $|\X|$ is necessarily larger than one (see lemma~\ref{lemma:bernoulli}). Thus, it cannot be an L-ensemble (L-ensembles have a non-null probability of sampling $\varnothing$). 
\end{proof}
\begin{remark}
As a consequence, the class of DPPs can be separated in two: the L-ensembles
(all DPPs with marginal kernel verifying $\bm{0}\preceq \bK\prec\bI$), and the rest (all DPPs with marginal kernel whose spectrum contains at least one
eigenvalue equal to one). \end{remark}

In DPPs, the size (cardinal) of $\X$, denoted by $|\X|$, is a random variable. Its
distribution is as follows \cite{Hough:DPPandIndep}:

\begin{lemma}
	\label{lemma:bernoulli}
	Let $\bm{0}\preceq\bK\preceq\bI$ be a marginal kernel with eigenvalues $\mu_1,\ldots, \mu_{n}$. Let $\X$ be a DPP with this marginal kernel. Then, $|\X|$ has the same distribution as $\sum_{i=1}^n B_i$, 
	where $B_i$ is a Bernoulli random variable with expectation
	$\E(B_i)=\mu_i$, and the $B_i$'s are distributed independently.
	In particular, the expected size of the DPP, $\E(|\X|)$, can be directly
	deduced from the above to be
  \begin{equation}
    \label{eq:trace-K}
    \E(|\X|)= \sum \mu_i = \Tr (\bK)
  \end{equation}
\end{lemma}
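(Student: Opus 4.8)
The plan is to show that the two integer-valued random variables $|\X|$ and $N \eqdef \sum_{i=1}^n B_i$ have identical laws by matching all of their \emph{factorial moments}, and then to read off the mean as a special case. Both variables are supported on $\{0,1,\ldots,n\}$ (since $\X\subseteq\Omega$ and $|\Omega|=n$), so it suffices to verify that $\E\!\left[\binom{|\X|}{k}\right] = \E\!\left[\binom{N}{k}\right]$ for every $k\in\{0,\ldots,n\}$. Indeed, the linear map sending a probability mass function on $\{0,\ldots,n\}$ to its vector of factorial moments has coefficient matrix $[\binom{j}{k}]_{k,j}$, which is upper triangular in $j$ with unit diagonal and hence invertible; so equality of all factorial moments forces equality of the distributions.

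First I would compute the factorial moments of $|\X|$ directly from the inclusion-probability definition \eqref{eq:def-dpp}. Writing $\binom{|\X|}{k} = \sum_{A\subseteq\Omega,\,|A|=k} \mathds{1}[A\subseteq\X]$ (the number of $k$-subsets contained in $\X$) and taking expectations gives
\begin{equation*}
  \E\!\left[\binom{|\X|}{k}\right] = \sum_{|A|=k}\Proba(A\subseteq\X) = \sum_{|A|=k}\det\bK_A .
\end{equation*}
The key algebraic identity is that this sum of $k\times k$ principal minors equals the $k$-th elementary symmetric polynomial $e_k(\mu_1,\ldots,\mu_n)$ of the eigenvalues. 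I would obtain it from the eigendecomposition $\bK=\bU\bm{\Lambda}\bU^\top$ (with $\bU$ orthogonal, $\bm{\Lambda}=\diag(\mu_1,\ldots,\mu_n)$) via the Cauchy-Binet corollary: $\det\bK_A = \sum_{|Y|=k}(\det\bU_{A,Y})^2\det\bm{\Lambda}_Y$. Summing over $A$ and using that the inner sum $\sum_{|A|=k}(\det\bU_{A,Y})^2 = \det(\bU_{:,Y}^\top\bU_{:,Y})=1$ (again Cauchy-Binet, plus orthonormality of the columns of $\bU$) collapses the expression to $\sum_{|Y|=k}\prod_{i\in Y}\mu_i = e_k(\mu)$.

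Next I would compute the factorial moments of $N=\sum_i B_i$. Since each $B_i\in\{0,1\}$, one has $\binom{N}{k}=\sum_{|A|=k}\prod_{i\in A}B_i$, and independence of the $B_i$ yields $\E\!\left[\binom{N}{k}\right]=\sum_{|A|=k}\prod_{i\in A}\mu_i = e_k(\mu)$, matching the previous paragraph and establishing $|\X|\stackrel{d}{=}N$. The claim on the mean is then the case $k=1$: $\E(|\X|)=e_1(\mu)=\sum_i\mu_i=\Tr(\bK)$.

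The routine ingredients (the combinatorial identity for $\binom{N}{k}$ and the triangular-inversion argument for moment determinacy) are standard; the one step deserving genuine care is the principal-minor identity $\sum_{|A|=k}\det\bK_A=e_k(\mu)$. The main thing to get right there is the bookkeeping in the double application of Cauchy-Binet—in particular recognizing that the inner sum $\sum_{|A|=k}(\det\bU_{A,Y})^2$ is itself a Cauchy-Binet expansion of $\det(\bU_{:,Y}^\top\bU_{:,Y})$, which collapses to $1$ precisely because $\bU$ is orthogonal. An alternative to this route would be the classical mixture-of-projection-DPPs representation, but that requires machinery not yet available at this point in the paper, so the factorial-moment argument is the cleaner path given the lemmas already in hand.
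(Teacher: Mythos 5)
Your proof is correct, and it takes a genuinely different route from the paper, which in fact offers no proof at all for this lemma: it is cited as classical from the DPP literature, where the standard argument is either the probability generating function computation $\E\bigl[z^{|\X|}\bigr] = \det\bigl(\bI + (z-1)\bK\bigr) = \prod_{i=1}^n \bigl(1-\mu_i + z\mu_i\bigr)$, or the mixture-of-projection-DPPs representation that the paper only develops later (and which, for eigenvalues equal to $1$, really needs the extended L-ensemble machinery of section~\ref{sec:ppDPPs}). Your factorial-moment argument is sound at every step: the identity $\binom{|\X|}{k} = \sum_{|A|=k}\mathds{1}[A\subseteq\X]$ turns the $k$-th factorial moment into $\sum_{|A|=k}\det\bK_A$ directly from Definition~\ref{def:dpp}; the double application of Cauchy--Binet, first $\det\bK_A = \sum_{|Y|=k}(\det\bU_{A,Y})^2\prod_{i\in Y}\mu_i$ and then $\sum_{|A|=k}(\det\bU_{A,Y})^2 = \det(\bU_{:,Y}^\top\bU_{:,Y}) = 1$, correctly collapses the sum to $e_k(\mu)$ (you are in effect re-deriving Lemma~\ref{lem:esp}, which is legitimate since that lemma is stated after this one in the paper, whereas Lemma~\ref{lem:cauchy-binet} and its corollary come before); and the triangular-inversion argument for moment determinacy is valid because both laws live on the finite set $\{0,\ldots,n\}$ and $[\binom{j}{k}]_{k,j}$ is unitriangular. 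Note that your computation is the generating-function proof in disguise, since $\sum_k t^k e_k(\mu) = \prod_i(1+t\mu_i)$ is exactly $\E[(1+t)^{N}]$; but your packaging buys something concrete here: it works verbatim from the inclusion probabilities, with no case distinction for eigenvalues equal to $0$ or $1$ and no appeal to machinery introduced later in the paper, which, as you correctly observe, makes it the cleaner path at this point in the exposition.
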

\subsubsection{Fixed-size DPPs}
The cardinal of a DPP is thus in general random. Such varying-sized samples are
not practical in many applications (one desires a subset of size 50, not
something of size 50 on average but which may be of size 35 or 56); which led the authors of~\cite{kulesza2011k} to define fixed-size DPPs\footnote{They are often called k-DPPs in the literature, but we prefer ``fixed-size DPPs'' in order not to overload the
	symbol $k$ too much.}
\begin{definition}[Fixed-size Determinantal Point Process]
	\label{def:fsDPP}
	A fixed size DPP of size $m$ is a DPP $\X$ conditioned on $|\X|=m$. 
\end{definition}
A subclass of fixed-size DPPs is the class of fixed-size L-ensembles: 
\begin{definition}[Fixed-size L-ensemble]
	\label{lemma:mDPP_via_L-ens}
  Let $\bm{0}\preceq\bL$ be a positive semi-definite matrix. A fixed-size L-ensemble is a point process $\X$ defined as:
  \begin{equation}
    \label{eq:def-dpp}
  \Proba(\X=X) =    \begin{cases}
 \displaystyle     \frac{\det \bL_{X}}{Z_m} & \mathrm{if} ~|X|=m,  \\
      0 & \mathrm{otherwise.}
    \end{cases} 
  \end{equation}
  where $Z_m$ is the normalisation constant. 
\end{definition}
Using the indicator function $\Ind(\cdot)$, we may rewrite Eq.~\eqref{eq:def-dpp} more compactly as:
\[  \Proba(\X=X) = \frac{\det \bL_{X}}{Z_m} \Ind(|X|=m).\]
\begin{lemma}
	A fixed-size L-ensemble is a fixed-size DPP, and we write it $\X \sim \mDPP{m}(\bL)$. 
\end{lemma}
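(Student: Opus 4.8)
The plan is to realise the fixed-size L-ensemble by \emph{conditioning} the ordinary L-ensemble $DPP(\bL)$ on the event $\{|\X| = m\}$, and then to check that the resulting conditional law coincides with the formula defining the fixed-size L-ensemble. Since Definition~\ref{def:fsDPP} declares a fixed-size DPP to be exactly a DPP conditioned on its size, and Lemma~\ref{lem:LisDPP} already guarantees that $DPP(\bL)$ is a genuine DPP, this immediately places the conditioned process inside the class of fixed-size DPPs; all that then remains is the bookkeeping identifying it with the object from Definition~\ref{lemma:mDPP_via_L-ens}.

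Concretely, I would start from the unconditioned L-ensemble, where $\Proba(\X = X) = \det\bL_X / Z$ with $Z = \det(\bI+\bL)$. For the target size $m$, Bayes' rule gives, for any $X$ with $|X| = m$,
\[
  \Proba(\X = X \mid |\X| = m) = \frac{\Proba(\X = X)}{\Proba(|\X| = m)} = \frac{\det\bL_X / Z}{\frac{1}{Z}\sum_{|X'| = m}\det\bL_{X'}} = \frac{\det\bL_X}{\sum_{|X'| = m}\det\bL_{X'}},
\]
where I have used $\Proba(|\X| = m) = \sum_{|X'|=m}\Proba(\X=X') = \tfrac{1}{Z}\sum_{|X'|=m}\det\bL_{X'}$; for $|X| \neq m$ the conditional probability is zero by construction. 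Setting $Z_m = \sum_{|X'| = m}\det\bL_{X'}$, this is precisely the law in Definition~\ref{lemma:mDPP_via_L-ens}, which proves the claim, and the notation $\X \sim \mDPP{m}(\bL)$ simply records that this fixed-size DPP is generated from the single matrix $\bL$.

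The only point requiring a word of care — and the nearest thing to an obstacle, though a mild one — is the well-definedness of the conditioning, i.e. that $\Proba(|\X| = m) > 0$, equivalently that the normalisation $Z_m = \sum_{|X'| = m}\det\bL_{X'}$ is strictly positive. This is in any case necessary for the fixed-size L-ensemble to be defined at all, and by Lemma~\ref{lemma:bernoulli} it holds exactly when $m$ is an attainable size (that is, $m \le \rank(\bL)$). Under this hypothesis the argument reduces to a direct matching of normalisation constants rather than a substantive computation.
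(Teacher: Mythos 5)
Your proof is correct and is essentially the argument the paper intends: the lemma is left unproved there precisely because, by Definition~\ref{def:fsDPP} together with Lemma~\ref{lem:LisDPP} (and as Remark~\ref{rm:mixture} makes explicit), a fixed-size L-ensemble is the DPP $\X \sim DPP(\bL)$ conditioned on $\{|\X|=m\}$, which is exactly your Bayes computation matching $Z_m = \sum_{|X'|=m}\det\bL_{X'}$. Your added observation that the conditioning is well defined only when $Z_m = e_m(\bL) > 0$, i.e.\ $m \leq \rank(\bL)$, is a correct detail the paper leaves implicit.
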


We use the notation $\X \sim \mDPP{m}(\bL)$ to distinguish from (standard)
random-size L-ensembles.

It is important to understand that, in general, fixed-size DPPs are not DPPs, with the exception of projection DPPs (see Sec.~\ref{sec:diag-and-proj}). In particular, whereas all DPPs have a marginal kernel, fixed-size DPPs (again with the exception of projection DPPs) do not have marginal kernels: there does not exist a matrix whose principal minors are the marginal probabilities. The question of inclusion probabilities in fixed-size
DPPs is treated at length in \cite{Barthelme:AsEqFixedSizeDPP}.

The constant $Z_m = \sum_{\X,|\X|=m} \det \bL_\X$ in Eq.~\ref{eq:def-dpp} is a normalisation constant and one can show that it 
equals the $m$-th ``elementary symmetric polynomial'' of $\bL$, a quantity that
depends only on the spectrum of $\bL$, and plays an important role in the
theory of DPPs. 
\begin{lemma}[{\cite[Theorem 1.2.12]{horn1990matrix}}]
  \label{lem:esp}
  Let $\bL \in \R^{n \times n}$ be a  matrix with eigenvalues $\lambda_1,\ldots,\lambda_n$. The
  $m$-th elementary symmetric polynomial of $\bL$ is defined as:
  \begin{equation}
    \label{eq:esp}
    e_m(\bL) := \sum_{|X|=m} \prod_{i \in X} \lambda_i,
  \end{equation}
  i.e., $e_0(\bL) = 1$, $e_1(\bL) = \sum_{i } \lambda_i = \Tr(\bL) $, $e_2(\bL) = \sum_{i    < j} \lambda_i  \lambda_j, \; \ldots, e_n(\bL) = \det(\bL)$. 
  Then:
   \begin{equation} 
    \label{eq:esp-det}
  Z_m =   \sum_{X,|X|=m} \det \bL_X = e_m(\bL).
  \end{equation}
\end{lemma}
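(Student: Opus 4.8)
The plan is to recognize $e_m(\bL)$ through the characteristic polynomial of $\bL$ and to match it, coefficient by coefficient, against a multilinear expansion of the same determinant that produces principal minors. Concretely, I would work with $\det(t\bI + \bL)$, which admits two evaluations. On one hand, since the eigenvalues of $\bL$ are $\lambda_1,\dots,\lambda_n$, we have $\det(t\bI+\bL) = \prod_{i=1}^n (t+\lambda_i) = \sum_{m=0}^n e_m(\bL)\, t^{n-m}$ directly from the definition \eqref{eq:esp} of the elementary symmetric polynomial. On the other hand, I would expand the very same determinant by exploiting multilinearity of $\det$ in the columns.

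For the second evaluation, write the $j$-th column of $t\bI + \bL$ as $t\vect{e}_j + \bL_{:,j}$, where $\vect{e}_j$ is the $j$-th standard basis vector. Expanding each column into its two summands and collecting terms by the set $Y$ of columns for which the $\bL_{:,j}$ summand is chosen gives
\[
\det(t\bI+\bL) = \sum_{Y \subseteq \{1,\dots,n\}} t^{\,n-|Y|}\, \det \bM^{(Y)},
\]
where $\bM^{(Y)}$ has column $j$ equal to $\bL_{:,j}$ for $j \in Y$ and equal to $\vect{e}_j$ for $j \notin Y$. The key computation is that $\det \bM^{(Y)} = \det \bL_Y$: performing Laplace expansion along each column $j \notin Y$ (a standard basis vector with its single nonzero entry in row $j$) successively deletes row $j$ and column $j$, and the accompanying signs cancel in symmetric pairs, so what remains is exactly the principal submatrix $\bL_{Y,Y} = \bL_Y$. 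Grouping the sum by $m = |Y|$ then yields $\det(t\bI+\bL) = \sum_{m=0}^n t^{n-m} \big( \sum_{|Y|=m} \det \bL_Y \big)$. Matching the coefficient of $t^{n-m}$ against the product form gives $\sum_{|Y|=m}\det\bL_Y = e_m(\bL)$, which is the claim $Z_m = e_m(\bL)$.

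I expect the main obstacle to be the bookkeeping in the key computation $\det \bM^{(Y)} = \det \bL_Y$, in particular verifying that the signs produced by the repeated Laplace expansions cancel so that the principal minor appears with a $+$ sign; this is routine but must be done carefully (one can also argue it via the Leibniz formula, noting that only permutations fixing every index outside $Y$ contribute, so the sign is that of a permutation supported on $Y$).

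Alternatively, since the DPP setting makes $\bL$ symmetric positive semi-definite, I could instead use the Cauchy-Binet machinery just established. Writing $\bL = \bU \matr{\Lambda} \bU^\top$ with $\bU$ orthogonal and $\matr{\Lambda} = \diag(\lambda_1,\dots,\lambda_n)$, each principal minor factors as $\bL_X = \bU_{X,:}\,\matr{\Lambda}\,\bU_{X,:}^\top$, so the Corollary to Lemma~\ref{lem:cauchy-binet} gives $\det\bL_X = \sum_{|Y|=m} (\det \bU_{X,Y})^2 \prod_{i\in Y}\lambda_i$. Summing over $X$ and swapping the order of summation, the inner sum $\sum_{|X|=m}(\det\bU_{X,Y})^2$ equals $\det(\bU_{:,Y}^\top \bU_{:,Y}) = \det \bI_m = 1$ by a second application of Cauchy-Binet (using that the columns of $\bU$ are orthonormal). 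This collapses the expression to $\sum_{|Y|=m}\prod_{i\in Y}\lambda_i = e_m(\bL)$. In this route the main obstacle is recognizing that orthonormality of the columns of $\bU$ forces the inner Cauchy-Binet sum to collapse to $1$.
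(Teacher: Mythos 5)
Your proposal is correct. The paper does not actually prove this lemma at all---it is imported by citation from \cite[Theorem 1.2.12]{horn1990matrix}---and your first route (evaluating $\det(t\bI+\bL)$ once as $\prod_{i=1}^n(t+\lambda_i)=\sum_m e_m(\bL)\,t^{n-m}$ and once, by multilinearity in the columns, as $\sum_Y t^{n-|Y|}\det\bL_Y$) is essentially the classical argument behind that citation; your Leibniz-formula remark is indeed the cleanest way to dispose of the sign bookkeeping, since only permutations fixing every index outside $Y$ survive and each contributes its sign as a permutation of $Y$ itself, so the principal minor enters with coefficient $+1$. Your second route is genuinely different and has the merit of using only machinery the paper has already set up (Lemma~\ref{lem:cauchy-binet} and its corollary), with the pleasant collapse $\sum_{|X|=m}(\det\bU_{X,Y})^2=\det\bigl(\bU_{:,Y}^\top\bU_{:,Y}\bigr)=\det\bI_m=1$ forced by orthonormality of the columns of $\bU$; but note that it is strictly less general than the statement: it requires $\bL=\bU\matr{\Lambda}\bU^\top$ with $\bU$ orthogonal, i.e.\ symmetric $\bL$, whereas the lemma as written holds for an arbitrary square matrix whose eigenvalues $\lambda_1,\ldots,\lambda_n$ are counted with algebraic multiplicity (possibly complex). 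Since every invocation of the lemma in the paper concerns symmetric positive semi-definite matrices, this restriction is harmless for the paper's purposes, but the first route is the one that proves the lemma as stated.
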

Since $e_m(\bL)$ is the sum of all the principal minors of fixed size $m$, we immediately
obtain the following corollary on the distribution of the size of an L-ensemble:
\begin{corollary}
  The probability that $\X \sim DPP(\bL)$ has size $m$ is given by:
  \begin{equation}
    \label{eq:distr-size-dpp}
    p(|\X|=m) = \frac{e_m(\bL)}{e_0(\bL)+e_1(\bL)+\ldots+e_n(\bL)}.
  \end{equation}
\end{corollary}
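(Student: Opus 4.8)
The plan is to reduce the statement directly to two ingredients already established in the excerpt: Lemma~\ref{lem:esp}, which evaluates the sum of fixed-size principal minors, and the identity $Z = \det(\bI + \bL)$ quoted immediately after Definition~\ref{def:dpp_via_L}. First I would unfold the definition of the size distribution. Since $\X \sim DPP(\bL)$ is an L-ensemble, the event $\{|\X| = m\}$ is the disjoint union of the elementary events $\{\X = X\}$ over all $X \subseteq \Omega$ with $|X| = m$, so by Definition~\ref{def:dpp_via_L},
\[
p(|\X| = m) = \sum_{X,\,|X|=m} \Proba(\X = X) = \frac{1}{Z} \sum_{X,\,|X|=m} \det \bL_X.
\]
By Eq.~\eqref{eq:esp-det} of Lemma~\ref{lem:esp}, the inner sum is exactly the $m$-th elementary symmetric polynomial $e_m(\bL)$, which settles the numerator.

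It then remains to identify the normalisation constant $Z$ with $e_0(\bL) + e_1(\bL) + \cdots + e_n(\bL)$. Here I would invoke $Z = \det(\bI + \bL)$ and pass to the spectrum of $\bL$. Writing the eigenvalues of $\bL$ as $\lambda_1, \ldots, \lambda_n$, the eigenvalues of $\bI + \bL$ are the $1 + \lambda_i$, so
\[
Z = \det(\bI + \bL) = \prod_{i=1}^{n} (1 + \lambda_i).
\]
Expanding this product over all choices of picking either the factor $1$ or the factor $\lambda_i$ from each term groups the monomials by the size of the selected index set, yielding $\prod_{i=1}^n (1+\lambda_i) = \sum_{X \subseteq \{1,\ldots,n\}} \prod_{i \in X} \lambda_i = \sum_{m=0}^{n} e_m(\bL)$, which is precisely the stated denominator. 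Combining this with the numerator computed above gives the claimed formula.

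This corollary is essentially immediate once Lemma~\ref{lem:esp} and the value of $Z$ are in hand, so there is no substantial obstacle; the only step requiring any care is the elementary-symmetric-polynomial expansion of $\prod_i (1+\lambda_i)$, and even that is the standard generating-function identity for the $e_m$. One could alternatively avoid spectral language entirely by expanding $\det(\bI + \bL)$ directly via the Cauchy-Binet-type cofactor expansion into a sum of principal minors of all sizes and applying Eq.~\eqref{eq:esp-det} termwise, which reaches $\sum_{m} e_m(\bL)$ without ever diagonalising $\bL$; I would mention this as a remark but carry out the eigenvalue version as the cleaner argument.
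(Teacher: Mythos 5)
Your proof is correct and takes essentially the same route as the paper, whose proof is the one-line observation preceding the corollary: sum $\Proba(\X=X)$ over sets of size $m$ and apply Lemma~\ref{lem:esp} to get the numerator. The only (harmless) difference is your denominator, which detours through $Z=\det(\bI+\bL)=\prod_i(1+\lambda_i)$ and the generating-function identity; grouping the defining sum $Z=\sum_{X}\det\bL_X$ by cardinality and applying Eq.~\eqref{eq:esp-det} at each size yields $\sum_{m=0}^n e_m(\bL)$ directly, with no spectral argument needed.
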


\begin{remark}\label{rm:mixture}
  Since a fixed-size L-ensemble is just an L-ensemble conditioned on size, an L-ensemble may also be viewed as a mixture of fixed-size L-ensembles. The size $m$ can be drawn according to its marginal
  distribution (Eq.~\eqref{eq:distr-size-dpp}), and conditional on $|\X|=m$, the fixed-size L-ensemble can be sampled.
\end{remark}

\subsubsection{Two useful special cases}
\label{sec:diag-and-proj}

There are two special cases of (fixed-size) DPPs that are useful to
study on their own, both from a practical and theoretical viewpoint. These are
the DPPs with \emph{diagonal} kernels and those with \emph{projection} kernels. 

As it will be shown in section~\ref{sec:mixture-representation}, these two examples are the key components for sampling any DPP using the mixture representation.

\paragraph{Diagonal kernels.} Diagonal L-ensembles are in a way the most basic kind of DPPs 
(although the fixed-size case is surprisingly intricate).

\begin{lemma}
  \label{lem:diagonal-dpp-bernoulli}
  An L-ensemble based on a diagonal positive semi-definite matrix $\bL$, $\Y \sim DPP(\matr{\Lambda})$ with $\matr{\Lambda}=\diag(\lambda_1\ldots,\lambda_n)$, 
  is a Bernoulli process: each event $i \in \Y$ is independent and occurs with
  probability $\pi_i=\frac{\lambda_i}{1+\lambda_i}$.
\end{lemma}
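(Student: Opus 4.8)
The plan is to show that for a diagonal L-ensemble the probability mass function factorizes across coordinates, which is exactly the signature of independent Bernoulli trials. The key observation is that when $\matr{\Lambda} = \diag(\lambda_1, \ldots, \lambda_n)$ is diagonal, every principal submatrix $\matr{\Lambda}_X$ is itself diagonal, so its determinant is simply the product of the selected diagonal entries: $\det \matr{\Lambda}_X = \prod_{i \in X} \lambda_i$. Plugging this into the L-ensemble definition (Eq.~\eqref{eq:def-dpp_via_L}) and using the fact, recalled just after that definition, that the normalization constant is $Z = \det(\bI + \matr{\Lambda}) = \prod_{i=1}^n (1 + \lambda_i)$, I would write
\begin{equation*}
  \Proba(\Y = X) = \frac{\prod_{i \in X} \lambda_i}{\prod_{i=1}^n (1+\lambda_i)}
  = \prod_{i \in X} \frac{\lambda_i}{1+\lambda_i} \prod_{i \notin X} \frac{1}{1+\lambda_i}.
\end{equation*}

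The second step is to recognize the right-hand side as the joint law of independent Bernoulli variables. Setting $\pi_i = \frac{\lambda_i}{1+\lambda_i}$, so that $1 - \pi_i = \frac{1}{1+\lambda_i}$, the displayed product becomes $\prod_{i \in X} \pi_i \prod_{i \notin X} (1-\pi_i)$, which is precisely $\Proba$ that an independent collection of indicators $(B_i)_{i=1}^n$, with $B_i \sim \mathrm{Bernoulli}(\pi_i)$, realizes the subset $X$ (i.e. $B_i = 1$ for $i \in X$ and $B_i = 0$ otherwise). Since two point processes on the finite ground set $\Omega$ with the same probability assigned to every subset are identical, this identification proves the claim: membership events are independent and $i \in \Y$ occurs with probability $\pi_i$.

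As an alternative (or consistency check) one could instead verify the claim at the level of the marginal kernel via Lemma~\ref{lem:LisDPP}: since $\matr{\Lambda}$ is diagonal, $\bK = \matr{\Lambda}(\bI + \matr{\Lambda})^{-1}$ is also diagonal with entries $K_{ii} = \frac{\lambda_i}{1+\lambda_i} = \pi_i$, and for a diagonal marginal kernel every inclusion probability $\Proba(A \subseteq \Y) = \det \bK_A = \prod_{i \in A} \pi_i$ factorizes, which characterizes independence. I would likely present the direct factorization argument as the main proof since it is self-contained, and mention the marginal-kernel route only in passing.

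I do not anticipate a genuine obstacle here, as the result is essentially a one-line consequence of the diagonal structure; the only point requiring a small amount of care is the bookkeeping of the complementary factors $\prod_{i \notin X}(1+\lambda_i)^{-1}$, which must be produced from the normalizer $Z$ rather than appearing in $\det \matr{\Lambda}_X$, and the verification that two processes agreeing on all atoms $\Proba(\Y = X)$ coincide (rather than, say, only agreeing on inclusion probabilities). Both are routine, so the burden is entirely in stating the factorization cleanly.
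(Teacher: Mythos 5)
Your proposal is correct and follows essentially the same route as the paper: the paper's proof is exactly the factorization $\Proba(\Y=Y)=\prod_{i\in Y}\lambda_i/\prod_{j=1}^n(1+\lambda_j)=\prod_{i\in Y}\pi_i\prod_{j\notin Y}(1-\pi_j)$, identified as the joint law of independent Bernoulli indicators. Your added marginal-kernel consistency check is a fine aside but not part of the paper's argument.
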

\begin{proof}
  \begin{align*}
    \Proba(\Y = Y) &= \frac{\prod_{i\in Y} \lambda_i}{\det(\bI + \matr{\Lambda})} 
               = \frac{\prod_{i\in Y} \lambda_i}{\prod_{j=1}^n (1+\lambda_j)} = \left(\prod_{i\in Y} \frac{\lambda_i}{1+\lambda_i}\right)\left(\prod_{j \in Y^c}  \frac{1}{1+\lambda_i}\right)\\
               &= \prod_{i=1}^n (\pi_i)^{B_i}(1-\pi_i)^{B_i}
  \end{align*}
  where $B_i$ is the Bernoulli variable indicating $i \in \Y$. 
\end{proof}
\begin{remark}
  For fixed-size L-ensembles this is no longer true: $\Y \sim
  \mDPP{m}(\matr{\Lambda})$ is not a Bernoulli process, as the
  events are no longer independent but indeed negatively associated. To see why,
  note that since the total size is fixed, conditional on $i \in \Y$ other
  points are less likely to be included. 
\end{remark}
\begin{remark}
  $\Y \sim \mDPP{m}(\bI)$ is a uniform sample of size $m$ without replacement. 
\end{remark}
Fixed-size diagonal L-ensembles have been studied at some length in the past, notably in the sampling survey literature. Many important features of these processes were reported in \cite{ChenDL94}.

\paragraph{Projection DPPs.}

Projection DPPs designate DPPs formed from projection matrices.
Projection DPPs have many unique features, for instance that of being both DPPs
and fixed-size DPPs. Section \ref{sec:ppDPPs} will introduce a
generalisation called ``partial projection DPPs''. 
The definition of a projection DPP is as follows: 

\begin{definition}[Projection DPP]
	Let $\bU$ be an $n \times m$ matrix with $\bU^\top \bU = \bI_m$. A projection DPP is a DPP with marginal kernel $\bK=\bU\bU^\top$.
\end{definition}

The name ``projection DPP'' comes from the fact that $\bU \bU^\top$ is a projection
matrix (its eigenvalues are 1, with multiplicity $m$, and 0 with multiplicity
$n-m$). As $\bK$'s spectrum contains at least an eigenvalue equal to 1, a projection DPP is \emph{not} an L-ensemble (see lemma~\ref{lem:DPPsubclass}). However, a projection DPP can be equivalently defined as a fixed-size L-ensemble:

\begin{lemma}[See e.g., {\cite[Lemma 1.3]{Barthelme:AsEqFixedSizeDPP}}]
  \label{lem:marginal-kernel-proj}
  Let $\bU$ be an $n \times m$ matrix with $\bU^\top \bU = \bI_m$. A projection DPP with marginal kernel $\bU \bU^\top$ is a fixed-size L-ensemble $\X \sim \mDPP{m}(\bU\bU^\top)$.
\end{lemma}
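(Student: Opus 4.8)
The plan is to show that the two processes assign identical probabilities to every subset $X \subseteq \Omega$, so that they coincide as point processes. The central observation that makes this easy is that a projection DPP has \emph{deterministic} cardinality: its size is exactly $m$ almost surely. Once this is established, the joint probabilities of the DPP can be read off directly from its inclusion probabilities, with no need for inclusion--exclusion.

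First I would pin down the size distribution. The marginal kernel $\bK = \bU\bU^\top$ is an orthogonal projection of rank $m$, so its spectrum consists of the eigenvalue $1$ with multiplicity $m$ and the eigenvalue $0$ with multiplicity $n-m$. By Lemma~\ref{lemma:bernoulli}, $|\X|$ has the law of $\sum_{i=1}^n B_i$ with $B_i \sim \mathrm{Bernoulli}(\mu_i)$ independent; here $m$ of the $\mu_i$ equal $1$ and the rest equal $0$, so each $B_i$ is degenerate and $|\X| = m$ almost surely. Consequently $\Proba(\X = X) = 0$ whenever $|X| \neq m$, matching the indicator $\Ind(|X|=m)$ in Definition~\ref{lemma:mDPP_via_L-ens}.

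Next I would compute the atomic probabilities for $|X| = m$. Because $|\X| = m$ almost surely, the event $\{X \subseteq \X\}$ differs from $\{\X = X\}$ only by the event $\{X \subsetneq \X\}$, which forces $|\X| > m$ and therefore has probability zero. Hence, using the defining inclusion probabilities of the DPP,
\[
\Proba(\X = X) = \Proba(X \subseteq \X) = \det \bK_X = \det(\bU\bU^\top)_X \qquad (|X| = m).
\]
On the fixed-size L-ensemble side, the kernel is $\bL = \bU\bU^\top$, so the numerator $\det \bL_X = \det(\bU\bU^\top)_X$ is literally the same quantity; it only remains to check the normalisation $Z_m$. By Lemma~\ref{lem:esp}, $Z_m = e_m(\bU\bU^\top)$, and since $e_m$ is the sum over size-$m$ eigenvalue subsets of their products, the only nonvanishing term picks out all $m$ unit eigenvalues, giving $e_m(\bU\bU^\top) = 1$. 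Therefore $\mDPP{m}(\bU\bU^\top)$ assigns $\det(\bU\bU^\top)_X$ to each size-$m$ set $X$ and $0$ otherwise, which agrees with the projection DPP on all of $\Omega$.

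The argument has essentially no hard step; the only thing to get right is the size-determinism collapse $\{X \subseteq \X\} = \{\X = X\}$ (valid precisely because $|\X|$ is a.s.\ equal to $m$), which is what lets me bypass the more laborious Möbius/inclusion--exclusion computation $\Proba(\X=X) = \sum_{A \supseteq X} (-1)^{|A|-|X|}\det \bK_A$ that would otherwise be needed for a DPP whose kernel has an eigenvalue equal to $1$. A minor point worth stating explicitly is that $\bU\bU^\top$ is positive semi-definite, so it is a legitimate choice of $\bL$ in Definition~\ref{lemma:mDPP_via_L-ens}.
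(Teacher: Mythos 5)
Your proof is correct, and it is worth noting that the paper itself gives no proof of this lemma at all: it is imported by citation from \cite[Lemma 1.3]{Barthelme:AsEqFixedSizeDPP}, so there is no internal argument to compare against. Your derivation is the natural self-contained one, and every ingredient you use is already on record in the paper: the size-determinism $|\X|=m$ a.s.\ follows from Lemma~\ref{lemma:bernoulli} applied to the $\{0,1\}$ spectrum of $\bU\bU^\top$, the normalisation $Z_m = e_m(\bU\bU^\top)=1$ is Lemma~\ref{lem:esp} (and is stated explicitly in Remark~\ref{rem:normalisation-constant-proj}), and the one genuinely delicate step --- the collapse $\Proba(\X=X)=\Proba(X\subseteq\X)=\det(\bU\bU^\top)_X$ for $|X|=m$, valid because $\{X\subsetneq\X\}\subseteq\{|\X|>m\}$ is a null event --- is exactly the observation that lets you avoid Möbius inversion on the inclusion probabilities. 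Your closing remark is also on point: since $\bK=\bU\bU^\top$ has eigenvalue $1$, the usual correspondence $\bL=\bK(\bI-\bK)^{-1}$ of Lemma~\ref{lem:DPPsubclass} is unavailable, so some such direct atom-by-atom matching is genuinely needed here, and consistency of the two sides is confirmed by $\sum_{|X|=m}\det(\bU\bU^\top)_X = e_m(\bU\bU^\top)=1$.
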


In fact, the only class of fixed-size DPPs that admit a marginal kernel are the projection DPPs. 
The next result states that a projection DPP is what one obtains when
sampling a fixed-size L-ensemble of size $m$ from a positive semi-definite matrix $\bL$ of rank $m \leq n$. 
\begin{lemma}[ See  {\cite[result 1]{Barthelme:AsEqFixedSizeDPP}}.]
  \label{lem:max-rank-dpp}
  Let $\X \sim \mDPP{m}(\bL)$, with $\rank(\bL) = m$, and let $\bU\in \R^{n \times m}$ denote an
  orthonormal basis for $\mspan \bL$.  Then, equivalently, $\X \sim
  \mDPP{m}(\bU \bU^\top)$
\end{lemma}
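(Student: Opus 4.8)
The plan is to show that for every index set $X$ with $|X|=m$ the weights $\det \bL_X$ and $\det\bigl((\bU\bU^\top)_X\bigr)$ differ only by a multiplicative constant that does not depend on $X$; since both processes assign zero mass to sets of size $\neq m$ and are normalised over the size-$m$ sets, this proportionality immediately identifies the two distributions.

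First I would record a convenient factorisation of $\bL$. Because $\bU$ is an orthonormal basis of $\mspan\bL$, the matrix $\bU\bU^\top$ is the orthogonal projector onto $\mspan\bL$, so $\bU\bU^\top\bL = \bL$ and, by symmetry of $\bL$, also $\bL\bU\bU^\top = \bL$. Setting $\bM \eqdef \bU^\top\bL\bU \in \R^{m\times m}$ gives $\bL = \bU\bM\bU^\top$, and $\bM$ is positive definite: it is PSD as a congruence of the PSD matrix $\bL$, and it is nonsingular because $\bU$ has full column rank, so $\rank\bM = \rank(\bU\bM\bU^\top) = \rank\bL = m$.

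Next I would compute the principal minors. For $|X| = m$ the row-restriction $\bU_{X,:}$ is a square $m\times m$ matrix, and restricting $\bU\bM\bU^\top$ to rows and columns in $X$ selects only rows of $\bU$, so $\bL_X = \bU_{X,:}\,\bM\,\bU_{X,:}^\top$ and likewise $(\bU\bU^\top)_X = \bU_{X,:}\,\bU_{X,:}^\top$. Since all three factors are square, multiplicativity of the determinant yields
\[ \det \bL_X = \det(\bM)\,\bigl(\det \bU_{X,:}\bigr)^2 = \det(\bM)\,\det\bigl((\bU\bU^\top)_X\bigr). \]
Thus $\det\bL_X = \det(\bM)\det\bigl((\bU\bU^\top)_X\bigr)$ for every size-$m$ set $X$, with $\det\bM > 0$ a fixed constant. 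Summing over $X$ and using Lemma~\ref{lem:esp} gives $Z_m = \det(\bM)\,e_m(\bU\bU^\top)$, so upon dividing, the constant $\det\bM$ cancels and the mass functions of $\mDPP{m}(\bL)$ and $\mDPP{m}(\bU\bU^\top)$ coincide.

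I do not expect a serious obstacle here; the only points requiring care are verifying the submatrix identity $(\bU\bM\bU^\top)_X = \bU_{X,:}\bM\bU_{X,:}^\top$ (which holds because the outer factors $\bU$, $\bU^\top$ contribute only through their rows indexed by $X$) and checking that the common normaliser is strictly positive. The latter follows because $\bL$ has rank $m$, so at least one size-$m$ principal minor of $\bL$ is nonzero; this guarantees $Z_m > 0$ and $e_m(\bU\bU^\top) > 0$, and shows that $\bU_{X,:}$ is nonsingular for at least one $X$, so that both processes are genuinely well defined.
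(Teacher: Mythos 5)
Your proof is correct and follows essentially the same route as the paper's: factor $\bL$ through the orthonormal basis $\bU$ (the paper writes $\bL = \bU\bM\bM^\top\bU^\top$, you write $\bL = \bU\bM\bU^\top$ with $\bM = \bU^\top\bL\bU$, which is the same idea), observe that $\bU_{X,:}$ is square when $|X|=m$ so the determinant factors as $\det(\bM)(\det\bU_{X,:})^2$, and cancel the $X$-independent constant against the normaliser. Your additional verifications (the projector identity, positive definiteness of $\bM$, and strict positivity of $Z_m$) are details the paper leaves implicit, not a different argument.
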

\begin{proof}
  Given the assumptions, we may write $\bL = \bU \bM \bM^\top \bU^\top$ with $\bU \in
  \R^{n \times m}$, and $\bM \in \R^{m \times m}$.
  Now, bearing in mind that $|\X| = m$, we have:
  \begin{align*}
    \Proba(\X = X) &\propto \det(\bL_X)  = \det(\bU_{X,:} \bM )^2 \propto  \det(\bU\bU^\top)_X
  \end{align*}
  where we used the fact that $\bU_{X,:}$ is square and that $\det(\bM)$
  is independent of $X$. 
  Note that any orthonormal basis works, for instance the eigenvectors of $\bL$ associated with a non-null eigenvalue,
  but not only: the Q factor in the QR factorisation of $\bL$ would work as well.
\end{proof}

\begin{remark}
Note that lemma~\ref{lem:max-rank-dpp} is valid only for fixed-size L-ensembles with rank of $\bL$ exactly equal to $m$.
In the case $\rank \bL > m$, the fixed-size L-ensemble $\X \sim \mDPP{m}(\bL)$ is no longer a projection DPP.
\end{remark}

\begin{remark}
  \label{rem:normalisation-constant-proj}
  The normalisation constant is particularly simple in the case of projection
  DPPs. Let $\bU\bU^\top$ denote a projection kernel. Then (trivially), $m$ of its
  eigenvalues equal $1$ and the rest are null. By lemma \ref{lem:esp},
  \[ \sum_X \det(\bU\bU^\top)_X = e_m(\bU\bU^\top) = 1 \]
  If as above $\bL = \bU \bM \bM^\top \bU^\top$ with $\bU \in \R^{n \times m}$, then
  by the same reasoning as in the proof of lemma \ref{lem:max-rank-dpp}:
  \[ \sum_X \det(\bL_X) = \det(\bM^\top\bM) \sum_X \det(\bU\bU^\top)_X  = \det(\bM^\top\bM) \]
\end{remark}

\subsubsection{Mixture representation}
\label{sec:mixture-representation}

Determinantal point processes have a well-known representation as a mixture of
projection-DPPs (also sometimes called ``elementary DPPs'' in the literature). See \cite{Barthelme:AsEqFixedSizeDPP}
for details. 
The following mixture representation (due to \cite{Hough:DPPandIndep}) is
fundamental, both for theoretical and computational purposes, since it serves as
the basis for exact sampling of DPPs. There are two variants, one for DPPs and
one for fixed-size DPPs. For the purposes of this paper, we describe here the
mixture representation of L-ensembles only.

\begin{lemma}[Mixture representation of fixed-size L-ensembles \cite{KuleszaTaskar:FixedSizeDPPs}]
  Let $\X \sim \mDPP{m}(\bL)$ be an L-ensemble based on $\bL$, and $\bL = \bU \matr{\Lambda} \bU^\top$ be the spectral decomposition of $\bL$. Then, equivalently,
  $\X$ may be obtained from the following mixture process:
  \begin{enumerate}
  \item Sample $m$ indices $\Y \sim \mDPP{m}(\matr{\Lambda})$
  \item Form the projection matrix $\bM = \bU_{:,\Y} (\bU_{:,\Y})^\top$
  \item Sample $\X | \Y \sim  \mDPP{m}(\bM)$
  \end{enumerate}
  Equivalently, the probability mass function of $\X$ can be written as:
  \begin{equation}
    \label{eq:mixture-representation-fixed}
    \Proba(\X = X) =  \frac{\Ind(|X| = m)}{e_m(\matr{\Lambda})} \sum_{Y,|Y|=m}  \det\begin{pmatrix} \bU_{X,Y}   \end{pmatrix}^2 \prod_{i \in Y} \lambda_i
\end{equation}
\end{lemma}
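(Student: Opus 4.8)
The plan is to prove the explicit probability mass formula first, and then check that the three-step mixture process reproduces exactly that formula, so that the word ``equivalently'' is justified.

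For the explicit formula I would start from the definition of a fixed-size L-ensemble (Definition~\ref{lemma:mDPP_via_L-ens}), which gives $\Proba(\X = X) = \det(\bL_X)/Z_m$ on sets of size $m$, with $Z_m = e_m(\bL)$ by Lemma~\ref{lem:esp}. Since $\bL = \bU\matr{\Lambda}\bU^\top$ is a spectral decomposition, the eigenvalues of $\bL$ are the diagonal entries of $\matr{\Lambda}$, so $Z_m = e_m(\bL) = e_m(\matr{\Lambda})$, which already supplies the denominator. The only real computation is the numerator. For $|X| = m$, the principal submatrix factorises as $\bL_X = \bU_{X,:}\,\matr{\Lambda}\,(\bU_{X,:})^\top$, and this is precisely the shape required by the corollary of the Cauchy-Binet lemma (the corollary following Lemma~\ref{lem:cauchy-binet}), applied with the $m\times n$ matrix $\bU_{X,:}$ in the role of $\matr{U}$. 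Because $(\bU_{X,:})_{:,Y} = \bU_{X,Y}$ and $\det(\matr{\Lambda}_Y) = \prod_{i\in Y}\lambda_i$, the corollary yields
\[
\det(\bL_X) = \sum_{Y,\,|Y|=m} (\det \bU_{X,Y})^2 \prod_{i\in Y}\lambda_i .
\]
Dividing by $e_m(\matr{\Lambda})$ and restoring the indicator $\Ind(|X|=m)$ gives Eq.~\eqref{eq:mixture-representation-fixed}.

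For the mixture process I would compute its pmf by conditioning on $\Y$. Step 1 samples $\Y$ from the fixed-size diagonal L-ensemble $\mDPP{m}(\matr{\Lambda})$; combining Definition~\ref{lemma:mDPP_via_L-ens} with Lemma~\ref{lem:esp} gives $\Proba(\Y = Y) = \big(\prod_{i\in Y}\lambda_i\big)/e_m(\matr{\Lambda})$ for $|Y|=m$. In step 2, since $\bU$ has orthonormal columns, the restriction $\bU_{:,Y}$ also has orthonormal columns, so $\bM = \bU_{:,Y}(\bU_{:,Y})^\top$ is a genuine rank-$m$ orthogonal projection. Step 3 samples $\X$ from the projection DPP $\mDPP{m}(\bM)$; by Lemma~\ref{lem:marginal-kernel-proj} its pmf is $\det(\bM_X)$ with normalisation constant $e_m(\bM) = 1$ (Remark~\ref{rem:normalisation-constant-proj}). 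As $\bM_X = \bU_{X,Y}(\bU_{X,Y})^\top$ with $\bU_{X,Y}$ a square $m\times m$ block, the conditional probability is exactly $(\det \bU_{X,Y})^2$. Marginalising, $\Proba(\X = X) = \sum_{Y,\,|Y|=m} \Proba(\Y = Y)\,\Proba(\X = X \mid \Y = Y)$, which is again Eq.~\eqref{eq:mixture-representation-fixed}; matching this against the explicit formula shows the two descriptions coincide.

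There is no deep obstacle here — the argument is essentially bookkeeping around Cauchy-Binet — but the points that need care are indexing and squareness. One must note that the corollary applies because $\bU_{X,:}$ is a bona fide $m\times n$ matrix with $m\le n$, so the sum over $Y$ ranges over all size-$m$ subsets of $\{1,\dots,n\}$; and in the mixture argument one must check that $\bU_{X,Y}$ is square, so that $\det(\bM_X) = (\det\bU_{X,Y})^2$ holds directly (no further Cauchy-Binet expansion), together with the fact that the projection kernel's normalisation is exactly $1$. Verifying these ensures both routes land on the same sum, completing the proof.
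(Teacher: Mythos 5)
Your proof is correct and follows exactly the standard route the paper intends: the paper states this lemma with a citation rather than a proof, but it explicitly identifies the Cauchy--Binet formula as the cornerstone of the mixture representation (see the remarks preceding Theorem~\ref{thm:equivalence-extended-spectral}), and your argument—expanding $\det(\bL_X)$ via the corollary of Lemma~\ref{lem:cauchy-binet} applied to $\bL_X = \bU_{X,:}\matr{\Lambda}(\bU_{X,:})^\top$, then matching it against the marginalised pmf of the three-step process using $e_m(\bM)=1$ for the rank-$m$ projection—is precisely that proof. Your attention to the squareness of $\bU_{X,Y}$ and to the unit normalisation constant of the projection DPP covers the only points where care is needed.
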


The mixture representation can be understood as (a) first sample which
eigenvectors to use and (b) sample a projection DPP with the selected
eigenvectors.

The counterpart for L-ensembles looks highly similar. 

\begin{lemma}[Mixture representation of L-ensembles, see e.g. \cite{kulesza2012determinantal}]\label{lem:mixture_dpp}
  Let $\X \sim DPP(\bL)$ and $\bL = \bU \matr{\Lambda} \bU^\top$. Then, equivalently,
  $\X$ may be obtained from the following mixture process:
  \begin{enumerate}
  \item Sample indices $\Y \sim DPP(\matr{\Lambda})$
  \item Form the projection matrix $\bM = \bU_{:,\Y} (\bU_{:,\Y})^\top$
  \item Sample $\X | \Y \sim  \mDPP{|\Y|}(\bM)$
  \end{enumerate}
  Equivalently, the probability mass function of $\X$ can be written as:
  \begin{equation}
    \label{eq:mixture-representation-varying}
    \Proba(\X = X) = \frac{1}{\det(\bL + \bI)} \sum_{Y}  \det\begin{pmatrix} \bU_{X,Y}   \end{pmatrix}^2 \prod_{i \in Y} \lambda_i.
  \end{equation}
\end{lemma}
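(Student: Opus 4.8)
The plan is to verify that both descriptions in the statement --- the three-step sampling procedure and the closed-form mass function \eqref{eq:mixture-representation-varying} --- reproduce the L-ensemble probability $\Proba(\X=X)=\det\bL_X/\det(\bI+\bL)$ of Definition~\ref{def:dpp_via_L}. The single tool I would use throughout is the corollary of the Cauchy--Binet lemma stated just after Lemma~\ref{lem:cauchy-binet}.

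First I would establish the closed-form expression directly from the definition. Since $\bL=\bU\matr{\Lambda}\bU^\top$, the principal submatrix factorises as $\bL_X=\bU_{X,:}\matr{\Lambda}(\bU_{X,:})^\top$, where $\bU_{X,:}$ has size $|X|\times n$ and $\matr{\Lambda}$ is diagonal. Applying the Cauchy--Binet corollary gives
\[
\det\bL_X=\sum_{Y,\,|Y|=|X|}\bigl(\det\bU_{X,Y}\bigr)^2\prod_{i\in Y}\lambda_i ,
\]
and because a minor $\det\bU_{X,Y}$ vanishes unless $|Y|=|X|$, the constraint may be dropped and the sum extended over all $Y$. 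Dividing by the normalisation constant $\det(\bI+\bL)$ then yields exactly \eqref{eq:mixture-representation-varying}.

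Next I would check that the mixture process marginalises to the same expression. By Lemma~\ref{lem:diagonal-dpp-bernoulli} the first step has $\Proba(\Y=Y)=\prod_{i\in Y}\lambda_i/\det(\bI+\matr{\Lambda})$, and since $\bU$ is orthogonal the eigenvalues of $\bL$ are those of $\matr{\Lambda}$, so $\det(\bI+\matr{\Lambda})=\det(\bI+\bL)$. Conditionally on $\Y$, the matrix $\bM=\bU_{:,\Y}(\bU_{:,\Y})^\top$ is an orthogonal projection of rank $|\Y|$, so by Lemma~\ref{lem:marginal-kernel-proj} the third step samples a projection DPP whose normalisation constant equals $1$ (Remark~\ref{rem:normalisation-constant-proj}); its mass function is therefore $\Proba(\X=X\mid\Y)=\det\bM_X\,\Ind(|X|=|\Y|)$. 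Since $\bM_X=\bU_{X,\Y}(\bU_{X,\Y})^\top$ is a product of square matrices when $|X|=|\Y|$, we have $\det\bM_X=(\det\bU_{X,\Y})^2$. Marginalising, $\Proba(\X=X)=\sum_Y\Proba(\Y=Y)\,\Proba(\X=X\mid\Y=Y)$ reproduces \eqref{eq:mixture-representation-varying}.

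The computation is essentially routine; the one point requiring care is the cardinality bookkeeping. The identity $\det\bU_{X,Y}=0$ whenever $|X|\neq|Y|$ does double duty: it lets the closed form be written as an unconstrained sum over $Y$, and it guarantees that the fixed-size step~3 (which samples exactly $|\Y|$ points) is consistent with the variable-size output of the whole process. I therefore expect this consistency, rather than any single algebraic manipulation, to be the main thing to state cleanly.
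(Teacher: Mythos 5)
Your proof is correct and follows what is essentially the canonical argument: the paper itself gives no proof of this lemma (it defers to \cite{kulesza2012determinantal}), but both halves of your verification --- Cauchy--Binet applied to $\bL_X=\bU_{X,:}\matr{\Lambda}(\bU_{X,:})^\top$ via the corollary stated after Lemma~\ref{lem:cauchy-binet}, and the marginalisation of the Bernoulli mixture using Lemma~\ref{lem:diagonal-dpp-bernoulli}, Lemma~\ref{lem:marginal-kernel-proj} and Remark~\ref{rem:normalisation-constant-proj} --- are exactly the standard route and the one implied by the paper's own toolkit. Your cardinality bookkeeping (reading $\det\bU_{X,Y}$ as $0$ whenever $|X|\neq|Y|$) correctly identifies the convention implicit in eq.~\eqref{eq:mixture-representation-varying}, so nothing is missing.
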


The only step that varies is the first one, where we sample from
$DPP(\matr{\Lambda})$ instead of $\mDPP{m}(\matr{\Lambda})$.

\subsection{Convergence of DPPs  from asymptotic series}
\label{sec:total-variation-convergence}
In this section we specify which type of convergence is proved in this paper. Below, 
we say that a random variable $\X_\varepsilon$ converges to a random variable
$\X_\star$ in $\flatlim$ if for all outcomes $A$
\[
\Proba(\X_{\varepsilon}=A) \to \Proba(\X_{\star}=A). 
\]
Note that since our space of outcomes is finite, this definition coincides with all possible notions of convergence.
For example, it is equivalent to convergence in total variation ($\lim_{\flatlim} D_{TV}(\X_\varepsilon,\X_\star)=0$), where for discrete random variables $\X$ and $\Y$ defined on the same
space of outcomes, the total variation distance equals:
\begin{equation}
  \label{eq:total-variation}
  D_{TV}(\X,\Y) = \sum_{A} | \Proba(\X=A) - \Proba(\Y=A) |.
\end{equation}
What the results from \cite{BarthelmeUsevich:KernelsFlatLimit} provide us with are
asymptotic expansions of the determinants involved in the probability mass
functions. To connect asymptotic expansions with  convergence of random variables we shall use the following simple lemma.

\begin{lemma}
  \label{lem:TV-convergence}
  Let $\X_{\varepsilon}$ be a family of discrete random variables (e.g.,
  a discrete point process) with values in
  the finite set $\Phi$. Let \[ \Proba(\X_\varepsilon=X)=\frac{f_\varepsilon(X)}{\sum_{Y \in
        \Phi}f_\varepsilon(Y)}, \]
  where the following asymptotic expansion holds for $f_{\varepsilon}$ and an
  integer $p$, possibly negative:
  \[ f_\varepsilon(X) = \varepsilon^p(f_0(X) + \O(\varepsilon)).\]
Then  $\X_\varepsilon$ converges to the random variable    $\X_\star$ (with values in $\Phi$), defined as
\[
\Proba(\X_\star=X)=\frac{f_0(X)}{\sum_{Y \in \Phi} f_0(Y)}.
\]

\end{lemma}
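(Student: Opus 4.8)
The plan is a direct computation: substitute the given asymptotic expansion into the defining ratio and observe that the common leading factor $\varepsilon^p$ cancels between numerator and denominator, after which one passes to the limit. Everything reduces to elementary manipulations of asymptotic expansions, so the only point requiring genuine care is the behaviour of the normalising denominator. The conceptual content is exactly the phenomenon illustrated in Fig.~\ref{fig:illus-dpp-flat-lim}: although each $f_\varepsilon(X)$ may tend to $0$ (if $p>0$) or blow up (if $p<0$) as $\flatlim$, the normalised probabilities, being \emph{ratios}, have a finite nonzero limit.

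First I would rewrite the numerator using the hypothesis $f_\varepsilon(X) = \varepsilon^p(f_0(X) + \O(\varepsilon))$, and then expand the denominator. Because $\Phi$ is finite, the sum over $Y \in \Phi$ of finitely many terms each of the form $\varepsilon^p(f_0(Y) + \O(\varepsilon))$ is again of this form; factoring out $\varepsilon^p$,
\[
\sum_{Y \in \Phi} f_\varepsilon(Y) = \varepsilon^p\Bigl(\sum_{Y \in \Phi} f_0(Y) + \O(\varepsilon)\Bigr).
\]
Here the finiteness of $\Phi$ is precisely what guarantees that the sum of the individual $\O(\varepsilon)$ remainders is still $\O(\varepsilon)$: a single uniform implied constant (and a single threshold for $\varepsilon$) is obtained by taking the maximum, respectively minimum, of the finitely many ones attached to each $Y$.

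Next I would form the ratio. The factor $\varepsilon^p$ appears in both numerator and denominator and cancels identically, giving
\[
\Proba(\X_\varepsilon = X) = \frac{f_0(X) + \O(\varepsilon)}{\sum_{Y \in \Phi} f_0(Y) + \O(\varepsilon)}.
\]
Letting $\flatlim$, the numerator tends to $f_0(X)$ and the denominator to $\sum_{Y \in \Phi} f_0(Y)$, whence $\Proba(\X_\varepsilon = X) \to f_0(X)/\sum_{Y \in \Phi} f_0(Y) = \Proba(\X_\star = X)$ for every $X \in \Phi$. Since $\Phi$ is finite, this pointwise convergence is exactly the notion of convergence (equivalently, convergence in total variation) fixed at the start of the section, so the conclusion follows.

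The main, and essentially only, obstacle is that passing to the limit in the ratio requires the limiting normalising constant $\sum_{Y \in \Phi} f_0(Y)$ to be nonzero, so that $\X_\star$ is well-defined; this nonvanishing is implicit in the statement, since otherwise the claimed limiting law would be meaningless. It is worth noting that in all our applications this holds automatically: the $f_0(Y)$ arise as limiting determinants or as sums of squared minors (via the Cauchy--Binet corollary), hence are nonnegative and not all zero, so the denominator is strictly positive.
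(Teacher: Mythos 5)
Your proof is correct and follows essentially the same route as the paper's own one-line argument: substitute the expansion, cancel the common factor $\varepsilon^p$, and pass to the limit using the finiteness of $\Phi$. Your added remarks — that finiteness gives a uniform implied constant for the summed $\O(\varepsilon)$ terms, and that $\sum_{Y\in\Phi} f_0(Y)\neq 0$ is implicitly required for $\X_\star$ to be well-defined — are careful touches the paper leaves tacit, but they do not change the argument.
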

\begin{proof}
By direct inspection, we have
\[
 \Proba(\X_\varepsilon=X)=\frac{f_\varepsilon(X)}{\sum_{Y \in  \Phi}f_\varepsilon(Y)} = 
  \frac{f_0(X) + \O(\varepsilon)}{ \sum_{Y\in \Phi} ( f_0(Y) + \O(\varepsilon))} \to \frac{f_0(X)}{\sum_{Y \in \Phi} f_0(Y)},
\]
where convergence holds everywhere since  $\Phi$ is a finite set.
\end{proof}We will also encounter discrete distributions in which the
(unnormalised) probability mass function $f_{\varepsilon}$ may involve different powers of $\varepsilon$. For
instance, consider the random variable $Y_\varepsilon \in \{1,2,3 \}$ with
unnormalised mass function $f_\varepsilon(Y_\varepsilon= 1) = \alpha_1\varepsilon$,
$f_\varepsilon(Y_\varepsilon= 2) = \alpha_2$, and $f_\varepsilon(Y_\varepsilon = 3) =
\alpha_3\varepsilon^{-1}$. What is the law of $Y_\varepsilon$ as $\flatlim$?
After normalisation, we have:
\begin{align*}
  \Proba(Y_\varepsilon = 1) &= \frac{\alpha_1 \varepsilon}{\alpha_1 \varepsilon + \alpha_2  + \alpha_3 \varepsilon^{-1}} = \frac{\alpha_1 \varepsilon^{2}}{\alpha_3 + \O(\varepsilon)} = \O(\varepsilon^2) \\
  \Proba(Y_\varepsilon = 2) &= \frac{\alpha_2 }{\alpha_1 \varepsilon + \alpha_2  + \alpha_3 \varepsilon^{-1}} = \frac{\alpha_2 \varepsilon}{\alpha_3 + \O(\varepsilon)} = \O(\varepsilon) \\
  \Proba(Y_\varepsilon = 3) &= \frac{\alpha_3 \varepsilon }{\alpha_1 \varepsilon + \alpha_2  + \alpha_3 \varepsilon^{-1}} = \frac{\alpha_3 }{\alpha_3 + \O(\varepsilon)} = 1 + \O(\varepsilon) 
\end{align*}
The diverging order wins, and $Y_\varepsilon$ equals 3 almost surely as
$\flatlim$.

This line of reasoning can be easily generalised to obtain the following lemma,
which simply says that the smallest order in $\varepsilon$ always wins:
\begin{lemma}
  \label{lem:TV-convergence-diverging}
  Let $\X_{\varepsilon}$ be a family of discrete random variables with values in
  the finite set $\Phi$. Let \[ \Proba(\X_\varepsilon=X)=\frac{f_\varepsilon(X)}{\sum_{Y \in
        \Phi}f_\varepsilon(Y)}, \]
  where the following Laurent series holds for $f$:
  \[ f_\varepsilon(X) = \varepsilon^{\eta_X}(f_0(X) + \O(\varepsilon)).\]
  for some $\eta_X \in \mathbb{Z}$ which may be negative. Let $\eta_{min} = \min_{X \in \Phi}
  \eta_X$ and $\Phi_{min} = \left\{ X | \eta_X = \eta_{min} \right\}$.
  Then $\X_{\varepsilon} \in \Phi_{min}$ almost surely as $\flatlim$. Moreover, $\X_\varepsilon \to \X_\star$, where $\X_\star$ is the random variable with support in $\Phi_{min}$, with
  $\Proba(\X_\star = X)=\frac{f_0(X)}{\sum_{Y \in \Phi_{min}}
    f_0(Y)}$. 
\end{lemma}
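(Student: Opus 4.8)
The plan is to generalize the computation done explicitly in the three-outcome example $Y_\varepsilon \in \{1,2,3\}$ just above the statement. First I would write out the normalized probability mass function directly. For any fixed $X \in \Phi$ with $f_\varepsilon(X) = \varepsilon^{\eta_X}(f_0(X) + \O(\varepsilon))$, the normalizing sum is
\[
\sum_{Y \in \Phi} f_\varepsilon(Y) = \sum_{Y \in \Phi} \varepsilon^{\eta_Y}(f_0(Y) + \O(\varepsilon)).
\]
The key observation is to factor out the smallest power $\varepsilon^{\eta_{min}}$ from this finite sum. Since $\eta_Y \geq \eta_{min}$ for all $Y$, each term $\varepsilon^{\eta_Y} = \varepsilon^{\eta_{min}} \cdot \varepsilon^{\eta_Y - \eta_{min}}$ with nonnegative exponent $\eta_Y - \eta_{min} \geq 0$. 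The terms with $Y \in \Phi_{min}$ contribute $f_0(Y) + \O(\varepsilon)$ to the bracket, while the terms with $\eta_Y > \eta_{min}$ contribute $\O(\varepsilon)$. Hence
\[
\sum_{Y \in \Phi} f_\varepsilon(Y) = \varepsilon^{\eta_{min}}\left( \sum_{Y \in \Phi_{min}} f_0(Y) + \O(\varepsilon) \right).
\]

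Next I would compute the ratio for the two cases separately. If $X \in \Phi_{min}$, then $\eta_X = \eta_{min}$ and the $\varepsilon^{\eta_{min}}$ factors cancel, giving
\[
\Proba(\X_\varepsilon = X) = \frac{f_0(X) + \O(\varepsilon)}{\sum_{Y \in \Phi_{min}} f_0(Y) + \O(\varepsilon)} \longrightarrow \frac{f_0(X)}{\sum_{Y \in \Phi_{min}} f_0(Y)},
\]
which is exactly $\Proba(\X_\star = X)$. If instead $X \notin \Phi_{min}$, then $\eta_X > \eta_{min}$, so after cancelling $\varepsilon^{\eta_{min}}$ a strictly positive power of $\varepsilon$ survives in the numerator:
\[
\Proba(\X_\varepsilon = X) = \frac{\varepsilon^{\eta_X - \eta_{min}}(f_0(X) + \O(\varepsilon))}{\sum_{Y \in \Phi_{min}} f_0(Y) + \O(\varepsilon)} = \O(\varepsilon^{\eta_X - \eta_{min}}) \longrightarrow 0.
\]
This simultaneously establishes that $\X_\varepsilon \to \X_\star$ pointwise and that $\Proba(\X_\varepsilon \in \Phi_{min}) \to 1$, i.e.\ $\X_\varepsilon \in \Phi_{min}$ almost surely in the limit. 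Since $\Phi$ is finite, this pointwise convergence of probabilities is equivalent to all the usual modes of convergence (as noted before Lemma~\ref{lem:TV-convergence}).

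The only point requiring mild care, and the place I would check most carefully, is that the denominator's leading bracket $\sum_{Y \in \Phi_{min}} f_0(Y)$ is nonzero, so that the limits are well-defined. This is guaranteed because $\Phi_{min}$ is nonempty (the minimum of a finite set is attained) and each $f_0(Y)$ for $Y \in \Phi_{min}$ is, by the definition of the Laurent expansion $f_\varepsilon(Y) = \varepsilon^{\eta_Y}(f_0(Y) + \O(\varepsilon))$, the genuine leading coefficient and hence nonzero; assuming the $f_0(Y)$ are all of the same sign (as they are here, being ratios of determinants giving probabilities), no cancellation can make the sum vanish. The argument is otherwise entirely elementary: it is just the observation that in a finite sum of Laurent monomials the lowest-order term dominates as $\flatlim$, which is precisely the heuristic stated informally before the lemma.
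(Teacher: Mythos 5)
Your proof is correct and follows exactly the route the paper intends: the paper proves this lemma only implicitly, via the three-outcome example followed by the remark that the reasoning ``can be easily generalised,'' and your argument --- factoring $\varepsilon^{\eta_{min}}$ out of the normalising sum so that the $\Phi_{min}$ terms survive and all others are $\O(\varepsilon)$ --- is precisely that generalisation, including the sensible check (nonvanishing leading coefficients of a single sign) that the limiting denominator is nonzero.
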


\subsection{Some matrix perturbation theory}
\label{sec:basic-matrix-perturbation}

In what follows we will be concerned with perturbed matrices. Matrix
perturbation theory is often used in statistics, but unfortunately the
perturbation problems that appear here are singular (they feature matrices that
become non-invertible at $\varepsilon=0$), and the theoretical tools we need are
a bit more exotic. In this section we introduce some basic results, a full
treatment can be found in \cite{kato1995perturbation}.

We are interested in asymptotic expansions for the eigenvalues and eigenvectors
of matrices of the form
\[ \bA(\varepsilon)=\bA_0+\varepsilon\bA_1+\varepsilon^2\bA_2+\ldots\]
Each entry in $\bA(\varepsilon)$ is analytic in $\varepsilon$, and this is
therefore known as an ``analytic perturbation'' (of $\bA_0$). The simplest case is just the
linear perturbation, also called a ``matrix pencil'':
\[ \bA(\varepsilon)=\bA_0+\varepsilon\bA_1\]
The difficulty comes from the fact that $\bA_0$ may be singular, in which case
some of the eigenvalues will be 0 at $\varepsilon=0$.

Rellich's perturbation theorem is very useful here (\cite{rellich1969perturbation}, th.
I.1.1):

\begin{lemma}
  \label{lem:rellichs-theorem}
  Let $\bA(\varepsilon)=\bA_0+\varepsilon\bA_1+\varepsilon^2\bA_2+\ldots$, with
  $\bA(\varepsilon)$ Hermitian for real $\varepsilon$ in a neighbourhood of 0. The eigenvalues $\lambda_1(\varepsilon) \ldots \lambda_n(\varepsilon)$ and
  corresponding eigenvectors $\bu_1(\varepsilon) \ldots \bu_n(\varepsilon)$ may
  be chosen analytic in a (complex) neighbourhood of 0.
\end{lemma}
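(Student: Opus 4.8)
The plan is to treat the eigenvalues as the roots of the characteristic polynomial and to exploit the Hermitian (hence real-spectrum) structure to rule out the branch points that occur for a generic analytic family. First I would observe that $p(\lambda,\varepsilon) = \det(\lambda \bI - \bA(\varepsilon))$ is monic in $\lambda$ with coefficients that are analytic in $\varepsilon$ near $0$ (each coefficient is a polynomial in the entries of $\bA(\varepsilon)$, which are themselves analytic). By the Newton--Puiseux theory of algebraic functions, in a punctured disc around $0$ the $n$ roots organise into cycles, and a cycle of length $q$ is described by a convergent Puiseux series $\lambda(\varepsilon) = \sum_{k} c_k \varepsilon^{k/q}$, the $q$ branches being $\sum_k c_k \zeta^{jk}\varepsilon^{k/q}$ with $\zeta = e^{2\pi i/q}$ and $j=0,\ldots,q-1$.

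The crux of the argument -- and the only place Hermiticity enters -- is to show that every cycle has $q=1$, so that each branch is in fact an ordinary integer-power Taylor series. Here I would use that for real $\varepsilon$ the matrix $\bA(\varepsilon)$ is Hermitian, so all $n$ eigenvalues, and in particular every branch value in a cycle, are real for small real $\varepsilon$ of \emph{both} signs. Evaluating along the positive real axis with the positive real root $\varepsilon^{1/q}>0$ shows the $c_k$ are real; evaluating along the negative real axis, where the relevant $q$-th roots are $|\varepsilon|^{1/q}e^{i\pi(2j+1)/q}$, then forces, for each $k$ with $c_k\neq0$, the factor $e^{i\pi k(2j+1)/q}$ to be real; taking $j=0$ gives $q \mid k$. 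Hence the series only contains integer powers of $\varepsilon$, contradicting minimality of $q$ unless $q=1$. The example $\lambda = \pm\sqrt{\varepsilon}$, which is real for $\varepsilon>0$ but imaginary for $\varepsilon<0$, shows why reality on both sides is exactly what kills the square-root branch point, and why a non-Hermitian pencil need not admit analytic eigenvalues. I expect this no-branching step to be the main obstacle, since it is what separates the theorem from the generic (merely Puiseux) situation.

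With the eigenvalues $\lambda_i(\varepsilon)$ now known to be analytic, I would recover analytic eigenvectors via spectral projections. For a group of branches that coincide at $\varepsilon=0$ but whose group separates from the remaining eigenvalues for small $\varepsilon\neq0$, the Riesz projection $P(\varepsilon) = \frac{1}{2\pi i}\oint (\zeta \bI - \bA(\varepsilon))^{-1}\,d\zeta$ over a fixed contour enclosing exactly that group is analytic across $0$, since the contour stays off the spectrum uniformly. Restricting $\bA(\varepsilon)$ to the analytically varying range of $P(\varepsilon)$ yields a smaller Hermitian analytic family, so an induction on size produces an analytic orthonormal eigenbasis; at the base case each $\bu_i(\varepsilon)$ is obtained by analytically continuing a unit kernel vector of $\bA(\varepsilon)-\lambda_i(\varepsilon)\bI$. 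The Hermitian structure is what keeps the projections bounded through the crossing points, so no normalisation blows up. Finally I would note that all series involved converge on discs, so analyticity on a real neighbourhood of $0$ extends to a complex neighbourhood, as claimed.
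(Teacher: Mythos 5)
You are being compared against a citation, not an in-paper argument: the paper states this lemma as Rellich's theorem and refers to \cite{rellich1969perturbation} (th.~I.1.1) without proving it. Your eigenvalue half reproduces the classical proof correctly and completely in outline: the characteristic polynomial $\det(\lambda\bI - \bA(\varepsilon))$ is monic with coefficients analytic in $\varepsilon$, Newton--Puiseux organises the roots into cycles of length $q$, reality of the spectrum for small real $\varepsilon>0$ forces the coefficients $c_k$ to be real, and reality for $\varepsilon<0$ then forces $\sin(\pi k/q)=0$ for every $k$ with $c_k\neq 0$, i.e.\ $q \mid k$, so $q=1$ and each branch is a genuine power series. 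This is exactly the crux, your order-by-order vanishing of the imaginary part is a valid way to extract $q\mid k$, and the $\lambda=\pm\sqrt{\varepsilon}$ example correctly isolates why Hermiticity on \emph{both} sides of $0$ is what kills branch points.

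The genuine gap is in the eigenvector step. Grouping branches by their common limit at $\varepsilon=0$ and taking the fixed-contour Riesz projection $P(\varepsilon)$ is fine ($P$ is indeed analytic across $0$), but your ``induction on size'' stalls: the family obtained by restricting $\bA(\varepsilon)$ to $\operatorname{ran}P(\varepsilon)$ has \emph{all} of its eigenvalues coinciding at $\varepsilon=0$, so your own grouping criterion puts them in a single group and the dimension need not decrease --- already when all $n$ eigenvalues of $\bA_0$ coincide, the first step returns a family of the same size. What is missing is the Rellich--Kato reduction process: on $\operatorname{ran}P(\varepsilon)$ write the restricted operator as $\lambda(\varepsilon)\bI + \varepsilon^{k}\,C(\varepsilon)$, where $\lambda(\varepsilon)$ is the common analytic eigenvalue part and $k$ is the first order at which some branches separate; $C(\varepsilon)$ is again an analytic Hermitian family whose eigenvalues at $0$ are no longer all equal, so recursing on $C$ makes progress, and the recursion terminates because distinct analytic branches separate at finite order. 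A second omitted ingredient is how to regard the restriction as a matrix family at all: one needs an analytic identification of $\operatorname{ran}P(\varepsilon)$ with a fixed space that is unitary for real $\varepsilon$, e.g.\ Kato's transformation function solving $U'=[P',P]\,U$, $U(0)=\bI$. Your normalisation handles only the rank-one base case, and even there note that $\lVert P(\varepsilon)\bu_0\rVert$ is not analytic in complex $\varepsilon$; the analytic object is the bilinear form $\langle P(\varepsilon)\bu_0,\bu_0\rangle$, positive for small real $\varepsilon$, whose analytic square root you divide by --- and correspondingly the basis produced is analytic in a complex neighbourhood but orthonormal only for real $\varepsilon$. With the reduction step and the transformation function added, your argument closes and coincides with the cited proof.
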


Armed with Rellich's theorem, it is easy to prove some results on (singular)
linear perturbations by matching orders in series.

\begin{lemma}
  \label{lem:pert-eigenvalues}
  Let \[ \bA(\varepsilon)=\bA_0+\varepsilon\bA_1\ldots\] be an $n\times n$ positive  semi-definite
  matrix, and $\rank(\bA_0)=p < n$. Then $p$ eigenvalues of $\bA$ are $\O(1)$
  (but not $\O(\varepsilon)$),
  and the remaining  $n-p$ are $\O(\varepsilon)$. 
\end{lemma}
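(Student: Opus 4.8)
The plan is to turn the eigenvalues of $\bA(\varepsilon)$ into analytic scalar functions of $\varepsilon$ via Rellich's theorem (Lemma~\ref{lem:rellichs-theorem}), and then simply read off their orders from their values at $\varepsilon=0$. First I would check the hypotheses of Rellich: since $\bA(\varepsilon)$ is positive semi-definite (hence real symmetric) for a range of real $\varepsilon$, each coefficient matrix $\bA_0, \bA_1, \ldots$ must itself be symmetric, so $\bA(\varepsilon)$ is Hermitian for all real $\varepsilon$ in a neighbourhood of $0$. Lemma~\ref{lem:rellichs-theorem} then supplies eigenvalues $\lambda_1(\varepsilon), \ldots, \lambda_n(\varepsilon)$ that are analytic in a neighbourhood of $0$, and in particular continuous there.

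Next I would evaluate these branches at $\varepsilon=0$, where they must recover the spectrum of $\bA_0$ counted with multiplicity. Because $\bA_0$ is symmetric, its rank equals the number of its nonzero eigenvalues counted with multiplicity; since $\rank(\bA_0)=p$, exactly $p$ of the values $\lambda_i(0)$ are nonzero and the remaining $n-p$ vanish. Reorder so that $\lambda_1(0), \ldots, \lambda_p(0) \neq 0$ and $\lambda_{p+1}(0) = \cdots = \lambda_n(0) = 0$.

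The two claims then follow from the local behaviour of analytic functions. For $i \leq p$, continuity of $\lambda_i$ together with $\lambda_i(0) \neq 0$ shows that $\lambda_i(\varepsilon)$ stays bounded away from zero for small $\varepsilon$; thus $\lambda_i(\varepsilon)=\O(1)$ and, since it does not tend to $0$, it is not $\O(\varepsilon)$. For $i > p$, analyticity with $\lambda_i(0)=0$ forces the Taylor expansion $\lambda_i(\varepsilon) = \lambda_i'(0)\varepsilon + \O(\varepsilon^2)$ to start at order at least one, so $\lambda_i(\varepsilon)=\O(\varepsilon)$. This partitions the $n$ eigenvalues into exactly $p$ that are $\O(1)$ but not $\O(\varepsilon)$, and $n-p$ that are $\O(\varepsilon)$, as claimed.

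The argument is essentially bookkeeping once Rellich is in hand; the only point that requires a moment of care is the matching between the analytic eigenvalue branches and the spectrum of $\bA_0$, so that the counts come out as $p$ and $n-p$ with the correct multiplicities, and the distinction between ``$\O(1)$'' and ``$\O(1)$ but not $\O(\varepsilon)$'', which is exactly what the non-vanishing of $\lambda_i(0)$ delivers. I would expect no serious obstacle here: the positive semi-definiteness is used only to guarantee symmetry so that Rellich applies, and otherwise plays no role in the conclusion.
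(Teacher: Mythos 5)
Your proof is correct and takes essentially the same approach as the paper's: both invoke Rellich's theorem (Lemma~\ref{lem:rellichs-theorem}) and reduce the claim to the fact that the value at $\varepsilon=0$ of each analytic eigenvalue branch is an eigenvalue of $\bA_0$, whose rank $p$ fixes the counts of $\O(1)$ versus $\O(\varepsilon)$ branches. The only (harmless) difference is that you argue with the eigenvalue branches alone, whereas the paper expands the full eigenpairs $(\lambda(\varepsilon),\bu(\varepsilon))$ and matches orders in the eigenvalue equation—an expansion it needs anyway for the follow-up Lemma~\ref{lem:linear-pert-eigenvectors} on the limiting eigenvectors.
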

\begin{proof}
  This result may also be proved using the Courant-Weyl minimum-maximum principle, as in
  \cite{wathen2015eigenvalues}. Here we rely on a series expansion instead.
  Let $\lambda,\bu$ designate an eigenvalue/eigenvector pair of $\bA$. It verifies:
  \begin{equation}
    \label{eq:eigenvector-eq}
    \bA(\varepsilon)\bu(\varepsilon) = \lambda(\varepsilon)\bu(\varepsilon)
  \end{equation}
  which we may expand as:
  \begin{equation}
    \label{eq:eigenvector-eq-expanded}
    (\bA_0 + \varepsilon \bA_1 + \ldots)(\bu_0 + \varepsilon \bu_1 + \varepsilon^2 \bu_2 + \ldots) = (\lambda_0 + \lambda_1 \varepsilon + \lambda_2 \varepsilon^2 + \ldots)(\bu_0 + \varepsilon \bu_1 + \varepsilon^2 \bu_2 + \ldots)
  \end{equation}
  by Rellich's theorem.
Matching orders in $\varepsilon$, eq. \eqref{eq:eigenvector-eq-expanded}
  implies at constant order:
  \begin{equation}
    \label{eq:eigenvector-pert-order-0}
    \bA_0\bu_0 = \lambda_0 \bu_0
  \end{equation}
  implying that the first order pair $(\lambda_0,\bu_0)$ is an eigenpair of
  $\bA_0$. By hypothesis, since $\bA_0$ has rank $p$, there are $p$ eigenvalues
  of order $\O(1)$ (but not $\O(\varepsilon)$), and the rest are $\O(\varepsilon)$ or less. 
\end{proof}

Continuing the process further, we have: 
\begin{lemma}
  \label{lem:linear-pert-eigenvectors}
  Under the same condition as in lemma \ref{lem:pert-eigenvalues}, a limiting
  basis of eigenvectors can be written as $\begin{bmatrix} \bU_{0}, & \widetilde{\bU_{1}}  \end{bmatrix}$, where
  $\bU_{0}$ is an $n \times p$ matrix concatenating the $p$ eigenvectors of
  $\bA_{0}$ associated with its non-null eigenvalues, and $ \widetilde{\bU_{1}}$ concatenating
  the $(n-p)$ eigenvectors associated with the non-null eigenvalues of $\widetilde{\bA_1}=(\bI - \bU_{0}\bU_{0}^\top) \bA_1 (\bI - \bU_{0}\bU_{0}^\top)$. 
\end{lemma}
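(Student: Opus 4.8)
The plan is to continue the order-matching argument from Lemma~\ref{lem:pert-eigenvalues}. We already know the $\O(1)$ eigenvalues converge to the non-null eigenvalues of $\bA_0$, with limiting eigenvectors spanning $\mspan(\bA_0)$; since $\bA_0$ is symmetric positive semi-definite of rank $p$, an orthonormal basis for this span is exactly the columns of $\bU_0$. The remaining $n-p$ eigenvalues are $\O(\varepsilon)$, and the point is to identify the \emph{limiting directions} of their eigenvectors. The natural substitution is to write such an eigenvalue as $\lambda(\varepsilon) = \varepsilon \mu + \O(\varepsilon^2)$ and the eigenvector as $\bu(\varepsilon) = \bu_0 + \varepsilon \bu_1 + \ldots$, plug into Eq.~\eqref{eq:eigenvector-eq-expanded}, and match the first two orders.

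First I would match the $\O(1)$ term: $\bA_0 \bu_0 = 0$, so the leading eigenvector $\bu_0$ lies in $\orth(\bA_0) = \mspan(\bU_0)^{\perp}$, i.e.\ $\bU_0^\top \bu_0 = 0$, equivalently $(\bI - \bU_0\bU_0^\top)\bu_0 = \bu_0$. This is the crucial structural fact: the limiting eigenvectors of the vanishing branch are orthogonal to the top eigenspace. Next I would match the $\O(\varepsilon)$ term of Eq.~\eqref{eq:eigenvector-eq-expanded}, namely $\bA_0 \bu_1 + \bA_1 \bu_0 = \mu\, \bu_0$. To extract $\mu$ and the constraint on $\bu_0$, I would left-multiply by $(\bI - \bU_0\bU_0^\top)$ (projection onto $\orth(\bA_0)$). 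Since $\bA_0 = \bA_0(\bI-\bU_0\bU_0^\top)^{\perp}$ kills the range of the projector applied on the left — concretely $(\bI - \bU_0\bU_0^\top)\bA_0 = \bm{0}$ because $\mspan(\bA_0)=\mspan(\bU_0)$ — the $\bA_0\bu_1$ term vanishes, leaving
\begin{equation*}
(\bI - \bU_0\bU_0^\top)\bA_1 \bu_0 = \mu\,(\bI - \bU_0\bU_0^\top)\bu_0 = \mu\,\bu_0.
\end{equation*}
Finally, using $\bu_0 = (\bI-\bU_0\bU_0^\top)\bu_0$ once more on the left factor, I would rewrite the left-hand side as $(\bI - \bU_0\bU_0^\top)\bA_1(\bI-\bU_0\bU_0^\top)\bu_0 = \widetilde{\bA_1}\,\bu_0$, giving the reduced eigenproblem $\widetilde{\bA_1}\,\bu_0 = \mu\,\bu_0$. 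Thus each limiting eigenvector of the vanishing branch is an eigenvector of $\widetilde{\bA_1}$ associated with a non-null eigenvalue (the $\O(\varepsilon)$ branches with nonzero leading coefficient), which are precisely the columns of $\widetilde{\bU_1}$, completing the characterization of the limiting basis $\begin{bmatrix}\bU_0, & \widetilde{\bU_1}\end{bmatrix}$.

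The main obstacle I anticipate is not the algebra of order-matching but the bookkeeping needed to guarantee that $\widetilde{\bA_1}$ has exactly $n-p$ relevant eigenvectors and that they genuinely constitute the limits, rather than some lower-dimensional subset. One must argue that $\widetilde{\bA_1} = (\bI-\bU_0\bU_0^\top)\bA_1(\bI-\bU_0\bU_0^\top)$, viewed as a symmetric operator on the $(n-p)$-dimensional space $\orth(\bA_0)$, supplies enough eigenvectors to fill out the vanishing branch, and that degeneracies (repeated $\mu$'s, or a $\widetilde{\bA_1}$ that is itself rank-deficient, producing $\O(\varepsilon^2)$ branches) are handled consistently. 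Rellich's theorem (Lemma~\ref{lem:rellichs-theorem}) is what licenses the analytic expansions and guarantees a full analytic orthonormal eigenbasis, so the orthogonality of the two blocks and the dimension count follow from the analyticity together with the symmetry of $\widetilde{\bA_1}$; I would lean on these to close the argument cleanly rather than tracking every higher-order term.
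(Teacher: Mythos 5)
Your proposal is correct and follows essentially the same route as the paper's proof: match orders in the Rellich expansion, observe that the leading eigenvector of each vanishing branch lies in $\orth(\bA_0)$, left-multiply the order-$\varepsilon$ equation by the projector $\bI - \bU_0\bU_0^\top$ to kill the $\bA_0\bu_1$ term, and read off the reduced eigenproblem $\widetilde{\bA_1}\bu_0 = \mu\,\bu_0$. The only blemish is the garbled intermediate phrase ``$\bA_0 = \bA_0(\bI-\bU_0\bU_0^\top)^{\perp}$''; the fact you actually use, $(\bI - \bU_0\bU_0^\top)\bA_0 = \matr{0}$ by symmetry of $\bA_0$, is stated correctly right after and is exactly what the paper invokes.
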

\begin{proof}
  Let $(\lambda,\bu)$ denote an eigenpair as before. If $\lambda_0$ is non-null,
  then $\bu_0$ is an non-null eigenvector of $\bA_0$. There are $p$ such
  eigenvectors, which we collect as $\bU_{0}$. If $\lambda_0=0$, then eq.
  \eqref{eq:eigenvector-pert-order-0} implies that $\bu_0$ belongs to the kernel
  of $\bA_0$. Define $\matr{P} = \bI - \bU_{0}\bU_{0}^\top$ the projector on
  $\orth(\bA_0)$ : then $\matr{P}\bu_0=\bu_0$. The
  eigenvalue equation (eq. \eqref{eq:eigenvector-eq-expanded}) implies at order
  $\varepsilon$ that:
  \[    \bA_0 \bu_1 + \bA_1 \matr{P} \bu_0 =  \lambda_1 \bu_0 \] 
  Multiplying by $\matr{P}$ on the left, we have:
  \[     \matr{P} \bA_0 \bu_1 + \matr{P} \bA_1 \matr{P} \bu_0 =  \lambda_1 \bu_0
  \]
  which from the definition of $\matr{P}$ implies
  \[ \matr{P} \bA_1 \matr{P} \bu_0 =  \lambda_1 \bu_0\]
  This last expression is an eigenvalue equation for the matrix $\widetilde{\bA_1}=\matr{P} \bA_1 \matr{P}$,
  which has at most $n-p$ non-null eigenvalues.
\end{proof}

\begin{example}
  As an example, we take the matrix
  \begin{equation}
    \label{eq:example-matrix-pert}
    \bA(\varepsilon) = 
    \begin{pmatrix}
      1 + \varepsilon& 1 \\
      1 & 1+ \frac{\varepsilon}{2}
    \end{pmatrix} = \mathbf{11}^t + \varepsilon   \begin{pmatrix}
      1 &  \\
      & \frac{1}{2}
    \end{pmatrix}.
  \end{equation}
  The results above imply that as $\flatlim$ the eigenvalues should be $\O(1)$
  and $\O(\varepsilon)$, and the associated eigenvectors proportional to $
  \begin{pmatrix}
    1 \\
    1
  \end{pmatrix}
  $ and $  \begin{pmatrix}
    -1 \\
    1
  \end{pmatrix}
  $ in the limit. Indeed, in this case the computations can be done by hand, and
  we find that
  \[ \lambda_1(\varepsilon) =  \frac{1}{4} (\sqrt{\varepsilon^2 + 16}
    + 3\varepsilon + 4) = 4 + \O(\varepsilon)\]
  and \[ \lambda_2(\varepsilon) =
    \frac{1}{4}(-\sqrt{\varepsilon^2 + 16}  + 3\varepsilon + 4) =
    0+\O(\varepsilon).\]
  The associated eigenvectors are
  \[ \begin{pmatrix}
      \frac{1}{4}(\varepsilon+\sqrt{\varepsilon^2 + 16}) \\
      1
    \end{pmatrix} = \begin{pmatrix}
      1 \\
      1
    \end{pmatrix} + \O(\varepsilon) \]
  and 
  \[ \begin{pmatrix}
      \frac{1}{4}(\varepsilon-\sqrt{\varepsilon^2 + 16}) \\
      1
    \end{pmatrix} = \begin{pmatrix}
      -1 \\
      1
    \end{pmatrix} + \O(\varepsilon)  \]
  Note that since the square-root terms can be expanded in a power series around
  16 the eigenvalues and eigenvectors are indeed analytic at 0.
  
\end{example}

\subsection{Polynomials}
\label{sec:polynomial_backgrounds}

\noindent\textbf{Multivariate polynomials.} 
Polynomials will play an important role in the paper, especially when we study
the flat limit of DPPs in section \ref{sec:univariate-results} and beyond. We
recall here the essential facts on multivariate polynomials. 

Let $\vect{x} =
\begin{pmatrix}
  x_1 & x_2 & \ldots & x_d
\end{pmatrix}^\top
 \in \R^d $. A monomial in $\vect{x}$ is a function of
the form:
\[ \vect{x}^{\vect{\alpha}} = \prod_{i=1}^d x_i^{\alpha_i}  \]
for $\vect{\alpha} \in \mathbb{N}^d$ (a multi-index). Its total degree (or degree for short) is defined as
$|\vect{\alpha}|=\sum_{i=1}^d \alpha_i$. For instance:
\[ \vect{x}^{(2,1)} = x_1^2x_2 \]
and it has degree 3. A multivariate polynomial in $\vect{x}$ is a weighted sum
of monomials in $\vect{x}$, and its degree is equal to the maximum of the
degrees of its component monomials. For instance, the following is a
multivariate polynomial of degree 2 in $\R^3$:
\[ \vect{x}^{(0,1,1)}-\vect{x}^{(1,0,1)}+2.2\vect{x}^{(1,0,0)}-1.\]

One salient difference between the univariate and the multivariate case is that
when $d>1$, there are several monomials of any given degree, instead of just
one. For instance, with $d=2$, the first few monomials are (by increasing
degree):
\begin{align*}
  \vect{x}^{(0,0)} \\
  \vect{x}^{(1,0)},\vect{x}^{(0,1)} \\
  \vect{x}^{(2,0)},\vect{x}^{(1,1)},\vect{x}^{(0,2)}\\
\end{align*}
There is a well-known formula for counting monomials of degree $k$ in dimension $d$:
\begin{equation}
  \label{eq:number-monomials}
  \HH_{k,d} = {k+d-1 \choose d-1}.
\end{equation}
The notation $\HH_{k,d}$ comes from the notion of \emph{homogeneous polynomials}. A
homogeneous polynomial is a polynomial made up of monomials with equal degree.
Therefore, the set of homogeneous polynomials of degree $k$ has dimension
$\HH_{k,d}$. The set of polynomials of degree $k$ is spanned by the sets of
homogenous polynomials up to $k$, and has dimension:
\begin{equation}
  \label{eq:dim-polynomials}
  \PP_{k,d} = \HH_{0,d} + \HH_{1,d} + \ldots + \HH_{k,d} = {k+d \choose d}.
\end{equation}
Note for instance that $\PP_{0,d}=1$ and $\PP_{1,d}=d+1$. By convention, we will also set $\PP_{-1,d}$ to be equal to $0$.\\

\noindent\textbf{Multivariate Vandermonde matrices.} We  now  define the multivariate generalisation of Vandermonde
matrices. Monomials are naturally ordered by degree, but monomials of the same
degree have no natural ordering. To properly define our matrices, we require
(formally) an ordering. For the purposes of this paper which ordering is used is
entirely arbitrary. For more on orderings, see
\cite{BarthelmeUsevich:KernelsFlatLimit} and references therein.

For an ordered set of points $\Omega = \{\vect{x}_1, \ldots, \vect{x}_n\}$, all in $\RR^d$, we define the multivariate Vandermonde matrix as:
\begin{equation}\matr{V}_{\le k} = 
  \begin{bmatrix}
    \matr{V}_{0} & \matr{V}_{1}  & \cdots & \matr{V}_{k}  
  \end{bmatrix} \in \mathbb{R}^{n\times \PP_{k,d}},
\end{equation}
where each block $\matr{V}_i \in\mathbb{R}^{n\times \HH_{i,d}}$ contains the monomials of degree $i$ evaluated on
the points in $\Omega$. As an example, consider $n=3$, $d=2$ and the ground set
\[
  \Omega = \{\left[\begin{smallmatrix}y_1 \\z_1 \end{smallmatrix}\right], \left[\begin{smallmatrix}y_2 \\z_2 \end{smallmatrix}\right], \left[\begin{smallmatrix}y_3 \\z_3 \end{smallmatrix}\right]\}.
\]
One has, for instance for $k=2$:
\[
  \matr{V}_{\le 2} =
  \left[\begin{array}{c|cc|ccc}
          1 & y_1 & z_1 & y_1^2 & y_1z_1 & z_1^2 \\
          1 & y_2 & z_2 & y_2^2 & y_2z_2 & z_2^2 \\
          1 & y_3 & z_3 & y_3^2 & y_3z_3 & z_3^2 \\
        \end{array}\right],
    \]
    where the ordering within each block is arbitrary. 

We will use $\bV_{\le k}(\X)$ to denote the matrix $\bV_{\le k}$ reduced to its lines indexed by the elements in $\mathcal{X}$. As such, $\bV_{\le k}(\X)$ has $|\X|$ rows and $\PP_{k,d} $ columns.

\section{Extended L-ensembles}
\label{sec:ppDPPs}
The goal of this section is to introduce \emph{extended L-ensembles}, a novel way of representing the class of DPPs. This representation has the advantage of giving explicit expressions for the
joint probability distribution of \emph{all} varying and fixed-size DPPs (not only varying and fixed-size L-ensembles). 

In particular, the extended L-ensemble viewpoint will provide easy-to-use,
explicit formulas for the joint probability of DPPs in cases where the spectrum
of the DPP's marginal kernel contains eigenvalues equal to $1$ (that is, in
cases where the DPP at hand is not an L-ensemble) \footnote{A formula due to
  \cite{Macchi:CoincidenceApproach} exists in this case but it is unwieldy}. 
According to Lemma~\ref{lemma:bernoulli}, those are the cases where the size of the DPP is the sum of a deterministic part (the number of such eigenvalues equal to 1) and a random part.
Such DPPs, that we will call \emph{partial projection DPPs} for reasons that will become clear when we
study their mixture representation, arise as limits of certain L-ensembles, as we will see in later sections.

\subsection{Conditionally positive (semi-)definite matrices}
\label{sec:cpd-matrices}

L-ensembles are naturally formed from positive semi-definite matrices, because
$\bL$ being positive semi-definite is a sufficient condition for $\det \bL_\X$
being non-negative. Extended L-ensembles, defined below, can accomodate a broader set
of matrices called conditionally positive semi-definite (CPD) matrices.
\begin{definition}
  A matrix $\bL \in \R^{n \times n}$ is called conditionally positive
  (semi-)definite with respect to a rank $p\geq 0$ matrix $\bV \in \R^{n \times p}$ if
  $\vect{x}^\top\bL\vect{x} > 0$ (resp., $\vect{x}^\top\bL\vect{x} \geq 0$) for all $\vect{x}$ such that $\bV^\top \vect{x}=0$.
\end{definition}
\begin{remark}
	Note that we authorize $p=0$ in the definition: in this case, the definition simply boils down to that of positive semi-definite matrices. 
\end{remark}
The set of vectors such that $\bV^\top \vect{x}=0$ is the space orthogonal to the
span of $\bV$, which we note $\orth \bV$. The conditionally positive definite
requirement may be read as a requirement for $\bL$ to be positive definite
within $\orth \bV$. Positive-definite matrices are therefore also conditionally
positive-definite, but matrices with negative eigenvalues may also be
conditionally positive-definite.
\begin{proposition}
  \label{prop:positivity-cond-eigenval}
  Let $\bL$ be conditionally positive (semi-)definite with respect to $\bV \in \R^{n \times p}$, that we suppose full column rank. Let
  $\bQ \in \R^{n \times p}$ designate an orthonormal basis for $\mspan \bV$, so that $\bI - \bQ\bQ^\top$
  is a projection on $\orth \bV$. Let $\widetilde{\bL} = (\bI - \bQ\bQ^\top)\bL(\bI -
  \bQ\bQ^\top)$. Then the eigenvalues of $\widetilde{\bL}$ are all non-negative.
\end{proposition}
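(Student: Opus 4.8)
The plan is to bypass any explicit eigendecomposition and instead show directly that $\widetilde{\bL}$ is positive semi-definite as a quadratic form; since $\bL$ is symmetric (as throughout the paper) the matrix $\widetilde{\bL} = (\bI - \bQ\bQ^\top)\bL(\bI - \bQ\bQ^\top)$ is symmetric too, so it has real eigenvalues, and non-negativity of the Rayleigh quotient is equivalent to non-negativity of every eigenvalue. Abbreviate $\bP = \bI - \bQ\bQ^\top$, the orthogonal projector onto $\orth \bV$, which satisfies $\bP = \bP^\top$ and $\bP^2 = \bP$.

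The one preliminary observation I would record is that $\bP$ maps every vector into $\orth \bV$. Because $\bQ$ is an orthonormal basis for $\mspan \bV$, each column of $\bV$ lies in $\mspan \bQ$, so we may write $\bV = \bQ \bR$ for some matrix $\bR$; using $\bQ^\top \bQ = \bI_p$ this gives
\[
  \bV^\top \bP = \bR^\top\bigl(\bQ^\top - \bQ^\top \bQ \bQ^\top\bigr) = \matr{0}.
\]
Hence for any $\vect{y} \in \R^n$ the vector $\vect{z} = \bP \vect{y}$ obeys $\bV^\top \vect{z} = 0$, i.e. $\vect{z} \in \orth \bV$. I would then compute the Rayleigh quotient using symmetry and idempotency of $\bP$:
\[
  \vect{y}^\top \widetilde{\bL} \vect{y} = \vect{y}^\top \bP \bL \bP \vect{y} = (\bP\vect{y})^\top \bL (\bP\vect{y}) = \vect{z}^\top \bL \vect{z}.
\]
Since $\vect{z} \in \orth \bV$, conditional positive semi-definiteness of $\bL$ with respect to $\bV$ yields $\vect{z}^\top \bL \vect{z} \geq 0$, so $\vect{y}^\top \widetilde{\bL} \vect{y} \geq 0$ for all $\vect{y}$. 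This is exactly $\widetilde{\bL} \succeq 0$, hence all eigenvalues of $\widetilde{\bL}$ are non-negative.

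There is no genuinely hard step here; the only point needing care is the identity $\bV^\top \bP = \matr{0}$, which is precisely where the hypothesis that $\bQ$ is an \emph{orthonormal basis of} $\mspan \bV$ (rather than an arbitrary matrix) is used. I would also flag that the argument does not upgrade to strict positivity even in the conditionally positive-\emph{definite} case: although one then gets $\vect{z}^\top \bL \vect{z} > 0$ whenever $\vect{z} \neq 0$, the map $\vect{y} \mapsto \vect{z} = \bP\vect{y}$ kills all of $\mspan \bV$, so $\widetilde{\bL}$ necessarily has the $p$-dimensional kernel $\mspan \bV$ and one recovers only $\widetilde{\bL} \succeq 0$, consistent with the statement.
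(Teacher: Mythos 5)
Your proof is correct and takes essentially the same approach as the paper, whose one-line proof likewise observes that $\vect{x}^\top (\bI - \bQ\bQ^\top)\bL(\bI - \bQ\bQ^\top)\vect{x} \geq 0$ for all $\vect{x}$ because $(\bI - \bQ\bQ^\top)\vect{x} \in \orth \bV$. You have merely spelled out the details the paper leaves implicit (the identity $\bV^\top(\bI - \bQ\bQ^\top) = \matr{0}$, symmetry of $\widetilde{\bL}$, and the equivalence between a non-negative Rayleigh quotient and non-negative eigenvalues).
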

\begin{proof}
  Follows directly from the definition: for all $\vect{x}$, $\vect{x}^\top (\bI - \bQ\bQ^\top)\bL(\bI -
  \bQ\bQ^\top)\vect{x} \geq 0$. 
\end{proof}

The above remark will become important when we define extended L-ensembles. The
following example of a conditionally positive definite is classical (but
surprising), and is a special case of a class of conditionally positive definite
kernels studied in \cite{micchelli1986interpolation}. We take this example
because it arises in section \ref{sec:univariate-results}:
\begin{example}[\cite{micchelli1986interpolation}]
  Let
  \[ \bD^{(1)} = [\norm{\vect{x}_i-\vect{x}_j}]_{i,j}\] the distance matrix
  between $n$ points in $\R^d$. Then $-\bD^{(1)}$ is conditionally positive
  definite with respect to the all-ones vector $\vect{1}_n$.
\end{example}
Some extensions of this example can be found in section \ref{sec:ppdpp-examples-cpdef}. 
\subsection{Nonnegative Pairs}
\label{sec:ppDPP-def}

The central object when defining extended L-ensembles is what we call a Nonnegative Pair (NNP for short). 

\begin{definition}
	\label{def:ext_Lens}
  A Nonnegative Pair, noted $\ELE{\bL}{\bV}$ is a pair $\bL \in \R^{n \times n}$, $\bV \in \R^{n
    \times p}$, $0\leq p\leq n$, such that $\bL$ is symmetric and conditionally positive semi-definite with respect to
  $\bV$, and $\bV$ has full column rank. Wherever a NNP $\ELE{\bL}{\bV}$ appears below, we consistently use the following notation: 
  \begin{itemize}
  		\item $\bQ \in \R^{n \times p}$ is an
  		orthonormal basis of  $\mspan \bV$, such that $\bI - \bQ\bQ^\top$
  		is a projector on $\orth \bV$
  		\item $\widetilde{\bL} = (\bI - \bQ\bQ^\top)\bL(\bI -
  		\bQ\bQ^\top)\in \R^{n \times n}$ is also symmetric and thus diagonalisable. From Proposition ~\ref{prop:positivity-cond-eigenval}, we know that all eigenvalues are non-negative. We will denote by $q$ the rank of $\widetilde{\bL}$. Note that $q \le n-p$ as the $p$ columns of $\bQ$ are trivially eigenvectors of $\widetilde{\bL}$ associated to $0$. We write
  		\[
  		\widetilde{\bL} = \tbU\tbLam\tbU^\top
  		\] 
  		its truncated spectral decomposition; where $\tbLam=\text{diag}(\widetilde{\lambda}_1,\ldots,\widetilde{\lambda}_q)\in \mathbb{R}^{q\times q}$ and $\tbU \in \mathbb{R}^{n\times q}$ are the diagonal matrix of nonzero eigenvalues and the matrix of the corresponding eigenvectors of $\widetilde{\bL}$, respectively.
  \end{itemize}
\end{definition}
\begin{remark}
		Again, note that we authorize $p=0$ in the definition: in this case, $\bQ=0$ and $\tbL=\bL$.
\end{remark}
Let us first formulate the following lemma, useful for the next section. 
\begin{lemma}\label{lem:equivalence_ppdpp_ensemble}
	Let $\ELE{\bL}{\bV}$ be a NNP. Then, for any subset $X \subseteq \{1, \ldots, n\}$:
	\begin{align*}
	(-1)^{p}\det \begin{pmatrix}
	{\bL}_{X} & \bV_{X,:} \\
	(\bV_{X,:})^\top & \matr{0} 
	\end{pmatrix} = (-1)^{p} \det \begin{pmatrix}
	{\tbL}_{X} & \bV_{X,:} \\
	(\bV_{X,:})^\top & \matr{0} 
	\end{pmatrix}  \ge 0.
	\end{align*}
\end{lemma}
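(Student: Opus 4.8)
The plan is to reduce both saddle-point determinants to compressions of $\bL_X$ (respectively $\tbL_X$) onto $\orth(\bV_{X,:})$ by means of Lemma~\ref{lem:det-saddlepoint}, and then to show that these two compressions in fact coincide and are positive semi-definite. I would first dispose of the degenerate case: if $\bV_{X,:}$ fails to have full column rank $p$ (in particular whenever $|X|<p$), then there is $\vect{w}\neq\vect{0}$ with $\bV_{X,:}\vect{w}=\matr{0}$, so the last $p$ columns $\left(\begin{smallmatrix}\bV_{X,:}\\ \matr{0}\end{smallmatrix}\right)$ of each saddle-point matrix are linearly dependent; both determinants then vanish and the claimed statement reads $0=0\ge 0$.

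Assume now $\bV_{X,:}$ has full column rank $p\le|X|$, and let $\Qort\in\R^{|X|\times(|X|-p)}$ be an orthonormal basis of $\orth(\bV_{X,:})$ inside $\R^{|X|}$. Applying Lemma~\ref{lem:det-saddlepoint} to the pair $(\bL_X,\bV_{X,:})$, and then the same lemma to $(\tbL_X,\bV_{X,:})$, gives
\[
(-1)^{p}\det\begin{pmatrix}\bL_X & \bV_{X,:}\\ \bV_{X,:}^\top & \matr{0}\end{pmatrix}=\det(\bV_{X,:}^\top\bV_{X,:})\,\det(\Qort^\top\bL_X\Qort),
\]
together with the analogous identity with $\tbL_X$ in place of $\bL_X$ (using $((-1)^p)^2=1$). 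Hence the whole statement follows once I establish two facts: (i) $\Qort^\top\bL_X\Qort=\Qort^\top\tbL_X\Qort$, which yields equality of the two saddle-point determinants; and (ii) $\Qort^\top\bL_X\Qort\succeq\matr{0}$, which gives nonnegativity since $\det(\bV_{X,:}^\top\bV_{X,:})>0$ as the Gram matrix of full-rank columns.

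The crux is a zero-extension argument. For $\vect{z}\in\orth(\bV_{X,:})$, let $\widehat{\vect{z}}\in\R^n$ be its extension by zeros outside $X$. Then $\bV^\top\widehat{\vect{z}}=\bV_{X,:}^\top\vect{z}=\matr{0}$, so $\widehat{\vect{z}}\in\orth\bV$ and therefore $(\bI-\bQ\bQ^\top)\widehat{\vect{z}}=\widehat{\vect{z}}$. Consequently, for any $\vect{z},\vect{w}\in\orth(\bV_{X,:})$ one computes $\vect{z}^\top\tbL_X\vect{w}=\widehat{\vect{z}}^\top\tbL\,\widehat{\vect{w}}=\widehat{\vect{z}}^\top(\bI-\bQ\bQ^\top)\bL(\bI-\bQ\bQ^\top)\widehat{\vect{w}}=\widehat{\vect{z}}^\top\bL\,\widehat{\vect{w}}=\vect{z}^\top\bL_X\vect{w}$. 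Letting $\vect{z},\vect{w}$ range over the columns of $\Qort$ proves (i) entrywise. For (ii), the same extension together with the conditional positive semi-definiteness of $\bL$ with respect to $\bV$ gives $\vect{z}^\top\bL_X\vect{z}=\widehat{\vect{z}}^\top\bL\,\widehat{\vect{z}}\ge 0$ for every $\vect{z}\in\orth(\bV_{X,:})$, so $\Qort^\top\bL_X\Qort$ is positive semi-definite with nonnegative determinant.

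The only genuine obstacle is the bookkeeping around the rank-deficient case, where Lemma~\ref{lem:det-saddlepoint} does not apply and one must instead observe directly that the saddle-point matrices are singular; everything else is a direct consequence of the defining property of conditional positive semi-definiteness combined with the observation that the projector $\bI-\bQ\bQ^\top$ acts as the identity on the zero-extensions of vectors from $\orth(\bV_{X,:})$.
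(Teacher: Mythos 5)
Your proof is correct and follows essentially the same route as the paper's: both dispose of the rank-deficient case by singularity of the saddle-point matrix, apply Lemma~\ref{lem:det-saddlepoint} to reduce to compressions onto $\orth(\bV_{X,:})$, and identify the two compressions via the key observation that $\bI-\bQ\bQ^\top$ fixes zero-extensions of vectors orthogonal to $\bV_{X,:}$ (your zero-extension computation is just a more carefully spelled-out version of the paper's terse step $\bB^{\T}(X)\bQ_{X,:}=0$). The only cosmetic differences are that the paper treats $|X|=p$ as a separate case while you absorb it via the empty-matrix convention, and the paper cites positive semi-definiteness of $\tbL$ (Proposition~\ref{prop:positivity-cond-eigenval}) for nonnegativity where you invoke the conditional positive semi-definiteness of $\bL$ directly.
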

\begin{proof} 
	Let us write $m=|X|$ the size of $X$. 
	The case $\rank \bV_{X,:} < p$ is trivial as both sides of the equality are zero. Next, assume that $\bV_{X,:}\in\mathbb{R}^{m\times p}$ is full column rank.
	If $m = p$, then $\bV_{X,:}$ is square and both sides are equal to $(\det \bV_{X,:})^2$.
	Now consider the case $m>p$.
	Let $\bQ$ be as in Definition~\ref{def:ext_Lens}, so that $\bV  = \bQ \matr{R}$ (with $\matr{R}$ nonsingular).  
	Let $\bB(X) \in \R^{m \times (m-p)}$ be the basis of $\orth(\bV_{X,:}) = \orth(\bQ_{X,:}) $. 
	Then, using lemma~\ref{lem:det-saddlepoint}, we have that
	\begin{align*}
	(-1)^p  \det \begin{pmatrix}
	\bL_{X} & \bV_{X,:} \\
	(\bV_{X,:})^\top & \matr{0} 
	\end{pmatrix}  &= \det ( (\bV_{X,:})^{\top}  \bV_{X,:})  \det ((\bB^\top(X) {\bL}_{X}\bB(X))\\
	&= \det ( (\bV_{X,:})^{\top}  \bV_{X,:})  \det ((\bB^\top(X) \tbL_{X}\bB(X) )
	=  (-1)^p\det
	\begin{pmatrix}
	\widetilde{\bL}_{X} & \bV_{X,:} \\
	(\bV_{X,:})^\top & \matr{0} 
	\end{pmatrix}, 
	\end{align*}
	where the last but one equality is from   $\tbL=(\bI - \bQ\bQ^\top)\bL(\bI - \bQ\bQ^\top)$ and the fact that  $\bB^{\T}(X)\bQ_{X,:} = 0$ and hence $(\bI - \bQ\bQ^\top)_X \bB(X) = \bB(X)$. 
	Finally, $\det (\bB^\top(X) \tbL_{X}\bB(X) ) \ge 0$ due to positive semidefiniteness of $\tbL$, which completes the proof.
\end{proof}

\subsection{DPPs via extended L-ensembles}

\begin{definition}[Extended L-ensemble] Let $\ELE{\bL}{\bV}$ be any NNP. An extended L-ensemble $\X$ based on $\ELE{\bL}{\bV}$ is a point process verifying:
	\begin{align}
	\label{eq:proba_mass}
	\forall X\subseteq\Omega,\qquad \Proba(\X=X) \propto (-1)^p \det
	\begin{pmatrix}
	\bL_{X} & \bV_{X,:} \\
	(\bV_{X,:})^\top & \matr{0}
	\end{pmatrix}.
	\end{align} 
\end{definition}
	\begin{remark}
	We stress that an extended L-ensemble reduces to an L-ensemble only in the case $p=0$. If $p\geq 1$, an extended L-ensemble is not an L-ensemble, since the probability mass function of $\X$ is not expressed as a principal minor of a larger matrix. Also, 
	the right-hand  side in eq. \eqref{eq:proba_mass} is non-negative by
	Lemma~\ref{lem:equivalence_ppdpp_ensemble}, and thus defines a valid
	probability distribution. The normalisation constant is tractable and given
	later (see section \ref{sec:ppdpp-marginal-kernel}). 
	On a more minor note, the factor $(-1)^p$ arises because of the peculiar properties of saddle-point matrices, see Lemma~\ref{lem:det-saddlepoint}. 
\end{remark}

One shows in fact that the class of extended L-ensembles is identical to the class of DPPs, as the two following theorems demonstrate. 
	\begin{theorem}
		\label{thm:L-ens_to_K}
		Let $\ELE{\bL}{\bV}$ be any NNP, and $\X$ be an extended L-ensemble based on $\ELE{\bL}{\bV}$. 
		Then, $\X$ is a DPP with marginal kernel
		\begin{equation}
		\label{eq:dpDPPmarginalkernel}
		\bK = \bQ \bQ^\top + \tbL(\bI+ \tbL)^{-1}.\end{equation}
	\end{theorem}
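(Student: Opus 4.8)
The goal is to show that an extended L-ensemble based on a NNP $\ELE{\bL}{\bV}$ is a DPP whose marginal kernel is $\bK = \bQ\bQ^\top + \tbL(\bI+\tbL)^{-1}$. To verify that $\bK$ is the marginal kernel, I must check that $\Proba(A \subseteq \X) = \det \bK_A$ for every $A \subseteq \Omega$. The strategy is to reduce everything to the already-understood case of L-ensembles by decomposing the saddle-point determinant into a $\bQ\bQ^\top$-part (which contributes a projection DPP) and a $\tbL$-part (which contributes an ordinary L-ensemble), and then recognising $\bK$ as the marginal kernel of the \emph{superposition} of these two pieces.

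The plan is to proceed as follows. First I would use Lemma~\ref{lem:equivalence_ppdpp_ensemble} to replace $\bL$ by $\tbL$ inside the saddle-point determinant in \eqref{eq:proba_mass}, so that the unnormalised mass is $(-1)^p \det\begin{pmatrix} \tbL_X & \bV_{X,:} \\ (\bV_{X,:})^\top & \matr 0\end{pmatrix}$. Next, I would analyse this determinant using Lemma~\ref{lem:det-coef-polynomial}: since it equals $(-1)^p$ times the top coefficient $[t^p]\det(\tbL_X + t (\bV_{X,:})(\bV_{X,:})^\top)$, and since $\mspan \bV = \mspan \bQ$ gives $\bV\bV^\top = \bQ\bR\bR^\top\bQ^\top$ with $\bR$ nonsingular, the leading behaviour as $t \to \infty$ is governed by $\bQ$. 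Writing $\tbL = \tbU\tbLam\tbU^\top$ and expanding $\det(\tbL_X + t\,\bQ_X \bR\bR^\top\bQ_X^\top)$ via the Cauchy--Binet corollary on the factored matrix $[\tbU_X \mid \bQ_X]$, the coefficient of $t^p$ selects exactly those index sets that use all $p$ columns of $\bQ$ together with the eigenvector columns of $\tbU$. This produces, up to the constant $\det(\bR\bR^\top) = \det(\bV^\top\bV)$, a sum of the form $\sum_{Y,|Y|=|X|-p} \det\big([\bQ_X \mid (\tbU_{:,Y})_X]\big)^2 \prod_{i \in Y}\lt_i$, which I recognise as the unnormalised mass function of the L-ensemble mixture in Lemma~\ref{lem:mixture_dpp}, but with the $p$ columns of $\bQ$ always forced into the selected eigenvector set.

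The key conceptual step is then to read off the marginal kernel from this mixture. In the mixture representation, each eigenvector of $\tbL$ with eigenvalue $\lt_i$ is independently selected with probability $\lt_i/(1+\lt_i)$, and the $p$ directions spanned by $\bQ$ are selected \emph{with probability one}; conditional on the chosen directions one samples the corresponding projection DPP. By Lemma~\ref{lemma:bernoulli} and the diagonal-kernel computation (Lemma~\ref{lem:diagonal-dpp-bernoulli}), a mixture in which certain eigendirections are always included and the rest are included as independent Bernoulli$(\lt_i/(1+\lt_i))$ variables is precisely a DPP whose marginal kernel is the projector $\bQ\bQ^\top$ onto the always-included directions plus $\sum_i \frac{\lt_i}{1+\lt_i}\, \tilde{\bu}_i\tilde{\bu}_i^\top = \tbL(\bI+\tbL)^{-1}$ on the orthogonal eigendirections; summing gives exactly \eqref{eq:dpDPPmarginalkernel}. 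To make this rigorous I would verify directly that $\det \bK_A = \Proba(A \subseteq \X)$ by expanding $\det \bK_A$ with the Cauchy--Binet corollary applied to the factorisation $\bK = [\bQ \mid \tbU\tbLam^{1/2}(\bI+\tbLam)^{-1/2}][\cdots]^\top$ and matching it term-by-term against the inclusion probability computed from the joint law above.

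The main obstacle I anticipate is the bookkeeping in identifying the coefficient of $t^p$ and confirming that the normalisation constant comes out correctly. The degree bound in Remark~\ref{rem:det-coef-degree} guarantees $t^p$ is the top power, which is what forces all $p$ columns of $\bQ$ to be used; but one must carefully track the factor $\det(\bV^\top\bV)$, the orthogonality relations $\bQ^\top\tbU = \matr 0$ (which hold because $\tbU$ lies in $\orth\bV = \mspan\bQ^\perp$), and the fact that $\bQ$ and $\tbU$ together with the remaining kernel directions form an orthonormal system. These orthogonality facts are what make the two pieces of $\bK$ add cleanly rather than interact. Once the mixture is set up and these orthogonality relations are invoked, verifying $\det\bK_A = \Proba(A\subseteq\X)$ is a Cauchy--Binet computation analogous to the proof of Lemma~\ref{lem:max-rank-dpp}, and the argument closes.
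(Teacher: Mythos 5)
Your proposal is correct and follows essentially the same route as the paper: your inline derivation via Lemma~\ref{lem:equivalence_ppdpp_ensemble}, Lemma~\ref{lem:det-coef-polynomial} and Cauchy--Binet is precisely the paper's proof of the generalised Cauchy--Binet formula (Theorem~\ref{thm:equivalence-extended-spectral}), which the paper's proof simply invokes before reading the law as a mixture of projection DPPs with the $\bQ$-directions forced in. Your final verification that $\det \bK_A = \Proba(A \subseteq \X)$ by factoring $\bK$ and matching terms is exactly the inclusion-probability computation of Appendix~\ref{sec:marginal-kernel-ppDPPs-proof} (including the Bernoulli structure of the weights, which requires the normalisation constant from Corollary~\ref{cor:normalisation-ppDPP} that you correctly flag as needing care).
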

	\begin{proof}
		See Appendix~\ref{sec:thm:K_to_L-ens_and_back}.

\end{proof}
Thus, an extended L-ensembles is a DPP. Importantly, the converse is also true: any DPP (not only L-ensembles) is an extended L-ensemble.

	\begin{theorem}
		\label{thm:K_to_L-ens}
		Let $\bm{0}\preceq\bK\preceq \bI$ be any marginal kernel and $\X$ its associated DPP. Denote by $\bV\in\mathbb{R}^{n\times p}$ the matrix concatenating the $p\geq 0$ orthonormal eigenvectors of $\bK$ associated to eigenvalue $1$ and $\bL=\bK\left(\bI-\bK\right)^{\dagger}$ with $\dagger$ representing the Moore-Penrose pseudo-inverse. Then, $\X$ is an extended L-ensemble based on the NNP $\ELE{\bL}{\bV}$.
	\end{theorem}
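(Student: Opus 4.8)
The plan is to exhibit $\ELE{\bL}{\bV}$ as a genuine NNP and then invoke Theorem~\ref{thm:L-ens_to_K}, which computes the marginal kernel of the extended L-ensemble based on that pair; if this kernel turns out to equal $\bK$, then the extended L-ensemble and $\X$ are DPPs with the same marginal kernel, hence the same process (a DPP is characterised by its inclusion probabilities $\det \bK_A$, and hence by $\bK$). I would start from a spectral decomposition $\bK = \sum_i \mu_i \bw_i \bw_i^\top$ with orthonormal $\bw_i$ and $0 \le \mu_i \le 1$, splitting the eigenvectors into the group with $\mu_i = 1$ (whose concatenation is $\bV$) and the group with $\mu_i < 1$.

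First I would compute $\bL$ explicitly. Since the eigenvalue-$1$ directions lie in the kernel of $\bI - \bK$, they contribute nothing to the pseudo-inverse, so $(\bI - \bK)^\dagger = \sum_{\mu_i < 1} (1-\mu_i)^{-1} \bw_i \bw_i^\top$; multiplying by $\bK$ and using orthonormality gives $\bL = \sum_{\mu_i < 1} \tfrac{\mu_i}{1-\mu_i}\, \bw_i \bw_i^\top$. This expression shows at once that $\bL$ is symmetric, positive semi-definite (each coefficient $\tfrac{\mu_i}{1-\mu_i} \ge 0$ for $0 \le \mu_i < 1$), and supported on $\orth \bV$. Together with the fact that $\bV$ has orthonormal, hence linearly independent, columns, this verifies that $\ELE{\bL}{\bV}$ is a NNP, conditional positive semidefiniteness being immediate from genuine positive semidefiniteness.

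Next I would apply Theorem~\ref{thm:L-ens_to_K} to $\ELE{\bL}{\bV}$. Because $\bV$ is already orthonormal I take $\bQ = \bV$, so $\bQ\bQ^\top = \sum_{\mu_i = 1} \bw_i \bw_i^\top$; and because $\bL$ lives in $\orth \bV$ the two-sided projection leaves it unchanged, giving $\tbL = (\bI - \bQ\bQ^\top)\bL(\bI - \bQ\bQ^\top) = \bL$. Formula~\eqref{eq:dpDPPmarginalkernel} then reduces to $\bK' = \bQ\bQ^\top + \bL(\bI + \bL)^{-1}$, and a direct spectral computation recovers $\bK' = \bK$: on the eigenvalue-$1$ space $\bL$ vanishes so that term equals $\bV\bV^\top$, while on each remaining direction the eigenvalue of $\bL(\bI+\bL)^{-1}$ is $\tfrac{\mu_i}{1-\mu_i}\big/\big(1 + \tfrac{\mu_i}{1-\mu_i}\big) = \mu_i$.

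Finally, Theorem~\ref{thm:L-ens_to_K} guarantees that the extended L-ensemble based on $\ELE{\bL}{\bV}$ is a DPP with marginal kernel $\bK' = \bK$, which is exactly the marginal kernel of $\X$, so the two processes coincide. I do not expect a serious obstacle here: the only points requiring genuine care are checking that the eigenvalue-$1$ block is annihilated by the pseudo-inverse (which is what makes $\bL$ supported on $\orth \bV$ and forces $\tbL = \bL$), and justifying the convenient choice $\bQ = \bV$ --- legitimate because the kernel in Theorem~\ref{thm:L-ens_to_K} does not depend on which orthonormal basis of $\mspan \bV$ is used. The case $p = 0$ is a consistency check, collapsing to the standard L-ensemble inversion $\bL = \bK(\bI - \bK)^{-1}$ of Lemma~\ref{lem:DPPsubclass}.
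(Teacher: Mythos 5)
Your proof is correct and follows essentially the same route as the paper's (much terser) argument: both rest on the spectral decomposition of $\bK$, grouping the eigenvalue-$1$ eigenvectors into $\bV=\bQ$, identifying the remaining eigenvalues as $\widetilde{\lambda}_i/(1+\widetilde{\lambda}_i)$ with $\widetilde{\lambda}_i=\mu_i/(1-\mu_i)$, and then invoking Theorem~\ref{thm:L-ens_to_K} together with the fact that a finite DPP is determined by its marginal kernel. You merely make explicit the details the paper leaves implicit (the NNP verification, $\tbL=\bL$, and the check $\bK'=\bK$), which is a faithful filling-in rather than a different approach.
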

	\begin{proof}
		See Appendix~\ref{sec:thm:K_to_L-ens_and_back}.
	\end{proof}
Recall that, as per definition~\ref{def:fsDPP}, a fixed-size DPP is simply a DPP conditioned on size. As a consequence of the equivalence between extended L-ensembles and DPPs, one thus obtains the following explicit expression of the probability mass function of any fixed-size DPP:
	\begin{corollary}
		Let $\bm{0}\preceq\bK\preceq \bI$ be any marginal kernel and $\X$ its associated fixed-size DPP of size $m$. Let $\ELE{\bL}{\bV}$ be the NNP as defined in theorem~\ref{thm:K_to_L-ens}. Then 
		\begin{align}
		\label{eq:proba_mass_fs}
		\forall X\subseteq\Omega,\qquad \Proba(\X=X) \propto (-1)^p \det
		\begin{pmatrix}
		\bL_{X} & \bV_{X,:} \\
		(\bV_{X,:})^\top & \matr{0}
		\end{pmatrix}\Ind(|X|=m).
		\end{align}
	\end{corollary}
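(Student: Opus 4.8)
The plan is to obtain this as an immediate consequence of Theorem~\ref{thm:K_to_L-ens} together with the fact that a fixed-size DPP is, by Definition~\ref{def:fsDPP}, nothing but the associated DPP conditioned on its size. First I would recall that the fixed-size DPP of size $m$ attached to the marginal kernel $\bK$ is the law of $\X \sim DPP(\bK)$ conditioned on the event $\{|\X| = m\}$. Hence, for every $X \subseteq \Omega$, writing out the conditional probability gives
\begin{equation*}
\Proba(\X = X \mid |\X| = m) = \frac{\Proba(\X = X)\,\Ind(|X| = m)}{\Proba(|\X| = m)},
\end{equation*}
where I have used that the event $\{\X = X\} \cap \{|\X| = m\}$ coincides with $\{\X = X\}$ when $|X| = m$ and is empty otherwise, which is precisely what the indicator $\Ind(|X| = m)$ records.

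Next I would feed in Theorem~\ref{thm:K_to_L-ens}, which asserts that the unconditioned DPP $\X$ with marginal kernel $\bK$ is exactly the extended L-ensemble based on the NNP $\ELE{\bL}{\bV}$ constructed there. By the defining relation~\eqref{eq:proba_mass} of an extended L-ensemble, this means
\begin{equation*}
\Proba(\X = X) \propto (-1)^p \det
\begin{pmatrix}
\bL_{X} & \bV_{X,:} \\
(\bV_{X,:})^\top & \matr{0}
\end{pmatrix}.
\end{equation*}
Substituting this into the conditional probability above and noting that the denominator $\Proba(|\X| = m)$ is a constant independent of $X$, it is absorbed into the overall normalisation, so that $\Proba(\X = X \mid |\X| = m)$ is proportional to $(-1)^p \det(\cdots)\,\Ind(|X| = m)$, which is the claimed formula~\eqref{eq:proba_mass_fs}.

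There is essentially no hard step here: the entire content is carried by Theorem~\ref{thm:K_to_L-ens}, and the corollary merely records what conditioning on size does to its probability mass function. The only point to watch — and it is a purely bookkeeping matter — is that conditioning replaces the global normalising constant of the extended L-ensemble by its restriction to subsets of cardinality $m$; since the statement is phrased up to proportionality, this causes no difficulty, and non-negativity of each term (guaranteed by Lemma~\ref{lem:equivalence_ppdpp_ensemble}) confirms that the right-hand side defines a genuine probability mass function.
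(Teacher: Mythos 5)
Your proof is correct and follows exactly the route the paper intends: the paper states the corollary as an immediate consequence of Definition~\ref{def:fsDPP} (a fixed-size DPP is a DPP conditioned on size) together with Theorem~\ref{thm:K_to_L-ens}, which is precisely your conditioning-then-substitute argument. The bookkeeping remark about the normalising constant being absorbed by proportionality is accurate and matches the paper's treatment.
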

	\begin{remark}
		Fixed-size DPPs of size $m$ with marginal kernel $\bK$ cannot be defined for
    $m$ smaller the multiplicity of $1$ in the spectrum of $\bK$. In other
    words, one cannot condition the DPP based on $\bK$ having fewer samples than
    its number of eigenvalues equal to one (by lemma~\ref{lemma:bernoulli}).
    Consequently, from the extended L-ensemble viewpoint, $m$ should always be
    larger than or equal to $p$.
	\end{remark}

\subsection{Partial projection DPPs}
The previous section made clear that 
	\begin{itemize}
		\item any DPP in the class of DPPs may be defined equivalently either via a marginal kernel  $\bm{0}\preceq\bK\preceq\bI$ from the marginal point of view, or via a NNP $\ELE{\bL}{\bV}$ from the point of view of the explicit probability mass function.
		\item the class of fixed-size DPPs, being in all generality defined as DPPs conditioned on size, are in fact best described with extended L-ensembles. Their probability mass function are given by Eq.~\eqref{eq:proba_mass_fs}. Apart from the special case where $m=p$ that implies a projection DPP~\footnote{If $m=p$, 
			$\bV_{X,:}$ is square in Eq.~\eqref{eq:proba_mass_fs} and by Lemma~\ref{lem:det-saddlepoint}, $\Proba(\X = X) \propto \det(\bV_{X,:})^2$, which is
			the probability mass function of a projection DPP (see lemma
			\ref{lem:max-rank-dpp}).}, fixed-size DPPs do not have marginal kernels.
	\end{itemize}
	In the following, for the purpose of this work, we differentiate DPPs (both varying-size and fixed-size) defined by NNPs $\ELE{\bL}{\bV}$ for which
	\begin{itemize}
		\item $p=0$:  Eq.~\ref{eq:proba_mass} (resp. Eq.~\ref{eq:proba_mass_fs}) boils down to Eq.~\ref{eq:def-dpp_via_L} (resp. Eq.~\ref{eq:def-dpp}): we recover the L-ensembles $\X\sim DPP(\bL)$ (resp. fixed-size L-ensembles $\X\sim|DPP|_m(\bL)$).
		\item $p\geq1$: in this case, the associated DPPs are not L-ensembles; and we will call them \emph{partial-projection DPPs} (pp-DPPs) for reasons that will become clear in section~\ref{sec:mixture_rep_ppDPP}. We will denote them $\X\sim \ppDPP \ELE{\bL}{\bV}$ and $\X\sim \mppDPP{m}\ELE{\bL}{\bV}$ for the varying-size and the fixed-size cases respectively.
\end{itemize}

\subsection{A generalisation of the Cauchy-Binet Formula}

The cornerstone of the mixture representation of L-ensembles, discussed in Section~\ref{sec:mixture-representation}, is in fact the Cauchy-Binet formula, recalled in Lemma~\ref{lem:cauchy-binet} (see for instance~\cite{Hough:DPPandIndep, kulesza2012determinantal}). In order to provide a similar spectral understanding of extended L-ensembles, we need the following generalisation of the Cauchy-Binet formula. 
\begin{theorem}
	\label{thm:equivalence-extended-spectral}
	Let $\ELE{\bL}{\bV}$ be a NNP, and $\bQ$, $\tbU$, $\tbLam$ and $q$ be as in Definition~\ref{def:ext_Lens}. Then for any subset $X \subseteq \{1, \ldots, n\}$ of size $|X| = m$, $p\leq m \leq p+q$, 
	it holds that
	\begin{equation}
	\label{eq:equivalence-spectral-extended-pDPP}
	(-1)^p\det
	\begin{pmatrix}
	\bL_{X} & \bV_{X,:} \\
	(\bV_{X,:})^\top & \matr{0} 
	\end{pmatrix} = \det(\bV^\top\bV)\sum_{Y,|Y|=m-p} \det \left( 
	\begin{bmatrix}
	\bQ_{X,:} & \tbU_{X,Y}
	\end{bmatrix}  \right)^2
	\prod_{i \in Y} \widetilde{\lambda}_i
	\end{equation}
\end{theorem}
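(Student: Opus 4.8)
The plan is to collapse the saddle-point determinant into a product of two factors using the determinant lemmas already in hand, then expand the second factor with the Cauchy-Binet corollary and match the result against the right-hand side term-by-term.

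First I would dispose of the degenerate case. Since $\bV = \bQ\matr{R}$ with $\matr{R}$ invertible, $\mspan \bV_{X,:} = \mspan \bQ_{X,:}$, so $\bV_{X,:}$ has full column rank $p$ iff $\bQ_{X,:}$ does. If it does not, the left-hand side vanishes by (the proof of) Lemma~\ref{lem:equivalence_ppdpp_ensemble}, while on the right every block matrix $[\bQ_{X,:}\ \ \tbU_{X,Y}]$ has its first $p$ columns linearly dependent, so each determinant, and hence the whole sum, is zero. So I assume $\bQ_{X,:}$ has full column rank. Then, combining Lemma~\ref{lem:equivalence_ppdpp_ensemble} (to replace $\bL_X$ by $\tbL_X$) with Lemma~\ref{lem:det-saddlepoint} applied to the submatrices, with $\bB(X)$ an orthonormal basis of $\orth(\bV_{X,:}) = \orth(\bQ_{X,:})$ exactly as in that proof, I get
\[
(-1)^p\det\begin{pmatrix}\bL_X & \bV_{X,:}\\ (\bV_{X,:})^\top & \matr{0}\end{pmatrix} = \det\big((\bV_{X,:})^\top\bV_{X,:}\big)\,\det\big(\bB(X)^\top\tbL_X\bB(X)\big).
\]

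Next I factor the Gram determinant. From $\bV = \bQ\matr{R}$ we have $\bV^\top\bV = \matr{R}^\top\matr{R}$ and $(\bV_{X,:})^\top\bV_{X,:} = \matr{R}^\top(\bQ_{X,:})^\top\bQ_{X,:}\matr{R}$, hence $\det((\bV_{X,:})^\top\bV_{X,:}) = \det(\bV^\top\bV)\det((\bQ_{X,:})^\top\bQ_{X,:})$, which produces the prefactor $\det(\bV^\top\bV)$ required by the statement. For the second factor I substitute $\tbL_X = \tbU_{X,:}\tbLam(\tbU_{X,:})^\top$, so that $\bB(X)^\top\tbL_X\bB(X) = \bW\tbLam\bW^\top$ with $\bW = \bB(X)^\top\tbU_{X,:}\in\R^{(m-p)\times q}$. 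Because $m-p \le q$ in the stated range $p \le m \le p+q$, the Cauchy-Binet corollary following Lemma~\ref{lem:cauchy-binet} applies and gives $\det(\bB(X)^\top\tbL_X\bB(X)) = \sum_{Y,|Y|=m-p}\det(\bB(X)^\top\tbU_{X,Y})^2\prod_{i\in Y}\lt_i$.

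Assembling these reductions, the theorem follows from the single, purely linear-algebraic identity, valid for each $Y$,
\[
\det\big((\bQ_{X,:})^\top\bQ_{X,:}\big)\,\det\big(\bB(X)^\top\tbU_{X,Y}\big)^2 = \det\big([\,\bQ_{X,:}\ \ \tbU_{X,Y}\,]\big)^2,
\]
which I expect to be the only genuine content of the proof. I would establish it by an orthogonal change of basis: take a thin factorisation $\bQ_{X,:} = \bG\matr{R}_0$ with $\bG$ having orthonormal columns, so that $[\bG\ \ \bB(X)]$ is an $m\times m$ orthogonal matrix (using $\bB(X)^\top\bQ_{X,:}=\matr{0}$). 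Forming $[\bG\ \ \bB(X)]^\top[\bQ_{X,:}\ \ \tbU_{X,Y}]$ makes the lower-left block $\bB(X)^\top\bQ_{X,:}=\matr{0}$ vanish, leaving a block upper-triangular matrix whose determinant is $\det(\matr{R}_0)\det(\bB(X)^\top\tbU_{X,Y})$; squaring and using $\det(\matr{R}_0)^2 = \det((\bQ_{X,:})^\top\bQ_{X,:})$ yields the identity. Matching the two sides term-by-term over $Y$ then completes the proof.
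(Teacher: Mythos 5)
Your proof is correct, and it takes a genuinely different route from the paper's. The two arguments share the opening moves: the QR factorisation $\bV=\bQ\matr{R}$ to pull out the prefactor $\det(\bV^\top\bV)$, the dismissal of the rank-deficient case, and the use of Lemma~\ref{lem:equivalence_ppdpp_ensemble} to replace $\bL_X$ by $\tbL_X$. They then diverge. The paper invokes Lemma~\ref{lem:det-coef-polynomial} to rewrite the saddle-point determinant as the coefficient $[t^p]\det\bigl(\tbL_X + t\,\bQ_{X,:}(\bQ_{X,:})^\top\bigr)$, represents the perturbed matrix as $[\bQ_{X,:}\ \tbU_{X,:}]\,\blkdiag(t\bI_p,\tbLam)\,[\bQ_{X,:}\ \tbU_{X,:}]^\top$, and performs a single Cauchy--Binet expansion: extracting the $t^p$ coefficient automatically forces the selection of all $p$ columns of $\bQ_{X,:}$ alongside $m-p$ eigenvector columns, so the mixed determinants $\det\bigl([\bQ_{X,:}\ \tbU_{X,Y}]\bigr)^2$ appear in one stroke. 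You instead apply Lemma~\ref{lem:det-saddlepoint} at the level of the submatrix to obtain the factorisation $\det\bigl((\bV_{X,:})^\top\bV_{X,:}\bigr)\det\bigl(\bB(X)^\top\tbL_X\bB(X)\bigr)$, expand only the second factor by the Cauchy--Binet corollary (correctly noting that the range $m\le p+q$ is exactly what makes the $(m-p)\times q$ matrix $\bB(X)^\top\tbU_{X,:}$ admissible), and then need the extra recombination identity $\det\bigl((\bQ_{X,:})^\top\bQ_{X,:}\bigr)\det\bigl(\bB(X)^\top\tbU_{X,Y}\bigr)^2=\det\bigl([\bQ_{X,:}\ \tbU_{X,Y}]\bigr)^2$, which you establish soundly via block-triangularisation under the orthogonal matrix $[\bG\ \bB(X)]$, the squaring disposing of the sign ambiguity $\det[\bG\ \bB(X)]=\pm1$. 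The trade-off is clear: the paper's coefficient-extraction trick avoids any explicit change of basis but leans on the generating-polynomial lemma, whereas your argument never touches Lemma~\ref{lem:det-coef-polynomial} and is more elementary and self-contained, at the cost of isolating and proving that one additional linear-algebraic identity --- which, as you suspected, is indeed the genuine content that the paper's $[t^p]$ device silently encapsulates.
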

\begin{proof}
	First of all, writing the $(\bQ, \bR)$ decomposition of $\bV$ as $\bV = \bQ\bR$ one has:
	\[
	\det
	\begin{pmatrix}
	\bL_{X} & \bV_{X,:} \\
	(\bV_{X,:})^\top & \matr{0} 
	\end{pmatrix} 
	= (\det(\matr{R}))^2
	\det
	\begin{pmatrix}
	\bL_{X} & \bQ_{X,:} \\
	(\bQ_{X,:})^\top & \matr{0} 
	\end{pmatrix}.     
	\]
	Noting that $\det(\bV^\top\bV) = (\det(\matr{R}))^2$, to prove Eq.~\eqref{eq:equivalence-spectral-extended-pDPP}   it is sufficient to show that:
	\begin{equation}
	\label{eq:equivalence-spectral-extended-pDPP_Q}
	(-1)^p\det
	\begin{pmatrix}
	\bL_{X} & \bQ_{X,:} \\
	(\bQ_{X,:})^\top & \matr{0} 
	\end{pmatrix} = \sum_{Y,|Y|=m-p} \det \left( 
	\begin{bmatrix}
	\bQ_{X,:} & \tbU_{X,Y}
	\end{bmatrix}  \right)^2
	\prod_{i \in Y} \widetilde{\lambda}_i.
	\end{equation}
	Now, the case $\rank \bQ_{X,:} < p$ is trivial as both sides in \eqref{eq:equivalence-spectral-extended-pDPP_Q} are zero. Next, we assume that $\bQ_{X,:}$ is full rank.
	Using first lemma~\ref{lem:equivalence_ppdpp_ensemble} and then lemma~\ref{lem:det-coef-polynomial}, one has:
	\[
	(-1)^p\det    \begin{pmatrix}  {\bL}_{\X} & {\bQ}_{X,:} \\  ({\bQ}_{X,:})^\top & {0}  \end{pmatrix}  =
	(-1)^p\det  \begin{pmatrix}     \widetilde{\bL}_{X} & {\bQ}_{X,:} \\      ({\bQ}_{X,:})^\top & {0}    \end{pmatrix}
	= [t^p ] \det(\widetilde{\bL}_X + t {\bQ}_{X,:}({\bQ}_{X,:})^\top).
	\]
	Using the fact that $\widetilde{\bL} = {\tbU}\tbLam{\tbU}^{\top}$, the right hand side may be re-written:
	\begin{align*}
	[t^p]\det(\widetilde{\bL}_X + t {\bQ}_{X,:}({\bQ}_{X,:})^\top) & =  [t^p]\det\left([{\bQ}_{X,:} {\tilde \bU}_{X,:} ] \begin{pmatrix}t \bI_p & \\ &  \widetilde{\matr{\Lambda}}  \end{pmatrix}[{\bQ}_{X,:} {\tilde \bU}_{X,:}]^\top\right)\\ 
	&  = 
	\sum\limits_{|Y| = m-p} (\det([{\bQ}_{X,:} {\tilde\bU}_{X,Y} ]))^2 \det(\widetilde{\matr{\Lambda}}_{Y}),
	\end{align*}
	where the last equality follows from the  Cauchy-Binet lemma.
\end{proof}

\subsection{Mixture representation}
\label{sec:mixture_rep_ppDPP}

In the mixture representation of L-ensembles (see Sec.~\ref{sec:mixture-representation}), one first samples a set of orthonormal vectors, forms a
projective kernel from these eigenvectors, and then samples a projection DPP
from that kernel. In that sense, a projection DPP is the trivial mixture in
which the same set of eigenvectors is always sampled. 
In this section, we will see that in partial projection DPPs, a subset of orthogonal vectors is included deterministically (coming from $\bV$), and the rest are subject to sampling, from the
part of $\bL$ orthogonal to $\bV$, hence the name \emph{partial projection}. 

In fact, examining Eq.~\eqref{eq:equivalence-spectral-extended-pDPP}, the kinship with the mixture representation of fixed-size L-ensembles should be clear 
upon comparison with equation \eqref{eq:mixture-representation-fixed}. The left-hand side of Eq.~\eqref{eq:equivalence-spectral-extended-pDPP} is the
probability mass function, and on the right-hand side we recognise a sum (over $Y$) of
probability mass functions for projection DPPs ($\det \left( 
\begin{bmatrix}    \bQ_{X,:} & \tbU_{X,Y}  \end{bmatrix}  \right)^2$) indexed by $Y$, weighted by a
product of eigenvalues ($\prod_{i \in Y} \widetilde{\lambda}_i$). This lets us represent the partial-projection DPP as a
probabilistic mixture. Contrary to fixed-size L-ensembles, some eigenvectors
appear with probability 1: the ones that originate from $\bV$ (represented by
$\bQ_{\X,:}$ in Eq.~\eqref{eq:equivalence-spectral-extended-pDPP}). The rest are
picked randomly according to the law given by the product $\Proba(\Y = Y) \propto \prod_{i \in Y} \tilde{\lambda}_i$.

Seen as a statement about probabilistic mixtures,  theorem~\ref{thm:equivalence-extended-spectral} provides a recipe
for sampling from $\X \sim \mppDPP{m} \ELE{\bL}{\bV} $. We summarize this recipe in the following statement: 

\begin{corollary}
  \label{cor:mixture-rep-ppdpp}
  Let $\ELE{\bL}{\bV}$ be a NNP, and $\bQ$, $\tbU$, $\tbLam$ and $q$ be as in Definition~\ref{def:ext_Lens}. 
  Let  $\X \sim\mppDPP{m} \ELE{\bL}{\bV}$ with $p \leq  m \leq  p+q$. Then, equivalently,
  $\X$ may be obtained from the following mixture process:
  \begin{enumerate}
  \item Sample $m-p$ indices $\Y \sim \mDPP{m-p}(\tbLam)$
  \item Form the projection matrix $\bM = \bQ\bQ^\top + \tbU_{:,\Y}
    (\tbU_{:,\Y})^\top$ (recall that $\bQ$ and $\tbU$ are orthogonal)
  \item Sample $\X | \Y \sim  \mDPP{m}(\bM)$
  \end{enumerate}
\end{corollary}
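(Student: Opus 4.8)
The plan is to show that the three–stage mixture produces exactly the probability mass function of $\X\sim\mppDPP{m}\ELE{\bL}{\bV}$, namely the one proportional to the saddle–point determinant in \eqref{eq:proba_mass_fs}. I would compute $\Proba(\X=X)$ for the mixture by conditioning on $\Y$, i.e. $\Proba(\X=X)=\sum_{Y}\Proba(\X=X\mid\Y=Y)\,\Proba(\Y=Y)$, evaluate each factor explicitly, and then recognize the resulting sum over $Y$ as the right–hand side of Theorem~\ref{thm:equivalence-extended-spectral}.

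For stage 1, since $\tbLam$ is diagonal, $\Y\sim\mDPP{m-p}(\tbLam)$ is a fixed–size L–ensemble (Definition~\ref{lemma:mDPP_via_L-ens}), so by Lemma~\ref{lem:esp} its mass function is $\Proba(\Y=Y)=\frac{\prod_{i\in Y}\widetilde{\lambda}_i}{e_{m-p}(\tbLam)}\Ind(|Y|=m-p)$. For stages 2--3 the key structural fact is that $\bQ^\top\tbU=\matr{0}$: indeed the columns of $\tbU$ are eigenvectors of $\tbL=(\bI-\bQ\bQ^\top)\bL(\bI-\bQ\bQ^\top)$ for nonzero eigenvalues, and since $\bQ^\top\tbL=\matr{0}$ this forces $\bQ^\top\tbU=\matr{0}$. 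Hence $\begin{bmatrix}\bQ&\tbU_{:,Y}\end{bmatrix}$ has orthonormal columns and $\bM=\bQ\bQ^\top+\tbU_{:,Y}(\tbU_{:,Y})^\top$ is the orthogonal projector onto their span, a projection of rank $p+(m-p)=m$. By Lemma~\ref{lem:marginal-kernel-proj} stage 3 is therefore a projection DPP, whose normalizing constant is $e_m(\bM)=1$ (Remark~\ref{rem:normalisation-constant-proj}); and because $|X|=m$ the matrix $\begin{bmatrix}\bQ_{X,:}&\tbU_{X,Y}\end{bmatrix}$ is square, so $\det\bM_X=\det\bigl(\begin{bmatrix}\bQ_{X,:}&\tbU_{X,Y}\end{bmatrix}\bigr)^2$. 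Thus $\Proba(\X=X\mid\Y=Y)=\det\bigl(\begin{bmatrix}\bQ_{X,:}&\tbU_{X,Y}\end{bmatrix}\bigr)^2\Ind(|X|=m)$.

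Combining the two stages, the marginal mass function of the mixture is $\Proba(\X=X)=\frac{\Ind(|X|=m)}{e_{m-p}(\tbLam)}\sum_{|Y|=m-p}\det\bigl(\begin{bmatrix}\bQ_{X,:}&\tbU_{X,Y}\end{bmatrix}\bigr)^2\prod_{i\in Y}\widetilde{\lambda}_i$, and Theorem~\ref{thm:equivalence-extended-spectral} identifies the sum (up to the constant $\det(\bV^\top\bV)$) with $(-1)^p\det\begin{pmatrix}\bL_X&\bV_{X,:}\\(\bV_{X,:})^\top&\matr{0}\end{pmatrix}$. Since the prefactor $1/(e_{m-p}(\tbLam)\det(\bV^\top\bV))$ does not depend on $X$, the mixture yields exactly the distribution \eqref{eq:proba_mass_fs}, proving the claim. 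The only genuinely delicate points are the orthogonality bookkeeping ($\bQ^\top\tbU=\matr{0}$, guaranteeing that $\bM$ has rank precisely $m$) and checking that the process is well posed, which needs $0\le m-p\le q$; both are ensured by the hypothesis $p\le m\le p+q$.
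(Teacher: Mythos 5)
Your proof is correct and takes essentially the same route as the paper: there the corollary is presented as an immediate reading of Theorem~\ref{thm:equivalence-extended-spectral} as a probabilistic mixture (the left-hand side being the pmf, the right-hand side a $\widetilde{\lambda}$-weighted sum of projection-DPP pmfs). Your write-up simply makes explicit the bookkeeping the paper leaves implicit — the diagonal fixed-size L-ensemble at stage 1, the orthogonality $\bQ^\top\tbU=\matr{0}$ making $\bM$ a rank-$m$ projector, and the unit normalisation of the projection DPP — all of which check out.
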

Note that at step 1 we only sample from the \emph{optional} part, since the
eigenvectors from $\bV$ need to be included anyways. The total number of
eigenvectors to include is $m$, so $m-p$ need to be sampled randomly. 

Using theorem \ref{thm:equivalence-extended-spectral}, as in the fixed-size
case, we arrive easily at the following mixture characterisation for the varying-size case: 

\begin{corollary}
  \label{cor:mixture-rep-ppDPP-varying}
  Let $\ELE{\bL}{\bV}$ be a NNP, and $\bQ$, $\tbU$ and $\tbLam$ be as in Definition~\ref{def:ext_Lens}. 
  Let $\X \sim \ppDPP \ELE{\bL}{\bV}$. Then, equivalently,
  $\X$ may be obtained from the following mixture process:
  \begin{enumerate}
  \item Sample indices $\Y \sim DPP(\tbLam)$
  \item Form the projection matrix $\bM = \bQ\bQ^\top + \tbU_{:,\Y} (\tbU_{:,\Y})^\top$
  \item Sample $\X | \Y \sim  \mDPP{p+|\Y|}(\bM)$
  \end{enumerate}
\end{corollary}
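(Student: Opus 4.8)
The plan is to reproduce the fixed-size argument behind Corollary~\ref{cor:mixture-rep-ppdpp} almost verbatim, the only new ingredient being that we now let the total size $p+|\Y|$ fluctuate by sampling $\Y \sim DPP(\tbLam)$ rather than conditioning on $|\Y|=m-p$. Concretely, I would start from the law of the three-step process, obtained by conditioning on the intermediate variable $\Y$:
\[
\Proba(\X = X) = \sum_{Y \subseteq \{1,\ldots,q\}} \Proba(\Y = Y)\, \Proba(\X = X \mid \Y = Y).
\]

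Next I would evaluate the two factors separately. For the first step, since $\tbLam$ is diagonal, Lemma~\ref{lem:diagonal-dpp-bernoulli} yields $\Proba(\Y = Y) = \bigl(\prod_{i \in Y}\widetilde{\lambda}_i\bigr)\big/\prod_{j=1}^{q}(1+\widetilde{\lambda}_j)$. For the third step I would first observe that the columns of $\bQ$ lie in the kernel of $\tbL$ whereas those of $\tbU$ are eigenvectors of $\tbL$ for the nonzero eigenvalues, so that $\bQ^\top\tbU = \matr{0}$ and $\bM = \bQ\bQ^\top + \tbU_{:,Y}(\tbU_{:,Y})^\top$ is an orthogonal projection of rank exactly $p+|Y|$. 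Hence $\mDPP{p+|Y|}(\bM)$ is a bona fide projection DPP, and by Lemma~\ref{lem:max-rank-dpp} together with Remark~\ref{rem:normalisation-constant-proj} (whose normalisation constant equals $1$),
\[
\Proba(\X = X \mid \Y = Y) = \det\left(\begin{bmatrix}\bQ_{X,:} & \tbU_{X,Y}\end{bmatrix}\right)^2 \Ind(|X| = p+|Y|).
\]

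Combining the two factors and noting that for a fixed $X$ with $|X|=m$ only the terms $|Y|=m-p$ survive the indicator, the mixture mass function collapses to
\[
\Proba(\X = X) = \frac{1}{\prod_{j=1}^{q}(1+\widetilde{\lambda}_j)} \sum_{Y,\,|Y|=m-p} \Bigl(\prod_{i \in Y}\widetilde{\lambda}_i\Bigr)\, \det\left(\begin{bmatrix}\bQ_{X,:} & \tbU_{X,Y}\end{bmatrix}\right)^2.
\]
At this point I would apply Theorem~\ref{thm:equivalence-extended-spectral} to identify the inner sum with $(\det(\bV^\top\bV))^{-1}(-1)^p$ times the saddle-point determinant defining $\ppDPP\ELE{\bL}{\bV}$. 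This exhibits the mixture law as proportional to the pp-DPP mass function, the proportionality factor $\bigl(\det(\bV^\top\bV)\prod_j(1+\widetilde{\lambda}_j)\bigr)^{-1}$ being independent of $X$.

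To conclude, I would simply note that the mixture process is a probability distribution by construction and the pp-DPP is normalised by definition, so two proportional normalised laws must coincide; no explicit evaluation of the normalising constant is then required. The step I expect to demand the most care is the size bookkeeping: one must check that allowing $|\Y|$ to vary according to $DPP(\tbLam)$ — rather than fixing it — produces the correct weight for \emph{every} admissible size $m$ with $p \le m \le p+q$ (sizes outside this range contributing zero on both sides). This is precisely where Theorem~\ref{thm:equivalence-extended-spectral} does the work, applied size-by-size, the crucial point being that the only leftover factor it introduces is the $X$-independent constant above.
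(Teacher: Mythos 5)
Your proof is correct and follows exactly the route the paper intends: the paper states this corollary as an immediate consequence of Theorem~\ref{thm:equivalence-extended-spectral} ``as in the fixed-size case'', and your write-up is precisely that argument made explicit --- Bernoulli sampling of $\Y$ via Lemma~\ref{lem:diagonal-dpp-bernoulli}, the conditional projection DPP with unit normalisation, and the size-by-size identification through the generalised Cauchy--Binet formula, including the boundary cases $m<p$ and $m>p+q$. Your shortcut of matching two normalised laws up to an $X$-independent constant is also sound (and indeed the constant is exactly $\det(\bV^\top\bV)\prod_{j=1}^q(1+\widetilde{\lambda}_j)$, consistent with Eq.~\eqref{eq:marginalisation-ppDPP-varying}).
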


The only difference from the fixed-size case is in step 1. Again, we include all eigenvectors from $\bV$
(they make up the $\bQ\bQ^\top$ part of the projection matrix $\bM$),
then the remaining ones are sampled from $\Y \sim
DPP(\tbLam)$, which is equivalent to including the eigenvector $\tilde{\vect{u}}_i$
with probability $\frac{\tilde{\lambda}_i}{1+\tilde{\lambda}_i}$.

\subsection{Properties}
\label{sec:ppDPP-properties}

\subsubsection{Normalisation}
\label{sec:ppdpp-marginal-kernel}

Using theorem \ref{thm:equivalence-extended-spectral}, the normalisation
constant is tractable both in the fixed-size and varying-size cases, as shown by the following corollary (see also \cite[Lemma 3.11]{BarthelmeUsevich:KernelsFlatLimit} for an alternative formulation).

\begin{corollary}
\label{cor:normalisation-ppDPP}
  Let $\ELE{\bL}{\bV}$ be a NNP, and $\widetilde{\bL}$ and $q$ as in Definition~\ref{def:ext_Lens}. For $m$ such that $p\leq m \le n$, one has:
  \begin{equation}
    \label{eq:marginalisation-ppDPP}
    (-1)^p \sum_{|X|=m} \det
    \begin{pmatrix}
      \bL_{X} & \bV_{X,:} \\
      (\bV_{X,:})^\top & \matr{0} 
    \end{pmatrix} = e_{m-p}(\tbL)\det(\bV^\top\bV)
  \end{equation}
  and
  \begin{equation}
  (-1)^p \sum_{X} \det
    \begin{pmatrix}
      \bL_{X} & \bV_{X,:} \\
      (\bV_{X,:})^\top & \matr{0} 
    \end{pmatrix} = \det(\bI + \tbL)\det(\bV^\top\bV)
    \label{eq:marginalisation-ppDPP-varying}
\end{equation}

\end{corollary}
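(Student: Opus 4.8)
The plan is to derive both identities by summing the pointwise formula of Theorem~\ref{thm:equivalence-extended-spectral} over the relevant collection of subsets $X$ and then exchanging the order of summation. For the fixed-size identity \eqref{eq:marginalisation-ppDPP}, I would first treat the range $p \le m \le p+q$, where the theorem applies directly, and substitute it into each summand:
\[
(-1)^p \sum_{|X|=m} \det\begin{pmatrix}\bL_X & \bV_{X,:}\\ (\bV_{X,:})^\top & \matr{0}\end{pmatrix} = \det(\bV^\top\bV)\sum_{|Y|=m-p}\left(\prod_{i\in Y}\widetilde{\lambda}_i\right)\sum_{|X|=m}\det\left(\begin{bmatrix}\bQ_{X,:} & \tbU_{X,Y}\end{bmatrix}\right)^2.
\]
The crux is then to evaluate, for each fixed $Y$, the inner sum over $X$.

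The key observation is that the $n\times m$ matrix $\begin{bmatrix}\bQ & \tbU_{:,Y}\end{bmatrix}$ has orthonormal columns. Indeed $\bQ^\top\bQ=\bI_p$ and $\tbU^\top\tbU=\bI_q$, while the columns of $\tbU$ lie in $\mspan(\bI-\bQ\bQ^\top)=\orth\bV=\orth\bQ$ because $\tbL=(\bI-\bQ\bQ^\top)\bL(\bI-\bQ\bQ^\top)$ forces $\mspan\tbL\subseteq\orth\bQ$, and eigenvectors with nonzero eigenvalue lie in $\mspan\tbL$. Hence $\begin{bmatrix}\bQ & \tbU_{:,Y}\end{bmatrix}^\top\begin{bmatrix}\bQ & \tbU_{:,Y}\end{bmatrix}=\bI_m$, and Cauchy-Binet (Lemma~\ref{lem:cauchy-binet}) gives $\sum_{|X|=m}\det(\begin{bmatrix}\bQ_{X,:} & \tbU_{X,Y}\end{bmatrix})^2=\det(\bI_m)=1$. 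Substituting, the right-hand side collapses to $\det(\bV^\top\bV)\sum_{|Y|=m-p}\prod_{i\in Y}\widetilde{\lambda}_i$, and since the nonzero eigenvalues of $\tbL$ are exactly $\widetilde{\lambda}_1,\ldots,\widetilde{\lambda}_q$, this sum equals $e_{m-p}(\tbL)$ by Lemma~\ref{lem:esp}, proving \eqref{eq:marginalisation-ppDPP} on $p\le m\le p+q$. For $p+q<m\le n$ the theorem does not literally apply, but both sides vanish: the right-hand side because $e_{m-p}(\tbL)=0$ once $m-p>q$, and the left-hand side by a rank argument, reducing $\bL$ to $\tbL$ via Lemma~\ref{lem:equivalence_ppdpp_ensemble} and applying Lemma~\ref{lem:det-saddlepoint}, so that the factor $\det(\bB^\top(X)\tbL_X\bB(X))$ is the determinant of an $(m-p)\times(m-p)$ matrix of rank at most $q<m-p$. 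This covers all $p\le m\le n$.

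For the varying-size identity \eqref{eq:marginalisation-ppDPP-varying} I would simply sum the fixed-size result over $m$, after noting that summands with $m<p$ vanish (then the $p$ columns $\begin{pmatrix}\bV_{X,:}\\ \matr{0}\end{pmatrix}$ of the saddle-point matrix are linearly dependent, living in an $m$-dimensional coordinate subspace with $m<p$). This yields
\[
(-1)^p\sum_{X}\det\begin{pmatrix}\bL_X & \bV_{X,:}\\ (\bV_{X,:})^\top & \matr{0}\end{pmatrix} = \det(\bV^\top\bV)\sum_{m=p}^{n}e_{m-p}(\tbL) = \det(\bV^\top\bV)\sum_{j=0}^{q}e_j(\tbL) = \det(\bV^\top\bV)\det(\bI+\tbL),
\]
where the last equality uses $\det(\bI+\tbL)=\prod_{i=1}^q(1+\widetilde{\lambda}_i)=\sum_{j}e_j(\tbL)$ together with $e_j(\tbL)=0$ for $j>q$. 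I do not expect a serious obstacle here; the only mildly delicate points are the orthonormality bookkeeping that makes the inner Cauchy-Binet sum equal to one, and the rank argument handling the tail range $m>p+q$. Everything else is a routine re-indexing and an appeal to the elementary symmetric polynomials.
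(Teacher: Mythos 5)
Your proposal is correct and follows essentially the same route as the paper: apply Theorem~\ref{thm:equivalence-extended-spectral}, exchange the sums, and observe that the inner sum over $X$ equals $1$ (your explicit orthonormality-plus-Cauchy--Binet computation is exactly the content of Remark~\ref{rem:normalisation-constant-proj}, which the paper cites instead), with the varying-size identity obtained by summing over $m$ via $\sum_j e_j(\tbL)=\det(\bI+\tbL)$. Your handling of the tail range $p+q<m\le n$ by the rank argument through Lemma~\ref{lem:equivalence_ppdpp_ensemble} is a slightly more explicit version of the paper's one-line appeal to Lemma~\ref{lem:det-saddlepoint}, and is valid.
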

\begin{proof}
If $m > p+q$, then the right-hand side is zero, as well as the left-hand side (by lemma~\ref{lem:det-saddlepoint}).  
 In the case $m \le p+q$,  from  theorem \ref{thm:equivalence-extended-spectral} we have:
  \begin{align*}
    (-1)^p \sum_{|X|=m} \det
    \begin{pmatrix}
      \bL_{X} & \bV_{X,:} \\
      (\bV_{X,:})^\top & \matr{0} 
    \end{pmatrix} &= 
                     \det(\bV^\top\bV) \sum_{|X|=m} \sum_{Y,|Y|=m-p} \det \left( 
    \begin{bmatrix}
      \bQ_{X,:} & \tbU_{X,Y}
    \end{bmatrix}  \right)^2
                   \prod_{i \in Y} \tilde{\lambda}_i \\
              &=  \det(\bV^\top\bV) \sum_{Y,|Y|=m-p} \quad \prod_{i \in Y} \tilde{\lambda}_i \\
              &= e_{m-p}(\tilde{\bL})\det(\bV^\top\bV),
  \end{align*}
  where the sum over $X$ is just the normalisation constant of a projection DPP
  (see remark \ref{rem:normalisation-constant-proj}). The proof for varying size
  is similar, using:
  $\sum_{Y} \prod_{i \in Y} \tilde{\lambda}_i = \prod_{i=1}^q (1 + \tilde{\lambda}_i).$
\end{proof}

Using these results, we easily obtain the distribution of the size of $|\X|$
for $\X\sim \ppDPP \ELE{\bL}{\bV}$. One may check that equivalent results
are obtained either using the mixture representation (see corollary
\ref{cor:mixture-rep-ppDPP-varying}) or the associated marginal kernel (via Eq.~\ref{eq:dpDPPmarginalkernel} and lemma~\ref{lemma:bernoulli}). 
\begin{corollary}
  Let $\X \sim \ppDPP \ELE{\bL}{\bV}$. Then
  \begin{equation}
    \label{eq:prob_size_ppdpp}
    \Proba(|\X|=m) =
    \begin{cases}
      0, & \mbox{\ if\ } m < p, \\
      \frac{e_{m-p}(\tilde{\bL})}{\det(\tbL + \bI)}, &\mbox{\ otherwise}.
    \end{cases}
  \end{equation}
\end{corollary}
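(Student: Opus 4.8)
The plan is to realise $\Proba(|\X| = m)$ directly as a ratio of the two quantities already computed in Corollary~\ref{cor:normalisation-ppDPP}. By the definition of the varying-size partial-projection DPP (Eq.~\eqref{eq:proba_mass}), summing the mass function over all subsets of a fixed cardinality gives
\[
\Proba(|\X| = m) = \frac{(-1)^p \sum_{|X|=m} \det\begin{pmatrix}\bL_{X} & \bV_{X,:} \\ (\bV_{X,:})^\top & \matr{0}\end{pmatrix}}{(-1)^p \sum_{X} \det\begin{pmatrix}\bL_{X} & \bV_{X,:} \\ (\bV_{X,:})^\top & \matr{0}\end{pmatrix}},
\]
so the entire statement reduces to substituting the normalisation formulas into the numerator and the denominator.

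For $m \geq p$, Eq.~\eqref{eq:marginalisation-ppDPP} evaluates the numerator to $e_{m-p}(\tbL)\det(\bV^\top\bV)$ while Eq.~\eqref{eq:marginalisation-ppDPP-varying} evaluates the denominator to $\det(\bI + \tbL)\det(\bV^\top\bV)$. The common factor $\det(\bV^\top\bV)$, which is nonzero because $\bV$ has full column rank, cancels and leaves $e_{m-p}(\tbL)/\det(\bI+\tbL)$, exactly as claimed. For $m < p$ I would instead argue that every summand in the numerator already vanishes: when $|X| = m < p$ the block $\bV_{X,:}$ is $m \times p$ and therefore column-rank-deficient, so there is a nonzero $\bz \in \R^p$ with $\bV_{X,:}\bz = \vect{0}$; the vector $(\vect{0};\bz)$ then lies in the kernel of the saddle-point matrix, forcing its determinant to be zero. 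Hence $\Proba(|\X|=m)=0$ whenever $m<p$.

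There is essentially no substantive obstacle here: the result is a bookkeeping consequence of Corollary~\ref{cor:normalisation-ppDPP}, and the only points worth a remark are the boundary conventions. The convention $e_0(\tbL)=1$ handles $m=p$, while $e_{m-p}(\tbL)=0$ for $m-p>q=\rank\tbL$ correctly encodes $|\X|\le p+q$ almost surely. As a consistency check one may re-derive the same law from the mixture representation of Corollary~\ref{cor:mixture-rep-ppDPP-varying}, where $|\X| = p + |\Y|$ with $\Y \sim DPP(\tbLam)$, so that the size formula Eq.~\eqref{eq:distr-size-dpp} applied to the diagonal ensemble $DPP(\tbLam)$ yields $\Proba(|\Y|=m-p) = e_{m-p}(\tbL)/\det(\bI+\tbL)$; alternatively, via the marginal kernel of Theorem~\ref{thm:L-ens_to_K} together with Lemma~\ref{lemma:bernoulli}, whose $p$ unit eigenvalues contribute a deterministic $p$ and whose remaining eigenvalues $\tilde{\lambda}_i/(1+\tilde{\lambda}_i)$ reproduce the size distribution of $DPP(\tbLam)$ by Lemma~\ref{lem:diagonal-dpp-bernoulli}.
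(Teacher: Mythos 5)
Your proof is correct and follows essentially the same route the paper intends: the statement is presented there as an immediate consequence of Corollary~\ref{cor:normalisation-ppDPP}, obtained by dividing Eq.~\eqref{eq:marginalisation-ppDPP} by Eq.~\eqref{eq:marginalisation-ppDPP-varying} and cancelling $\det(\bV^\top\bV)$, with the mixture representation and the marginal kernel mentioned only as alternative cross-checks, exactly as in your closing paragraph. Your explicit kernel-vector argument for the vanishing of the saddle-point determinant when $m<p$ is a detail the paper leaves implicit, and it is valid.
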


\subsubsection{Complements of DPPs}
\label{sec:complements_dpps}

A known (see e.g., \cite{kulesza2012determinantal}) result about DPPs is
that the complement of a DPP in $\Omega$ is also a DPP, i.e., if $\X$ is a DPP,
$\X^c = \Omega \setminus \X$ is also a DPP. We shall give a short proof and some
extensions. 

\begin{theorem}
  \label{thm:complement-DPP}
  Let $\X$ be a DPP with marginal kernel $\bK$. Then the complement of
  $\X$, noted $\X^c$, is also a DPP, and its marginal kernel is $\bI - \bK$. 
\end{theorem}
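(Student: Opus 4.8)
The plan is to work directly from the defining property of a DPP (Definition~\ref{def:dpp}): it suffices to check that $\bI - \bK$ is a legitimate marginal kernel and that, for every $A \subseteq \Omega$, one has $\Proba(A \subseteq \X^c) = \det (\bI - \bK)_A$. The first point is immediate: since $\bm{0} \preceq \bK \preceq \bI$, subtracting from $\bI$ gives $\bm{0} \preceq \bI - \bK \preceq \bI$, so $\bI - \bK$ is positive semidefinite with spectrum in $[0,1]$ and is a valid marginal kernel.

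For the inclusion probabilities, first I would rewrite the event $A \subseteq \X^c$ as $A \cap \X = \varnothing$, i.e.\ none of the elements of $A$ belongs to $\X$. Writing $E_i = \{i \in \X\}$, inclusion-exclusion gives
\[
\Proba(A \subseteq \X^c) = \Proba\Big(\bigcap_{i \in A} E_i^c\Big) = \sum_{B \subseteq A} (-1)^{|B|}\,\Proba(B \subseteq \X) = \sum_{B \subseteq A} (-1)^{|B|}\det \bK_B,
\]
where the last equality uses the DPP property $\Proba(B \subseteq \X) = \det \bK_B$.

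It then remains to identify this alternating sum of principal minors with $\det(\bI - \bK)_A$. Since $(\bI - \bK)_A = \bI_{|A|} - \bK_A$, I would invoke the generating-function expansion of principal minors: for any square matrix $\bM$,
\[
\det(\bI + t\bM) = \sum_{S} t^{|S|} \det \bM_S,
\]
which follows from $\det(\bI + t\bM) = \prod_i (1 + t\lambda_i) = \sum_m t^m e_m(\bM)$ together with the fact (Lemma~\ref{lem:esp}) that $e_m(\bM)$ equals the sum of the size-$m$ principal minors of $\bM$. Applying this with $\bM = \bK_A$ and $t = -1$ yields $\det(\bI - \bK_A) = \sum_{S \subseteq A} (-1)^{|S|}\det \bK_S$, matching the inclusion-exclusion sum and completing the argument.

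The only genuinely substantive step is this last determinantal identity; everything else is bookkeeping, so I expect no real obstacle, since the identity is a standard consequence of the principal-minor characterisation of elementary symmetric polynomials already recorded in Lemma~\ref{lem:esp}. The one point requiring a little care is the restriction of the expansion to subsets of $A$: because $\bK_A$ is itself a principal submatrix of $\bK$, its principal submatrices indexed by $S$ coincide with $\bK_S$ for every $S \subseteq A$, so the sums on both sides range over exactly the same collection of subsets.
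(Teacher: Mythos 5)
Your proof is correct, but it takes a genuinely different route from the paper. You argue directly from Definition~\ref{def:dpp}: inclusion--exclusion over the events $\{i \in \X\}$, $i \in A$, gives $\Proba(A \subseteq \X^c) = \sum_{B \subseteq A} (-1)^{|B|} \det \bK_B$, and the classical expansion $\det(\bI + t\bM) = \sum_S t^{|S|} \det \bM_S$ at $t=-1$ (which indeed follows from Lemma~\ref{lem:esp} applied to $\bK_A$, noting as you do that principal submatrices of $\bK_A$ indexed by $S \subseteq A$ are exactly the $\bK_S$) identifies this sum with $\det(\bI - \bK)_A$. Since $\X^c$ is a point process by construction and its inclusion probabilities match those of the kernel $\bI - \bK$ (which your spectral remark confirms is a valid marginal kernel), the conclusion follows with nothing beyond the definition and one standard determinant identity. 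The paper instead proceeds structurally: it first proves the claim for projection DPPs via block-determinant manipulations (Lemma~\ref{lem:block-det} and the block-inverse formula applied to the orthogonal matrix $\begin{pmatrix} \bU & \bV \end{pmatrix}$), then lifts it to general DPPs through the mixture representation, observing that complementing $\X$ corresponds to flipping each eigenvector's independent Bernoulli inclusion from $\pi_i$ to $1-\pi_i$. Your argument is shorter and more elementary; the paper's buys a constructive sampling description of the complement, which is what powers the subsequent corollaries — in particular the fixed-size variant $\X^c \sim \mDPP{n-m}(\tbL^\dag, \bZ)$, where no marginal kernel exists and your inclusion--exclusion argument would not apply, so the mixture viewpoint is genuinely needed there.
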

\begin{proof}
  We first prove this for projection DPPs. Let $\cA \sim \mDPP{m}(\bU\bU^t)$ for
  orthogonal $\bU$ of rank $m$. Then
  \[ \Proba(\cA^c=A) = \Proba(\cA = A^c) \propto \det( \bU_{A^c,:})^2.\] Note that for the
  probability to be non null we need $A$ to be of size $ n-m $.
  
  Let $\bV \in \R^{n \times (n-m)}$ so that $ \bI = \bU\bU^t + \bV\bV^t$. $ \bM
  = 
  \begin{pmatrix}
    \bU & \bV
  \end{pmatrix}$ is an orthogonal basis for $\R^n$ which we may partition as
  $\begin{pmatrix}
    \bU_{A^c,:} & \bV_{A^c,:} \\
    \bU_{A,:} & \bV_{A,:}
  \end{pmatrix}$
  By lemma \ref{lem:block-det}
  \[ \det \bM  =  \det \bU_{A^c,:} \det \left( \bV_{A,:} -
      \bV_{A^c,:} (\bU_{A^c,:})^{-1}  \bV_{A^c,:} \right). \]
  This gives
  \[ \Proba(\cA = A) \propto \det \left( (\bV_{A,:} - 
      \bV_{A^c,:} (\bU_{A^c,:})^{-1}  \bV_{A^c,:})^{-1} \right).\]
  By the inversion formula for block matrices this is equal to the lower-right
  block in $\bM^{-1} =\bM^t$, and so:
  \[ \Proba(\cA = A) \propto \det \left( (\bV_{A,:} \right)^2\] where we recognise a
  projection DPP ($\cA \sim \mDPP{n-m}(\bV\bV^t)$, as claimed). 
  We now use the mixture property to show the general case.
  In the general case,
  \[ \Proba(\X = X) = \sum_{\Y} \Proba(\Y) \det (\bU_{X,\Y})^2\]
  so that:
  \[ \Proba(\X^c = A) = \sum_{\Y} \Proba(\Y) \det (\bU_{A^c,\Y})^2
    =  \sum_{\Y^c} \Proba(\Y^c) \det (\bV_{A,\Y^c})^2 \]
  Since each eigenvector is picked independently in $\Proba(\Y)$ with probability
  $\pi_i$, picking each eigenvector independently with probability $1-\pi_i$
  produces a draw from $\Proba(\Y^c)$. $\Proba(\X^c = A)$ is therefore a DPP, and its kernel is $\bI-\bK$.
\end{proof}

Applying the theorem to L-ensembles we obtain: 
\begin{corollary}
  Let $\X \sim DPP(\bL)$, with $\bL$ a rank $p$ matrix and $p \leq n$. Then
  $\X^c \sim DPP(\bL^\dag, \bV)$ with $\bV$ a basis for $\orth \bL$. In
  particular, if $p = n$
  ($\bL$ is full rank), we have $\X^c \sim DPP(\bL^{-1})$.
\end{corollary}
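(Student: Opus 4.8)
The plan is to route everything through the marginal-kernel description and the dictionary between marginal kernels and extended L-ensembles. Since $\X \sim DPP(\bL)$ is an L-ensemble, Lemma~\ref{lem:LisDPP} gives it marginal kernel $\bK = \bL(\bI + \bL)^{-1}$. Theorem~\ref{thm:complement-DPP} then immediately yields that $\X^c$ is a DPP whose marginal kernel is $\bI - \bK$, and a one-line simplification gives
\[ \bK^c := \bI - \bK = \bI - \bL(\bI + \bL)^{-1} = (\bI + \bL)^{-1}. \]
The remaining task is purely to recognise this marginal kernel as an extended L-ensemble, for which Theorem~\ref{thm:K_to_L-ens} is the right tool: it tells me to collect the eigenvectors of the kernel associated with eigenvalue $1$ into $\bV$ and to set $\bL' = \bK^c(\bI - \bK^c)^\dag$.

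Next I would diagonalise $\bL = \sum_i \lambda_i \bu_i \bu_i^\top$ with orthonormal $\bu_i$, ordering so that $\lambda_1, \ldots, \lambda_p > 0$ and $\lambda_{p+1} = \cdots = \lambda_n = 0$. Because $\bK^c = (\bI + \bL)^{-1}$ is a function of $\bL$, it shares these eigenvectors and has eigenvalues $(1 + \lambda_i)^{-1}$; these equal $1$ exactly when $\lambda_i = 0$, i.e. on $\ker \bL = \orth \bL$. Hence the matrix $\bV$ produced by Theorem~\ref{thm:K_to_L-ens} is precisely an orthonormal basis of $\orth \bL$ (of size $n \times (n-p)$), matching the statement.

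Finally I would compute $\bL' = \bK^c(\bI - \bK^c)^\dag$ in the same eigenbasis, which is the only genuine calculation: for $i \le p$ the two factors contribute $(1+\lambda_i)^{-1}$ and $\lambda_i^{-1}(1+\lambda_i)$ respectively, multiplying to $\lambda_i^{-1}$, while for $i > p$ the factor $\bI - \bK^c$ has a zero eigenvalue, so the pseudo-inverse kills that direction and $\bL'$ has eigenvalue $0$ there. Thus $\bL' = \sum_{i \le p} \lambda_i^{-1} \bu_i \bu_i^\top = \bL^\dag$, and Theorem~\ref{thm:K_to_L-ens} gives $\X^c \sim DPP(\bL^\dag, \bV)$. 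The special case $p = n$ then falls out: $\orth \bL = \{0\}$, so $\bV$ is empty and $\bL^\dag = \bL^{-1}$, whence $\X^c \sim DPP(\bL^{-1})$ is an ordinary L-ensemble. The only point requiring care is the bookkeeping across the zero eigenvalues---keeping straight that the eigenvalue-$1$ subspace of $\bK^c$, the kernel of $\bI - \bK^c$, and $\orth \bL$ all coincide---together with the observation that $\bK^c$ and $(\bI - \bK^c)^\dag$ are simultaneously diagonalised by the eigenvectors of $\bL$, which is what licenses multiplying eigenvalues directly instead of reasoning about a general product involving a pseudo-inverse.
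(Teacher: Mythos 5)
Your proof is correct and follows exactly the route the paper intends: the corollary is stated as an immediate application of Theorem~\ref{thm:complement-DPP}, giving the complement marginal kernel $\bI - \bL(\bI+\bL)^{-1} = (\bI+\bL)^{-1}$, after which Theorem~\ref{thm:K_to_L-ens} identifies the NNP, with the eigenvalue bookkeeping (eigenvalue-$1$ subspace of $(\bI+\bL)^{-1}$ equals $\orth \bL$, and $\bK^c(\bI-\bK^c)^\dag = \bL^\dag$ in the shared eigenbasis) carried out just as you do. Your explicit verification that the simultaneous diagonalisation licenses the eigenvalue-by-eigenvalue computation with the pseudo-inverse is exactly the detail the paper leaves implicit.
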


For extended L-ensembles this generalises to:
\begin{corollary}
  Let $\X \sim DPP \ELE{\bL}{\bV} $, and let $\bZ$ be a basis for $\orth \tbL
  \setminus \mspan \bV $. Then 
  $\X^c \sim DPP(\tbL^\dag, \bZ)$. 
\end{corollary}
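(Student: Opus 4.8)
The plan is to reduce the statement to a comparison of marginal kernels, using the two equivalences already in hand. By Theorem~\ref{thm:L-ens_to_K}, the DPP $\X \sim DPP\ELE{\bL}{\bV}$ has marginal kernel $\bK = \bQ\bQ^\top + \tbL(\bI + \tbL)^{-1}$, and by Theorem~\ref{thm:complement-DPP} its complement $\X^c$ is a DPP with marginal kernel $\bI - \bK$. It therefore suffices to check that the extended L-ensemble $DPP(\tbL^\dag, \bZ)$ is well-defined and has marginal kernel exactly $\bI - \bK$, again via Theorem~\ref{thm:L-ens_to_K}.

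First I would record the orthogonal decomposition underlying everything. Reading $\orth\tbL \setminus \mspan\bV$ as $\orth\tbL \cap \orth\bV$, and taking $\bQ$, $\tbU$, $\tbLam$ as in Definition~\ref{def:ext_Lens}, the three subspaces $\mspan\bQ$ (dimension $p$), $\mspan\tbU$ (dimension $q$, the range of $\tbL$), and $W := \mspan\bZ = \orth\tbL \cap \orth\bV$ (dimension $n-p-q$) are mutually orthogonal and span $\R^n$. Here one uses that $\mspan\bV = \mspan\bQ$, that the columns of $\bQ$ lie in $\ker\tbL$, and that $\bQ^\top(\bI - \bQ\bQ^\top) = 0$ forces $\bQ^\top\tbU = \matr{0}$, so $\mspan\bQ \perp \mspan\tbU$. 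On this decomposition $\bK$ is the identity on $\mspan\bQ$, acts as $\widetilde{\lambda}_i/(1+\widetilde{\lambda}_i)$ on the eigendirection $\tilde{\bu}_i$ of $\tbL$, and is $\matr{0}$ on $W$; hence $\bI - \bK$ is $\matr{0}$ on $\mspan\bQ$, acts as $1/(1+\widetilde{\lambda}_i)$ on $\tilde{\bu}_i$, and is the identity on $W$.

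Next I would apply Theorem~\ref{thm:L-ens_to_K} to the pair $\ELE{\tbL^\dag}{\bZ}$. Since $\tbL^\dag = \tbU\tbLam^{-1}\tbU^\top$ is positive semi-definite it is conditionally positive semi-definite with respect to any $\bZ$, and $\bZ$ has full column rank as a basis, so $\ELE{\tbL^\dag}{\bZ}$ is a valid NNP. The crucial simplification is that $\mspan\bZ = W$ is orthogonal to the range $\mspan\tbU$ of $\tbL^\dag$, so if $\bQ'$ is an orthonormal basis of $W$ then $\bQ'^\top\tbU = \matr{0}$ and projecting off $\mspan\bZ$ leaves $\tbL^\dag$ unchanged: the matrix $\widetilde{(\tbL^\dag)}$ of Definition~\ref{def:ext_Lens} equals $\tbL^\dag$ itself, with eigenvectors $\tbU$ and eigenvalues $1/\widetilde{\lambda}_i$. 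Theorem~\ref{thm:L-ens_to_K} then gives the marginal kernel of $DPP(\tbL^\dag,\bZ)$ as $\bQ'\bQ'^\top + \tbL^\dag(\bI + \tbL^\dag)^{-1}$, which on the same decomposition is the identity on $W$, equals $(1/\widetilde{\lambda}_i)/(1 + 1/\widetilde{\lambda}_i) = 1/(1+\widetilde{\lambda}_i)$ on $\tilde{\bu}_i$, and is $\matr{0}$ on $\mspan\bQ$ --- eigenspace by eigenspace identical to $\bI - \bK$.

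Comparing the two symmetric matrices on the common eigenspaces establishes $\bQ'\bQ'^\top + \tbL^\dag(\bI+\tbL^\dag)^{-1} = \bI - \bK$, which closes the argument. The only delicate point, and the one I expect to require the most care, is the bookkeeping of the three orthogonal subspaces together with the fact that the pseudo-inverse exchanges the zero and nonzero eigendirections of $\tbL$; once the splitting $\R^n = \mspan\bQ \oplus \mspan\tbU \oplus W$ is established, complementation simply sends each marginal eigenvalue $\mu$ to $1-\mu$, and the three values $1$, $\widetilde{\lambda}_i/(1+\widetilde{\lambda}_i)$, $0$ map respectively to $0$, $1/(1+\widetilde{\lambda}_i)$, $1$, matching $DPP(\tbL^\dag,\bZ)$ exactly.
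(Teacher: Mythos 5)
Your proof is correct and follows exactly the route the paper intends for this corollary (which it states without explicit proof as a consequence of Theorem~\ref{thm:complement-DPP}): pass to marginal kernels via Theorem~\ref{thm:L-ens_to_K}, complement with $\bI - \bK$, and match eigenspaces on the splitting $\R^n = \mspan\bQ \oplus \mspan\tbU \oplus W$. The only cosmetic difference is that you verify the candidate NNP $\ELE{\tbL^\dag}{\bZ}$ forward through Theorem~\ref{thm:L-ens_to_K}, whereas one could equivalently read the NNP off from $\bI-\bK$ via Theorem~\ref{thm:K_to_L-ens}; your key observations --- that $\mspan\bZ \perp \mspan\tbU$ so that projecting leaves $\tbL^\dag$ unchanged, and that the pseudo-inverse sends $\widetilde{\lambda}_i \mapsto 1/\widetilde{\lambda}_i$ so that $1-\mu$ matches eigenvalue by eigenvalue --- are precisely the substance of the argument.
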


The following fixed-size variant is new: it states that the complement
of a fixed-size DPP is also a fixed-size DPP
\begin{proposition}
  Let $\X \sim \mDPP{m} \ELE{\bL}{\bV} $, and let $\bZ$ be a basis for $\orth \tbL  \setminus \mspan \bV $. Then 
  $\X^c \sim \mDPP{n-m}(\tbL^\dag, \bZ)$.
\end{proposition}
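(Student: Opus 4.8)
The plan is to reuse the two-stage strategy from the proof of Theorem~\ref{thm:complement-DPP}: first settle the claim for a single projection DPP, then lift it to the partial-projection DPP through the mixture representation of Corollary~\ref{cor:mixture-rep-ppdpp}. I would begin by fixing the orthogonal geometry. Since $(\bI-\bQ\bQ^\top)\bQ = \matr{0}$, the columns of $\bQ$ lie in $\ker\tbL$ and are therefore orthogonal to the nonzero eigenvectors $\tbU$; hence $\mspan\bV = \mspan\bQ \subseteq \orth\tbL$, and $\orth\tbL \setminus \mspan\bV$ is the orthogonal complement of $\mspan\bV$ inside $\orth\tbL$, of dimension $n-p-q$. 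Choosing $\bZ$ orthonormal is harmless, because the law of $\mppDPP{n-m}\ELE{\tbL^\dag}{\bZ}$ depends on the second slot only through $\mspan\bZ$ (in Theorem~\ref{thm:equivalence-extended-spectral} the factor $\det(\bV^\top\bV)$ cancels after normalisation). With this choice the three blocks $\bQ,\tbU,\bZ$ form a full orthonormal basis of $\R^n$, so that $\bI = \bQ\bQ^\top + \tbU\tbU^\top + \bZ\bZ^\top$.

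Next I would record the two elementary facts needed. For projections, the first half of the proof of Theorem~\ref{thm:complement-DPP} already shows that if $\bM$ is a rank-$m$ orthogonal projection then the complement of $\mDPP{m}(\bM)$ is $\mDPP{n-m}(\bI-\bM)$. For the diagonal part, if $\Y \sim \mDPP{m-p}(\tbLam)$ then its complement $\Y^c$ in $\{1,\ldots,q\}$ satisfies $\Proba(\Y^c = W) \propto \prod_{i\in W^c}\tilde{\lambda}_i \propto \prod_{i\in W}\tilde{\lambda}_i^{-1}$, so $\Y^c \sim \mDPP{q-(m-p)}(\tbLam^{-1})$, where $\tbLam^{-1}=\diag(\tilde{\lambda}_i^{-1})$ is exactly the nonzero spectrum of $\tbL^\dag = \tbU\tbLam^{-1}\tbU^\top$.

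I would then assemble these. By Corollary~\ref{cor:mixture-rep-ppdpp}, sampling $\X$ means drawing $\Y \sim \mDPP{m-p}(\tbLam)$, forming the rank-$m$ projection $\bM = \bQ\bQ^\top + \tbU_{:,\Y}(\tbU_{:,\Y})^\top$, and drawing $\X\mid\Y \sim \mDPP{m}(\bM)$. Conditioning on $\Y$ and applying the projection fact gives $\X^c\mid\Y \sim \mDPP{n-m}(\bI-\bM)$, and using the basis decomposition,
\[
\bI - \bM = \bZ\bZ^\top + \tbU_{:,\Y^c}(\tbU_{:,\Y^c})^\top .
\]
This is precisely the step-2 projection in the mixture representation of $\mppDPP{n-m}\ELE{\tbL^\dag}{\bZ}$ with selector $\Y^c$: indeed, since $\bZ\perp\tbU$, the tilde-reduction of $\tbL^\dag$ with respect to $\bZ$ is again $\tbL^\dag$, so the NNP $\ELE{\tbL^\dag}{\bZ}$ has $\bQ'=\bZ$, $\tbU'=\tbU$, $\tbLam'=\tbLam^{-1}$. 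The diagonal fact then identifies the law of $\Y^c$ with the step-1 draw $\mDPP{q-(m-p)}(\tbLam^{-1})$ of that same mixture, and the sizes match since $q-(m-p) = (n-m)-(n-p-q)$. Identifying the two mixtures term by term yields $\X^c \sim \mppDPP{n-m}\ELE{\tbL^\dag}{\bZ}$.

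I expect the main obstacle to be purely bookkeeping: one must verify that $\bQ,\tbU,\bZ$ genuinely exhaust $\R^n$, that the index set $\Y^c$ is the correct selector in the complementary mixture, and that the eigenvalue inversion carried by $\tbL^\dag$ is exactly what the complement of the diagonal selection $\mDPP{m-p}(\tbLam)$ produces. Once the sizes are checked against $p\le m\le p+q$, no genuine analytic difficulty remains.
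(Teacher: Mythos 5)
Your proof is correct and follows essentially the same route as the paper's own (sketched) argument: run the projection-DPP complement step from Theorem~\ref{thm:complement-DPP} inside the mixture representation of Corollary~\ref{cor:mixture-rep-ppdpp}, observing that the complementary selector $\Y^c$ has law $\propto \prod_{i\in Y^c}\tilde{\lambda}_i^{-1}$, i.e. is a diagonal fixed-size DPP on the inverted spectrum, which is exactly the spectrum of $\tbL^\dag$. Your extra bookkeeping (the orthogonal decomposition $\bI = \bQ\bQ^\top + \tbU\tbU^\top + \bZ\bZ^\top$, the identification of the NNP $\ELE{\tbL^\dag}{\bZ}$, and the size check $q-(m-p)=(n-m)-(n-p-q)$) merely fills in details the paper's proof sketch leaves implicit.
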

\begin{proof}
  Proof sketch: repeat the proof of th. \ref{thm:complement-DPP} up to the mixture
  representation, where we note that since $p(\Y = Y) \propto \prod_{i \in Y} \lambda_i$,
  $p(\Y^c) \propto \prod_{j \in Y^C} \frac{1}{\lambda_j}$ which is again a
  diagonal fixed-size DPP. 
\end{proof}

\subsubsection{Partial Invariance}
\label{sec:ppdpp-invariance}

We parametrise partial-projection DPPs using a pair of matrices (the NNP $\ELE{\bL}{\bV}$), but this is an over-parameterisation since all
that matters is the linear space spanned by $\bV$, as the following makes clear: 
\begin{remark}
	\label{rem:first_inv}
  Consider a NNP $\ELE{\bL}{\bV}$. Let $\bV'=\bV\bR$ with $\bR \in \R^{p
    \times p}$ invertible. We have $\mspan \bV' = \mspan \bV$. Then $\X
  \sim \ppDPP\ELE{\bL}{\bV}$ and  $\X
  \sim \ppDPP\ELE{\bL}{\bV'}$  define the same point process.
  This also holds for $\X \sim \mppDPP{m}\ELE{\bL}{\bV}$ for any $m \geq p$.
\end{remark}
\begin{proof}
  This is clear from theorem \ref{thm:equivalence-extended-spectral} or the
  mixture representation of the partial-projection DPP. Nothing on the
  right-hand side of equation \eqref{eq:equivalence-spectral-extended-pDPP}
  is affected by replacing $\bV$ with a matrix with identical span. In
  particular, the distribution is invariant to rescaling of $\bV$ by any
  non-zero scalar. 
\end{proof}

Notice that this generalises a property of projection DPPs given in the
introduction (section \ref{sec:diag-and-proj}), which is that $\X \sim
\mDPP{m}(\bL)$ and $\X \sim \mDPP{m}(\bL')$ are the same if $\bL$ and $\bL'$
have the same column span and rank $m$. 

Another source of invariance in partial projection DPPs lies in $\bL$: we can
modify $\bL$ along the subspace spanned by $\bV$ without changing the distribution.
\begin{remark}
	\label{rem:second_inv}
  Consider a NNP $\ELE{\bL}{\bV}$. Let $\bL' = \bL + \bV\bm{X}^\top + \bm{Y}\bV^\top$ for any two matrices $\bm{X}, \bm{Y}
  \in \R^{n \times p}$. Then $\X \sim \ppDPP\ELE{\bL}{\bV}$ and $\X \sim
  \ppDPP\ELE{\bL'}{\bV}$ define the same random variable.
\end{remark}
\begin{proof}
Indeed, by Definition~\ref{def:ext_Lens}, we have $\widetilde{\bL' } = \tbL$.
Therefore, by lemma~\ref{lem:equivalence_ppdpp_ensemble}, the DPPs defined by  ${\bL' }$ and $\bL$ coincide. 
\end{proof}

\subsection{Examples}
\label{sec:ppdpp-examples}

We give here a few examples of partial projection DPPs and their NNPs.

\subsubsection{Partial projection DPPs as conditional distributions}

A simple example of a partial projection DPP arises when the columns of the matrix $\bV$ come from a canonical basis (i.e., each column of $\bV$ is a standard unit vector).
In this case, partial projection DPPs can be interpreted as a particular conditional of a DPP.
For simplicity, assume that $\matr{V}  =  \begin{bmatrix} \matr{I}_p & 0 \end{bmatrix}^{\top}$,
so that the projected $\matr{L}$ matrix  becomes
\[
\tbL =  \begin{bmatrix} 0& 0  \\ 0 & \bL_{\{\vect{x}_{p+1}, \ldots,\vect{x}_n\}}\end{bmatrix}.
\]

In this case, the mixture representation for pp-DPPs (resp. fixed-sized pp-DPPs) implies that:
\begin{itemize}
\item all the points  $\vect{x}_1, \ldots,\vect{x}_p$ are always sampled; 
\item  the remaining points are sampled according to the L-ensemble (resp. fixed-size L-ensemble) based on   $\matr{L}_{\{\vect{x}_{p+1}, \ldots,\vect{x}_n\}}$.
\end{itemize}
For example, in the  varying-size case $\X \sim \ppDPP \ELE{\bL}{\bV}$, the probability of sampling the remaining points is
\begin{equation}\label{eq:pp-dpp-conditional}
\Proba(\X \cap \{\vect{x}_{p+1}, \ldots,\vect{x}_n\} = X') \propto \det  \bL_{X'},
\end{equation}
which is linked  to a certain conditional distribution of the ordinary L-ensemble based on $\bL$
(see \cite[\S 2.4.3]{kulesza2012determinantal} for more details).

\subsubsection{Partial projection DPPs and conditional positive definite functions}
\label{sec:ppdpp-examples-cpdef}

An important generalisation of positive definite kernels is the notion of
conditional positive definite kernels (see for example
\cite{micchelli1986interpolation},\cite{wendland2004scattered}), especially in
interpolation problems with polynomial regularisation. Conditional positive
definite kernels generate conditionally positive definite matrices when
evaluated at a finite set of locations, just like positive definite kernels
generate positive definite matrices. We will show here that extended L-ensembles
let us construct DPPs based on conditional positive definite functions.

\begin{definition}
A function $f:\R^d \longrightarrow \R$ is conditionally positive definite of order $\ell$ if and only if, for any $n\in \mathbb{N}$, any $X= (\vect{x}_1,\ldots,\vect{x}_n) \in (\R^d)^n$, any $\vect{\alpha}\in \R^n$ satisfying $\sum_i \alpha_i \vect{x}_i^{\vect{\beta}}=0$ for all multi-indices $\vect{\beta}$  s.t. $|\vect{\beta}|<\ell$, the quadratic form
\[
 \sum_{i,j} \alpha_i \alpha_j f(\vect{x}_i-\vect{x}_j)
 \]
 is non-negative.
\end{definition}

Suppose now that we introduce ``Gram" matrices $\vect{L}_{X} = [f(\vect{x}_i-\vect{x}_j)]_{i,j}$, and the multivariate Vandermonde matrix   $\vect{V}_{\leq \ell-1}( X)$. Then, an equivalent definition is
\begin{definition}
A function $f:\R^d \longrightarrow \R$ is conditionally positive definite of order $\ell$ if and only if, for any $n\in \mathbb{N}$, any $X= (\vect{x}_1,\ldots,\vect{x}_n) \in (\R^d)^n$, the matrix $\vect{L}_{X}$ is conditionally positive definite with respect to $\vect{V}_{\leq \ell-1}(X)$.
\end{definition}
This extends the possible functions used to measure diversity in DPP sampling. For example, it can be shown that 
$f( \vect{x} ) = \phi (  \| \vect{ x} \|^2_2)$ 
where $\phi : \R^+\rightarrow \R$ is the so-called multiquadrics 
$(-1)^{\lceil \beta \rceil} (c^2 + r^2 )^\beta ; c,\beta >0, \beta \not\in
\mathbb{N}$ is conditional positive definite of order $\lceil \beta \rceil$. To
be explicit, we may for instance define  a valid extended L-ensemble based on a NNP $\ELE{\bL}{\ones}$ with $L_{ij}= - \sqrt{c^2 +
  \norm{\vect{x}_i-\vect{x}_j}^2}  $. Likewise, the so-called "thin-plate spline" $\phi(r)= (-1)^{k+1} r^{2k} \log(r)$ makes $f( \vect{x} ) = \phi (  \| \vect{ x} \|^2_2)$ a conditional positive definite function of order $k+1$  on $\R^d$.   

A last example of great interest for this paper is the case of $\phi(r)=(-1)^{\lceil \beta/2 \rceil}  r^\beta ; \beta >0, \beta \not\in 2\mathbb{N}$ which makes $f(\vect{x}) = (-1)^{\lceil \beta/2 \rceil}  \|\vect{x}\|^{2\beta} $ a conditional positive function of order $\lceil \beta/2 \rceil$. Indeed, we will encounter in sections \ref{sec:univariate-results} to \ref{sec:varying-size}  extended $L$-ensembles of the form $((-1)^r\bD^{(2r-1)} ; \vect{V}_{\leq r-1})$ where $\bD^{(2r-1)} = [\norm{\vect{x}_i-\vect{x}_j}]^{2r-1}_{i,j}$, for $r$ a positive integer, corresponding to $\beta=r-1/2$.

We stated above that a link exists to interpolation. To illustrate the link, suppose we want to interpolate points $((\vect{x}_1,y_1), \ldots,(\vect{x}_n,y_n)\in (\R^d \times \R)^n$ using the function $s(x)=\sum_i \alpha_i f(\vect{x}_i-\vect{x}_j) + \sum_{k=1}^{\PP_{\ell-1,d}} \beta_k p_k(\vect{x})$
where $f$ is a conditionally positive function of order $\ell$, and $p_k$, $k=1,\ldots, \PP_{\ell-1,d}$ is a basis for the set of polynomials of degree less  or equal than $\ell-1$.  The solution of this interpolation problem is then equivalent to the solution of the linear system
\[
  \begin{pmatrix}
      \bL_{X} & \bV_{\leq \ell-1}\\
     (\bV_{\leq \ell-1})^\top & \matr{0} 
    \end{pmatrix}
      \begin{pmatrix}
      \vect{\alpha} \\
     \vect{\beta} 
    \end{pmatrix}
    =
      \begin{pmatrix}
     \vect{y}\\
    \vect{0} 
    \end{pmatrix}
  \]
where we recover the matrix defining the $L$-ensemble in partial projection
DPPs. A DPP based on the conditional positive definite kernel $f$ will sample a
good design for interpolation, since the interpolation points are
selected such that the interpolation matrix is well-conditioned. This link
between DPP sampling and interpolation theory deserves to be further studied, but is beyond the scope of the paper.

\subsubsection{Roots of trees in uniform spanning random forests are partial projection DPPs}
\label{sec:roots-forests}

\begin{figure}[pt]
\begin{center}
\begin{picture}(0,0)\includegraphics{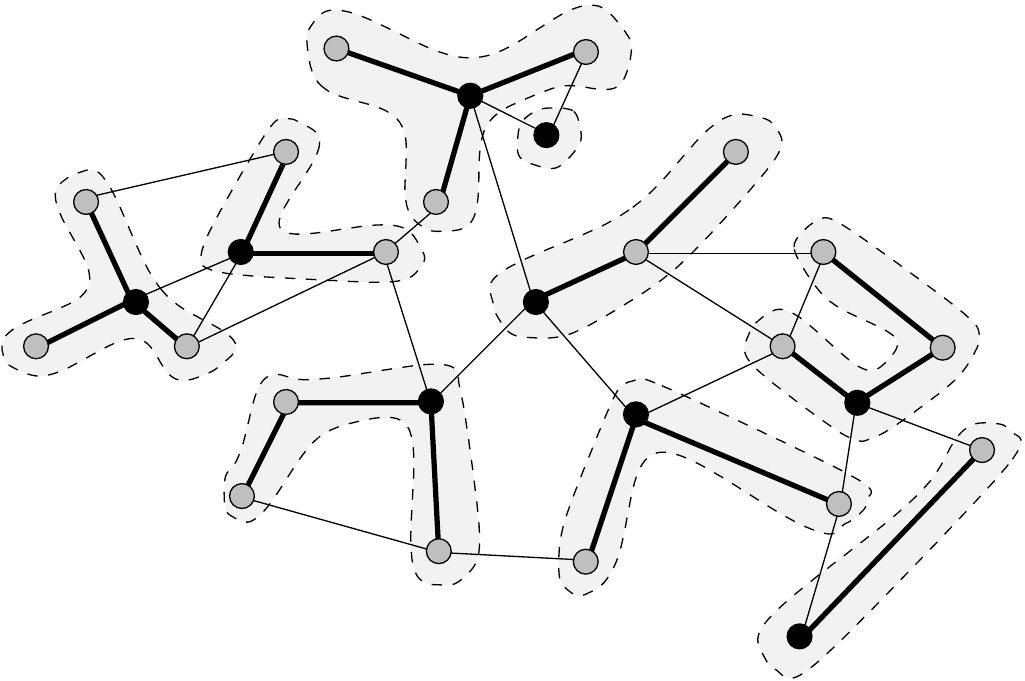}\end{picture}\setlength{\unitlength}{3315sp}\begingroup\makeatletter\ifx\SetFigFont\undefined \gdef\SetFigFont#1#2#3#4#5{\reset@font\fontsize{#1}{#2pt}\fontfamily{#3}\fontseries{#4}\fontshape{#5}\selectfont}\fi\endgroup \begin{picture}(5850,3872)(2937,-4753)
\end{picture} \end{center}
\caption{Roots of uniform random forests over a graph are distributed according to a partial projection DPP. Vertices or nodes are depicted in gray; edges as thin lines. A random forest is depicted : its trees are surrounded by light gray zones; edges of the trees are thicks black lines; roots are the black nodes. The forest is spanning the graph as each nodes of the graph appears once in a tree of the forest.}
\label{fig:URForest}
\end{figure}

It is known ({\it e.g. } \cite{avena2013some}) that the roots of the trees in a uniform random spanning forest over a graph with $n$ nodes and Laplacian $\vect{{\mathcal{L}}}$ are distributed according to a DPP with  marginal kernel  $\vect{K} = q (q \vect{I} +\vect{{\mathcal{L}}})^{-1} $ for some real parameter $q>0$. 
Figure \ref{fig:URForest} illustrates what a spanning forest over a graph is. 
Let us denote as $\lambda_1\geq \ldots \geq \lambda_n = 0$ the eigenvalues of the Laplacian, and $\{ \vect{u}_i \}_i$ the associated set of orthonormal  eigenvectors. It is well known that $\lambda_n=0$ for any graph:  $\vect{K}$ thus has at least one eigenvalue equal to $1$ and, as such, the associated DPP is not an L-ensemble. It can however be described by an extended L-ensemble:
\begin{proposition}
The set of roots in a uniform random spanning forest over a connected graph with Laplacian $\vect{{\mathcal{L}}}$ is distributed according to a partial projection DPP with NNP $( q \vect{{\mathcal{L}}}^{\dagger} ; \vect{1} )$, where $\dagger$ stands for the Moore-Penrose inverse.
\end{proposition}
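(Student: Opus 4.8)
The plan is to start from the known marginal kernel $\vect{K} = q(q\bI + \vect{\mathcal{L}})^{-1}$ and verify that the extended L-ensemble built on the NNP $(q\vect{\mathcal{L}}^{\dagger} ; \vect{1})$ has exactly this marginal kernel; since a DPP is determined by its marginal kernel (Definition~\ref{def:dpp}), this suffices. The main tool is Theorem~\ref{thm:L-ens_to_K}, which expresses the marginal kernel of any extended L-ensemble through the projected matrix $\tbL$.

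First I would record the spectral data. Since the graph is connected, $\ker \vect{\mathcal{L}}$ is one-dimensional and spanned by $\vect{1}$; writing the eigenvalues of $\vect{\mathcal{L}}$ as $\lambda_1 \geq \cdots \geq \lambda_{n-1} > \lambda_n = 0$ with orthonormal eigenvectors $\vect{u}_1, \ldots, \vect{u}_n$, we have $\vect{u}_n = \vect{1}/\sqrt{n}$. I would then check that $(q\vect{\mathcal{L}}^{\dagger} ; \vect{1})$ is a genuine NNP: the matrix $q\vect{\mathcal{L}}^{\dagger} = \sum_{i<n}(q/\lambda_i)\vect{u}_i\vect{u}_i^\top$ is positive semi-definite (hence conditionally positive semi-definite with respect to any matrix), and $\vect{1}$ trivially has full column rank, so $p=1$.

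Next I would assemble the quantities of Definition~\ref{def:ext_Lens}. An orthonormal basis of $\mspan \vect{1}$ is $\bQ = \vect{u}_n$, so $\bQ\bQ^\top = \vect{u}_n\vect{u}_n^\top$. Because $q\vect{\mathcal{L}}^{\dagger}$ already annihilates $\vect{u}_n$ and fixes each $\vect{u}_i$ for $i<n$, the two-sided projection leaves it unchanged, $\tbL = (\bI - \bQ\bQ^\top)(q\vect{\mathcal{L}}^{\dagger})(\bI - \bQ\bQ^\top) = q\vect{\mathcal{L}}^{\dagger}$. Computing the formula of Theorem~\ref{thm:L-ens_to_K} spectrally, $\tbL(\bI + \tbL)^{-1}$ has eigenvalue $(q/\lambda_i)/(1 + q/\lambda_i) = q/(q+\lambda_i)$ on $\vect{u}_i$ for $i<n$ and eigenvalue $0$ on $\vect{u}_n$, while $\bQ\bQ^\top$ contributes eigenvalue $1 = q/(q+\lambda_n)$ on $\vect{u}_n$. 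Summing,
\[
\bK = \bQ\bQ^\top + \tbL(\bI + \tbL)^{-1} = \sum_{i=1}^{n} \frac{q}{q+\lambda_i}\,\vect{u}_i\vect{u}_i^\top = q(q\bI + \vect{\mathcal{L}})^{-1},
\]
which is precisely the marginal kernel of the forest roots, establishing the claim.

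There is no serious obstacle here: the argument is a direct spectral verification. The one point requiring care is the bookkeeping at the null eigenvalue of $\vect{\mathcal{L}}$ --- it is exactly the eigenvalue-$1$ direction of $\bK$, which is why it must be absorbed into the $\vect{V} = \vect{1}$ block rather than into $\tbL$, and why the expression $q/(q+\lambda_i)$ correctly reproduces the value $1$ on $\vect{u}_n$ through the $\bQ\bQ^\top$ term. As a cross-check one could instead run Theorem~\ref{thm:K_to_L-ens} in reverse: the eigenvalue-$1$ eigenvector of $\bK$ is $\vect{u}_n \in \mspan \vect{1}$, and $\bK(\bI - \bK)^{\dagger}$ collapses to $q\vect{\mathcal{L}}^{\dagger}$, recovering the same NNP.
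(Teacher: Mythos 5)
Your proof is correct, but it runs the NNP--marginal-kernel equivalence in the opposite direction from the paper. The paper's proof is a one-step application of Theorem~\ref{thm:K_to_L-ens}: starting from the known marginal kernel $\bK = q(q\bI + \vect{\mathcal{L}})^{-1}$ of the roots process, it reads off the NNP as $\ELE{\bK(\bI-\bK)^{\dagger}}{\bV}$, where $\bV$ collects the eigenvalue-$1$ eigenvectors of $\bK$ --- for a connected graph, just the direction $n^{-1/2}\vect{1}$ --- and identifies $\bK(\bI-\bK)^{\dagger} = q\vect{\mathcal{L}}^{\dagger}$. You instead start from the candidate NNP $(q\vect{\mathcal{L}}^{\dagger};\vect{1})$, push it forward through Theorem~\ref{thm:L-ens_to_K} to recover $\bK = \bQ\bQ^{\top} + \tbL(\bI+\tbL)^{-1} = q(q\bI+\vect{\mathcal{L}})^{-1}$, and conclude by uniqueness of the law given the marginal kernel. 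That uniqueness step does hold on a finite ground set (the inclusion probabilities $\Proba(A\subseteq\X)=\det\bK_A$ determine the joint law by inclusion--exclusion), but it deserves that one-line justification rather than a bare citation of Definition~\ref{def:dpp}, which only defines the inclusion probabilities. The trade-off: the paper's route is shorter, since Theorem~\ref{thm:K_to_L-ens} hands you the NNP directly with no uniqueness argument needed; your route makes the spectral bookkeeping explicit --- in particular that $\tbL = q\vect{\mathcal{L}}^{\dagger}$ survives the two-sided projection unchanged, and that the null eigenvalue of $\vect{\mathcal{L}}$ is exactly the eigenvalue-$1$ direction of $\bK$ absorbed by the $\bQ\bQ^{\top}$ term --- which is genuinely illuminating. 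Your closing ``cross-check'' via $\bK(\bI-\bK)^{\dagger}$ is in fact precisely the paper's proof, so in effect you supplied both directions.
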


\begin{proof}
Applying theorem~\ref{thm:K_to_L-ens}, a DPP with marginal kernel $\bK$
  can be described by an extended L-ensemble based on the NNP $(\bL, \bV)$ with $\bV$ and $\bL$ verifying:
	\begin{itemize}
		\item the matrix $\bV$ concatenates all eigenvectors of $\bK$ associated to eigenvalue 1: in a connected graph, there is only one such eigenvalue and it is associated to eigenvector  $\vect{u}_n=n^{-1/2} \vect{1}$
		\item the matrix $\bL$ is equal to $\bK(\bI-\bK)^\dagger$, which is equal to $q\vect{{\mathcal{L}}}^{\dagger}$
	\end{itemize}
\end{proof}

\begin{remark}
  This example also provides a nice illustration for the properties of
  complements of DPPs (section \ref{sec:complements_dpps}). Since $\mathcal{L}$ is a positive-definite matrix, we may define  $\mathcal{C} \sim
  DPP(\mathcal{L})$. The complement of $\mathcal{C}$ is a DPP $\mathcal{C}^c
  \sim DPP \ELE{\mathcal{L}^\dag}{\mathbf{1}}$, which  from the result above 
  corresponds to the roots process. 
  $\mathcal{C}$ therefore samples every node except the roots of a random forest
  on the graph. 
\end{remark}

\section{Partial projection DPPs as limits}
\label{sec:ppDPP-as-limits}
The main goal of this section is to serve as a warm-up for the study of flat
limits, and illustrate on a simple case the mathematical
tools used later in the paper, as well as some of the peculiarities of limits of L-ensembles (such as dependence on scaling).

As stated above, pp-DPPs arise as limits of certain L-ensembles, and in this
section we exhibit one such limit: the L-ensemble based on the linear perturbation of a (low-rank)
positive semi-definite matrix; i.e., we consider L-ensembles based on matrices of the form:
\begin{equation}
  \label{eq:unbalanced-L-ensemble}
  \bL_\varepsilon \triangleq \varepsilon \bA + \bV \bV^\top
\end{equation}
where $\bA$ has full rank\footnote{The case where $\bA$ is not full rank can also be studied, but it is
	more burdensome and not much more informative} $n$ and $\bV$ has full column rank $p < n$. 

Thus $\bL_\varepsilon$ defined in  \eqref{eq:unbalanced-L-ensemble} is a regular matrix pencil.  One should think about this
scenario as constructing a kernel as a sum of (a) a few important features
contained in $\bV\bV^{\top}$ and (b) a generic kernel in $\bA$.

\subsection{Limit of fixed-size L-ensembles based on \texorpdfstring{$\varepsilon \bA + \bV \bV^\top$}{}}
We begin with the more straightforward fixed-size case.
We seek the
limiting process $\Xs$ of $\X_\varepsilon \sim \mDPP{m}(\bL_\varepsilon)$ as $\flatlim$.
The following theorem establishes the limiting distribution  using asymptotic expansions of the  determinants. 
\subsubsection{Limiting process}
\label{sec:linear-pert-fixed-size}
\begin{theorem}
  \label{thm:partial-proj-limit}
  Let $\Xe \sim \mDPP{m}(\bL_\varepsilon)$, with $\bL_\varepsilon$ as in Eq.~\eqref{eq:unbalanced-L-ensemble}. Then the limiting process is: 
\[
\Xe \rightarrow \Xs \sim 
\begin{cases}
\mDPP{m}(\bV\bV^{\top}),  & m \le p \\
\mppDPP{m} \ELE{\bA}{\bV}, & m > p. \\
\end{cases}  
\]
 \end{theorem}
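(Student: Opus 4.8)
The plan is to work directly with the unnormalised mass function $f_\varepsilon(X) = \det (\bL_\varepsilon)_X \, \Ind(|X| = m)$ of $\Xe \sim \mDPP{m}(\bL_\varepsilon)$, extract its leading order in $\varepsilon$ \emph{uniformly} over all subsets $X$ of size $m$, and then invoke Lemma~\ref{lem:TV-convergence}. Since $(\bL_\varepsilon)_X = \varepsilon \bA_X + \bV_{X,:}(\bV_{X,:})^\top$ is $m \times m$, the starting point is the factorisation
\[
\det (\bL_\varepsilon)_X = \varepsilon^m \det\!\left( \bA_X + \varepsilon^{-1} \bV_{X,:}(\bV_{X,:})^\top \right) = \varepsilon^m\, g_X(\varepsilon^{-1}),
\]
where $g_X(t) = \det(\bA_X + t\, \bV_{X,:}(\bV_{X,:})^\top)$. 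By Remark~\ref{rem:det-coef-degree}, $g_X$ is a polynomial in $t$ of degree at most $p$, so $\varepsilon^m g_X(\varepsilon^{-1})$ is a combination of powers $\varepsilon^{m-k}$ with $0 \le k \le p$. The smallest exponent that can possibly occur is $\varepsilon^{m-p}$, with coefficient $[t^p]g_X(t)$, and by Lemma~\ref{lem:det-coef-polynomial} (applied with $\bA_X$ and $\bV_{X,:}$) this coefficient equals $(-1)^p \det\begin{pmatrix} \bA_X & \bV_{X,:}\\ (\bV_{X,:})^\top & \matr{0}\end{pmatrix}$.

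For the case $m > p$, I would factor out the common exponent $\varepsilon^{m-p}$ and write $f_\varepsilon(X) = \varepsilon^{m-p}\bigl(f_0(X) + \O(\varepsilon)\bigr)$ with $f_0(X) = (-1)^p \det\begin{pmatrix} \bA_X & \bV_{X,:}\\ (\bV_{X,:})^\top & \matr{0}\end{pmatrix}$. The point is that $f_0(X)$ may vanish for some $X$ (precisely those for which $g_X$ has degree strictly less than $p$), but the expansion still has the single-exponent form required by Lemma~\ref{lem:TV-convergence}; those subsets then receive limiting probability zero. That lemma gives convergence to the process with mass $\propto f_0(X)\,\Ind(|X|=m)$, which is exactly the probability mass function of $\mppDPP{m}\ELE{\bA}{\bV}$ from Eq.~\eqref{eq:proba_mass_fs}. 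Here I would also record that $(\bA;\bV)$ is a genuine NNP: $\bA$ is positive (semi-)definite (this is what makes $\bL_\varepsilon \succeq 0$ for small $\varepsilon$ and hence $\mDPP{m}(\bL_\varepsilon)$ well-defined), hence conditionally positive definite with respect to $\bV$, while $\bV$ has full column rank; the normalising constant is strictly positive by Corollary~\ref{cor:normalisation-ppDPP} since $m - p \le n - p = \rank \widetilde{\bA}$.

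For the case $m \le p$, the same factorisation applies but the relevant coefficient is now that of the lowest power $\varepsilon^{m-\deg g_X}$, and $g_X$ can reach degree $m$: its $t^m$ coefficient is $\det\bigl(\bV_{X,:}(\bV_{X,:})^\top\bigr) = \det (\bV\bV^\top)_X$. I would therefore write $f_\varepsilon(X) = \varepsilon^{0}\bigl(\det (\bV\bV^\top)_X + \O(\varepsilon)\bigr)$, a uniform expansion with exponent $0$ whose coefficient vanishes exactly when $\bV_{X,:}$ lacks full row rank. Applying Lemma~\ref{lem:TV-convergence} then yields the limit $\mDPP{m}(\bV\bV^\top)$, and the denominator $e_m(\bV\bV^\top)$ is positive because $\rank(\bV\bV^\top) = p \ge m$. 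This settles both regimes (and the two descriptions agree at $m=p$, both being the projection DPP on $\mspan\bV$ by Lemma~\ref{lem:max-rank-dpp}).

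The main obstacle is the bookkeeping of the leading exponent: Lemma~\ref{lem:TV-convergence} demands a single common power of $\varepsilon$ across all outcomes, whereas the true order of vanishing of $\det (\bL_\varepsilon)_X$ genuinely depends on $X$ through the rank of $\bV_{X,:}$. The idea that unlocks the proof is to factor out the \emph{smallest} exponent occurring over all $X$ of size $m$ ($\varepsilon^{m-p}$ when $m>p$, $\varepsilon^0$ when $m\le p$) and to let the leading coefficient $f_0(X)$ be zero on the non-generic subsets; those then acquire zero limiting probability, consistently with the claimed limit. The identification of $f_0(X)$ with the saddle-point determinant via Lemma~\ref{lem:det-coef-polynomial} is precisely what ties the $m>p$ limit to the partial-projection DPP $\mppDPP{m}\ELE{\bA}{\bV}$.
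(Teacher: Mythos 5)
Your proposal is correct and follows essentially the same route as the paper's proof: the same factorisation $\det(\bL_\varepsilon)_X = \varepsilon^m \det\bigl(\bA_X + \varepsilon^{-1}\bV_{X,:}(\bV_{X,:})^\top\bigr)$, the same identification of the leading coefficient via Lemma~\ref{lem:det-coef-polynomial} together with Remark~\ref{rem:det-coef-degree}, and the same passage to the limit through the convergence lemmas of Section~\ref{sec:total-variation-convergence}. The only deviations are cosmetic: for $m \le p$ you apply Lemma~\ref{lem:TV-convergence} with a possibly vanishing leading coefficient where the paper invokes Lemma~\ref{lem:TV-convergence-diverging}, and you additionally verify that $\ELE{\bA}{\bV}$ is a genuine NNP and that the normalising constants are strictly positive --- details the paper leaves implicit.
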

\begin{proof}

First, we consider the case $m \le p$.
Note that the unnormalized probability mass function for the $L$-ensemble based on $\bL_\varepsilon$ is 
\[
f_{\varepsilon} (X)  = \det((\varepsilon \bA +  \bV\bV^\top)_{X}) =  \det( \varepsilon  \bA _{X} +  \bV_{X,:}( \bV_{X,:})^\top)=  \det( \bV_{X,:}(\bV_{X,:})^\top)  + \O(\varepsilon).
\]
Since $\rank \bV = p \ge m$, there exists a subset of rows $X_0$ such that 
\begin{equation}\label{eq:VVT_nonzero_minor}
\det( \bV_{X_0,:}(\bV_{X_0,:})^\top) \neq 0.
\end{equation}
Therefore, by lemma~\ref{lem:TV-convergence-diverging}, we get that $\Xe  \rightarrow \mDPP{m}(\bV\bV^{\top})$.

The case $m > p$ is more delicate, as eq. \eqref{eq:VVT_nonzero_minor} no longer holds true, and we need to determine the  order of $\varepsilon$ in the expansion of $f_{\varepsilon} (X)$.
For this, we can invoke lemma~\ref{lem:det-coef-polynomial}  and remark~\ref{rem:det-coef-degree} to get

\begin{align*}
f_{\varepsilon} (X)  &   =\det( \varepsilon  \bA _{X} +  \bV_{X,:}( \bV_{X,:})^\top) = \varepsilon^{m} \det(  \bA _{X} + \varepsilon^{-1}   \bV_{X,:}( \bV_{X,:})^\top).\\
 &= 
 \varepsilon^{m} \left(\varepsilon^{-p}  (-1)^p\det
    \begin{pmatrix}
      \bA_{X} & \bV_{X,:} \\
     (\bV_{X,:})^\top & \matr{0} 
    \end{pmatrix} + \varepsilon^{-(p-1)} \ldots   \right)   \\ 
& = \varepsilon^{m-p} \left(  (-1)^p\det
    \begin{pmatrix}
      \bA_{X} & \bV_{X,:} \\
     (\bV_{X,:})^\top & \matr{0} 
    \end{pmatrix} +\O(\varepsilon) \right).
\end{align*}
By applying lemma \ref{lem:TV-convergence}, we get
 \begin{equation}
    \label{eq:extended-L-ensemble_bis}
    \Proba(\Xs = X) \propto  (-1)^p\det
    \begin{pmatrix}
      \bA_{X} & \bV_{X,:} \\
      (\bV_{X,:})^\top & \matr{0} 
    \end{pmatrix},
  \end{equation}
and hence $\Xe  \rightarrow \mppDPP{m} \ELE{\bA}{\bV}$.  
\end{proof}

\begin{remark}
  Note that if $m=p$  the limiting process is a projection DPP by lemma \ref{lem:max-rank-dpp}
\end{remark}

\subsection{A spectral view}
\label{sec:partial-proj-spectral}

As we show in this section, the limiting distribution in theorem~\ref{thm:partial-proj-limit} can be obtained using a completely different, and, in our opinion, more interpretable approach.

Recall the mixture representation of L-ensembles and fixed-size L-ensembles described in section
\ref{sec:mixture-representation}. Given a positive semi-definite matrix $\bL = \bL_\varepsilon$, one
first samples some eigenvectors of $\bL$, then builds a projection matrix $\bU_{:,\Y}(\bU_{:,\Y})^\top$ from
these eigenvectors, then samples a projection DPP from $\bU_{:,\Y}(\bU_{:,\Y})^\top$. We shall now study the asymptotic distribution of $\Xe$ from the
mixture point of view, using the spectral results of section
\ref{sec:basic-matrix-perturbation}.

Lemma \ref{lem:pert-eigenvalues} implies that the spectrum of $\bL_\varepsilon$ 
contains $p$ eigenvalues  $\lambda_{1}(\varepsilon), \ldots, \lambda_{p}(\varepsilon)$ of order $\O(1)$, and $n-p$ eigenvalues $\lambda_{p+1}(\varepsilon), \ldots, \lambda_{n}(\varepsilon)$ of order
$\O(\varepsilon)$.
In other words, their   expansion reads
\begin{equation}\label{eq:ev_2nd_order}
\lambda_i(\varepsilon) = \lambda_{i,0} +
\varepsilon\lambda_{i,1}+\O(\varepsilon^2),
\end{equation}
 where $\lambda_{i,0} \neq 0$ for $i \le p$,   and $\lambda_{i,0}$ is null otherwise.

In the case of fixed-size L-ensembles, in the mixture representation, the eigenvectors
are sampled according to the following law ($\Y_\varepsilon$ indexes the sampled eigenvectors): 
\begin{equation}\label{eq:diag-mdpp-asymptotics}
\Proba(\Y_\varepsilon = Y) \propto  \prod_{i\in Y} \lambda_i(\varepsilon) \cdot \Ind( |Y| = m),
\end{equation}
where $\lambda_i(\varepsilon)$ are as in \eqref{eq:ev_2nd_order}. 
Intuitively: if $m \leq p$, then all the sets $Y \subseteq \{ 1, \ldots, p\} $ have probability mass $\O(1)$. All other sets $Y$
have probability mass $\O(\varepsilon)$ or smaller. As $\flatlim$, the limiting
process must then only select $Y \subseteq \{ 1, \ldots, p\}$. If $m > p$, then the process
is forced to select some of the small eigenvalues, but then as few as possible:
the lowest possible order in $\varepsilon$ of the probability mass function is
$\O(\varepsilon^{m-p})$, which is obtained by having $\{ 1, \ldots, p \} \subset
Y$, and selecting the $m-p$ remaining ones at random. 
This discussion can be summarized as follows. 
\begin{proposition}\label{prop:kDPP_pencil_limit_spectral}
  If $m \leq p$, the limiting distribution of $\Y_\varepsilon$ is:
  \[
    \Proba(\Y_\star = Y) \propto  \prod_{i\in Y} \lambda_{i,0}
 \cdot \Ind\big(|Y| = m \text{ and } Y \subseteq \{1, \ldots, p\}\big)  
 \]
  As a special case, if $m=p$ then $\Y_\star=\{1, \ldots, p\}$ with probability 1. 
  
  If $m>p$ the limiting distribution of $\Y_\varepsilon$ is 
\[
\Proba(\Y_\star = Y) \propto   \prod_{i\in Y \cap \{p+1, \ldots, n\}} \lambda_{i,1}\cdot
\Ind\left(|Y| = m \text{ and }  \{1, \ldots, p\} \subset Y \right).
\]
\end{proposition}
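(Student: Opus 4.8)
The plan is to apply the ``smallest order wins'' principle of Lemma~\ref{lem:TV-convergence-diverging} to the unnormalized mass function $f_\varepsilon(Y) = \Ind(|Y|=m)\prod_{i\in Y}\lambda_i(\varepsilon)$ from \eqref{eq:diag-mdpp-asymptotics}, so that the entire argument reduces to reading off the order in $\varepsilon$ of $f_\varepsilon(Y)$ as $Y$ ranges over the $m$-subsets of $\{1,\ldots,n\}$. First I would split any admissible $Y$ according to how many ``large'' and ``small'' eigenvalue indices it contains, writing $k = |Y \cap \{p+1,\ldots,n\}|$, so that $|Y \cap \{1,\ldots,p\}| = m - k$. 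Using the expansion \eqref{eq:ev_2nd_order}, each factor with $i \le p$ contributes $\lambda_{i,0} + \O(\varepsilon)$ with $\lambda_{i,0}\neq 0$, while each factor with $i > p$ contributes $\varepsilon(\lambda_{i,1}+\O(\varepsilon))$. Multiplying these together gives
\[
f_\varepsilon(Y) = \varepsilon^{k}\Big(\prod_{i\in Y,\, i\le p}\lambda_{i,0}\Big)\Big(\prod_{i\in Y,\, i> p}\lambda_{i,1}\Big) + \O(\varepsilon^{k+1}),
\]
so that the order is $\eta_Y = k = m - |Y\cap\{1,\ldots,p\}|$, provided the displayed leading coefficient does not vanish.

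Next I minimize $\eta_Y$ over admissible $Y$. Since $\eta_Y = m - |Y\cap\{1,\ldots,p\}|$ and $|Y\cap\{1,\ldots,p\}|\le \min(m,p)$, the two regimes separate cleanly. If $m\le p$, the minimum $\eta_{\min}=0$ is attained exactly on the sets $Y\subseteq\{1,\ldots,p\}$, and Lemma~\ref{lem:TV-convergence-diverging} yields $\Proba(\Y_\star = Y)\propto\prod_{i\in Y}\lambda_{i,0}\,\Ind(|Y|=m,\ Y\subseteq\{1,\ldots,p\})$; when $m=p$ there is a single such set, $\{1,\ldots,p\}$, which therefore has probability one. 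If $m>p$, the minimum $\eta_{\min}=m-p$ is attained exactly on the sets with $\{1,\ldots,p\}\subset Y$; on these the constant factor $\prod_{i\le p}\lambda_{i,0}$ is common to all minimizers and cancels upon normalization, leaving $\Proba(\Y_\star=Y)\propto\prod_{i\in Y\cap\{p+1,\ldots,n\}}\lambda_{i,1}\,\Ind(|Y|=m,\ \{1,\ldots,p\}\subset Y)$, as claimed.

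The one point that needs care --- and the only genuine obstacle --- is justifying that the leading coefficient above is nonzero, so that $\eta_Y$ really equals $k$ and Lemma~\ref{lem:TV-convergence-diverging} applies with the stated $f_0$. For $i\le p$ this is immediate, since $\lambda_{i,0}\neq 0$ by Lemma~\ref{lem:pert-eigenvalues}. For $i>p$, which matters only in the regime $m>p$, I must check that $\lambda_{i,1}\neq 0$: by Lemma~\ref{lem:linear-pert-eigenvectors} applied with $\bA_0=\bV\bV^\top$ and $\bA_1=\bA$, the coefficients $\{\lambda_{i,1}\}_{i>p}$ are exactly the nonzero eigenvalues of $\widetilde{\bA}=(\bI-\bQ\bQ^\top)\bA(\bI-\bQ\bQ^\top)$, and these are all strictly positive because $\bA$ is conditionally positive definite with respect to $\bV$ (equivalently, $\widetilde{\bA}$ is positive definite on $\orth \bV$ and hence of rank $n-p$), which is the very condition making $\ELE{\bA}{\bV}$ a NNP and $\bL_\varepsilon$ positive semi-definite for small $\varepsilon$. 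In the regime $m\le p$ this subtlety does not even arise, since the minimizing sets involve only the large eigenvalues and the limit depends solely on the $\lambda_{i,0}$.
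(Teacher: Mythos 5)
Your proof is correct, but it is organized differently from the paper's. The paper's formal proof proceeds in two stages: it sets $\Z = \Y \cap \{1,\ldots,p\}$, first shows that $|\Z|$ concentrates (namely $\Proba(|\Z|=m)=1+\O(\varepsilon)$ when $m\leq p$, and $\Proba(|\Z|=p)=1+\O(\varepsilon)$ when $m>p$), and then applies the non-diverging Lemma~\ref{lem:TV-convergence} to the conditional law $\Y \,\big\vert\, |\Z|$. You instead make a single global pass: you read off the exact $\varepsilon$-order $\eta_Y = m - |Y\cap\{1,\ldots,p\}|$ of each unnormalized mass $f_\varepsilon(Y)$ and invoke Lemma~\ref{lem:TV-convergence-diverging} once, which is essentially a formalization of the ``intuitive'' paragraph the paper places just before the proposition. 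What your route buys is rigor on a point the paper leaves implicit: for $\eta_Y$ to be the \emph{exact} order (so that the $f_0$ in Lemma~\ref{lem:TV-convergence-diverging} is nonzero on the minimizers), one needs $\lambda_{i,1}\neq 0$ for $i>p$, and you correctly identify these coefficients, via Lemma~\ref{lem:linear-pert-eigenvectors}, as the eigenvalues of $(\bI-\bQ\bQ^\top)\bA(\bI-\bQ\bQ^\top)$. One small caution there: being a NNP only requires conditional positive \emph{semi-}definiteness, which would not by itself rule out $\lambda_{i,1}=0$; what actually guarantees strict positivity in this section is the standing assumption in Eq.~\eqref{eq:unbalanced-L-ensemble} that $\bA$ has full rank $n$ (i.e., $\bA\succ 0$), so that $\vect{x}^\top\bA\vect{x}>0$ on all of $\orth\bV$ and the projected matrix has rank exactly $n-p$. (Even in a degenerate semi-definite case the stated formula would survive so long as $\rank\big((\bI-\bQ\bQ^\top)\bA(\bI-\bQ\bQ^\top)\big)\geq m-p$, since sets containing a vanishing $\lambda_{i,1}$ would receive zero limiting mass either way.) With that assumption made explicit, your argument is complete and marginally more self-contained than the paper's.
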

\begin{proof}
  Let $\Z = \Y \cap \{1, \ldots, p \}$. We first characterise the limiting
  distribution of $|\Z|$, then the conditional $ \Y \big\vert |\Z|$. If $m \leq p$, we see that  
  $\Proba(|\Z| = m) = 1 +
  \O(\varepsilon)$, hence the conditional distribution is
\[
\Proba(\Y = Y \big\vert |\Z|=m) \propto\prod_{i\in Y \cap \{1,\ldots, m\} }\lambda_i(\varepsilon).
\]  
If $m>p$, we see that $\Proba(|\Z| = p) = 1 +  \O(\varepsilon)$,  the conditional distribution is 
\[
\Proba(\Y = Y \big\vert |\Z|=p) \propto  \prod_{i\in Y \cap \{p+1, \ldots, n\} }  \lambda_i(\varepsilon).
\]
In both cases, we may invoke lemma \ref{lem:TV-convergence} to complete the proof. 
\end{proof}

We now know how $\Y_\varepsilon$ is sampled in the limit. In parallel,
we have conditional distributions $\Xe | \Y_\varepsilon$ that are projection-DPPs. By lemma,
\ref{lem:linear-pert-eigenvectors} the
eigenvectors of $\bL_\varepsilon$ converge to  $\begin{bmatrix} \bQ& \tbU\end{bmatrix}$, where  $\bQ$ and  $\tbU$ are as in  Definition~\ref{def:ext_Lens} for the extended L-ensemble $\ELE{\bA}{\bV}$. 
This establishes the following: 
\begin{proposition}
  Let $\Xe \sim \mDPP{m}(\bL_\varepsilon)$, $\bL_\varepsilon$ as in \eqref{eq:unbalanced-L-ensemble}.
  Note
  $\bL_\varepsilon = \bU(\varepsilon)\bm{\Lambda}(\varepsilon)\bU(\varepsilon)^\top$ the
  eigendecomposition of $\bL_\varepsilon$. Then the mixture representation of $\Xe$, i.e.
  \begin{enumerate}
  \item $\Y_\varepsilon \sim \mDPP{m}(\bm{\Lambda}(\varepsilon))$,
  \item $\X_\varepsilon | \Y_\varepsilon \sim \mDPP{m}(\bM(\Y_{\varepsilon},\varepsilon))$ with
    $\bM(Y,\varepsilon)= \bU_{:,Y}(\varepsilon)(\bU_{:,Y}(\varepsilon))^\top$,
  \end{enumerate}
  has the limit:
  \begin{enumerate}
  \item $\Y'_\star \sim \mDPP{m-p}(\widetilde{\bm{\Lambda}})$,
  \item $\X_\star | \Y'_\star \sim \mDPP{m}(\bM(\Y'_{\star},\varepsilon))$ with
    $\bM(Y')= \bQ\bQ^\top + \tbU_{:,Y'}(\tbU_{:,Y'})^\top$.
  \end{enumerate}
which is equivalent to corollary \ref{cor:mixture-rep-ppdpp}.    
\end{proposition}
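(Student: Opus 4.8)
The plan is to assemble the proposition directly from the two ingredients established in the paragraphs just above: the limiting law of the index selection $\Y_\varepsilon$ furnished by Proposition~\ref{prop:kDPP_pencil_limit_spectral}, and the limit of the eigenvectors furnished by Lemma~\ref{lem:linear-pert-eigenvectors}. I would treat the two stages of the mixture in turn, focusing on the case $p < m \le p+q$ which produces the partial-projection limit; the case $m \le p$ is handled identically but invokes the first branch of Proposition~\ref{prop:kDPP_pencil_limit_spectral} and yields an ordinary projection DPP.

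\emph{First stage.} Since $\bm{\Lambda}(\varepsilon)$ is diagonal, $\Y_\varepsilon \sim \mDPP{m}(\bm{\Lambda}(\varepsilon))$ is a diagonal fixed-size L-ensemble, whose limiting law is exactly Proposition~\ref{prop:kDPP_pencil_limit_spectral}: the $p$ indices carrying the $\O(1)$ eigenvalues are included almost surely, and the remaining $m-p$ indices are drawn from $\{p+1,\dots,n\}$ with probability proportional to $\prod_{i \in Y \cap \{p+1,\dots,n\}} \lambda_{i,1}$. The crucial identification is that these first-order coefficients form the spectrum of the projected matrix. Applying Lemma~\ref{lem:linear-pert-eigenvectors} with $\bA_0 = \bV\bV^\top$ and $\bA_1 = \bA$, the relevant projector satisfies $\bI - \bU_0\bU_0^\top = \bI - \bQ\bQ^\top$, since $\mspan(\bV\bV^\top) = \mspan\bV$ forces $\bU_0\bU_0^\top = \bQ\bQ^\top$; hence $\widetilde{\bA_1} = (\bI-\bQ\bQ^\top)\bA(\bI-\bQ\bQ^\top)$ coincides with the projected matrix attached by Definition~\ref{def:ext_Lens} to the NNP $\ELE{\bA}{\bV}$. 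Its nonzero eigenvalues are therefore $\widetilde{\lambda}_1,\dots,\widetilde{\lambda}_q$, the diagonal of $\widetilde{\bm{\Lambda}}$, with eigenvectors the columns of $\tbU$. Reindexing the selected small indices as $Y'\subseteq\{1,\dots,q\}$, the limiting law $\Proba(\Y'_\star = Y') \propto \prod_{i\in Y'}\widetilde{\lambda}_i\,\Ind(|Y'|=m-p)$ is precisely that of $\Y'_\star \sim \mDPP{m-p}(\widetilde{\bm{\Lambda}})$, the first step of the claimed limit.

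\emph{Second stage.} By Lemma~\ref{lem:linear-pert-eigenvectors}, a limiting eigenbasis of $\bL_\varepsilon$ is $\begin{bmatrix}\bQ & \tbU\end{bmatrix}$, with the columns of $\bQ$ attached to the $\O(1)$ eigenvalues and those of $\tbU$ to the nonzero $\O(\varepsilon)$ eigenvalues. Hence for any fixed selection $Y = \{1,\dots,p\}\cup Y'$ the projection matrix converges entrywise, $\bM(Y,\varepsilon) = \bU_{:,Y}(\varepsilon)\bU_{:,Y}(\varepsilon)^\top \to \bQ\bQ^\top + \tbU_{:,Y'}\tbU_{:,Y'}^\top = \bM(Y')$. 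Since the mass function of a projection DPP equals $\det(\bM_X)$ on $|X|=m$, a polynomial and hence continuous function of the entries of $\bM$, the conditional law $\Xe \mid (\Y_\varepsilon = Y)$ converges to $\mDPP{m}(\bM(Y'))$, giving the second step. Combining the two stages and matching them against the recipe of Corollary~\ref{cor:mixture-rep-ppdpp} yields the proposition.

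The one point I would be careful to address is that eigenvectors are determined only up to sign and, when eigenvalues are repeated, only up to an orthogonal change of basis inside each eigenspace. Rellich's theorem (Lemma~\ref{lem:rellichs-theorem}) supplies an analytic branch so that the limits above exist, but the decisive fact is that both the selection probabilities and the conditional projection DPPs depend on the chosen eigenvectors only through the projection matrices $\bM(Y,\varepsilon)$ and $\bM(Y')$, which are invariant under these ambiguities. This invariance is what makes the stage-by-stage convergence of the two-level sampler well-defined, and it is the main (if mild) obstacle in the argument.
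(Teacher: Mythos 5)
Your proposal is correct and follows essentially the same route as the paper, which likewise establishes the proposition by combining Proposition~\ref{prop:kDPP_pencil_limit_spectral} (limiting law of the eigenvalue-selection stage, with $\lambda_{i,1}$ identified as the spectrum of $(\bI-\bQ\bQ^\top)\bA(\bI-\bQ\bQ^\top)$) with Lemma~\ref{lem:linear-pert-eigenvectors} (convergence of the eigenbasis to $\begin{bmatrix}\bQ & \tbU\end{bmatrix}$), then reads off the two-stage limit against Corollary~\ref{cor:mixture-rep-ppdpp}. Your closing remark on the sign/basis ambiguity of eigenvectors being absorbed by the projection matrices $\bM(Y,\varepsilon)$ is a careful point the paper leaves implicit, but it does not change the argument.
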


Put more plainly, if $m \geq p$ the limiting fixed-size L-ensembles is a partial projection DPP: the top $p$
eigenvectors are included with probability 1, and the  $m-p$ others are picked
according to the law of a diagonal L-ensemble with diagonal entries equal to the
(non-zero) eigenvalues of $(\bI - \bQ\bQ^{\top})\bA (\bI - \bQ\bQ^\top)$, by lemma
\ref{lem:linear-pert-eigenvectors}.

\subsection{Limits of variable-size L-ensembles based on \texorpdfstring{$\bA  + \varepsilon^{-1}\bV\bV^t$}{}}
\label{sec:variable-size-pdpps-as-limits}

The variable-size version of the results requires a bit more care. In fixed-size
L-ensembles, the law of $\X$ is invariant to a rescaling of the positive semi-definite matrix it is based on: $\X \sim
\mDPP{m}(\bL)$ is equivalent to $\mDPP{m}(\alpha\bL)$ for any $\alpha > 0$. For
regular (variable-size) DPPs this is not true. That feature both enriches and
complicates a little the asymptotic analysis. 

\subsubsection{A trivial limit}
\label{sec:trivial-limit}

Let us start with a straightforward limit, namely $\Xe \sim
DPP(\bL_\varepsilon)$ based on the matrix pencil defined in  \eqref{eq:unbalanced-L-ensemble}.  There are several equivalent ways of obtaining the limiting process, but let us
use the mixture representation, to contrast with the fixed-size case. In the mixture representation, the
only difference between L-ensembles and fixed-size L-ensembles is in how one samples the
eigenvectors. In variable-size L-ensembles,  by lemma~\ref{lem:mixture_dpp},  these are sampled from a Bernoulli process with inclusion probability 
\[ \pi_i(\varepsilon)= \frac{\lambda_i(\varepsilon)}{1+\lambda_i(\varepsilon)} \]
Inserting   expansions of $\lambda_i(\varepsilon)$ from \eqref{eq:ev_2nd_order}, we can directly compute the limit of the inclusion probabilities:
\[ \pi_i(\varepsilon) = \frac{\lambda_{i,0}}{1+ \lambda_{i,0}} + \O(\varepsilon).
\]
Thus, the probability to sample each of the eigenvectors goes to $\frac{  \lambda_{i,0}}{1+ \lambda_{i,0}}$,   which is equal to $0$ for the last  $n-p$ eigenvectors. 
Since these events are independent, this implies that in $\flatlim$ (with
probability $1$) we only sample from the top $p$ eigenvectors of $\bL(\varepsilon)$.
By lemma \ref{lem:linear-pert-eigenvectors}, these top $p$ eigenvectors
themselves tend to the eigenvectors of $\bV \bV^\top$, which is enough to show:

\begin{proposition}
  \label{prop:standard-dpp-limit}
  Let $\Xe \sim DPP(\varepsilon\bA + \bV\bV^\top)$. Then the limiting process $\Xs$ is $\Xs \sim
  DPP( \bV \bV^\top)$.
\end{proposition}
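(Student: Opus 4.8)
The plan is to argue directly on the unnormalised probability mass function and invoke Lemma~\ref{lem:TV-convergence-diverging}, which is precisely designed for variable-size processes whose mass function mixes several powers of $\varepsilon$. Writing $f_\varepsilon(X) = \det\big((\bL_\varepsilon)_X\big) = \det\big((\varepsilon\bA + \bV\bV^\top)_X\big)$ for the unnormalised mass, so that $\Proba(\Xe = X) = f_\varepsilon(X)/\sum_Y f_\varepsilon(Y)$, I would first observe that every entry of the submatrix $(\bL_\varepsilon)_X$ is affine in $\varepsilon$. Since a determinant is a polynomial in its entries, each $f_\varepsilon(X)$ is itself a polynomial in $\varepsilon$ whose constant term is obtained by setting $\varepsilon = 0$; that is,
\[
f_\varepsilon(X) = \det\big((\bV\bV^\top)_X\big) + \O(\varepsilon).
\]

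The second step is to read off the Laurent order $\eta_X$ of each $f_\varepsilon(X)$ in the sense of Lemma~\ref{lem:TV-convergence-diverging}. Two cases arise. If $\det\big((\bV\bV^\top)_X\big) \neq 0$ then $\eta_X = 0$ and $f_0(X) = \det\big((\bV\bV^\top)_X\big)$. Otherwise the constant term vanishes and the expansion above gives $f_\varepsilon(X) = \O(\varepsilon)$, so $\eta_X \geq 1$; crucially, I do not need the exact order in this case, only that it is strictly positive. Consequently $\eta_{min} = 0$ and $\Phi_{min} = \{X : \det((\bV\bV^\top)_X) \neq 0\}$. Lemma~\ref{lem:TV-convergence-diverging} then yields a limiting law supported on $\Phi_{min}$ with $\Proba(\Xs = X) \propto \det\big((\bV\bV^\top)_X\big)$. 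Since the omitted sets carry a vanishing leading minor, this proportionality extends verbatim to all $X \subseteq \Omega$, so $\Proba(\Xs = X) = \det((\bV\bV^\top)_X)/\det(\bI + \bV\bV^\top)$, i.e.\ $\Xs \sim DPP(\bV\bV^\top)$. Positive semidefiniteness of $\bV\bV^\top$ (and of $\bL_\varepsilon$ for $\varepsilon \ge 0$) guarantees that all these minors are non-negative, so the limit is a genuine distribution.

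The only delicate point --- and the reason the statement is labelled trivial --- is the claim that sets with a vanishing leading minor genuinely drop out, i.e.\ that their order is strictly positive; this is immediate from the affine-in-$\varepsilon$ expansion above and requires none of the saddle-point machinery used in the fixed-size Theorem~\ref{thm:partial-proj-limit}. For interpretation I would also give the spectral rederivation foreshadowed in the text: by Lemma~\ref{lem:pert-eigenvalues} the spectrum of $\bL_\varepsilon$ splits into $p$ eigenvalues of order $\O(1)$ and $n-p$ of order $\O(\varepsilon)$, so in the mixture representation of variable-size L-ensembles (Lemma~\ref{lem:mixture_dpp}) the inclusion probabilities $\pi_i(\varepsilon) = \lambda_i(\varepsilon)/(1+\lambda_i(\varepsilon))$ of the latter eigenvectors tend to $0$. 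By Lemma~\ref{lem:linear-pert-eigenvectors} the surviving top $p$ eigenvectors converge to an orthonormal basis of $\mspan \bV$; since the conditional projection-DPP probabilities are continuous (indeed polynomial) in these eigenvectors, the whole mixture converges to the mixture representation of $DPP(\bV\bV^\top)$, in agreement with the direct computation.
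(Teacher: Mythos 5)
Your proof is correct, but your main argument takes a genuinely different route from the paper's. The paper deliberately proves this proposition via the mixture representation (Lemma~\ref{lem:mixture_dpp}): it expands the eigenvalues as in Lemma~\ref{lem:pert-eigenvalues}, observes that the Bernoulli inclusion probabilities $\pi_i(\varepsilon)=\lambda_i(\varepsilon)/(1+\lambda_i(\varepsilon))$ of the bottom $n-p$ eigenvectors tend to $0$, and uses Lemma~\ref{lem:linear-pert-eigenvectors} for convergence of the top $p$ eigenvectors --- exactly the argument you relegate to your closing ``interpretation'' paragraph (and which you actually tighten, since the paper's ``which is enough to show'' glosses over the continuity-of-the-conditional-projection-DPP step you make explicit). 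Your primary argument instead works directly on the unnormalised mass function: each $f_\varepsilon(X)=\det\bigl((\varepsilon\bA+\bV\bV^\top)_X\bigr)$ is a polynomial in $\varepsilon$ with constant term $\det\bigl((\bV\bV^\top)_X\bigr)$, so $\eta_{min}=0$ (e.g.\ via $f_\varepsilon(\varnothing)=1$), sets with vanishing leading minor have $\eta_X\geq 1$ and drop out by Lemma~\ref{lem:TV-convergence-diverging}, and the surviving law is $\Proba(\Xs=X)=\det\bigl((\bV\bV^\top)_X\bigr)/\det(\bI+\bV\bV^\top)$. This is sound and in fact more elementary: it needs no Rellich-type perturbation theory and no saddle-point expansion (it is the $m\le p$ half of the paper's own proof of Theorem~\ref{thm:partial-proj-limit}, transposed to varying size). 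What the paper's spectral route buys, and what your direct computation does not exhibit, is the mechanism needed immediately afterwards: tracking how rescaling by $\varepsilon^{-1}$ shifts the inclusion probabilities $\pi_i$ is what explains both the bound $\E(|\Xe|)\le p+\O(\varepsilon)$ and the non-trivial partial-projection limit of Proposition~\ref{prop:standard-dpp-limit-partial}, which is why the text announces it chooses the mixture view ``to contrast with the fixed-size case.''
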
The result is not very surprising. It has a noteworthy consequence, which is
that as $\flatlim$, the expected sample size will  be bounded by $p$ from above:  
\[ \E(|\Xe|) = \sum_{i=1}^n \pi_i(\varepsilon) = \sum_{i=1}^p \frac{  \lambda_{i,0}}{1+ \lambda_{i,0}} +
  \O(\varepsilon) \le p + \O(\varepsilon).\] 
If we wish to sample a larger number of points on average, then it appears that we
are out of luck.

\subsubsection{A more interesting limit}

We may instead look at a very similar limit: instead of taking $\bL_\varepsilon$, we will now take 
\[
\bL'_\varepsilon
= \varepsilon^{-1} \bL_\varepsilon = \bA + \varepsilon^{-1}\bV\bV^\top,
\]
which
carries the same intuition of giving more importance to $\bV\bV^\top$ than $\bA$. Since
we know the limiting eigenvalues and eigenvectors of $\bL_\varepsilon$, we know those of
$\bL'_\varepsilon$: the eigenvectors are unaffected, but the eigenvalues are scaled by
$\varepsilon^{-1}$. 

The scaling affects the probabilities of including eigenvectors, since we now have:
\[ \pi'_i(\varepsilon) = \frac{\varepsilon^{-1}  \lambda_i(\varepsilon)}{1+ \varepsilon^{-1} \lambda_i(\varepsilon)}
  =
  \begin{cases}
    1 + \O(\varepsilon), &  \mbox{\ if \ } i \leq p, \\
    \frac{ \lambda_{i,1}}{1+ \lambda_{i,1}}(1+ \O(\varepsilon)) , & \mbox{\ otherwise}. \\
  \end{cases}
\]
With the new scaling, the probability  of being included goes to $1$  for  the $p$ first eigenvectors, and  tends  to  
$\frac{\lambda_{i,1}}{1+ \lambda_{i,1}}$ for the remaining eigenvectors. We have
a partial-projection DPP, i.e., we obtain:
\begin{proposition}
  \label{prop:standard-dpp-limit-partial}
  Let $\Xe \sim DPP(\bA +
  \varepsilon^{-1} \bV\bV^\top)$. Then the limiting process is $\Xs \sim
  \ppDPP\ELE{\bA}{\bV}$
\end{proposition}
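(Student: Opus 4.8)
The plan is to prove the convergence through the mixture representation of variable-size L-ensembles (Lemma~\ref{lem:mixture_dpp}), mirroring the spectral argument already used in the fixed-size case and matching the outcome against Corollary~\ref{cor:mixture-rep-ppDPP-varying}. The decisive observation is that passing from $\bL_\varepsilon = \varepsilon\bA + \bV\bV^\top$ to $\bL'_\varepsilon = \bA + \varepsilon^{-1}\bV\bV^\top = \varepsilon^{-1}\bL_\varepsilon$ leaves the eigenvectors untouched and merely rescales the eigenvalues, so the limiting eigenbasis $\bU(\varepsilon) \to \begin{bmatrix}\bQ & \tbU\end{bmatrix}$ supplied by Lemma~\ref{lem:linear-pert-eigenvectors} (with $\bQ,\tbU$ as in Definition~\ref{def:ext_Lens} for the NNP $\ELE{\bA}{\bV}$) carries over verbatim. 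Here $\bA_0 = \bV\bV^\top$ has rank $p$ with $\mspan\bU_0 = \mspan\bV = \mspan\bQ$, and $\widetilde{\bA_1} = (\bI-\bQ\bQ^\top)\bA(\bI-\bQ\bQ^\top) = \tbL$, so the first-order coefficients $\lambda_{i,1}$ in \eqref{eq:ev_2nd_order} for $i>p$ are precisely the eigenvalues of $\tbL$, whose nonzero values are the $\tilde{\lambda}_j$. For the inclusion probabilities, the computation already carried out above shows $\pi'_i(\varepsilon) \to 1$ for $i\le p$ and $\pi'_i(\varepsilon) \to \tilde{\lambda}_{i-p}/(1+\tilde{\lambda}_{i-p})$ otherwise.

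Next I would assemble these limits in the mixture pmf. Writing $\Proba(\X_\varepsilon = X) = \sum_Y \bigl(\prod_{i\in Y}\pi'_i\prod_{i\notin Y}(1-\pi'_i)\bigr)\det(\bU_{X,Y})^2\,\Ind(|X|=|Y|)$, every factor converges and the sum over $Y$ is finite, so each term converges to its limit (this is the content of Lemma~\ref{lem:TV-convergence}). In the limit the eigenvector indices $\{1,\ldots,p\}$ belong to $Y$ almost surely, the remaining indices are drawn as an independent diagonal L-ensemble on the $\tilde{\lambda}_j$, i.e.\ $\Y' \sim DPP(\tbLam)$, and $\det(\bU_{X,Y})^2$ tends to $\det\left(\begin{bmatrix}\bQ_{X,:} & \tbU_{X,Y'}\end{bmatrix}\right)^2$. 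This is exactly the three-step recipe of Corollary~\ref{cor:mixture-rep-ppDPP-varying}, yielding $\Xs \sim \ppDPP\ELE{\bA}{\bV}$. An equivalent and slightly tidier route is to track the marginal kernel directly: $\bK'_\varepsilon = \bL'_\varepsilon(\bI+\bL'_\varepsilon)^{-1} = \sum_i \pi'_i(\varepsilon)\,\bu_i(\varepsilon)\bu_i(\varepsilon)^\top$ converges to $\bQ\bQ^\top + \tbL(\bI+\tbL)^{-1}$, which is the marginal kernel of $\ppDPP\ELE{\bA}{\bV}$ by \eqref{eq:dpDPPmarginalkernel}; since every inclusion probability $\det\bK_A$ is continuous in $\bK$, this again gives the claimed limit via Theorem~\ref{thm:L-ens_to_K}.

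I expect the only delicate step to be the bookkeeping that the first-order coefficients $\lambda_{i,1}$ for $i>p$ genuinely coincide with the spectrum of $\tbL$, and not merely match it in count; this is where Lemma~\ref{lem:linear-pert-eigenvectors} does the real work. Particular care is needed in the degenerate case $q = \rank\tbL < n-p$: then $n-p-q$ of the limiting eigenvectors have $\lambda_{i,1}=0$, so their eigenvalues are $\O(\varepsilon^2)$ and their rescaled inclusion probabilities $\pi'_i(\varepsilon)$ tend to $0$. These eigenvectors therefore drop out of the limiting mixture, consistently with $DPP(\tbLam)$ involving only the $q$ nonzero $\tilde{\lambda}_j$. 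The interchange of the limit with the finite sum over $Y$ is routine given the finite ground set.
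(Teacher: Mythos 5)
Your proof is correct and takes essentially the same approach as the paper: the paper's own argument is exactly the computation of the rescaled inclusion probabilities $\pi'_i(\varepsilon)$ in the mixture representation, using the eigenvalue expansion \eqref{eq:ev_2nd_order} and lemma~\ref{lem:linear-pert-eigenvectors} to identify the limiting eigenbasis $\begin{bmatrix}\bQ & \tbU\end{bmatrix}$ and the coefficients $\lambda_{i,1}$ with the spectrum of $\tbL$, then matching the result to corollary~\ref{cor:mixture-rep-ppDPP-varying}. Your supplementary marginal-kernel route via theorem~\ref{thm:L-ens_to_K} and your care about the degenerate case $q < n-p$ are sound but not needed here, since under the standing assumption that $\bA$ has full rank $n$ the matrix $\tbL = (\bI-\bQ\bQ^\top)\bA(\bI-\bQ\bQ^\top)$ has rank exactly $n-p$.
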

 Importantly, the expected sample size goes to:
\[ \E(|\Xs|) = \sum_i^n \pi_i=1 = p + \sum_{i=p+1}^{n} \frac{ \lambda_{i,1}}{1+ \lambda_{i,1}} \geq p, \]
so the rescaled L-ensemble allows for a larger sample size.

\subsection{Scaling L-ensembles to control sample size}
\label{sec:scaling-functions}

To sum up, partial-projection DPPs also arise as limits of L-ensembles. The
types of limits we obtain are analogous to the fixed-size case, but some
attention has to be paid to scaling, so that $|\X|$ is controlled in
expectation. 
The goal of this section is to motivate rescalings of the form
$\alpha\varepsilon^{-p}\bL_\varepsilon$. It is technical and may be skipped on
a first reading. Here we shall consider general kernels at an abstract level,
and not just the matrix pencils studied in the rest of the section.
 
In L-ensembles, the natural way of controlling the expected sample size is to multiply the
positive semi-definite matrix $\bL$ it is based on by a scalar. In other words, we need to rescale $\bL$ to  $\beta \bL$, with $\beta$
such that 
\begin{equation*}
  \E |\Xe| = \Tr \left( \beta \bL_\varepsilon ( \beta \bL_\varepsilon + \bI)^{-1}\right) = m,
\end{equation*}
where $m$ is the average sample size we would like to obtain. 
Rescaling by a scalar is a natural process if one thinks of the elements of $\bL$ as
representing similarity, which is defined on a ratio scale (i.e. the similarity
between $i$ and $j$ is actually $\frac{L_{ij}}{\sqrt{L_{ii}L_{jj}}}$, which is
invariant to rescaling by a scalar). 
The effect of rescaling is best seen from the point of view of the inclusion
probabilities of the eigenvectors (that we noted $\pi_i$ above). For $\Xe \sim DPP(\beta\bL_\varepsilon)$,
we have
\begin{equation}
  \label{eq:sum-of-probs-rescaled}
  \E |\Xe| = \sum_{i=1}^n \pi_i = \sum_{i=1}^n \frac{\beta\lambda_i(\varepsilon)}{1+\beta\lambda_i(\varepsilon)} = s_\varepsilon(\beta).
\end{equation}
It is not too hard to see that $s_\varepsilon(\beta)$ is a continous, monotonic
function of $\beta$ and that: 
\[0 = s_\varepsilon(0) \leq s_\varepsilon(\beta) < \rank \bL_\varepsilon \]
Because $s$ is monotonic, for every $\varepsilon$ there exists a unique $\beta$
such that $s_\varepsilon(\beta) = m$ for $m < \rank \bL_\varepsilon$. This
value of $\beta$ is an implicit function of $m$ and $\varepsilon$, which we
note $\beta^\star_m(\varepsilon)$. One may verify using the implicit function theorem
that $\beta^\star_m(\varepsilon)$ is continuous and differentiable.
In addition, it has an expansion in $\varepsilon$ as a
Puiseux series. To see why, note that $s_\varepsilon(\beta)=m$ may be rewritten
as a polynomial equation:
\begin{align*}
  \sum_{i=1}^n
  \frac{\beta\lambda_i(\varepsilon)}{1+\beta\lambda_i(\varepsilon)} = m \iff
  \sum_{i=1}^n
  \beta\lambda_i(\varepsilon) \prod_{j \neq i} (1+\beta\lambda_j(\varepsilon)) = m \prod_{j=1}^n (1+\beta\lambda_j(\varepsilon)),
\end{align*}
which is a polynomial in $\beta$, with coefficients that depend analytically on
$\varepsilon$ (via the $\lambda_i$'s). We call the solution $\beta^\star_m(\varepsilon)$ a
scaling function because it specifies how to rescale the matrix $\bL$ (as a
function of $\varepsilon$) so that $\E(|\Xe|)=m$ for all $\varepsilon$. 

Because $\beta^\star_m(\varepsilon)$ is the solution of a polynomial equation
with analytical coefficients, the Newton-Puiseux theorem 
states that the solution can be written (in an non-empty, punctured neighbourhood of 0, see
\cite{moro2002first}) as:
\begin{equation}
  \label{eq:puiseux-series}
  \beta^\star_m(\varepsilon) = \sum_{i=-s}^{\infty} \alpha_i \varepsilon^{i/c},
\end{equation}
where $c$ is some positive integer and $s$ determines the order of the
divergence at 0. This Puiseux series is simply a
Laurent series in $\varepsilon^{1/c}$. While we could go deeper in the study of
scaling functions, it would require introducing quite a bit of
background on Newton diagrams (which enable us to show for instance that $c=1$
in most cases). Instead, for the purposes of this article, we are content to note that scaling functions are
asymptotically of the form $\alpha \varepsilon^{-p}$ for some $\alpha$ and $p$
that depend on $m$. In the theorems below (section \ref{sec:varying-size}), we
study limits of L-ensembles rescaled by $\alpha \varepsilon^{-p}$, and describe what
happens as $p$ varies.

\subsection{A summary}
\label{sec:projection-DPPs-a-summary}
It may be helpful to take a step back and look broadly at the space of DPPs,
fixed-size DPPs, partial-projection DPPs and their relationships. Recall figure \ref{fig:venn-diagram}. Partial projection
DPPs can be thought of as forming part of the boundary of the space of DPPs.
Seen from the point of view of marginal kernels, they are on the boundary
of the set $\mathcal{K}$ of positive semi-definite matrices with eigenvalues
between 0 and 1 (since in a partial projection DPP, at least one of the
eigenvalues equals 1). Seen from the point of view of L-ensembles, partial
projection DPPs can be obtained by taking certain limits.
The following facts are useful to keep in mind:
\begin{itemize}
\item A projection DPP may be obtained by taking the limit in $\flatlim$ of the
   L-ensemble $\bL(\varepsilon) = \varepsilon^{-1} \bV \bV^\top$. The
  limiting DPP is a projection DPP, $\Xs \sim \mDPP{\rank \bV}(\bV\bV^\top)$. It
  has an L-ensemble as a fixed-size DPP, but not as a DPP (the L-ensemble
  diverges in the limit). 
\item A partial projection DPP may be obtained by taking the limit in $\flatlim$ of the
  L-ensemble $\bL(\varepsilon) = \bA + \varepsilon^{-1} \bV \bV^\top$.
  This is proposition \ref{prop:standard-dpp-limit-partial}.
\item A partial projection DPP with fixed-size $m$ may be  obtained by taking the
  limit in $\flatlim$ of a $\mDPP{m}$ with  $\bL(\varepsilon) = \varepsilon\bA + \bV \bV^\top$, if
  $m\geq \rank \bV$. This is theorem \ref{thm:partial-proj-limit}. If $m = \rank
  \bV$, then the limit is a projection DPP. 
\end{itemize}

\section{The flat limit of fixed-size L-ensembles (univariate case)}
\label{sec:univariate-results}

Now that we have introduced partial-projection DPPs, and seen how they arise
as limits in the specific case of pencil matrices, we have the requisite tools to deal with flat limits of L-ensembles in general. In this
section and the two following ones, we study L-ensembles based on kernel matrices taken in the flat limit. More specifically, Section~\ref{sec:univariate-results} starts gently with \emph{fixed-size L-ensembles} in the \emph{univariate} (the ground set $\Omega$ is a subset of the real line) case. Then, Section~\ref{sec:results-multivariate} extends these results to the \emph{multivariate} case ($\Omega\subseteq\mathbb{R}^d$, $d\geq1$), but still in the fixed-size context. Finally, Section~\ref{sec:varying-size} deals with the more involved limits of \emph{varying-size L-ensembles}, again first in the univariate case before extending to the multivariate case.

We begin by defining our objects of study, and summarise a few properties of
determinants in the flat limit, taken from
\cite{lee2015flatkernel,BarthelmeUsevich:KernelsFlatLimit}. We then apply these
results to study the flat limit of fixed-size L-ensembles, which as we will see depends
mostly on $r$, the smoothness parameter of the kernel. The section concludes
with some numerical results.

\subsection{Introduction}
\label{sec:1d-intro}

We focus on stationary kernels, as defined in section
\ref{sec:intro-kernels}, where $\varepsilon$ plays the role of an inverse scale
parameter. Thus, we consider L-ensembles based on matrices of the form
\[ \bL(\varepsilon) = [\kappa_\varepsilon(x_i,x_j)]_{i=1,j=1}^n\]
for a set of points $\Omega = \{x_1, \ldots, x_n\}$, all on the real line and all different from one another. 
From stationarity, the kernel function $\kappa_\varepsilon$ may be
written as:
\[ \kappa_\varepsilon(x_i,x_j) = f(\varepsilon|x_i-x_j|)\]
and we further assume that $f$ is analytic in a neighbourhood of 0. As in
equation \eqref{eq:kernel-expansion}, we expand the kernel in powers of $\varepsilon$ as:
\[ \kappa_\varepsilon(x_i,x_j) = f_0 +  \varepsilon f_1 |x_i-x_j| +
  \varepsilon^2 f_2 |x_i-x_j|^2  + \varepsilon^3 f_3|x_i-x_j|^3 +  \ldots\]
The expansion for individual entries may be represented in a more compact and
familiar manner in a matrix form:
\begin{equation}
  \label{eq:kernel-matrix-expansion}
  \bL(\varepsilon) = f_0 \bD^{(0)} + \varepsilon f_1 \bD^{(1)} + \varepsilon^2 f_2 \bD^{(2)} + \ldots
\end{equation}
where 
\[ \bD^{(p)} = [|x_i-x_j|^p]_{i,j}\]
Our goal is to characterise the limiting processes that arise from varying-size and fixed-size L-ensembles based on $\bL(\varepsilon)$ as $\flatlim$. One may recognise in Eq.~\eqref{eq:kernel-matrix-expansion} a more complex version of the linearly
perturbed matrix studied in section \ref{sec:ppDPP-as-limits}. It is indeed
useful to think of the terms $\varepsilon^i f_i \bD^{(i)}$ as containing
features that are increasingly down-weighted as $\flatlim$. The analysis is more
complicated than in the simple case above, notably because the matrices
$\bD^{(i)}$ are rank-deficient for even $i$ (up to some index depending on $n$) 
but invertible for odd $i$ \cite{BarthelmeUsevich:KernelsFlatLimit}. 
The smoothness order of the kernel (see section
\ref{sec:intro-kernels}) defines how soon in the decomposition the first
invertible matrix appears. For instance, if $r=2$ then $f_1 = 0$ and we get:
\[   \bL(\varepsilon) = f_0 \bD^{(0)} +
  \varepsilon^2 f_2 \bD^{(2)} + \varepsilon^3 f_3 \bD^{(3)} + \ldots  \]
If $n>2$, the first invertible matrix to appear in the expansion in
$\varepsilon$ is $\bD^{(3)}$, and it will lead to different asymptotic
behaviour than if the first invertible matrix had been $\bD^{(1)}$ ($r=1$) or
$\bD^{(5)}$ ($r=3$). If the kernel is completely smooth, then:
\[   \bL(\varepsilon) = \sum_{i=0}^\infty \varepsilon^{2i}f_{2i}\bD^{(2i)}  \]
and odd terms never appear. This again has its own asymptotic behaviour. A subtle issue is that if the matrix under
consideration is small enough compared to the regularity order, then the
asymptotics are the same than in the completely smooth case. We invite the
reader to pay attention to the interplay between $m$ (the size of the L-ensemble) and
$r$ (the regularity order) in our theorems. For more on the flat asymptotics of
kernel matrices, we refer again to \cite{BarthelmeUsevich:KernelsFlatLimit}. 

\subsubsection{Univariate polynomials and Vandermonde matrices}
\label{sec:univar-poly}

 Recall that we define the Vandermonde matrix of order $k$ as:
\begin{equation}\label{eq:Vandermonde1D}
  \matr{V}_{\le k} = \begin{bmatrix}
    1 & x_1 & \cdots & x^{k}_1\\
    \vdots &  & \vdots \\
    1 & x_n & \cdots & x^{k}_n 
  \end{bmatrix},
\end{equation}
where $x_1, \ldots, x_n$ are the $n$ points of the ground set $\Omega$ (We may sometimes use
the notation $\bV_{<k} = \bV_{\leq k-1}$ as well).
Note that $\matr{V}_{\le k}$ has $k+1$ columns. 
The ``classical'' Vandermonde matrix has $k=n-1$, which makes it square.
$\bV_{\le n-1}$ is invertible if and only if the points in $\Omega$ are distinct, which can
be established from the following well-known determinantal formula:
\begin{equation}
  \label{eq:VdM-determinant}
  \det \matr{V}_{\le n-1} = \prod_{i<j} (x_i-x_j)
\end{equation}
As short-hand, we shall use $\vect{v}_l=
\begin{pmatrix}
  x_1^{l-1}, \ldots, x_n^{l-1}
\end{pmatrix}^\top
$ to denote the $l$-th column of $\matr{V}$. 
Submatrices of $\bV_{\leq k}$ corresponding to a subset of points $X$ will be
denoted $\bV_{\leq k}(X) \in \R^{|X| \times (k+1) }$.

\subsubsection{Some results on limiting determinants and spectra}
\label{sec:flat-limit-summary}

In this section we summarise  some of the main results from
\cite{BarthelmeUsevich:KernelsFlatLimit}. These concern the limiting
determinants and spectra of kernel matrices. All we need for the proofs are the
results on the limiting determinants, but the results on asymptotic spectra may
help understand how the limiting process arises.

The statements involve the Wronskian matrix of the kernel, which we now define. The
Wronskian is a matrix of derivatives of the kernel at 0, specifically:
\begin{equation}\label{eq:Wronskian}
  \matr{W}_{\leq k} \eqdef
  \begin{bmatrix}
    \frac{\kappa^{(0,0)}(0,0)}{0!0!}  & \cdots & \frac{\kappa^{(0,k)}(0,0)}{0!k!} \\
    \vdots &  & \vdots \\
    \frac{\kappa^{(k,0)}(0,0)}{k!0!}  & \cdots & \frac{\kappa^{(k,k)}(0,0)}{k!k!} 
  \end{bmatrix}.
\end{equation}
Thus, $\bW_{\leq k}$ contains derivatives up to order $k$. It is important to realise that $\matr{W}$ depends \emph{only} on the kernel,
and is independent of the locations $\Omega$ at which the kernel is evaluated.

The first theorem concerns the limiting determinants in the smooth case, which tie
in directly to Vandermonde determinants: 
\begin{theorem}\label{thm:det_1d_smooth}
Let $\kappa$ be a kernel function and $X$ a set of $m$ points. 
If the smoothness order $r$ satisfies $r\geq m$ then, for small $\varepsilon$, the determinant of $\bL_{X}(\varepsilon) =
  [\kappa(\varepsilon x_i,\varepsilon x_j)]_{i,j=1}^m$ has the  expansion
      \begin{equation}
      \det (\matr{L}_{X}(\varepsilon)) = \varepsilon^{m(m-1)} (\det(\matr{V}_{\le m-1}(X))^2 \det \matr{W}_{\leq m-1} + \O(\varepsilon)).\label{eq:det_1d_smooth}
    \end{equation}
\end{theorem}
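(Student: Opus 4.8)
The plan is to expose the Vandermonde structure hidden in the scaled kernel matrix and then extract the leading order of the determinant by a double application of Cauchy--Binet. First I would split the radial profile into its even and odd parts, $f(t) = f^{e}(t) + f^{o}(t)$, where $f^{e}(t) = \sum_{j\ge 0} f_{2j} t^{2j}$ collects the even powers and $f^{o}(t) = \sum_{j\ge r} f_{2j-1} t^{2j-1}$ the odd ones. The point of this splitting is that $f^{e}(|x_i-x_j|) = f^{e}(x_i-x_j)$ is a genuine analytic (polynomial-in-difference) kernel, so the matrix $\bL^{e}_X(\varepsilon) = [f^{e}(\varepsilon(x_i-x_j))]_{i,j}$ carries no absolute value, whereas the rough part $\bL^{o}_X(\varepsilon) = [f^{o}(\varepsilon|x_i-x_j|)]_{i,j}$ is entrywise $\O(\varepsilon^{2r-1})$ because, under the hypothesis $r \ge m$, the lowest surviving odd power is $2r-1 \ge 2m-1$.

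For the smooth part, I would Taylor-expand each entry at the origin and read the expansion as a matrix factorisation
\[
\bL^{e}_X(\varepsilon) = \bV \bD_\varepsilon \bW^{e} \bD_\varepsilon \bV^\top, \qquad \bD_\varepsilon = \diag(1,\varepsilon,\varepsilon^2,\ldots),
\]
where $\bV = [x_i^{a}]_{i,\,a\ge 0}$ is the (extended) Vandermonde of $X$ and $\bW^{e} = [(f^{e})^{(a,b)}(0,0)/(a!b!)]_{a,b\ge0}$; truncating the series at a fixed high order makes all factors finite, at the cost of an error in $\varepsilon$ that can be made negligible. Applying Lemma~\ref{lem:cauchy-binet} twice (once to $\bV\bD_\varepsilon$ against $\bW^{e}\bD_\varepsilon\bV^\top$, then again inside) gives
\[
\det \bL^{e}_X(\varepsilon) = \sum_{|Y|=|Z|=m} \varepsilon^{\sigma(Y)+\sigma(Z)} \det(\bV_{:,Y}) \det(\bW^{e}_{Y,Z}) \det(\bV_{:,Z}), \quad \sigma(Y) = \textstyle\sum_{y\in Y} y.
\]
Since $\sigma(Y)$ is minimised uniquely by $Y = \{0,\ldots,m-1\}$ with value $m(m-1)/2$, the lowest power of $\varepsilon$ is $m(m-1)$, attained only at $Y = Z = \{0,\ldots,m-1\}$, with coefficient $\det(\bV_{\le m-1}(X))^2 \det \bW^{e}_{\le m-1}$. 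Because the odd part affects no derivative of total order $\le 2m-2$, one has $\bW^{e}_{\le m-1} = \bW_{\le m-1}$, while every other index pair contributes order $\ge m(m-1)+1$, yielding the claimed expansion for $\bL^{e}_X$.

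It remains to show that adding $\bL^{o}_X$ does not disturb the leading coefficient, and this is the step I expect to be the main obstacle, since a single rough entry is only $\O(\varepsilon^{2r-1})$ whereas the target order $m(m-1)$ can be much larger. Here I would expand $\det(\bL^{e}_X + \bL^{o}_X)$ by multilinearity over the choice of $s$ rows drawn from $\bL^{o}_X$: such a term is a product of $s$ factors, each $\O(\varepsilon^{2r-1})$, times an $(m-s)\times(m-s)$ minor of $\bL^{e}_X$, which by the Cauchy--Binet bound above is $\O(\varepsilon^{(m-s)(m-s-1)})$. Using $r \ge m$, this gives order at least $s(2m-1) + (m-s)(m-s-1)$, which one checks is $\ge m(m-1)+1$ for every $s \ge 1$ (with equality at $s=1$). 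Hence all rough contributions are absorbed into the $\O(\varepsilon)$ remainder, and combining this with the smooth expansion completes the proof.
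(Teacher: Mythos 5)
Your proof is correct, and it is worth noting that the paper itself offers no proof of Theorem~\ref{thm:det_1d_smooth}: the result is quoted from \cite{lee2015flatkernel} and Theorem~4.1 of \cite{BarthelmeUsevich:KernelsFlatLimit}. What you have produced is a self-contained derivation that follows essentially the same strategy as those references: isolate the even (smooth) part of the radial profile, factor the resulting matrix as $\bV\bD_\varepsilon\bW^{e}\bD_\varepsilon\bV^\top$ through a truncated Vandermonde and Wronskian, extract the leading power $\varepsilon^{m(m-1)}$ via a double Cauchy--Binet expansion using that $\sigma(Y)+\sigma(Z)$ is uniquely minimised at $Y=Z=\{0,\dots,m-1\}$ (uniqueness holds because each of $\sigma(Y),\sigma(Z)\ge m(m-1)/2$ forces both sets to be $\{0,\dots,m-1\}$), and treat the odd remainder perturbatively. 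Two points deserve slightly more care in the write-up, though neither is a gap. First, your phrase ``a product of $s$ factors \ldots times an $(m-s)\times(m-s)$ minor'' should strictly be a generalized Laplace expansion along the $s$ rows drawn from $\bL^{o}_X$, namely $\det M_S = \sum_{|T|=s} \pm \det\bigl((\bL^{o}_X)_{S,T}\bigr)\det\bigl((\bL^{e}_X)_{S^c,T^c}\bigr)$, where an $s\times s$ minor of an entrywise $\O(\varepsilon^{2r-1})$ matrix is $\O(\varepsilon^{s(2r-1)})$ and the complementary minor --- not principal, but your Cauchy--Binet bound applies verbatim to rectangular row/column restrictions of the factorisation --- is $\O(\varepsilon^{(m-s)(m-s-1)})$; the order count is then exactly as you claim, and in fact $s(2r-1)+(m-s)(m-s-1)-m(m-1)=2s(r-m)+s^2\ge s^2\ge 1$ for $s\ge 1$, confirming equality at $s=1$ when $r=m$. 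Second, your identification $\bW^{e}_{\le m-1}=\bW_{\le m-1}$ should be made explicit: each entry of $\bW_{\le m-1}$ in \eqref{eq:Wronskian} involves a mixed derivative of total order at most $2m-2\le 2r-2$, and the odd part of the kernel is $C^{2r-2}$ with all derivatives of order below $2r-1$ vanishing at the origin, which is also what makes the paper's Wronskian well defined for finitely smooth kernels in the first place. Similarly, the truncation of the infinite factorisation at some order $N>m(m-1)$ merits one sentence (by multilinearity, any term involving a remainder row contributes $\O(\varepsilon^{N+1})$). With these small clarifications your argument is complete and would serve as a correct proof of the theorem as stated.
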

We have made explicit in the notation the quantities that depend on the points
$X$ versus those that do not. 

This result appeared originally in \cite{lee2015flatkernel}, and can be found in this form in theorem 4.1 of~\cite{BarthelmeUsevich:KernelsFlatLimit}. It can be
generalised to cases with lower order of smoothness, leading to:
\begin{theorem}\label{thm:det_1d_finite_smoothness}
Let $\kappa$ be a kernel function and $X$ a set of $m$ points. 
	If the smoothness order $r$ satisfies $r\leq m$ then, for small $\varepsilon$, the determinant of $\bL_{X}(\varepsilon) =
	[\kappa(\varepsilon x_i,\varepsilon x_j)]_{i,j=1}^m$ has the  expansion
\begin{equation}
\det (\matr{L}_{X}(\varepsilon)) =\varepsilon^{m(2r-1)-r^2} \left( \widetilde{l}(X) + \O(\varepsilon)\right),
\label{eq:det_finite_smoothness}
\end{equation}
where the main term is given by
\begin{equation}
\widetilde{l}(X) =
(-1)^{r} 
\det \matr{W}_{\leq r-1}  \det
\begin{bmatrix}
f_{2r-1}  \bD^{(2r-1)}(X) &  \matr{V}_{\le r-1}(X) \\
 \matr{V}_{\le r-1}(X) ^\top& 0 
\end{bmatrix}\\
\end{equation}
\end{theorem}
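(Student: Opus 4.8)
The plan is to exploit the smoothness order to split $\bL_X(\varepsilon)$ into a \emph{smooth} (even) part and an \emph{odd} remainder, and then to show that the determinant is governed by a balance between the low-degree polynomial directions — controlled by the Wronskian — and the first odd term $\bD^{(2r-1)}$, which fills in the remaining directions. Since the smoothness order is exactly $r$ we have $f_1=f_3=\cdots=f_{2r-3}=0$, so by \eqref{eq:kernel-matrix-expansion}
\[
\bL_X(\varepsilon) = \underbrace{\sum_{k\ge 0} f_{2k}\varepsilon^{2k}\bD^{(2k)}(X)}_{=:\,\bS(\varepsilon)} + f_{2r-1}\varepsilon^{2r-1}\bD^{(2r-1)}(X) + \O(\varepsilon^{2r}),
\]
where $\bS(\varepsilon)=[g(\varepsilon(x_i-x_j))]_{i,j}$ for the even analytic function $g(t)=\sum_k f_{2k}t^{2k}$. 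In particular $\bS$ is the matrix of a \emph{completely smooth} kernel and its entries are polynomials in the $\varepsilon x_i$.

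First I would treat $\bS$ exactly as in the proof of Theorem~\ref{thm:det_1d_smooth}: expanding $g(\varepsilon(x_i-x_j))$ in the monomial basis gives the factorisation $\bS(\varepsilon)=\bV\,\matr{E}\,\bW\,\matr{E}\,\bV^{\top}$ (modulo reductions of monomials of degree $\ge m$, which only affect higher orders in $\varepsilon$), where $\bV=\matr{V}_{\le m-1}(X)$ is the square Vandermonde matrix, $\matr{E}=\diag(\varepsilon^0,\ldots,\varepsilon^{m-1})$, and $\bW$ is the Wronskian of the kernel, whose top-left $r\times r$ block is $\bW_{\le r-1}$. The essential point is that, because $g$ is even, $\bW$ vanishes on every odd anti-diagonal; consequently $\det\bS(\varepsilon)$ alone would be of order $\varepsilon^{m(m-1)}$, which is \emph{too high}, and the odd term must be retained since it lowers the order.

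The heart of the argument is then a multi-scale order matching. I would partition the Vandermonde directions into the $r$ low-degree ones ($j=0,\ldots,r-1$) and the $m-r$ high-degree ones ($j\ge r$); the cross-over at $j=r$ is forced by the comparison $2j<2r-1\iff j\le r-1$ between the even scale $\varepsilon^{2j}$ and the odd scale $\varepsilon^{2r-1}$. On the low block the smooth part is non-degenerate and contributes the truncated Wronskian $\det\bW_{\le r-1}$ together with the scaling $\varepsilon^{2(0+1+\cdots+(r-1))}=\varepsilon^{r(r-1)}$. On the complementary block the even part is too degenerate and the leading behaviour comes from restricting $f_{2r-1}\bD^{(2r-1)}(X)$ to $\orth(\matr{V}_{\le r-1}(X))$; writing $\bQ$ for an orthonormal basis of that complement this yields a factor $\varepsilon^{(2r-1)(m-r)}\det\!\big(\bQ^{\top}f_{2r-1}\bD^{(2r-1)}(X)\bQ\big)$. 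The two scalings multiply to
\[
r(r-1) + (2r-1)(m-r) = m(2r-1)-r^2,
\]
the claimed exponent, and re-packaging the complementary determinant through Lemma~\ref{lem:det-saddlepoint} turns $\det(\matr{V}_{\le r-1}^{\top}\matr{V}_{\le r-1})\det(\bQ^{\top}f_{2r-1}\bD^{(2r-1)}\bQ)$ back into the saddle-point determinant of $\widetilde{l}(X)$. To make the coefficient extraction rigorous I would mimic the pencil computation in the proof of Theorem~\ref{thm:partial-proj-limit}, invoking Lemma~\ref{lem:det-coef-polynomial} and Remark~\ref{rem:det-coef-degree} to read off the relevant power of $\varepsilon$ as the top coefficient of a polynomial in $\varepsilon^{-1}$, which is where the global sign $(-1)^{r}$ and the saddle-point form appear together.

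Two structural facts make the bookkeeping coherent: $\bW_{\le r-1}$ is determined entirely by the even coefficients $f_0,\ldots,f_{2r-2}$ (the odd anti-diagonals it would touch lie below order $2r-1$), and, by the conditionally positive definite property recalled in Section~\ref{sec:ppdpp-examples-cpdef} (in the spirit of \cite{micchelli1986interpolation}), $(-1)^{r}\bD^{(2r-1)}(X)$ is conditionally positive definite with respect to $\matr{V}_{\le r-1}(X)$, so $(-1)^{r}\bQ^{\top}\bD^{(2r-1)}(X)\bQ\succ0$; this both exhibits $((-1)^{r}\bD^{(2r-1)};\matr{V}_{\le r-1})$ as a NNP and guarantees the expansion is sharp whenever $\det\bW_{\le r-1}\neq0$. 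I expect the main obstacle to be the rigorous multi-scale matching: one must verify that the $\O(\varepsilon^{2r})$ remainder, the higher even terms, and the degree-$\ge m$ monomial reductions all contribute strictly above order $\varepsilon^{m(2r-1)-r^2}$, and that the low- and high-degree blocks decouple at leading order. This is cleanest via a Schur-complement (block-$LU$) argument on the $\bV$- and $\matr{E}$-scaled matrix, or through the Cauchy--Binet expansion of \cite{BarthelmeUsevich:KernelsFlatLimit}.
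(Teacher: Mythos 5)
Your plan is correct and follows essentially the same route as the actual proof: the paper states Theorem~\ref{thm:det_1d_finite_smoothness} without proof, importing it from \cite{BarthelmeUsevich:KernelsFlatLimit}, and the argument there is precisely your even/odd splitting of $\bL_X(\varepsilon)$ with the crossover at polynomial degree $r$, the exponent count $r(r-1)+(2r-1)(m-r)=m(2r-1)-r^2$, and the extraction of the leading coefficient in saddle-point form via the analogues of Lemmas~\ref{lem:det-saddlepoint} and~\ref{lem:det-coef-polynomial} (suitably generalised to the multi-scale pencil $\bV\matr{E}\bW\matr{E}\bV^\top$, since those lemmas as stated handle only a single scale $t\bV\bV^\top$). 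The technical residue you flag — verifying that the $\O(\varepsilon^{2r})$ remainder, the higher even terms, and the degree-$\geq m$ monomial truncations all contribute strictly above the leading order, and that the low- and high-degree blocks decouple — is exactly the bookkeeping carried out in that reference, so your outline is missing no idea.
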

\begin{remark}
  Note that for $r=m$,  equations \eqref{eq:det_1d_smooth}  and \eqref{eq:det_finite_smoothness}  coincide,  since $ \matr{V}_{\le m-1}(X)$ is square and the determinant in (\ref{eq:det_finite_smoothness}) reduces to $ (-1)^m \det(\matr{V}_{\le m-1}(X)^2$.
\end{remark}

\begin{remark}
  In the introduction (see fig.
  \ref{fig:illus-dpp}), we stated that while determinants of kernel matrices go to
  0 in the flat limit, ratios of determinants go to a finite value. The statement
  follows as a direct consequence of thm. \ref{thm:det_1d_smooth} and
  \ref{thm:det_1d_finite_smoothness}:. For instance, under the conditions of
  Theorem 
  \ref{thm:det_1d_smooth}, we have:
  \[ \frac{\det
      (\matr{L}_{X'}(\varepsilon))}{\det (\matr{L}_{X}(\varepsilon)) } =
    \frac{\det(\matr{V}_{\le m-1}(X'))^2}{\det(\matr{V}_{\le m-1}(X))^2} +
    \O(\varepsilon) \]
  By itself this observation is almost enough to prove convergence. 

\end{remark}

\subsection{Flat limit in the fixed-size case}
\label{sec:univariate-fixed-size}

Consider $\X_\varepsilon \sim \mDPP{m}(\bL(\varepsilon))$ with $m\leq n$ and $m$ and $n=|\Omega|$ fixed (no large $n$ asymptotics are involved here). We are interested in
the limiting distribution of $\X_\varepsilon$ as $\flatlim$.

It is not at first blush obvious that the limiting point process exists
and is non-trivial. Indeed, as $\varepsilon\rightarrow0$, every entry of the
matrix $\bL(\varepsilon)$ goes to 1, and so $\det(\bL(\varepsilon)_X)$ goes to 0 for all
subsets $X$. What makes the limit non-trivial is, as we shall see in the
proofs, that these quantities go to 0 at different speeds.

The first result characterises the smooth case, where the smoothness order of
the kernel is larger than $m$.
\begin{theorem}
  \label{thm:cs-case-fixed-size-1d}
  Let $\bL_\varepsilon = [\kappa_\varepsilon(x_i,x_j)]_{i,j}$ with $\kappa$ a
  stationary kernel of smoothness order $r \geq m$. Then $\Xe \sim
  \mDPP{m}(\bL_\varepsilon)$ converges to $\Xs \sim \mDPP{m}(\bV_{\leq m-1}
  \bV_{\leq m-1}^\top)$.   
\end{theorem}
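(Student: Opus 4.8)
The plan is to manipulate the unnormalised probability mass function of $\Xe$ directly, feed in the termwise determinant asymptotics of Theorem~\ref{thm:det_1d_smooth}, and then read off the limit via Lemma~\ref{lem:TV-convergence}. Write $f_\varepsilon(X) = \det \bL_X(\varepsilon)\,\Ind(|X| = m)$, so that $\Proba(\Xe = X) = f_\varepsilon(X)/\sum_{|Y|=m} f_\varepsilon(Y)$. Because the kernel is stationary, $\kappa_\varepsilon(x_i,x_j) = f(\varepsilon|x_i - x_j|) = \kappa(\varepsilon x_i,\varepsilon x_j)$, so each principal minor $\det \bL_X(\varepsilon)$ is exactly the quantity handled by Theorem~\ref{thm:det_1d_smooth}.

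First I would apply Theorem~\ref{thm:det_1d_smooth}, whose hypothesis $r \geq m = |X|$ holds for every relevant $X$ by assumption. This yields, uniformly over all $X$ with $|X| = m$,
\[
f_\varepsilon(X) = \varepsilon^{m(m-1)}\left( \det \bW_{\leq m-1}\,(\det \bV_{\leq m-1}(X))^2 + \O(\varepsilon)\right).
\]
The point I would stress is that both the exponent $m(m-1)$ and the scalar $\det \bW_{\leq m-1}$ are independent of $X$ (the Wronskian depends only on the kernel, not on the point locations), and that $\det \bW_{\leq m-1}$ is nonzero under the smoothness hypothesis $r \geq m$; since $\bL_X(\varepsilon)$ is positive definite for small $\varepsilon>0$ and $(\det \bV_{\leq m-1}(X))^2 > 0$ for distinct points, it is in fact strictly positive.

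Next I would invoke Lemma~\ref{lem:TV-convergence} with $p = m(m-1)$ and leading term $f_0(X) = \det \bW_{\leq m-1}\,(\det \bV_{\leq m-1}(X))^2$. The common constant $\det \bW_{\leq m-1}$ cancels in the ratio, giving the limiting law
\[
\Proba(\Xs = X) = \frac{(\det \bV_{\leq m-1}(X))^2}{\sum_{|Y|=m}(\det \bV_{\leq m-1}(Y))^2}\,\Ind(|X|=m),
\]
whose denominator is strictly positive because the points are distinct, so at least one $m\times m$ Vandermonde minor is nonzero. Finally I would identify this with $\mDPP{m}(\bV_{\leq m-1}\bV_{\leq m-1}^\top)$: for $|X| = m$ the block $\bV_{\leq m-1}(X)$ is square, so $(\bV_{\leq m-1}\bV_{\leq m-1}^\top)_X = \bV_{\leq m-1}(X)\bV_{\leq m-1}(X)^\top$ has determinant $(\det \bV_{\leq m-1}(X))^2$ (equivalently, by the Cauchy--Binet corollary). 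Substituting matches the mass function of Definition~\ref{lemma:mDPP_via_L-ens} exactly.

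The proof is short because all the analytic difficulty is absorbed into the cited Theorem~\ref{thm:det_1d_smooth}. The one genuinely load-bearing point, and the step I would be most careful about, is that $\det \bW_{\leq m-1}$ is a single nonzero factor common to every subset $X$: this is what guarantees that $\varepsilon^{m(m-1)}$ is the true and uniform leading order, letting the normalisation in Lemma~\ref{lem:TV-convergence} go through and the Wronskian disappear from the limit.
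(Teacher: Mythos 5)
Your proposal is correct and follows essentially the same route as the paper's proof: apply Theorem~\ref{thm:det_1d_smooth} to the $m\times m$ principal minors, note that the power $\varepsilon^{m(m-1)}$ and the factor $\det \bW_{\leq m-1}$ are common to all subsets, and conclude via Lemma~\ref{lem:TV-convergence} with the identification $(\bV_{\leq m-1}\bV_{\leq m-1}^\top)_X$ having determinant $(\det \bV_{\leq m-1}(X))^2$. Your extra care about the nonvanishing of $\det \bW_{\leq m-1}$ and of the normalising sum is a welcome refinement the paper leaves implicit (though note that positivity of $\det \bL_X(\varepsilon)$ by itself only gives nonnegativity of the leading coefficient, not strict positivity, so the nonsingularity of the Wronskian really rests on the results cited from \cite{BarthelmeUsevich:KernelsFlatLimit}).
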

\begin{proof}
  The result follows directly from theorem \ref{thm:det_1d_smooth}, applied to
  minors of $\bL(\varepsilon)$ of size $m \times m$, and lemma \ref{lem:TV-convergence}.  
  To be   more explicit, let $\bL^\star = \bV_{\leq m-1}\bV_{\leq m-1}^\top$. Theorem \ref{thm:det_1d_smooth} implies that:
  \[ \Proba(\X_\varepsilon = X) =  \frac{  \varepsilon^{m(m-1)} \left( \det
        \matr{W}_{\leq m-1}  \det
        \bL^\star_{X} + \O(\varepsilon)  \right) }{ \varepsilon^{m(m-1)} \left(
        \det \matr{W}_{\leq m-1} \sum_{Y,|Y|=m} \det
        \bL^\star_{Y} + \O(\varepsilon)  \right)}
  \]
We  may apply lemma \ref{lem:TV-convergence} directly. $\X_\varepsilon$ tends to
  $\X_\star$, a fixed-size DPP with law: 
  \[ \Proba(\X_\star = X) =  \frac{\det
      \bL^\star_{X} }{\sum_{Y,|Y|=m} \det
      \bL^\star_{Y} } \]
\end{proof}
\begin{remark}
  The result says that as $\flatlim$ the limiting point process is (a) a fixed-size L-ensemble (and even a projection DPP as $\bV_{\leq m-1} \bV_{\leq m-1}^\top$ is of rank $m$)
  and (b) the positive semi-definite matrix it is based on is a Vandermonde matrix of
  $\Omega$. It is worth studying this matrix in greater detail. Let
  $\matr{M} = \bV_{\leq m-1} \bV_{\leq m-1}^\top$. Then for any subset $X \subset
  \Omega$ of size $m$, $\det \matr{M}_X=\det^2(\bV_{\leq m-1}(X))$, because
  $\bV_{\leq m-1}(X)$ is a square matrix. From the Vandermonde determinant
  formula (eq. \eqref{eq:VdM-determinant}), this means that if $\X \sim
  \mDPP{m}(\matr{M})$,
  \begin{equation}
    \label{eq:cs-case-joint-prob}
    \Proba\left(X\right) = \frac{1}{Z} \prod_{(x,y)\in X^2} (x - y)^2
\end{equation}
  
\end{remark}
\begin{remark}
  The conditional law $\Proba(\Xe = \{x\} \cup Y | Y)$ (the conditional law of one of the
  points when the rest are fixed) tends to:
  \[ \Proba\left(\Xs = \{x\} \cup Y | Y\right) \propto \prod_{y \in Y} (x-y)^2\]
  which is evidently a repulsive point process (since small distances between
  points are unlikely). 
\end{remark}
To summarise: if we sample a fixed-size L-ensemble of size $m$, and the kernel is
regular enough compared to $m$ (\emph{i.e.}, $r\geq m$), then \emph{whatever} the kernel the
limiting process exists and is the same\footnote{The ``whatever the kernel''
  part becomes more complicated in the multidimensional case, as we shall see.}. The probability of sampling a set $X$
is just proportional to a squared Vandermonde determinant, and that defines a projection DPP.  

The next theorem describes what happens when the kernel is less smooth. We obtain a partial projection
kernel, where the projective part comes from polynomials, and the non-projective
part comes from the first nonzero odd term in the kernel expansion (see Eq.~\eqref{eq:kernel-matrix-expansion}).

\begin{theorem}
  \label{thm:fs-case-fixed-size-1d}
  Let $\bL_\varepsilon = [\kappa_\varepsilon(x_i,x_j)]_{i,j}$ with $\kappa$ a
  stationary kernel of smoothness order $r \leq m$. 
  Then $\Xe \sim
  \mDPP{m}(\bL_\varepsilon)$ converges to $\Xs \sim \mppDPP{m} \ELE{\bD^{(2r-1)}} {\bV_{\leq r-1}}$.   
\end{theorem}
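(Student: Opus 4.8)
The plan is to combine Theorem~\ref{thm:det_1d_finite_smoothness} (the finitely-smooth determinant expansion) with the convergence Lemma~\ref{lem:TV-convergence}, exactly as in the proof of the smooth case (Theorem~\ref{thm:cs-case-fixed-size-1d}), but now tracking the saddle-point determinant that appears instead of a plain Vandermonde square.

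Let me think about how this goes.

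We have $\mathcal{X}_\varepsilon \sim |DPP|_m(\mathbf{L}(\varepsilon))$, so the unnormalized mass is $f_\varepsilon(X) = \det \mathbf{L}_X(\varepsilon)$ for $|X| = m$. Since $r \le m$, Theorem~\ref{thm:det_1d_finite_smoothness} applies to each $m \times m$ principal minor and gives
$$\det(\mathbf{L}_X(\varepsilon)) = \varepsilon^{m(2r-1)-r^2}\left(\tilde{l}(X) + \O(\varepsilon)\right),$$
where
$$\tilde{l}(X) = (-1)^r \det \mathbf{W}_{\le r-1} \det \begin{bmatrix} f_{2r-1}\mathbf{D}^{(2r-1)}(X) & \mathbf{V}_{\le r-1}(X) \\ \mathbf{V}_{\le r-1}(X)^\top & 0 \end{bmatrix}.$$

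So the key observation is that the leading coefficient $\tilde{l}(X)$ is, up to the $X$-independent factor $\det \mathbf{W}_{\le r-1}$, precisely the saddle-point determinant defining the extended L-ensemble mass function for the NNP $\langle \mathbf{D}^{(2r-1)}; \mathbf{V}_{\le r-1}\rangle$ (with the sign $(-1)^r$ and the scalar $f_{2r-1}$). The exponent of $\varepsilon$, namely $m(2r-1)-r^2$, is the same for every subset $X$ of size $m$, so no subset's mass diverges relative to another — this is why the simpler Lemma~\ref{lem:TV-convergence} suffices rather than the diverging-order Lemma~\ref{lem:TV-convergence-diverging}.

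Here is the plan in steps. First, I would write down the probability mass function ratio, factoring out the common power $\varepsilon^{m(2r-1)-r^2}$ and the common constant $\det \mathbf{W}_{\le r-1}$ from numerator and denominator; these cancel. Second, I would invoke Lemma~\ref{lem:TV-convergence} with $p = m(2r-1)-r^2$ to conclude that $\mathcal{X}_\varepsilon$ converges to the process with mass proportional to the leading coefficient, i.e.
$$\Proba(\mathcal{X}_\star = X) \propto (-1)^r f_{2r-1}^{?}\det \begin{bmatrix} f_{2r-1}\mathbf{D}^{(2r-1)}(X) & \mathbf{V}_{\le r-1}(X) \\ \mathbf{V}_{\le r-1}(X)^\top & 0 \end{bmatrix}.$$
Third, I would clean up the scalar $f_{2r-1}$: pulling it out of the $\mathbf{D}^{(2r-1)}$ block contributes a factor $f_{2r-1}^{r}$ (since $\mathbf{V}_{\le r-1}(X)$ has $r$ columns, rescaling $\mathbf{D}$ by a scalar rescales the saddle-point determinant by that scalar to the power equal to the number of ``$\mathbf{L}$-rows minus $\mathbf{V}$-columns'' — I must check this exponent carefully), which is $X$-independent and hence absorbed into the normalization. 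This leaves the mass proportional to $(-1)^r \det \begin{bmatrix} \mathbf{D}^{(2r-1)}(X) & \mathbf{V}_{\le r-1}(X) \\ \mathbf{V}_{\le r-1}(X)^\top & 0\end{bmatrix}$, which by the definition of the extended L-ensemble (Eq.~\eqref{eq:proba_mass_fs} with $p = r$) is exactly the fixed-size partial-projection DPP $|DPP|_m\langle \mathbf{D}^{(2r-1)}; \mathbf{V}_{\le r-1}\rangle$.

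I expect two points to need care, and these are the main obstacles. The first is verifying that $\langle \mathbf{D}^{(2r-1)}; \mathbf{V}_{\le r-1}\rangle$ (possibly with the sign $(-1)^r$ making $(-1)^r \mathbf{D}^{(2r-1)}$ conditionally positive semi-definite) genuinely forms a valid NNP — that is, that $\mathbf{V}_{\le r-1}$ has full column rank (true since $|\Omega| \ge m \ge r$ and the points are distinct, so the Vandermonde block is full rank) and that $(-1)^r\mathbf{D}^{(2r-1)}$ is conditionally positive semi-definite with respect to $\mathbf{V}_{\le r-1}$; this is the Micchelli-type fact quoted in the example of Section~\ref{sec:ppdpp-examples-cpdef} ($\beta = r - 1/2$), which guarantees the right-hand side is a legitimate probability distribution rather than merely a formal limit. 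The second obstacle is bookkeeping the scalar and sign factors ($f_{2r-1}$, $(-1)^r$, and $\det \mathbf{W}_{\le r-1}$) so that what survives after normalization is manifestly nonnegative and matches the claimed NNP up to the allowed invariances (rescaling $\mathbf{V}$ by Remark~\ref{rem:first_inv} and rescaling $\mathbf{L}$ within $\mspan \mathbf{V}$ by Remark~\ref{rem:second_inv}). Once these are settled, the convergence itself is immediate from Lemma~\ref{lem:TV-convergence}.
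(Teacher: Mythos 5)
Your proposal is correct and takes essentially the same route as the paper, whose proof is literally ``the argument is exactly the same as in Theorem~\ref{thm:cs-case-fixed-size-1d}, using Theorem~\ref{thm:det_1d_finite_smoothness}'': one cancels the common factor $\varepsilon^{m(2r-1)-r^2}(-1)^r\det\bW_{\le r-1}$ (and the $X$-independent power of $f_{2r-1}$) from numerator and denominator and applies Lemma~\ref{lem:TV-convergence}. The only slip is in the exponent you flagged: pulling $f_{2r-1}$ out of the $\bD^{(2r-1)}$ block yields $f_{2r-1}^{\,m-r}$ (rows of $\bL$ minus columns of $\bV_{\le r-1}$), not $f_{2r-1}^{\,r}$, but since $m$ is fixed this is $X$-independent and absorbed into the normalisation exactly as you argue, so the conclusion is unaffected.
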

\begin{proof}
  The argument is exactly the same as in theorem
  \ref{thm:cs-case-fixed-size-1d}, this time using the limiting form of the
  determinant given by theorem \ref{thm:det_1d_finite_smoothness}. 
\end{proof}
\begin{example}
  In the case of the exponential kernel $\kappa_{\varepsilon}(x,y) = e^{ -\varepsilon |x-y| }$, $r=1$, and the theorem states that
  \begin{equation}
    \label{eq:extended-L-ensemble-exp-kernel}
    \Proba(\Xs = X) \propto \det
    \begin{pmatrix}
      -\bD_{X}^{(1)} & \ones \\
      \ones^\top & 0
    \end{pmatrix}
  \end{equation}
  Equivalently, from a mixture point of view, the constant eigenvector
  $\vect{q}_0=\frac{1}{\sqrt{n}}\ones$ is sampled with probability 1, and the remaining
  $m-1$ eigenvectors are sampled from a (diagonal) fixed-size L-ensemble with diagonal entries equal
  to the eigenvalues of $\tilde{\bD}=-(\bI-\vect{q}_0\vect{q}_0^\top)\bD^{(1)}(\bI-\vect{q}_0\vect{q}_0^\top)$
\end{example}

\begin{remark}
  Some algebra reveals that
  \begin{equation}
    \label{eq:det-exp-kernel}
    \det
    \begin{pmatrix}
      -\bD_{X}^{(1)} & \ones \\
      \ones^t & 0
    \end{pmatrix}
    = (2)^{m-1} \prod_{i = 1}^m (x_{i+1} - x_i)
  \end{equation}
  where in the last expression we have sorted the points in $X$ so that $x_1
  \leq x_2 \leq \ldots \leq x_m$. 
  As in  \eqref{eq:cs-case-joint-prob} above, the repulsive nature of the limit
  point process is immediately apparent from eq. \eqref{eq:det-exp-kernel}.
  Unlike \eqref{eq:cs-case-joint-prob}, which involves all distances, eq.
  \eqref{eq:det-exp-kernel} only involves distances between direct neighbours.
  We speculate that similar expressions exist for $r>1$ but we unfortunately
  have not been able to derive them.
\end{remark}
\begin{proof}
  Eq. \eqref{eq:extended-L-ensemble-exp-kernel} may be derived by using a finite
  difference operator of the form:
  \[
    \matr{F} =
    \begin{pmatrix}
      1 & 0 & \ldots \\
      \frac{-1}{\delta_1} & \frac{1}{\delta_1} & 0 & \ldots \\
      0 & \frac{-1}{\delta_2} & \frac{1}{\delta_2} & 0 & \ldots \\
      \vdots & \vdots & \vdots & \vdots
    \end{pmatrix}
  \]
  where $\delta_i = x_{i+1}-x_i$. Since $\matr{F}$ is lower-triangular, $\det
  \matr{F}= \prod_{i=1}^{m-1} \delta_i^{-1}$.  Then applying
  lemma \ref{lem:det-saddlepoint} to
  \[ \det(
    \begin{bmatrix}
      \matr{F} & 0 \\
      0 & 1
    \end{bmatrix}
    \begin{bmatrix}
      -\bD_{X}^{(1)} & \ones \\
      \ones^t & 0
    \end{bmatrix}
    \begin{bmatrix}
      \matr{F}^t & 0 \\
      0 & 1
    \end{bmatrix}
    ) \] and simplifying yields the result.
\end{proof}

\subsection{Some numerical illustrations}
\label{sec:numerics-1d}

To illustrate the convergence theorems above, a good visual tool is to examine
the convergence of conditional distributions of the form:
\begin{equation}
  \label{eq:cond-law-dpp}
  \Proba(\X = \{x\} \cup Y | Y) \propto \det \bL_{\{x\} \cup Y} \propto  (\bL_{x,x}-\bL_{x,Y}\bL_{Y}^{-1} \bL_{Y,x})
\end{equation}
This should be interpreted as the conditional probability of the $m$-th item
fixing the first $m-1$. The conditional law $\Proba(\Xe = \{x\} \cup Y | Y)$ tends
to that of $\Proba(\Xs = \{x\} \cup Y | Y)$, and in dimension 1 we can depict this,
 as a function of $x$.

We do so in figure \ref{fig:convergence-cond-1d}, where we assume $\X$ is a
$m=5$ fixed-size L-ensemble, and the ground set is a finite subset of $[0,1]$. The conditioning subset $Y$
is chosen to be of size 4, 
and for the sake of illustration, we let $x$ vary as a continuous parameter in $[0,1]$.
 The four panels correspond to four different kernel
functions. The conditional probability is plotted for different values of $\varepsilon$. In all plots we observe a rapid convergence
with $\varepsilon$. In the top panel, the difference between the asymptoptics obtained for $r=1$ and
$r = \infty$ are quite striking. In the bottom panel, we have two different
kernels with identical smoothness index, and as predicted by Theorem
\ref{thm:fs-case-fixed-size-1d} the $\flatlim$ limits are identical.
 
\begin{figure}
  \centering
  \begin{subfigure}[b]{0.48\textwidth}
    \includegraphics[width=\textwidth]{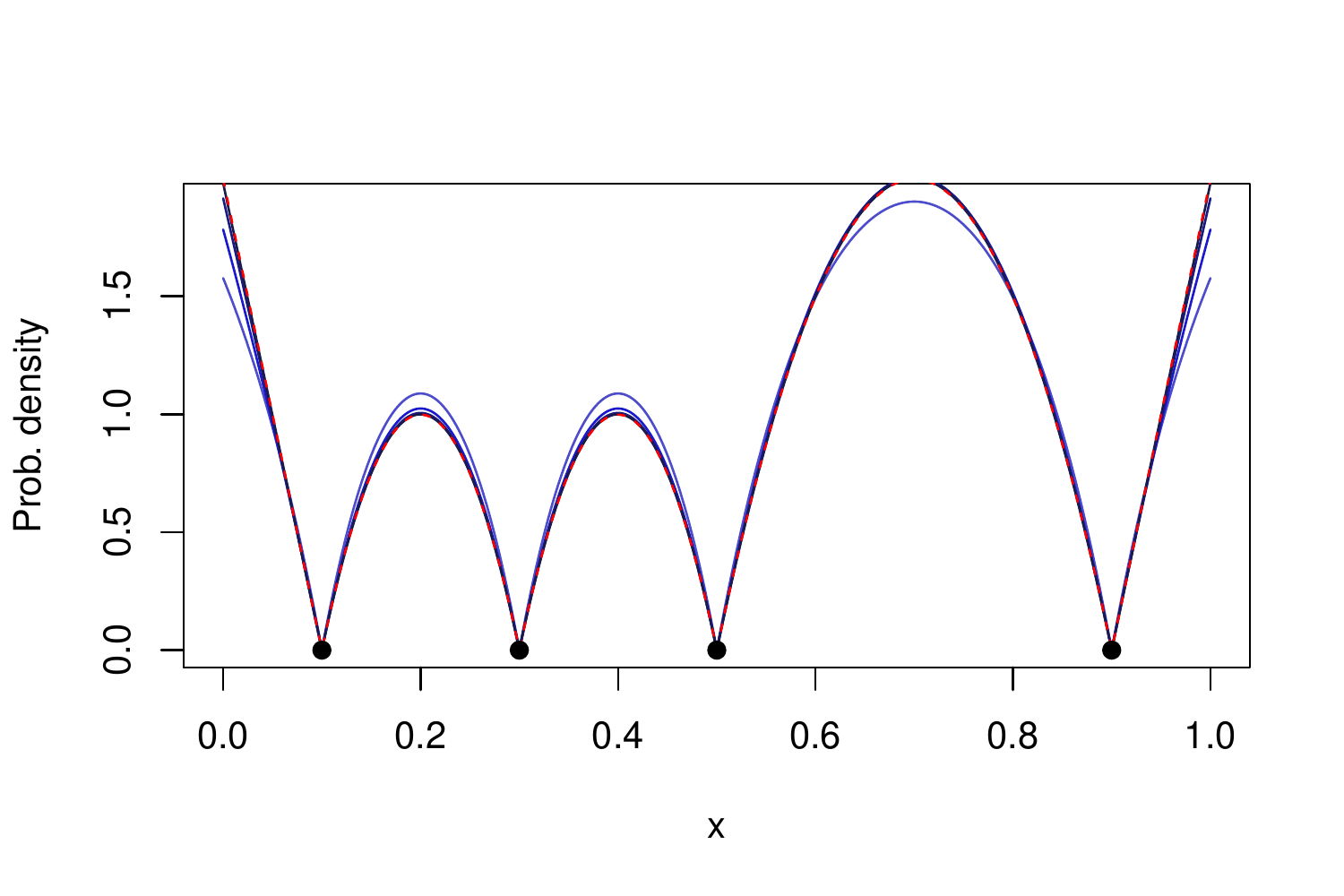}
    \caption{$k(x,y)=\exp(-|x-y|)$, a kernel with $r=1$}
  \end{subfigure}
  \hfill
  \begin{subfigure}[b]{0.48\textwidth}
    \includegraphics[width=\textwidth]{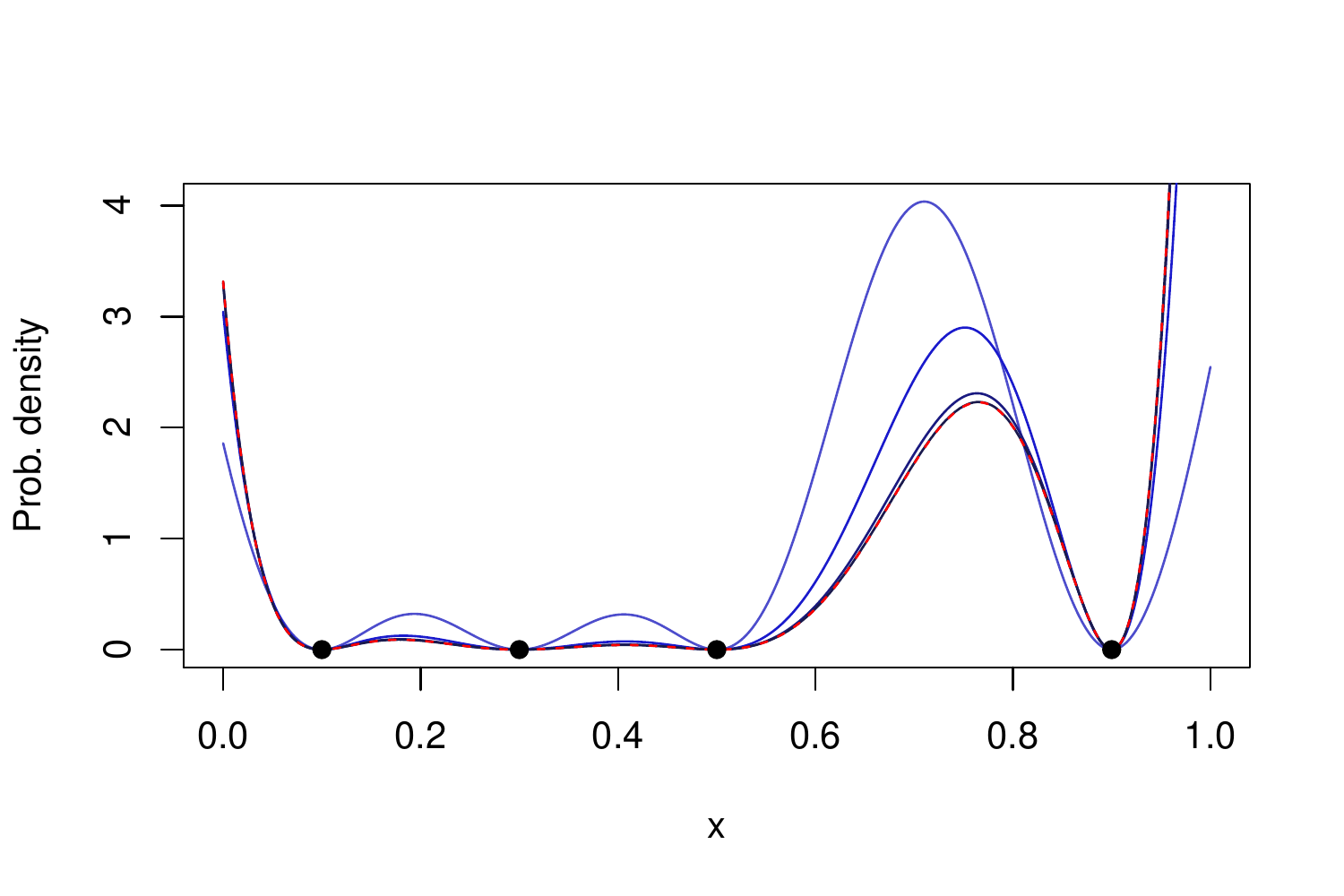}
    \caption{$k(x,y)=\exp(-(x-y)^2)$, a kernel with $r=\infty$}
  \end{subfigure}
  \\
  \begin{subfigure}[b]{0.48\textwidth}
    \includegraphics[width=\textwidth]{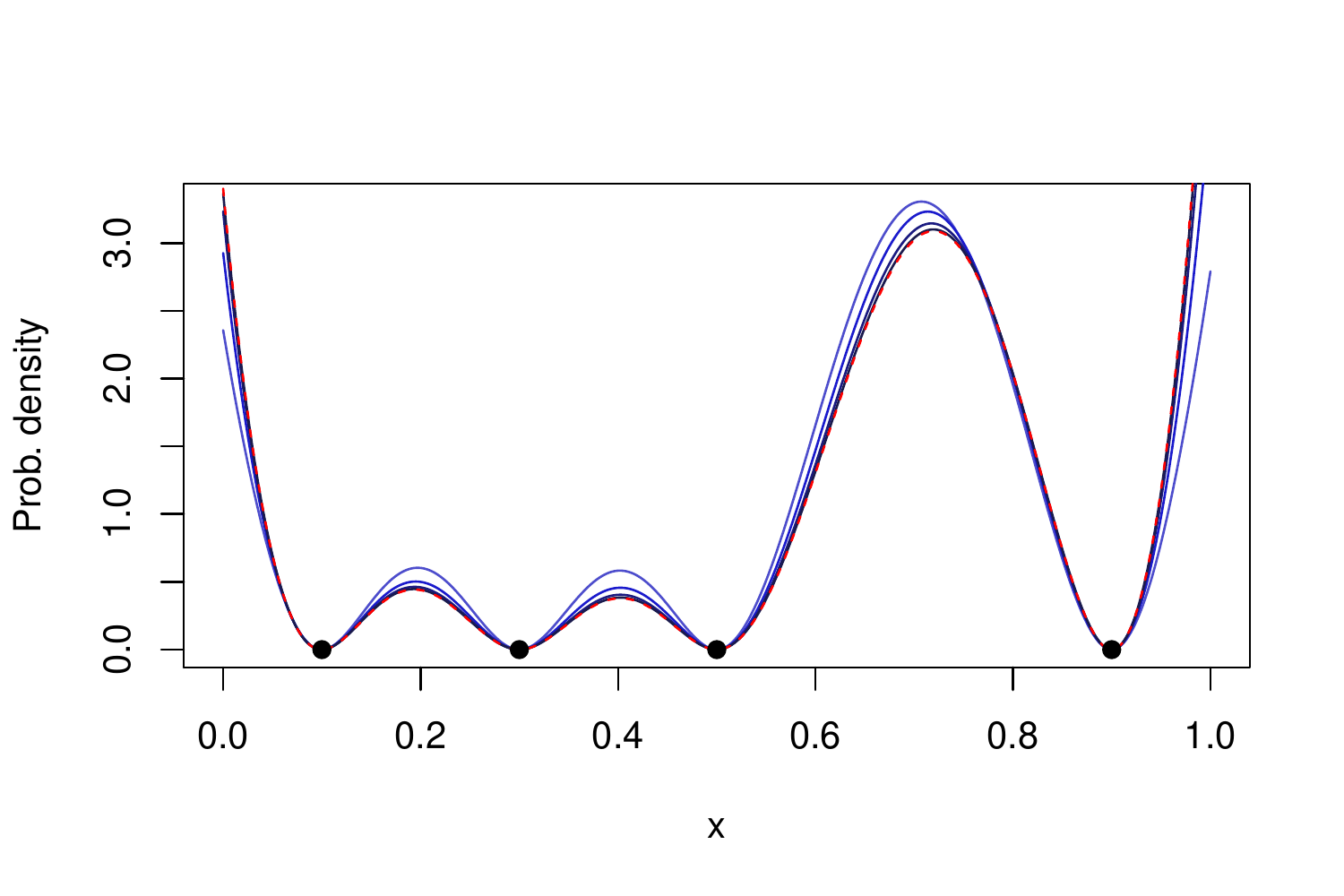}
    \caption{$k(x,y)=(1+|x-y|)\exp(-|x-y|)$, a kernel with $r=2$}
  \end{subfigure}
  \hfill
  \begin{subfigure}[b]{0.48\textwidth}
    \includegraphics[width=\textwidth]{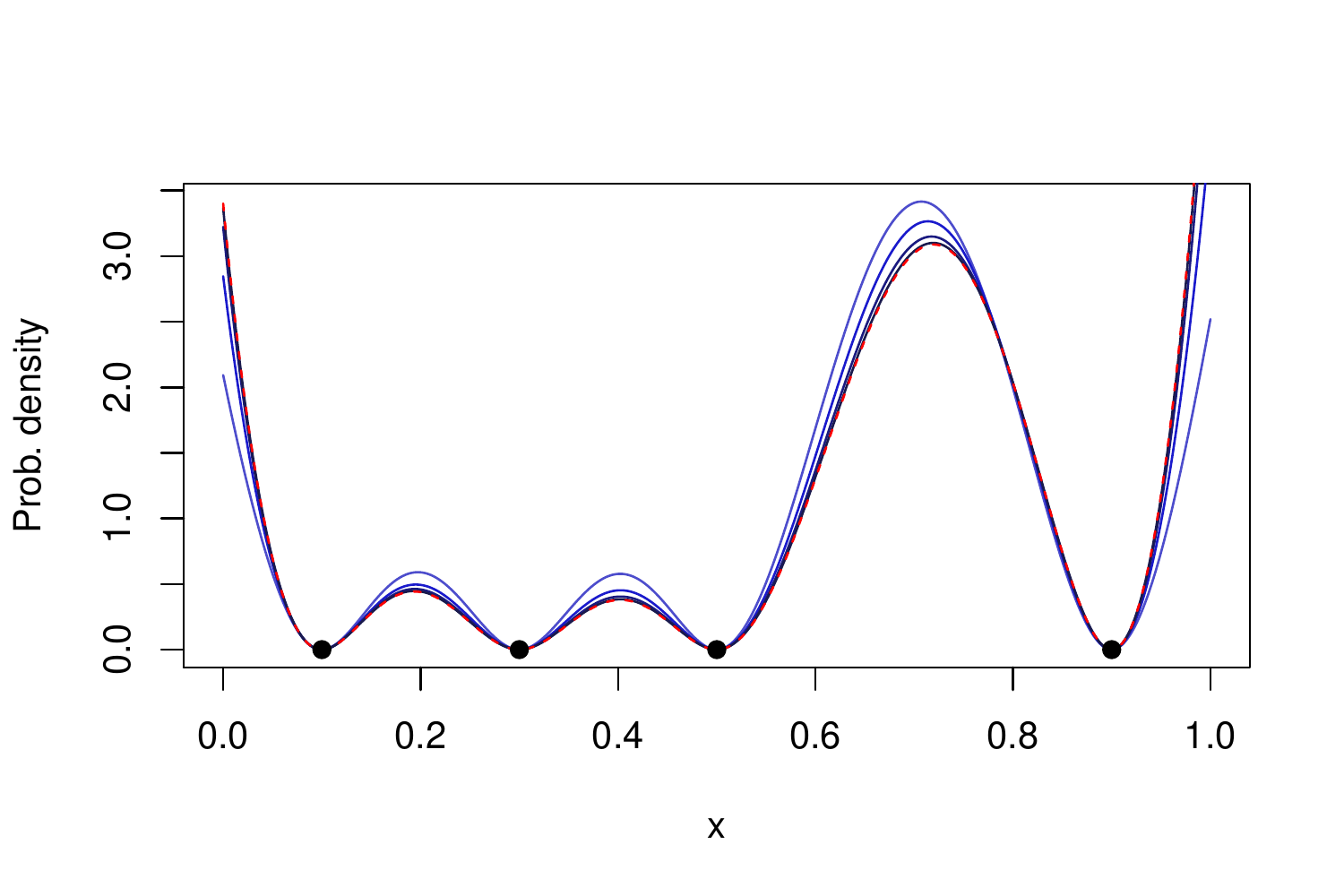}
    \caption{$k(x,y)=\sin(|x-y|+\frac{\pi}{4})\exp(-|x-y|)$, another kernel with $r=2$}
  \end{subfigure}

  \caption{Asymptotics of conditional densities of L-ensembles based on four different
    kernels. Here we plot $\Proba(\Xe = \{x\} \cup Y | Y)$, the conditional density of a fixed size L-ensemble
    (with $m=5$) where four of the points are fixed ($Y$) and the last is varying
    $(x)$. The points in $Y$ are at $0.1,0.3,0.5,0.9$. The curves in blue are
    the conditional densities for different values of $\varepsilon$:
    $4,1.5,.5,.1$, in blue. The dotted red line is
  the asymptotic limit in $\flatlim$. Note that the two kernels in the bottom row have
the same regularity coefficient $r=2$, and as predicted by the results the
limiting densities are equal.}
\label{fig:convergence-cond-1d}

\end{figure}

Another set of quantities that are easy to examine visually are the first order
inclusion probabilities ($\Proba(x \in \X)$). We refer to
\cite{Barthelme:AsEqFixedSizeDPP} for how to compute these quantities in
fixed-size L-ensembles. Since $\Xe$ converges to $\Xs$, so must the inclusion
probabilities, and this is shown in figure \ref{fig:convergence-inclusion-1d} for
three kernels with increasing values of $r$.  For these plots, the ground set consists in 20 points drawn at random in the unit interval.
We depict the first order inclusion probabilities for four different values of $\varepsilon$. Rapid convergence with $\varepsilon$ is also oberved. 
\begin{figure}
  \includegraphics[width=\textwidth]{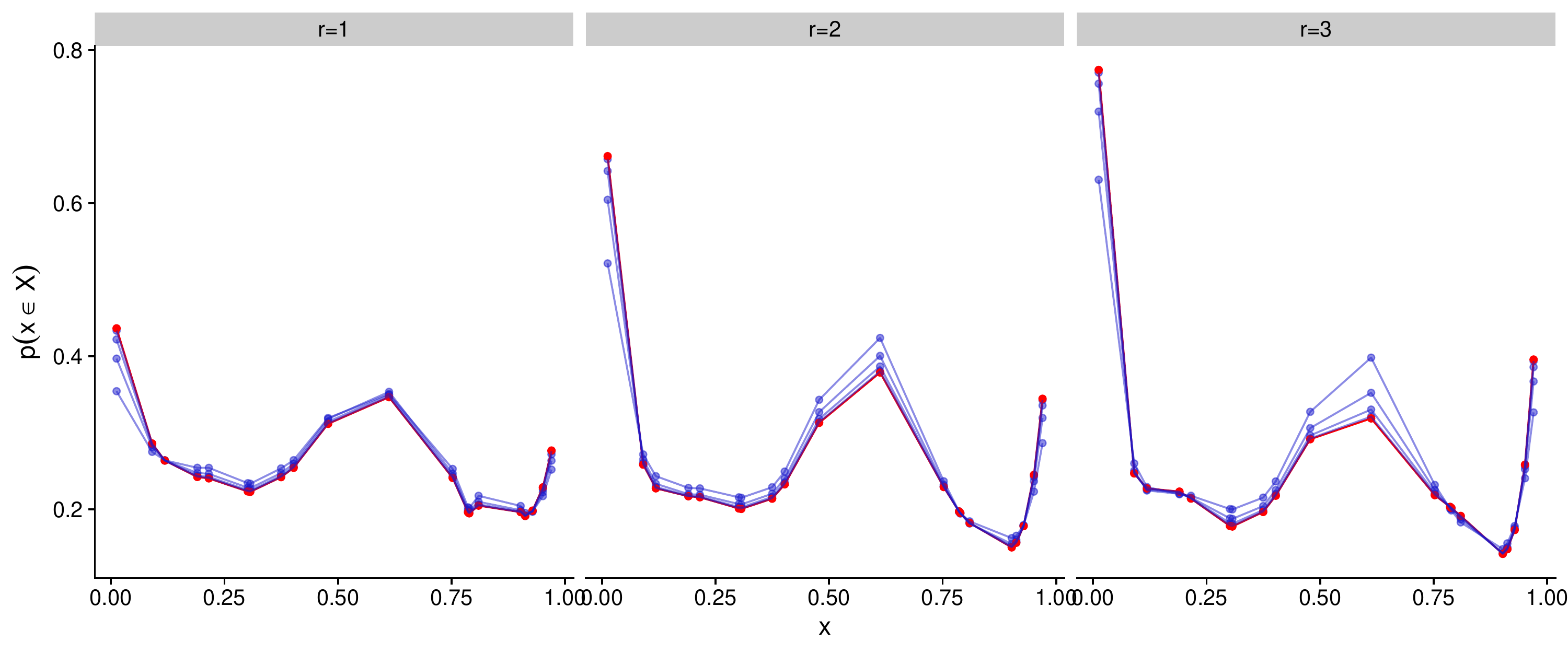}
  \caption{Flat limit of inclusion probabilities of (fixed-size) L-ensembles for three different
    kernels. Here we plot $\Proba(x \in \Xe)$, the inclusion probabilities for a fixed size L-ensemble
    (with $m=5$), where the ground set $\Omega$ consists in 20 points drawn at
    random from the unit interval. The dots in blue (joined by lines for
    clarity) are inclusion probabilities for 
    $\varepsilon = 4,1.5,.5,.1$. The dots in red correspond to
  the asymptotic limit in $\flatlim$. The three kernels are, from left-to-right,
$\exp(-\delta), (1+\delta)\exp(-\delta),(3+3\delta+\delta^2)\exp(-\delta)$,
where $\delta = |x - y|$. These kernels have $r=1$,$2$ and $3$, respectively.}

\label{fig:convergence-inclusion-1d}
\end{figure}

\section{The flat limit of fixed-size L-ensembles (multivariate case)}
\label{sec:results-multivariate}

The univariate results we stated above have a multivariate generalisation, and
in some cases they are almost the same. The only major difference is that in the
univariate case, the \emph{only} aspect of the kernel function that plays a role
in determining the limiting process is the smoothness order $r$. Two kernels may
look different, but if they have the same smoothness order they have the same
limiting DPP. 

When $d>1$ this is no longer always true. The limiting process may \emph{sometimes}
depend on the specific values of the derivatives of the kernel at 0 (not just whether they exist). 
Sometimes, but not always: for
instance, all kernels with $r=1$ give the same limiting fixed-size DPP. All kernels
with $r=2$ give the same limiting fixed-size ($m$) L-ensemble, as long as $m>d$. The case of
infinitely smooth kernels is particularly intriguing: there is a universal
limiting process, but only for $m$ in a set of ``magic'' values $\magicn{d}$ to be defined below. 
 When $m$ falls in between these values, then the
limiting process depends on the kernel (although perhaps not strongly).

To build a picture of what the final results look like, we state the easiest first:
\begin{example}
  Let $\bL_\varepsilon = [\kappa_\varepsilon(\vect{x}_i,\vect{x}_j)]_{i,j}$ with
  $\kappa$ a stationary kernel of smoothness order $r = 1$. 
  Then $\Xe \sim  \mDPP{m}(\bL_\varepsilon)$ converges to $\Xs \sim
  \mppDPP{m}  \ELE{-\bD}{\ones} $. 
\end{example}
A more general statement is given later, but this one has the advantage of being
identical to the univariate result. 

As the more general statements are also more complicated, we present our
results in increasing order of complexity. The general theorem is found at the
end of the section, and all results we state first (including the above) are
special cases. But before delving into this, we need to recall some aspects of Vandermonde matrices and introduce the magic numbers $\magicn{d}$. Furthermore, we will give in section \ref{sec:non-universal-limits} the spectral interpretation for the universal/non universal limits. We will then present the technical results. 

\subsection{Multivariate Vandermonde matrices}
\label{sec:multivar-poly} 

We recall for the sake of readability the appropriate generalisations for multivariate Vandermonde matrices presented
in the background section \ref{sec:polynomial_backgrounds} on polynomials.
For an ordered set of points $\Omega = \{\vect{x}_1, \ldots, \vect{x}_n\}$, all in $\RR^d$,  the multivariate Vandermonde matrix 
is defined as:
\begin{equation}\label{eq:VandermondeND}
  \matr{V}_{\le k} = 
  \begin{bmatrix}
    \matr{V}_{0} & \matr{V}_{1}  & \cdots & \matr{V}_{k}  
  \end{bmatrix} \in \mathbb{R}^{n\times \PP_{k,d}}
\end{equation}
where each block $\matr{V}_i \in\mathbb{R}^{n\times \HH_{i,d}}$ contains the monomials of degree $i$ evaluated on
the points in $\Omega$. 
    
As in the previous section, we use $\bV_{\le k}(X)$ to denote the matrix $\bV_{\le k}$ reduced to its lines indexed by the elements in $X$. As such, $\bV_{\le k}(X)$ has $|X| = m$ rows and $\PP_{k,d} $ columns. For some values of $m$ and $k$ it is square and (potentially) invertible. For instance, consider $\bV_{\leq k}$ as in Eq.~\eqref{eq:VandermondeND}, with $k=1$ and $d=2$. Choosing a subset $X$ of size $m=3$, the matrix $\bV_{\leq 1}(X)$ is square. In dimension 2, there exists a square Vandermonde matrix for sets $X$ of size $m=1$, $3$, $6$, $10$, $15$, $21$, etc.
	
In fact, for an arbitrary dimension $d$, there exists a square Vandermonde matrix for any size $m$ such that there exists $k\in\mathbb{N}$ verifying $\PP_{k,d} =m$, that is, any $m$ included in the set of integers:
\begin{equation}
\label{eq:magic-numbers}
\magicn{d} = \left\{ \PP_{k,d} | k \in \mathbb{N}\right\}.
\end{equation}
We will see that these values of $m$ are in some sense natural sizes for L-ensembles, because they
lead to universal limits, and that is the reason for calling them magic numbers. 

We note in passing that while we may easily determine whether $\bV_{\leq k}(X)$ is
square, whether it is invertible is a complicated question that depends on the
geometry of the points $X$, as there are some non-trivial configurations for
which it is not \cite{gasca2000polynomial}. The results below show that such configurations
 have probability 0 in the flat limit under any L-ensemble with $r$ sufficiently large compared to $m$.

\subsection{Universal and non-universal limits, a spectral view}
\label{sec:non-universal-limits}

To understand why universal limits sometimes arise and sometimes not, it is
worth making a small detour to examine the behaviour of the eigenvalues in the flat limit.

Schaback in \cite[Theorem~6]{schaback2005multivariate} showed that eigenvalues of completely smooth
kernels have different orders in $\varepsilon$. All but the first go to 0 as
$\flatlim$, but they do so at different rates. When $d=2$, the top eigenvalue is
$\O(1)$, the next two are $\O(\varepsilon^2)$, the next three are
$\O(\varepsilon^4)$, the next four are $\O(\varepsilon^6)$, etc. The reader may
notice that there are as many eigenvalues of order $\O(\varepsilon^{2i})$ as
$\HH_{i,d}$ the number of monomials of degree $i$ in dimension $d=2$. This is indeed the general case for
smooth kernels in any dimension $d$. In \cite{BarthelmeUsevich:KernelsFlatLimit} the result is
extended to finitely smooth kernels, and the main term in the expansion of the eigenvalues
as $\flatlim$ is given. In finitely smooth kernels of smoothness order $r$, the first $r$
groups of eigenvalues behave as in the completely smooth case, meaning that the first group
(of size $\HH_{0,d}=1$) has order $\O(1)$, the second of size $\HH_{1,d}=d$\ has order $O(\varepsilon^2)$, etc. up to
the group of order $\O(\varepsilon^{2(r-1)})$ with size $\HH_{r-1,d}$. Then all the remaining eigenvalues form a single
group of order $\O(\varepsilon^{2r-1})$ and of size $n-\PP_{r-1,d}$. For instance, if $r=2$, and $d=2$, the
top eigenvalue is $\O(1)$, the next two are $\O(\varepsilon^2)$, and the
remaining $n-3$ eigenvalues are all $\O(\varepsilon^3)$.
Let us examine this case more closely, in light of the spectral mixture
viewpoint on L-ensembles. The asymptotic expansion of the eigenvalues for $r=2$, and
$d=2$ are as follows:
\begin{align*}
  \left. \lambda_0(\varepsilon) = \lt_0 + \O(\varepsilon) \right\}\textrm{\textcolor{blue}{Group 1}}\\
  \left.
  \begin{array}{c}
\lambda_1(\varepsilon) = \varepsilon^2\left(\lt_1 + \O(\varepsilon)\right) \\
  \lambda_2(\varepsilon) = \varepsilon^2\left(\lt_2 + \O(\varepsilon)\right)
  \end{array}
  \right\}\textrm{\textcolor{blue}{Group 2}}\\
  \left.
  \begin{array}{c}
  \lambda_3(\varepsilon) = \varepsilon^3\left(\lt_3 + \O(\varepsilon)\right) \\
  \vdots \\
  \lambda_{n-1}(\varepsilon) = \varepsilon^3\left(\lt_{n-1} + \O(\varepsilon)\right)
  \end{array}
  \right\}\textrm{Group 3}
\end{align*}
We highlight the first two groups in blue because they correspond to the smooth
part of the spectrum, i.e. the part that behaves in the same way in the
completely smooth case. The rest is the non-smooth part. 
What the precise values of $\lt_0,\lt_1,\ldots$ are does not matter here (see
Theorem 6.3 in \cite{BarthelmeUsevich:KernelsFlatLimit} for the expression), but
what matters to this explanation is the following: in the smooth part, the
eigenvalues depend non-trivially on the Taylor expansion of the kernel
at 0. Different kernels with equal order of regularity may have different
asymptotic eigenvalues, but they will appear in groups with the same structure.
In the non-smooth part, that is not the case, apart from a trivial global
scaling that does not matter here. To sum up: in our example of $r=2$ and
$d=2$, as $\flatlim$,
$\frac{\lambda_2}{\lambda_1}$ depends on the kernel, while e.g.
$\frac{\lambda_5}{\lambda_4}$ does not. 
Now consider what happens when we sample a fixed-size L-ensemble, going into the
limit $\flatlim$, and bearing in mind lemma \ref{lem:TV-convergence-diverging}. 

With $m=1$, only the top eigenvector will ever be sampled
(its eigenvalue is $\O(1)$, all the rest are asymptotically smaller). The result
is a projection DPP and the limit is universal. With $m=2$, the top one is always sampled, then either of the
next two. We have a partial-projection DPP again. The relative probability of
sampling the second or third eigenvector depends on
$\frac{\lambda_2}{\lambda_1}$, which in turn depends on the kernel. The limit is
here non-universal. With $m=3$, the top three eigenvectors are necessarily
sampled, the ratio $\frac{\lambda_2}{\lambda_1}$ is irrelevant. Again, we find a
projection DPP as the universal limit. Finally, with $m>4$, we start hitting the
non-smooth part. The first three eigenvectors are necessarily sampled, and then
$m-3$ eigenvectors from the remaining ones. In that part of the spectrum the
ratios $\frac{\lambda_i}{\lambda_j}$ do not depend on the kernel, and so the
limit is universal (and a partial-projection DPP). 
In conclusion, with $r=2$ and $d=2$, there is a universal limit for every value
of $m$ except $m=2$. With $r=2$ and $d=3$, and repeating the same reasoning, we
find a universal limit for every $m$ except $m=2$ and $m=3$. 

Theorem \ref{thm:general-case-smooth-fixed-size} below will describe the general pattern for $m\leq \PP_{r-1,d}$, gives the asymptotic process for non-universal limits ($m$ non magic) and universal ($m$ magic).  Before presenting it, we will present separately the case of universal limits alone given for the cases $m> \PP_{r-1,d}$ and $m\in \magicn{d}$.

The statement of the theorems involves derivatives of the kernel.
A convenient short-hand notation for higher-order derivatives uses
multi-indices:
\[
  f^{(\va)}
  (\vect{x}) =\frac{\partial f^{|\va|}}{\partial x_1^{\alpha_1} \cdots\partial x_d^{\alpha_d}} (\vect{x})
\]
The Wronskian matrix of the kernel is defined as:
\begin{equation}\label{eq:wronskian_nd}
  \matr{W}_{\leq k}  = 
  \left[
    \frac{k^{(\va,\vect{\beta})} (\vect{0},\vect{0})}{\va!\vect{\beta}!}
  \right]_{|\va| \leq k, |\vb| \leq k} \in\mathbb{R}^{\PP_{k,d}\times \PP_{k,d}}.
\end{equation}
Here we index the matrix using multi-indices (equivalently, monomials), so that an element of
$\matr{W}_{\leq k}$ is e.g., $\matr{W}_{(0,2),(2,1)}$ which is a scaled
derivative of $k(\vect{x},\vect{y})$ of order $(0,2)$ in $\vect{x}$ and $(2,1)$
in $\vect{y}$.
For example, for $d=2$ and $k=2$ we may write 
\[
\matr{W}_{\leq 2} =\begin{bmatrix}
k^{((0,0),(0,0))} & k^{((0,0),(1,0))} & k^{((0,0),(0,1))} & \frac{k^{((0,0),(2,0))}}{2} & {k^{((0,0),(1,1))}} & \frac{k^{((0,0),(0,2))}}{2} \\
k^{((1,0),(0,0))} & k^{((1,0),(1,0))} & k^{((1,0),(0,1))} & \frac{k^{((1,0),(2,0))}}{2} & {k^{((1,0),(1,1))}} & \frac{k^{((1,0),(0,2))}}{2} \\
k^{((0,1),(0,0))} & k^{((0,1),(1,0))} & k^{((0,1),(0,1))} & \frac{k^{((0,1),(2,0))}}{2} & {k^{((0,1),(1,1))}} & \frac{k^{((0,1),(0,2))}}{2} \\
\frac{k^{((2,0),(0,0))}}{2} & \frac{k^{((2,0),(1,0))}}{2} & \frac{k^{((2,0),(0,1))}}{2} & \frac{k^{((2,0),(2,0))}}{4} & {\frac{k^{((2,0),(1,1))}}{2}} & \frac{k^{((2,0),(0,2))}}{4} \\
k^{((1,1),(0,0))} & k^{((1,1),(1,0))} & k^{((1,1),(0,1))} & \frac{k^{((1,1),(2,0))}}{2} & {k^{((1,1),(1,1))}} & \frac{k^{((1,1),(0,2))}}{2} \\
\frac{k^{((0,2),(0,0))}}{2} & \frac{k^{((0,2),(1,0))}}{2} & \frac{k^{((0,2),(0,1))}}{2} & \frac{k^{((0,2),(2,0))}}{4} & {\frac{k^{((0,2),(1,1))}}{2}} & \frac{k^{((0,2),(0,2))}}{4} \\
\end{bmatrix}\in\mathbb{R}^{\PP_{2,2}\times \PP_{2,2}}
\]
for a given ordering of the monomials, and where all the derivatives are taken
at $\vect{x}=0,\vect{y}=0$.

\subsection{Universal (easy) limits}
\label{sec:universal-limits}

The following result applies when the kernel is sufficiently smooth and the L-ensemble has
fixed size $m \in \magicn{d}$.
\begin{theorem}
  \label{thm:nd-smooth-magic-case}
  Let $d\in \mathbb{N}^*$ and $\bL_\varepsilon =
  [\kappa_\varepsilon(\vect{x}_i,\vect{x}_j)]_{i,j}$ for $\kappa$ a stationary
  kernel of smoothness order $r$ and $\vect{x}_1, \ldots, \vect{x}_n$
  vectors in $\R^d$. Then for all $m\in \{\PP_{k,d}\}_{k\leq r-1} \subset\magicn{d}$, the fixed-size L-ensemble $\X_\varepsilon \sim
  |DPP|_m(\bL(\varepsilon))$ has the limiting distribution:
  \[ \Xs \sim |DPP|_m(\bV_{\leq k}\bV_{\leq k}^\top)\]
  Equivalently, if $\bQ$ is an orthonormal basis for $\bV_{\leq k}$, then:
  \[ \Xs \sim |DPP|_m(\bQ\bQ^\top)\]
\end{theorem}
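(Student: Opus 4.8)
The plan is to follow the same strategy as in the univariate smooth case (Theorem~\ref{thm:cs-case-fixed-size-1d}): obtain a flat-limit expansion of every $m\times m$ minor of $\bL(\varepsilon)$ in which the power of $\varepsilon$ is the \emph{same} for all subsets $X$ of size $m$, identify the leading coefficient as a principal minor of $\bV_{\le k}\bV_{\le k}^\top$, and then invoke Lemma~\ref{lem:TV-convergence}. Concretely, fix the unique $k\le r-1$ with $m=\PP_{k,d}$ (the constraint $k\le r-1$ places us in the smooth regime, where the kernel is differentiable to the order needed). The first step is to establish the multivariate analogue of Theorem~\ref{thm:det_1d_smooth}, namely that there is an integer $s$ \emph{independent of $X$} such that, for every $X$ with $|X|=m$,
\begin{equation*}
  \det(\bL_X(\varepsilon)) = \varepsilon^{s}\bigl(\det\bW_{\le k}\cdot\det(\bV_{\le k}(X))^2 + \O(\varepsilon)\bigr),
\end{equation*}
where $\bW_{\le k}$ is the Wronskian of~\eqref{eq:wronskian_nd} and $\det\bW_{\le k}\neq 0$ for a smooth positive-definite kernel. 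This is exactly the fact that, at magic sizes, the leading flat-limit order of a minor does not depend on the chosen rows.

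Granting this expansion, the rest is immediate. Because $m=\PP_{k,d}$, the block $\bV_{\le k}(X)\in\R^{m\times\PP_{k,d}}$ is square, so $\det(\bV_{\le k}(X))^2=\det\bigl((\bV_{\le k}\bV_{\le k}^\top)_X\bigr)$ (the Cauchy-Binet corollary, or simply $\det(AA^\top)=(\det A)^2$ for square $A$). Writing $f_\varepsilon(X)=\det(\bL_X(\varepsilon))$, the expansion above is precisely of the form required by Lemma~\ref{lem:TV-convergence} with common order $s$ and leading term $f_0(X)=\det\bW_{\le k}\cdot\det\bigl((\bV_{\le k}\bV_{\le k}^\top)_X\bigr)$. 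The $X$-independent factor $\det\bW_{\le k}$ cancels in the normalisation, so
\begin{equation*}
  \Proba(\Xs=X)=\frac{\det\bigl((\bV_{\le k}\bV_{\le k}^\top)_X\bigr)}{\sum_{|Y|=m}\det\bigl((\bV_{\le k}\bV_{\le k}^\top)_Y\bigr)},
\end{equation*}
which is by definition the law of $\mDPP{m}(\bV_{\le k}\bV_{\le k}^\top)$.

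To pass to the projection form, note that when the points of $\Omega$ are in general position the $n\times\PP_{k,d}$ matrix $\bV_{\le k}$ has full column rank $\PP_{k,d}=m$, so $\bV_{\le k}\bV_{\le k}^\top$ has rank exactly $m$. Lemma~\ref{lem:max-rank-dpp} then applies verbatim and yields $\Xs\sim\mDPP{m}(\bQ\bQ^\top)$ for any orthonormal basis $\bQ$ of $\mspan\bV_{\le k}$, completing the proof.

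The main obstacle is the determinant expansion of the first step: proving both that the leading power $s$ is uniform over all size-$m$ subsets and that its coefficient factorises as $\det\bW_{\le k}\det(\bV_{\le k}(X))^2$. Unlike the univariate case, this cannot be read off a single Vandermonde identity and rests on the multivariate combinatorics of monomial degrees; I would import it from the spectral results of~\cite{BarthelmeUsevich:KernelsFlatLimit}. The underlying reason it holds precisely at magic sizes is spectral and can serve as an independent check: the eigenvalues of $\bL(\varepsilon)$ split into groups of sizes $\HH_{0,d},\HH_{1,d},\ldots$ and orders $\varepsilon^{0},\varepsilon^{2},\ldots$, and since $m=\PP_{k,d}$ fills the first $k+1$ groups exactly, the minimal-order selection forced by Lemma~\ref{lem:TV-convergence-diverging} deterministically picks groups $0,\dots,k$, whose limiting eigenvectors span $\mspan\bV_{\le k}$; this reproduces $\mDPP{m}(\bQ\bQ^\top)$ directly through the mixture representation.
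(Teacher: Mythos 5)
Your proposal is correct and follows essentially the same route as the paper: import the determinant expansion $\det(\bL_X(\varepsilon)) = \varepsilon^{M}\bigl(\det\bW_{\leq k}\,(\det\bV_{\leq k}(X))^2 + \O(\varepsilon)\bigr)$ with $X$-independent leading order from Case 1 of Theorem 6.1 in \cite{BarthelmeUsevich:KernelsFlatLimit}, conclude via Lemma~\ref{lem:TV-convergence} as in the univariate Theorem~\ref{thm:cs-case-fixed-size-1d}, and pass to the projection form via Lemma~\ref{lem:max-rank-dpp}. Your explicit general-position caveat for the full column rank of $\bV_{\leq k}$ (needed for Lemma~\ref{lem:max-rank-dpp}) and the spectral sanity check are reasonable additions that the paper leaves implicit, but the argument is the same.
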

\begin{proof}
  Case 1 of theorem 6.1 in \cite{BarthelmeUsevich:KernelsFlatLimit} states the behavior in $\varepsilon$ of the determinant in this case:
  \begin{align*}
    \forall X \text{ s.t. } | X|=m,\qquad \det(\bL_{\varepsilon,X}) = \varepsilon^{M} \left(\det \bW_{\leq k}(\det\bV_{\leq k}(X))^2 + \O(\varepsilon)\right) 
  \end{align*} 
 for some $M\in\mathbb{N}$ that we do not need to specify in this proof. $\bW_{\leq
    k}$  is the Wronskian matrix. It is irrelevant here as it does
  not depend on $X$. Similarly to the univariate proof (of theorem~\ref{thm:cs-case-fixed-size-1d}), one obtains that the limiting distribution is indeed $\Xs \sim |DPP|_m(\bV_{\leq k}\bV_{\leq k}^\top)$.
  The equivalence between the two formulations of the limiting process comes
  from the fact that $\bV_{\leq k}$ has dimension $n \times m$, and we may
  apply lemma \ref{lem:max-rank-dpp}. Any orthonormal basis will do. 
\end{proof}
\begin{remark}
  Since $\bV_{\leq k}$ is a polynomial basis, $\bQ$ is a basis of orthogonal
  polynomials. The limiting process we see appearing here is the same as the one
  studied in \cite{tremblay2019determinantal} in the discrete case. A similar
  theorem can be proved for continuous DPPs, essentially by tediously changing
  the notation, and leads to the multivariate orthogonal ensembles studied in
  \cite{bardenet2016monte}. What this means is that the properties proved in
   \cite{bardenet2016monte} (good properties for integration)
    and
 \cite{tremblay2019determinantal}  (asymptotic rebalancing) also hold for any
  sufficiently smooth kernel in the flat limit, at least for DPPs of size $m \in \magicn{d}$.
\end{remark}

The case of kernels with finite smoothness is simple if $m$ is greater than 
$\PP_{r-1,d}$.  We then obtain another universal limiting process:
\begin{theorem}
  \label{thm:nd-finite-smooth-magic-case}
  Let $d\in \mathbb{N}^*$ and $\bL_\varepsilon =
  [\kappa_\varepsilon(\vect{x}_i,\vect{x}_j)]_{i,j}$ for $\kappa$ a stationary
  kernel of smoothness order $r$ and $\vect{x}_1, \ldots, \vect{x}_n$
  vectors in $\R^d$. Then, for all $m \geq \PP_{r-1,d}$, the limiting distribution
  of $\Xe \sim |DPP|_m(\bL(\varepsilon))$ is:
  \[ \Xs \sim  \mppDPP{m} \ELE{(-1)^r \bD^{(2r-1)}} {\bV_{\leq r-1}} \]
\end{theorem}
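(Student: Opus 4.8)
The plan is to follow the template of the univariate finitely-smooth case (Theorem~\ref{thm:fs-case-fixed-size-1d}) and of the magic-number case (Theorem~\ref{thm:nd-smooth-magic-case}): extract the leading-order asymptotics of every minor $\det(\bL_{\varepsilon,X})$ with $|X|=m$, and then feed them into Lemma~\ref{lem:TV-convergence}. The only new analytic ingredient is the multivariate determinant expansion in the finitely-smooth regime, which I would import from the relevant case of Theorem~6.1 in \cite{BarthelmeUsevich:KernelsFlatLimit} (the $d$-dimensional counterpart of Theorem~\ref{thm:det_1d_finite_smoothness}). Setting $p=\PP_{r-1,d}$, this expansion reads, for all $X$ with $|X|=m\geq p$,
\begin{equation*}
  \det(\bL_{\varepsilon,X}) = \varepsilon^{M}\Bigl( C\,(-1)^{p}\det\begin{pmatrix} f_{2r-1}\bD^{(2r-1)}(X) & \bV_{\leq r-1}(X) \\ \bV_{\leq r-1}(X)^\top & \matr{0}\end{pmatrix} + \O(\varepsilon)\Bigr),
\end{equation*}
where the exponent $M$ and the Wronskian constant $C$ depend on $m,r,d$ and the kernel but not on $X$.

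First I would apply Lemma~\ref{lem:TV-convergence} with $f_\varepsilon(X)=\det(\bL_{\varepsilon,X})$ and leading coefficient $f_0(X)=C\,(-1)^p\det(\,\cdots\,)$. Since the $X$-independent prefactor $\varepsilon^M C$ cancels upon normalisation, this yields $\Xe\to\Xs$ with
\begin{equation*}
  \Proba(\Xs=X)\propto (-1)^{p}\det\begin{pmatrix} f_{2r-1}\bD^{(2r-1)}(X) & \bV_{\leq r-1}(X) \\ \bV_{\leq r-1}(X)^\top & \matr{0}\end{pmatrix}\Ind(|X|=m).
\end{equation*}
Comparing with Eq.~\eqref{eq:proba_mass_fs}, the right-hand side is precisely the unnormalised mass of the fixed-size pp-DPP $\mppDPP{m}\ELE{f_{2r-1}\bD^{(2r-1)}}{\bV_{\leq r-1}}$.

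It then remains to replace $f_{2r-1}\bD^{(2r-1)}$ by $(-1)^r\bD^{(2r-1)}$. For a positive-definite radial kernel of smoothness order $r$ one has $(-1)^r f_{2r-1}>0$, which is exactly the statement (recalled in Section~\ref{sec:ppdpp-examples-cpdef}) that $(-1)^r\bD^{(2r-1)}$ is conditionally positive semi-definite with respect to $\bV_{\leq r-1}$; hence $f_{2r-1}\bD^{(2r-1)}=\bigl((-1)^r f_{2r-1}\bigr)\,(-1)^r\bD^{(2r-1)}$ is a \emph{positive} scalar multiple of it. By Lemma~\ref{lem:det-saddlepoint} (or equivalently Theorem~\ref{thm:equivalence-extended-spectral}), scaling the $\bL$-part of an NNP by a constant $\alpha>0$ multiplies each size-$m$ saddle-point minor by the \emph{same} factor $\alpha^{m-p}$ (it acts on the $(m-p)\times(m-p)$ block $\det(\bQ_X^\top\bL_X\bQ_X)$), which cancels in the normalisation. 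Therefore the two NNPs define the same fixed-size distribution, and I conclude $\Xs\sim\mppDPP{m}\ELE{(-1)^r\bD^{(2r-1)}}{\bV_{\leq r-1}}$.

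The routine steps above are essentially identical to the univariate and magic-number proofs; the part that needs genuine attention is \emph{well-definedness} of the limit, i.e.\ that the leading coefficient $f_0$ is not identically zero so that Lemma~\ref{lem:TV-convergence} applies. This amounts to two facts: (a) $\bV_{\leq r-1}$ must have full column rank $p=\PP_{r-1,d}$, the general-position (unisolvency) condition on $\Omega$ for degree-$(r-1)$ polynomials that is also what makes $\ELE{(-1)^r\bD^{(2r-1)}}{\bV_{\leq r-1}}$ a legitimate NNP; and (b) the projected matrix $\widetilde{\bL}$ associated with $(-1)^r\bD^{(2r-1)}$ must have rank at least $m-p$, so that by Corollary~\ref{cor:normalisation-ppDPP} the normalising constant $e_{m-p}(\widetilde{\bL})\det(\bV^\top\bV)$ is strictly positive (this holds since the odd distance-power matrix is generically invertible, giving $\rank\widetilde{\bL}=n-p\geq m-p$). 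Under these conditions at least one size-$m$ minor is positive, the limit is a genuine pp-DPP, and the theorem follows.
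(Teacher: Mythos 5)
Your proof is correct and takes essentially the same route as the paper's: import the leading-order multivariate determinant expansion for finitely smooth kernels from \cite{BarthelmeUsevich:KernelsFlatLimit}, cancel the $X$-independent prefactor via Lemma~\ref{lem:TV-convergence}, and match the resulting mass function with Eq.~\eqref{eq:proba_mass_fs}; your additional bookkeeping (the sign $(-1)^r f_{2r-1}>0$ and positive-scaling invariance of the fixed-size pp-DPP, plus the rank/unisolvency conditions ensuring the NNP and normalisation are non-degenerate) is in fact more explicit than the paper's proof, which absorbs these points silently. One minor pointer correction: the expansion you need is Case 1 of Theorem 6.3 (not 6.1) of \cite{BarthelmeUsevich:KernelsFlatLimit}, Theorem 6.1 being the smooth-case counterpart used in Theorem~\ref{thm:nd-smooth-magic-case}.
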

\begin{proof}
	Case 1 of theorem 6.3 in \cite{BarthelmeUsevich:KernelsFlatLimit} states the behavior in $\varepsilon$ of the determinant in this case:
		\begin{align*}
		\forall X \text{ s.t. } | X|=m\geq\PP_{r-1,d},\qquad \det (\matr{L}_{\varepsilon,  X}) =\varepsilon^{M} \left( \widetilde{l}( X) + \O(\varepsilon)\right),
		\end{align*} 
		with $\widetilde{l}(X)$ as in Eq.~\eqref{eq:det_finite_smoothness} (with
    $\bD^{(2r-1)}(X)$, $\bW_{\le r-1}$ and  $\matr{V}_{\le r-1}(X)$ replaced
    by their multivariate equivalent -- see section~\ref{sec:non-universal-limits} to see how this is done), and $M\in\mathbb{N}$ that we do not need to specify in this proof neither. Similarly to the univariate proof (of theorem~\ref{thm:fs-case-fixed-size-1d}), one obtains that the limiting distribution is indeed $\Xs \sim  \mppDPP{m} \ELE{\bD^{(2r-1)}} {\bV_{\leq r-1}}$.
\end{proof}

With these two theorems in hand, we can go back to the teaser (figure
\ref{fig:teaser}) we gave in the introduction. In figure
\ref{fig:teaser}, the points 1 to 6 are on a parabolic curve: $x_2 = x_1^2$,
while point 7 ($x_1 = 0.5,x_2 = 0.6$) is not. For now let $X = \{1,2,3,4,5,6 \}$
and $X' = \{2,3,4,5,6,7 \}$. 
Applying theorem
\ref{thm:nd-smooth-magic-case} for a  $\mDPP{6}$ with a Gaussian kernel, we see
that $p(\Xs = X) \propto \det V_{\leq 2}(X)^2 = 0$
(the matrix is square and has two identical columns). On the other hand, one may check
numerically that $\det V_{\leq 2}(X')$ is non-zero, even though $X'$ is less
spread-out than $X$. 
For the case of the exponential kernel, we apply theorem
\ref{thm:nd-finite-smooth-magic-case}, and we can verify numerically that
$X$ is much more likely than $X'$.
In fact, the two theorems tell us more: the case of the Gaussian kernel holds in
fact for all kernels with $r>1$, which all give zero probability to set $X$. The
more general phenomenon this illustrates is that DPPs defined from smooth
kernels avoid non-unisolvent sets, even though they may be acceptably
spread-out.

\subsection{The general case.}
\label{sec:non-universal-limits}

Up to here, we have covered all the easy cases which lead to universal limits. To be precise, for a fixed $d\in\mathbb{N}^*$ and $r\in\mathbb{N}^*$:
\begin{itemize}
	\item Thm.~\ref{thm:nd-finite-smooth-magic-case} covers the case $m\geq \PP_{r-1,d}$
	\item Out of the remaining cases where $m\leq \PP_{r-1,d}$, Thm.~\ref{thm:nd-smooth-magic-case} covers the special cases where $m\in\magicn{d}$: $m=\PP_{0,d}$, $m=\PP_{1,d}$, $\ldots$, $m=\PP_{r-1,d}$.
\end{itemize}
 What remains is to cover the not-so-easy cases where $m\leq \PP_{r-1,d}$ and $m\notin\magicn{d}$, as provided by:
\begin{theorem}
  \label{thm:general-case-smooth-fixed-size}
  Let $d\in \mathbb{N}^*$ and $\bL_\varepsilon =
  [\kappa_\varepsilon(\vect{x}_i,\vect{x}_j)]_{i,j}$ for $\kappa$ a stationary
  kernel of smoothness order $r$, and $\vect{x}_1, \ldots, \vect{x}_n$
  vectors in $\R^d$. Let $m\leq \PP_{r-1,d}$ and $k\leq r-1$ the integer such
  that $\PP_{k-1,d} < m \le  \PP_{k,d}$. Let us partition the Wronskian
  $\bW_{< k}$ as: 
  \[
  \matr{W}_{< k} =  \begin{bmatrix}\matr{W}_{< k-1} & \matr{W}_{\left\urcorner\right.} \\ \matr{W}_{\llcorner}  &\matr{W}_{\lrcorner}  \end{bmatrix} .
  \]
  Then, the limiting distribution of $\Xe \sim |DPP|_m(\bL_\varepsilon)$ is:
  \[\Xs \sim \mppDPP{m} \ELE{\bV_k\bar{\bW}\bV_k^\top}{\bV_{\leq k-1}}\]
  where $\bar{\bW}\in\mathbb{R}^{\HH_{k,d}\times \HH_{k,d}}$ is the  Schur complement:
  \[ \bar{\bW}= \matr{W}_{\lrcorner} - \matr{W}_{\llcorner}
    (\matr{W}_{< k - 1})^{-1}\matr{W}_{\urcorner} \]
\end{theorem}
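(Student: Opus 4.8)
The strategy mirrors the structure of the two universal-limit theorems (Theorems~\ref{thm:nd-smooth-magic-case} and~\ref{thm:nd-finite-smooth-magic-case}): extract from \cite{BarthelmeUsevich:KernelsFlatLimit} the leading term in the $\varepsilon$-expansion of $\det(\bL_{\varepsilon,X})$ in the regime $\PP_{k-1,d} < m \le \PP_{k,d}$ with $m\notin\magicn{d}$, recognise it (up to an $X$-independent factor) as the saddle-point determinant of an extended L-ensemble, and then invoke Lemma~\ref{lem:TV-convergence} to pass to the limiting distribution. The key point that distinguishes this intermediate regime from the universal ones is that $\bV_{\le k}(X)$ is \emph{wider} than it is tall (it has $\PP_{k,d} \ge m$ columns), so its squared determinant is no longer the relevant object; instead the degree-$k$ block $\bV_k$ enters only through the quadratic form $\bV_k\bar{\bW}\bV_k^\top$, where the Schur complement $\bar{\bW}$ encodes the coupling between the degree-$k$ monomials and the lower-degree ones via the Wronskian.

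\textbf{Main steps.} First I would quote the appropriate case of Theorem~6.1 (or its finitely-smooth analogue, Theorem~6.3) in \cite{BarthelmeUsevich:KernelsFlatLimit}, which gives
\[
\det(\bL_{\varepsilon,X}) = \varepsilon^{M}\bigl(\widetilde{l}(X) + \O(\varepsilon)\bigr)
\]
for a fixed integer $M$ independent of $X$, and where the main term $\widetilde{l}(X)$ is a saddle-point determinant built from the degree-$(k-1)$ Vandermonde block $\bV_{\le k-1}(X)$ together with the degree-$k$ contribution filtered through the Wronskian. Second, I would show that this main term equals, up to an $X$-independent multiplicative constant,
\[
(-1)^{k-1}\det\begin{pmatrix} \bigl(\bV_k\bar{\bW}\bV_k^\top\bigr)_{X} & \bV_{\le k-1}(X) \\ \bigl(\bV_{\le k-1}(X)\bigr)^\top & \matr{0} \end{pmatrix}.
\]
The Schur-complement identity $\bar{\bW}= \matr{W}_{\lrcorner} - \matr{W}_{\llcorner}(\matr{W}_{<k-1})^{-1}\matr{W}_{\urcorner}$ is exactly what arises when one eliminates the degree-$<k$ monomial contributions from the full Wronskian block $\bW_{<k}$ using Lemma~\ref{lem:block-det}; this is the algebraic heart of the argument. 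Third, I would check that $(\bV_k\bar{\bW}\bV_k^\top\,;\,\bV_{\le k-1})$ is indeed a valid NNP: $\bV_{\le k-1}$ has full column rank $\PP_{k-1,d}=p$ generically, and conditional positive semi-definiteness with respect to $\bV_{\le k-1}$ follows from positive-semidefiniteness of the relevant Wronskian Schur complement (the smoothness assumption guarantees the kernel's local positive-definiteness). Finally, normalising by the sum over all $|X|=m$ and applying Lemma~\ref{lem:TV-convergence} yields
\[
\Proba(\Xs = X) \propto (-1)^{k-1}\det\begin{pmatrix} \bigl(\bV_k\bar{\bW}\bV_k^\top\bigr)_{X} & \bV_{\le k-1}(X) \\ \bigl(\bV_{\le k-1}(X)\bigr)^\top & \matr{0} \end{pmatrix},
\]
which is precisely the probability mass function of $\mppDPP{m}\ELE{\bV_k\bar{\bW}\bV_k^\top}{\bV_{\le k-1}}$.

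\textbf{Main obstacle.} The delicate step is the second one: reconciling the form of $\widetilde{l}(X)$ as stated in \cite{BarthelmeUsevich:KernelsFlatLimit} with the clean extended-L-ensemble expression in terms of $\bV_k\bar{\bW}\bV_k^\top$. The Wronskian $\bW_{<k}$ mixes all monomial degrees up to $k$, and one must carefully track which rows and columns correspond to degree exactly $k$ versus degree strictly less than $k$, then perform the block elimination so that the degree-$<k$ part becomes the constraint matrix $\bV_{\le k-1}$ (the projective/deterministic part) while the degree-$k$ part, weighted by the Schur complement $\bar{\bW}$, becomes the $\bL$-matrix of the NNP (the randomly-sampled part). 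Ensuring that the $X$-independent factors (powers of $\varepsilon$, determinants of Wronskian blocks such as $\det\matr{W}_{<k-1}$) genuinely cancel in the normalisation, and that $\bar{\bW}$ is well-defined (i.e.\ $\matr{W}_{<k-1}$ is invertible, which the smoothness hypothesis secures), is where the bookkeeping is heaviest. Everything downstream of establishing this identity is a routine repetition of the arguments in Theorems~\ref{thm:cs-case-fixed-size-1d} and~\ref{thm:fs-case-fixed-size-1d}.
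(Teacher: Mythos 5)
Your plan follows the paper's own proof essentially step for step: it quotes the relevant case of Theorem~6.1 of \cite{BarthelmeUsevich:KernelsFlatLimit} for the leading term of $\det(\bL_{\varepsilon,X})$, performs the block elimination on the Wronskian via Lemma~\ref{lem:block-det} to produce the Schur complement $\bar{\bW}$ (correctly identified as the algebraic heart), recasts the main term as the saddle-point determinant (Lemma~\ref{lem:det-saddlepoint}), and concludes with Lemma~\ref{lem:TV-convergence}, exactly as the paper does. The only slip is the sign: it should be $(-1)^{p}$ with $p=\PP_{k-1,d}$ (the number of columns of $\bV_{\le k-1}$) rather than $(-1)^{k-1}$, but since this factor is global and $X$-independent it cancels in the normalisation and does not change the limiting distribution.
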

\begin{proof}
	Let $X \subset\Omega$ be a subset of size $m$. 
  Case 2 of theorem 6.1 in \cite{BarthelmeUsevich:KernelsFlatLimit} states the behavior in $\varepsilon$ of the determinant in this case:
  \begin{equation}
    \label{eq:limiting-det-general-case}
    \det{\matr{L}_{\varepsilon,X}} = \varepsilon^{2 s(k,d)}  (\det(\matr{Y}\matr{W}_{\leq k} \matr{Y}^{\T}) \det(\matr{V}_{\le k-1}(X)^{\T} \matr{V}_{\le k-1}(X)) + \O(\varepsilon))
  \end{equation}
  with $s(k,d) = d {k+d \choose d+1} - k (\PP_{k,d} - m)$ and $\matr{Y} \in \RR^{m \times \PP_{k,d}}$ defined as:
  \[
    \matr{Y} = 
    \begin{bmatrix}
      \matr{I}_{\PP_{k-1,d}} &  \\ & \Qort(X)^{\T} \matr{V}_k(X)
    \end{bmatrix},
  \]
  $\matr{I}_{\PP_{k-1,d}}$ being the identity matrix of dimension $\PP_{k-1,d}$, $\Qort(X)\in\mathbb{R}^{m\times (m-\PP_{k-1,d})}$ is an orthonormal basis for the space orthogonal to $\text{span } \matr{V}_{\leq k-1}(X)$. 
  
  Expanding the expression:
  \[
    \det(\matr{Y}\matr{W}_{\leq k} \matr{Y}^{\T}) =
    \det
    \begin{pmatrix}
      \matr{W}_{\leq k-1} & \matr{W}_{\urcorner} \bV_k(X)^\top \Qort(X) \\
      \Qort(X)^\top \bV_k(X)  \matr{W}_{\llcorner} & \Qort(X)^\top \bV_k(X)  \matr{W}_{\lrcorner}
      \bV_k(X)^\top \Qort(X) \\
    \end{pmatrix}
  \]
  Applying lemma \ref{lem:block-det}:
  \begin{align}
  \label{eq:def_Y}
    \det(\matr{Y}\matr{W}_{\leq k} \matr{Y}^{\T}) &=
    \det(\matr{W}_{\leq k-1})\det\left( \Qort(X)^\top \bV_k(X) \left( \matr{W}_{\urcorner}
                                                 - \matr{W}_{\llcorner}\matr{W}_{\leq k-1}^{-1}\matr{W}_{\urcorner} \right)\bV_k(X)^\top \Qort(X) \right) \\
                                               &= \det(\matr{W}_{\leq k-1})\det\left( \Qort(X)^\top \bV_k(X) \bar{\bW} \bV_k(X)^\top \Qort(X) \right) \\
  \end{align}
  Injecting into \eqref{eq:limiting-det-general-case} and applying lemma
  \ref{lem:det-saddlepoint}, we obtain:
  \[ \det{\matr{L}_{\varepsilon,\X}} = \varepsilon^{2s(k,d)}
    \left( \det(\matr{W}_{\leq k-1})\det
      \begin{pmatrix}
        \bV_k(X) \bar{\bW} \bV_k(X)^\top & \bV_{\leq k-1}(X) \\
        \bV_{\leq k-1}(X)^\top  & \matr{0}
      \end{pmatrix} + \O(\varepsilon)
\right)
\]
The rest of the proof is identical to the univariate case. 
\end{proof}

\subsection{Numerical illustrations}
\label{sec:numerics-nd}

We show here some numerical results analoguous to those of section~
\ref{sec:numerics-1d}.  In figures \ref{fig:cond-dens-2d-exp} and
\ref{fig:cond-dens-2d-r2}, we show the convergence of conditional densities for
two different kernels.  We illustrate the conditional probabilities of $\vect{x} \cup Y  \large| Y $ where $Y$ comprises seven points already sampled. Even if the ground set is finite and for the sake of illustration,  $\vect{x}$ varies continuously in the unit square. 
Figure \ref{fig:convergence-inclusion-2d} shows the convergence of inclusion probabilities in an example.

\label{sec:conditionals-numerical}
\begin{figure}
  \centering
  \includegraphics[width=\textwidth]{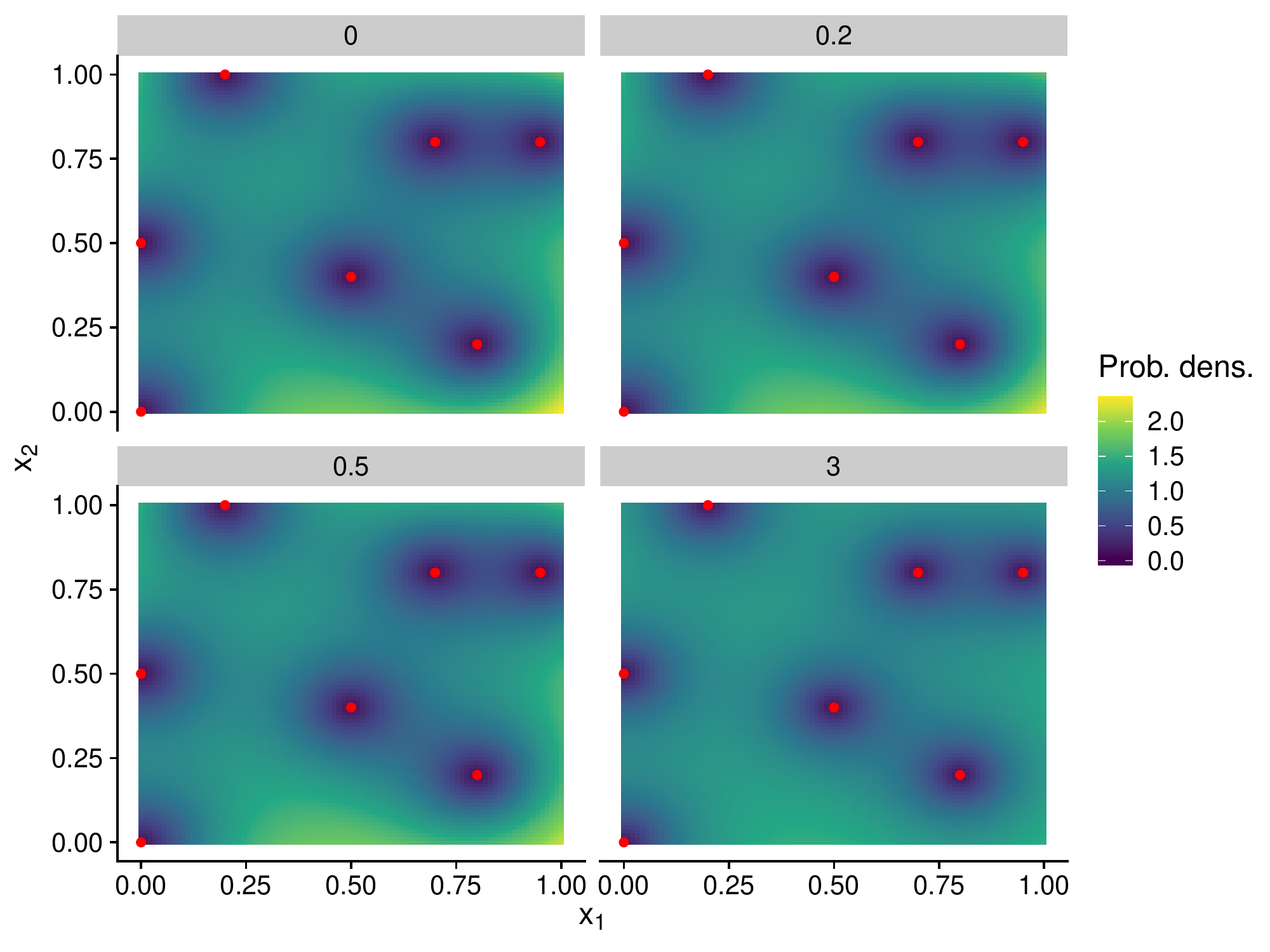}
  \caption{Conditional probability density for $\vect{x}\in [0,1]^2$ conditional on the 7
    nodes in red, for the exponential kernel $\exp(-\norm{\vect{x} -
      \vect{y}})$. The four panels represent the density for different values of
    $\varepsilon$ (panels are labelled with the value). The top-left panel is the theoretical limit.
  }
  \label{fig:cond-dens-2d-exp}
\end{figure}

\begin{figure}
  \centering
  \includegraphics[width=\textwidth]{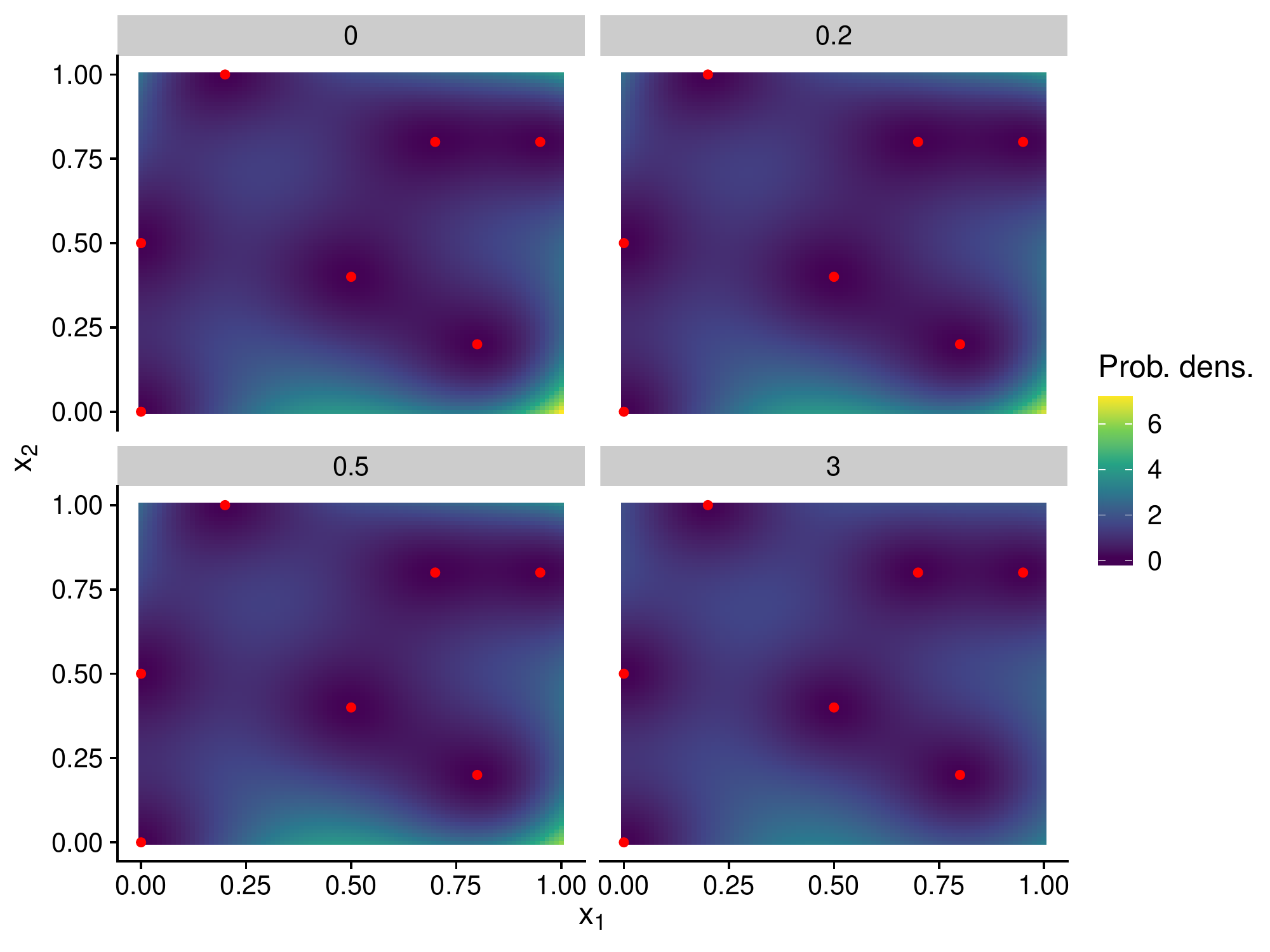}
  \caption{Same as in figure \ref{fig:cond-dens-2d-exp}, but for the kernel $(1+\norm{\vect{x} - \vect{y}})\exp(-\norm{\vect{x} - \vect{y}})$
  }
  \label{fig:cond-dens-2d-r2}
\end{figure}

\begin{figure}

  \begin{center}
    \includegraphics[width=\textwidth]{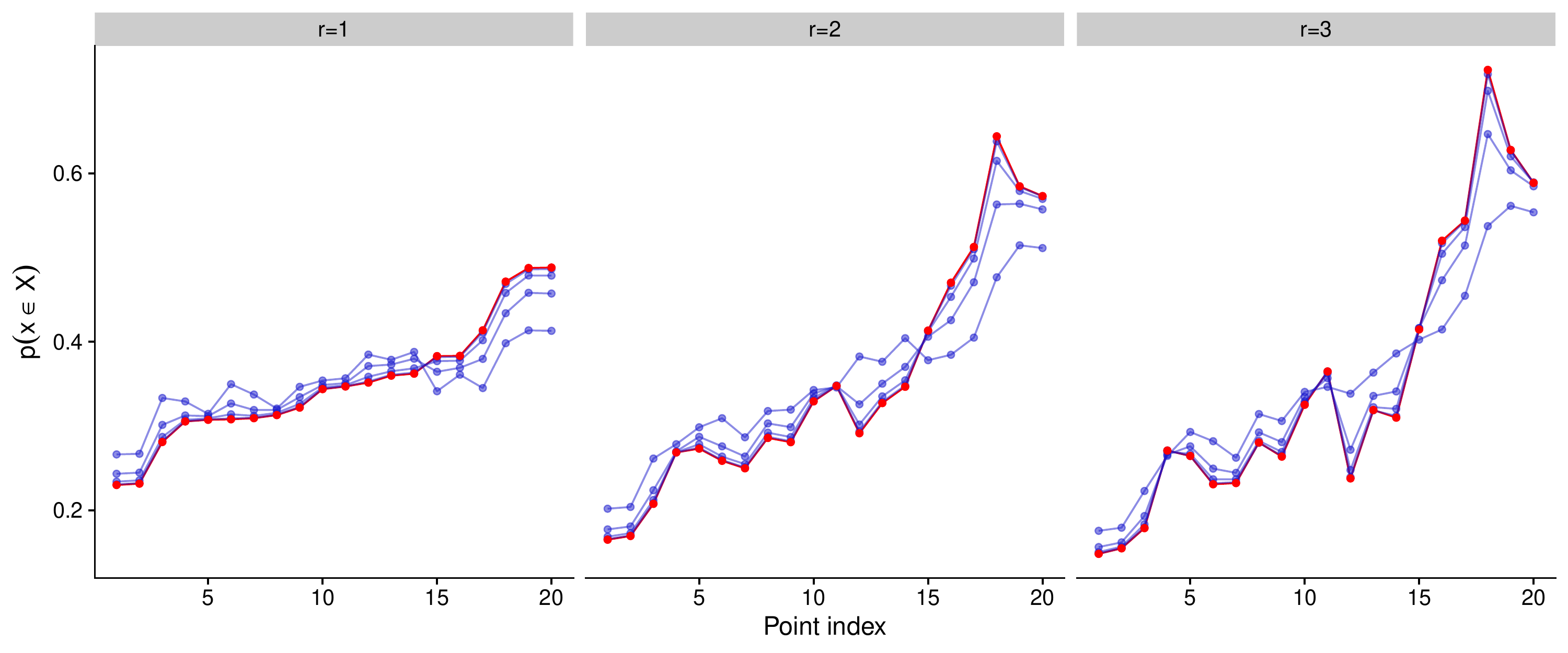}
  \end{center}

  \caption{Flat limit of inclusion probabilities of (fixed-size) L-ensembles for three different
    kernels, multivariate case. Here we plot $\Proba(x \in \Xe)$, the inclusion probabilities for a fixed size L-ensemble 
    (with $m=7$), where the ground set $\Omega$ consists in 20 points drawn at
    random from the unit square. To better visualise the convergence, we plot
    $\Proba(x_i \in \Xe)$ as a function of the index $i$, and we have ordered the
    points according to their inclusion probability for the first kernel.
    Everything else is analoguous to fig. \ref{fig:convergence-inclusion-1d}. 
    The dots in blue (joined by lines for
    clarity) are inclusion probabilities for 
    $\varepsilon = 4,1.5,.5,.1$. The dots in red represent
  the limit in $\flatlim$. The three kernels are, from left-to-right,
$\exp(-\delta), (1+\delta)\exp(-\delta),(3+3\delta+\delta^2)\exp(-\delta)$,
where $\delta = \norm{\vect{x} - \vect{y}}$. These kernels have $r=1$,$2$ and $3$, respectively.}

\label{fig:convergence-inclusion-2d}
\end{figure}

\section{The flat limit of varying-size L-ensembles }
\label{sec:varying-size}

As we saw in section \ref{sec:variable-size-pdpps-as-limits} a  difficulty in studying limits of varying-size L-ensembles 
is the control of the sample size. Using the interesting fact that  L-ensembles are not
invariant to a rescaling of the matrix it is based on, we showed how to control the sample size by
using appropriate scaling functions.
We restrict ourselves to scaling functions that are asymptotically of the form
$\alpha\varepsilon^{-p}$, and we study the limiting process as a function of
$p$ (and $\alpha$, but $p$ plays the more important role).

Studying the flat limit of rescaled L-ensembles reveals an intricate interplay
between the scaling parameter $p$ and the smoothness order $r$ of the kernel. This will
be summarized by  pictures analogous to phase diagrams featuring phase transitions. Once again, we begin the study with the $d=1$ case before delving into the multivariate case.

\subsection{The univariate case}
\label{sec:univariate-variable-size}

In the simple case examined in section
\ref{sec:variable-size-pdpps-as-limits}, we had to rescale 
$\bL(\varepsilon)$ by $\varepsilon^{-1}$ in order to have $\E(|\Xe|) > p$ in the
limit. Here we  generalise the scaling to
$\alpha\varepsilon^{-p}\bL$, and the limiting
size of the L-ensemble will depend on $p$. Interestingly, we will see that in some cases, if $p$ is
odd then the limit is a projection DPP, whereas if $p$ is even the limit is a
partial projection DPP. 
As in the fixed-size case, finitely smooth kernels are indistinguishable from completely smooth
kernels if $|\X|$ is small enough, so that a subtle interplay between $p$ and
$r$ is at work in our result given in theorem \ref{thm:Xe_varying_univariate}. 

This interplay is summed up in figure (\ref{fig:VariableSizeFL}). In a $(p,r)$ plot,
we distinguish three different zones in which the limiting behavior is different. 
If $p\geq 2n-1$ (where we recall that $n$ is the size of the ground set
$\Omega$) or $p > 2r -1$, the limit is trivial since the process converges with
probability 1 to  the ground set. If $ p< 2n-1$ and $p < 2r -1$, the limiting
process depends on the parity of $p$ as announced above. An odd $p$ gives a
fixed-size L-ensemble as a limit, whereas an even $p$ leads to a partial projection DPP
with two possible sample size. Finally, on the line defined by $r=(p+1)/2$ for
$p$ varying from 0 to $2n-1$, the limit process is a partial projection DPP with a wide range of possible sample size, the probability mass of which is explicitely given in Lemma \ref{lem:size_Xe_varying_univariate}. The definition of the limit processes are given in Theorem \ref{thm:Xe_varying_univariate}.

We will prove these results in two steps. The first step is to characterise the
distribution of the size of $|\Xe|$ in the limit. Once we know how $|\Xe|$ is
distributed, we use the fact that the conditional law $\Xe | |\Xe| = m$ is a
fixed size L-ensemble and use the results derived in the previous section to
work out the limit of the point process.

\begin{figure}
\begin{center}
\begin{picture}(0,0)\includegraphics{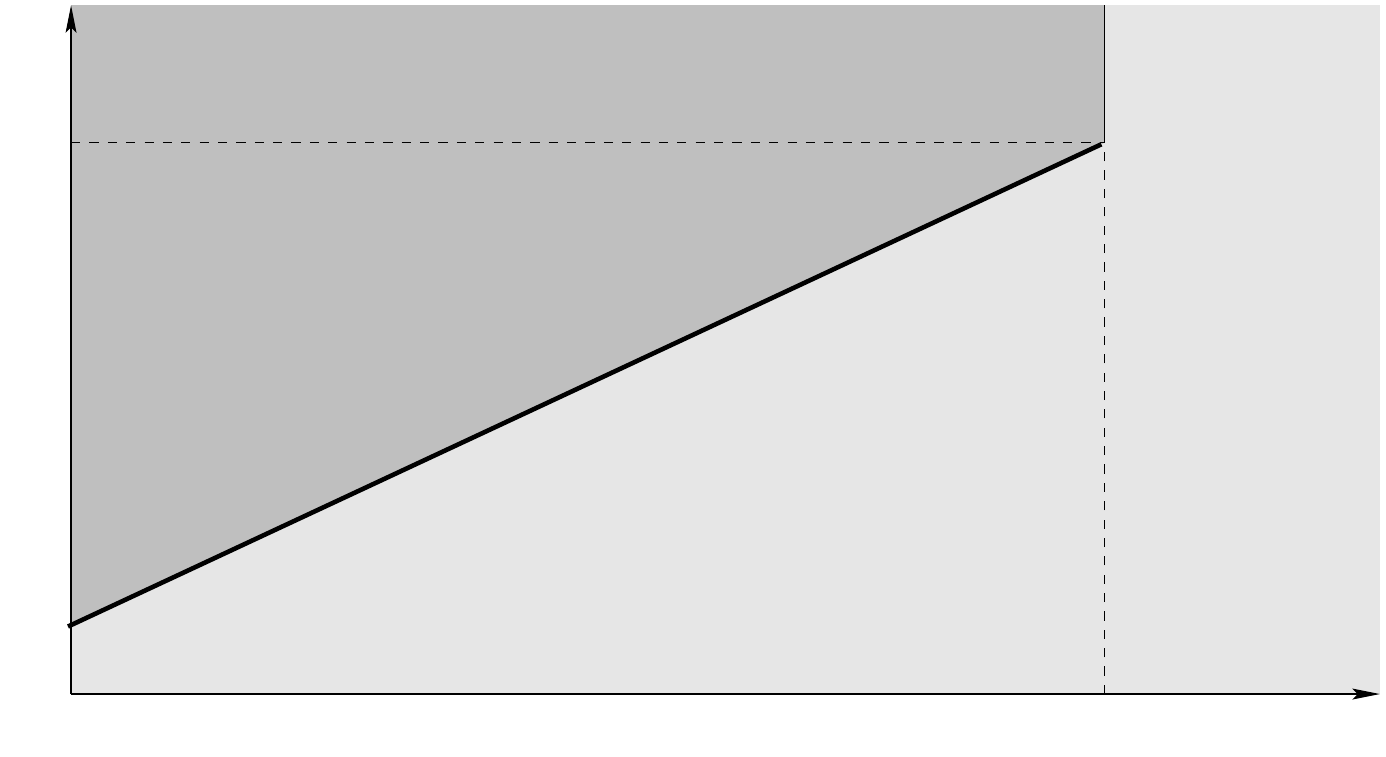}\end{picture}\setlength{\unitlength}{2901sp}\begingroup\makeatletter\ifx\SetFigFont\undefined \gdef\SetFigFont#1#2#3#4#5{\reset@font\fontsize{#1}{#2pt}\fontfamily{#3}\fontseries{#4}\fontshape{#5}\selectfont}\fi\endgroup \begin{picture}(9027,4923)(1336,-4972)
\put(9991,-4876){\makebox(0,0)[lb]{\smash{{\SetFigFont{11}{13.2}{\rmdefault}{\mddefault}{\updefault}{\color[rgb]{0,0,0}$p$}}}}}
\put(1351,-286){\makebox(0,0)[lb]{\smash{{\SetFigFont{11}{13.2}{\rmdefault}{\mddefault}{\updefault}{\color[rgb]{0,0,0}$r$}}}}}
\put(1846,-1231){\makebox(0,0)[lb]{\smash{{\SetFigFont{8}{9.6}{\rmdefault}{\mddefault}{\updefault}{\color[rgb]{0,0,0}$p$ even: ${\cal X}_\star=\mbox{ppDPP}$ and $|{\cal X}_\star |=\frac{p}{2}$ or $\frac{p}{2}+1$ }}}}}
\put(1846,-1816){\makebox(0,0)[lb]{\smash{{\SetFigFont{8}{9.6}{\rmdefault}{\mddefault}{\updefault}{\color[rgb]{0,0,0}$p$ odd : ${\cal X}_\star = |\mbox{DPP}|$ and $|{\cal X}_\star|=\frac{p+1}{2}$}}}}}
\put(1531,-1051){\makebox(0,0)[lb]{\smash{{\SetFigFont{8}{9.6}{\rmdefault}{\mddefault}{\updefault}{\color[rgb]{0,0,0}$n$}}}}}
\put(1576,-4156){\makebox(0,0)[lb]{\smash{{\SetFigFont{8}{9.6}{\rmdefault}{\mddefault}{\updefault}{\color[rgb]{0,0,0}$\frac{1}{2}$}}}}}
\put(8281,-4786){\makebox(0,0)[lb]{\smash{{\SetFigFont{8}{9.6}{\rmdefault}{\mddefault}{\updefault}{\color[rgb]{0,0,0}$2n-1$}}}}}
\put(7606,-3211){\makebox(0,0)[lb]{\smash{{\SetFigFont{8}{9.6}{\rmdefault}{\mddefault}{\updefault}{\color[rgb]{0,0,0}${\cal X}_\star = \Omega$ and $ |{\cal X}_\star | =n$}}}}}
\put(2206,-4201){\rotatebox{24.8}{\makebox(0,0)[lb]{\smash{{\SetFigFont{8}{9.6}{\rmdefault}{\mddefault}{\updefault}{\color[rgb]{0,0,0}$r=\frac{p+1}{2}$ : ${\cal X}_\star = \mbox{ppDPP}$ and $\frac{p+1}{2} \leq |{\cal X}_\star|\ \leq n$}}}}}}
\end{picture} \end{center}
\caption{Phase transition diagram $(p,r$) for the scaling $\varepsilon^p$ in the flat limit of varying size L-ensemble, for kernels with smoothness parameter $r$. In the light gray zone, the process converges to the whole ground set. On the diagonal line ($r=(p+1)/2$), the limit process is a partial projection DPP, with a size distributed over the integers $(p+1)/2$ up to $n$. In the dark grey zone, the limit process depends on the parity of $p$ and is either a partial projection DPP ($p$ even) of a fixed-size L-ensemble  ($p$ odd). The parameters defining the limit process are given in Lemma  \ref{lem:size_Xe_varying_univariate} and Theorem \ref{thm:Xe_varying_univariate}.
}
\label{fig:VariableSizeFL}
\end{figure}

\subsubsection{Distribution of $|\X_\varepsilon|$ in the flat limit}
\label{sec:1d-distr-size}

\begin{lemma}
	\label{lem:size_Xe_varying_univariate}
	Let $p\in\mathbb{N}$, $\alpha>0$, and $\Omega=\{x_1,\ldots,x_n\}$ a set of $n$ distinct points on the real line. Let $\bL_\varepsilon = [\kappa_\varepsilon(x_i,x_j)]_{i,j}$ with $\kappa$ a
	stationary kernel of smoothness order $r\in\mathbb{N}^*$.  Let $\Xe \sim
	DPP(\alpha\varepsilon^{-p}\bL_\varepsilon)$.  In the limit $\flatlim$, the distribution of the size of $\Xe$ depends on the interplay between $p, r$ and $n$. First of all, if $\frac{p+1}{2}\geq n$ then, for any value of $r$, as $\flatlim$, $|\Xe|=n$ with probability one.
	If $\frac{p+1}{2}\leq n$, there are three scenarii depending on the value of $r$: 
		\begin{enumerate}
			\item if $r<\frac{p+1}{2}$, then, as $\flatlim$, $|\Xe|=n$ with probability one.
			\item if $r>\frac{p+1}{2}$, the size of $\Xe$ has a distribution that depends on the parity of $p$:
			\begin{enumerate}
				\item If $p$ is odd, then, as $\flatlim$, $|\Xe|=\frac{p+1}{2}$ with probability one.
				\item If $p$ is even then, as $\flatlim$, and noting $l=p/2$
				\[ |\Xe| =
				\begin{cases}
				l \text{ with probability } \frac{1}{1+\alpha\gamma} \\
				l + 1  \text{ with probability } \frac{\alpha\gamma}{1+\alpha\gamma}
				\end{cases}
				\]
				with \[ \gamma^{-1} = ((\bV_{\leq l}^\top \bV_{\leq
            l})^{-1})_{l+1,l+1}\; ((\bW_{\leq l})^{-1})_{l+1,l+1}\]
			\end{enumerate}
		\item if $r=\frac{p+1}{2}$, then, as $\flatlim$, the distribution tends to:
		\begin{align}
		\label{distrib:size_Xe_r=}
		\Proba(|\Xe| = m) =  \left\{ \begin{array}{ll} 
		0 & \mbox{if }  m< r  \mbox{ or } m>n\\ 
		 \frac{e_{m-r}\left(\alpha f_{2r-1}\widetilde{\bD^{(2r-1)}}\right)}{\det\left(\bI+\alpha f_{2r-1}\widetilde{\bD^{(2r-1)}}\right)} & \mbox{otherwise}
		\end{array} \right. 
		\end{align}
		 where $\widetilde{\bD^{(2r-1)}}=(\bI-\bQ\bQ^\top) \bD^{(2r-1)}(\bI-\bQ\bQ^\top)$, $\bQ$ being an orthonormal basis of ${\rm{span}}(\matr{V}_{\leq r -1})$.
		\end{enumerate}
\end{lemma}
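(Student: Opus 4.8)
The plan is to work directly with the unnormalised size-$m$ mass
\[
Z_m(\varepsilon)=\sum_{|X|=m}\det\bigl((\alpha\varepsilon^{-p}\bLe)_X\bigr)=\alpha^m\varepsilon^{-pm}\sum_{|X|=m}\det(\bLe)_X,
\]
so that $\Proba(|\Xe|=m)=Z_m(\varepsilon)/\sum_{k=0}^n Z_k(\varepsilon)$, and to read off the limit from Lemma~\ref{lem:TV-convergence-diverging} (``the smallest power of $\varepsilon$ wins''). First I would determine the leading order of each $Z_m$. Theorems~\ref{thm:det_1d_smooth} and~\ref{thm:det_1d_finite_smoothness} give, for $|X|=m$, the expansion $\det(\bLe)_X=\varepsilon^{e(m,r)}(\ell(X)+\O(\varepsilon))$ with $e(m,r)=m(m-1)$ when $m\le r$ and $e(m,r)=m(2r-1)-r^2$ when $m\ge r$ (the two agreeing at $m=r$). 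Summing and factoring out $\alpha^m\varepsilon^{-pm}$ yields $Z_m(\varepsilon)=\varepsilon^{\eta_m}(c_m+\O(\varepsilon))$ with $\eta_m=e(m,r)-pm$ and $c_m=\alpha^m\sum_{|X|=m}\ell(X)$. A preliminary step is to verify $c_m>0$, so that $\eta_m$ is the true order: in the smooth regime $\ell(X)=\det\bW_{\leq m-1}(\det\bV_{\leq m-1}(X))^2$ and $\sum_{|X|=m}(\det\bV_{\leq m-1}(X))^2=\det(\bV_{\leq m-1}^\top\bV_{\leq m-1})>0$ by Lemma~\ref{lem:cauchy-binet} (distinct points, $m\le n$), while $\det\bW_{\leq m-1}>0$ by non-degeneracy of the Wronskian; in the finitely smooth regime positivity follows from Lemma~\ref{lem:equivalence_ppdpp_ensemble} and the strict conditional positive-definiteness of $(-1)^r\bD^{(2r-1)}$ with respect to $\bV_{\leq r-1}$.

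The heart of the argument is minimising $\eta_m$ over $m\in\{0,\dots,n\}$. On $[0,r]$ it is the parabola $\eta_m=m^2-(p+1)m=(m-\tfrac{p+1}{2})^2-\tfrac{(p+1)^2}{4}$ with vertex at $\tfrac{p+1}{2}$, and on $[r,n]$ it is linear with slope $\sigma=2r-1-p=2(r-\tfrac{p+1}{2})$; thus the sign of $r-\tfrac{p+1}{2}$ governs everything. If $\tfrac{p+1}{2}\ge n$ the parabola is decreasing on $[0,\min(r,n)]$ and (when $r<n$) $\sigma<0$, so $\eta$ decreases throughout and is minimised at $m=n$. Assuming $\tfrac{p+1}{2}\le n$: when $r<\tfrac{p+1}{2}$ we have $\sigma<0$ and again the minimiser is $m=n$; when $r>\tfrac{p+1}{2}$ we have $\sigma>0$, the linear branch is increasing, and the minimiser sits in the parabola near $\tfrac{p+1}{2}$; when $r=\tfrac{p+1}{2}$ (forcing $p=2r-1$ odd) $\sigma=0$, so $\eta_m=-r^2$ for all $m\in\{r,\dots,n\}$ whereas $\eta_m=(m-r)^2-r^2>-r^2$ for $m<r$. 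Applying Lemma~\ref{lem:TV-convergence-diverging}, the limiting support is $\argmin_m\eta_m$: this gives $|\Xe|=n$ a.s.\ in the first two cases; in the case $r>\tfrac{p+1}{2}$, for odd $p$ the integer vertex $\tfrac{p+1}{2}$ is the unique minimiser, whereas for even $p$ the integers $l=\tfrac{p}{2}$ and $l+1$ tie (both giving $-l(l+1)$), so the support is $\{l,l+1\}$; and for $r=\tfrac{p+1}{2}$ the support is $\{r,\dots,n\}$.

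It remains to compute the limiting probabilities as ratios of the $c_m$. In the even case $l,l+1\le r$ (since $r\ge l+1$), so both lie in the smooth regime and $c_m=\alpha^m\det\bW_{\leq m-1}\det(\bV_{\leq m-1}^\top\bV_{\leq m-1})$. Because $\bV_{\leq l}^\top\bV_{\leq l}$ borders $\bV_{\leq l-1}^\top\bV_{\leq l-1}$ by the single column $x^l$, the Schur-complement/cofactor identity gives
\[
\frac{\det(\bV_{\leq l}^\top\bV_{\leq l})}{\det(\bV_{\leq l-1}^\top\bV_{\leq l-1})}=\frac{1}{((\bV_{\leq l}^\top\bV_{\leq l})^{-1})_{l+1,l+1}},
\]
and likewise $\det\bW_{\leq l}/\det\bW_{\leq l-1}=1/((\bW_{\leq l})^{-1})_{l+1,l+1}$; multiplying, $c_{l+1}/c_l=\alpha\gamma$ with $\gamma^{-1}=((\bV_{\leq l}^\top\bV_{\leq l})^{-1})_{l+1,l+1}((\bW_{\leq l})^{-1})_{l+1,l+1}$, whence $\Proba(|\Xe|=l)\to\tfrac{1}{1+\alpha\gamma}$ and $\Proba(|\Xe|=l+1)\to\tfrac{\alpha\gamma}{1+\alpha\gamma}$. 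On the critical line $r=\tfrac{p+1}{2}$ each $m\in\{r,\dots,n\}$ is finitely smooth with $c_m=\alpha^m\sum_{|X|=m}\widetilde l(X)$; expanding $\widetilde l$ and applying the normalisation Corollary~\ref{cor:normalisation-ppDPP} to the NNP $\ELE{f_{2r-1}\bD^{(2r-1)}}{\bV_{\leq r-1}}$ (whose $\bV$-block has $r$ columns, matching the $(-1)^r$ sign) shows that the sum over $|X|=m$ of the saddle-point determinant equals $e_{m-r}(f_{2r-1}\widetilde{\bD^{(2r-1)}})\det(\bV_{\leq r-1}^\top\bV_{\leq r-1})$. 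Using the homogeneity $\alpha^{m-r}e_{m-r}(\cdot)=e_{m-r}(\alpha\,\cdot)$ gives $c_m=C\,e_{m-r}(\alpha f_{2r-1}\widetilde{\bD^{(2r-1)}})$ for a constant $C$ independent of $m$, and normalising with $\sum_{k=r}^n e_{k-r}(M)=\det(\bI+M)$ (valid since $\widetilde{\bD^{(2r-1)}}$ has rank $n-r$) produces exactly~\eqref{distrib:size_Xe_r=}.

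The main obstacle is not any single computation but the careful bookkeeping of the minimisation of $\eta_m$ together with the parity split, and, underlying all cases, guaranteeing $c_m>0$ so that $\eta_m$ really is the order of $Z_m$; this rests on the non-degeneracy of the Wronskian determinants $\det\bW_{\leq m-1}$ and on the strict conditional positive-definiteness (with the correct sign of $f_{2r-1}$) of the odd distance matrices $\bD^{(2r-1)}$, which ensures both the existence of unisolvent subsets in the smooth regime and the positivity of the elementary symmetric polynomials $e_{m-r}$ on the critical line.
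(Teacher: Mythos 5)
Your proposal is correct and follows essentially the same route as the paper's own proof: the same expansions of $\sum_{|X|=m}\det(\bL_\varepsilon)_X$ from Theorems~\ref{thm:det_1d_smooth} and~\ref{thm:det_1d_finite_smoothness}, the same piecewise exponent $\eta$ (parabola with vertex $\frac{p+1}{2}$ glued at $m=r$ to a line of slope $2r-1-p$) minimised via Lemma~\ref{lem:TV-convergence-diverging}, the same Cramer's-rule computation of $\gamma$ in the even-$p$ case, and the same use of Corollary~\ref{cor:normalisation-ppDPP} on the critical line $r=\frac{p+1}{2}$. The only substantive addition is your explicit verification that the leading coefficients $c_m$ are strictly positive (non-degeneracy of the Wronskian determinants and strict conditional positive definiteness of $(-1)^r\bD^{(2r-1)}$), a point the paper leaves implicit but which is indeed needed for $\eta_m$ to be the true order of $Z_m$.
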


\begin{proof}
In the following, $\bL_{\varepsilon,X}$ stands for the matrix $\bL_{\varepsilon}$ reduced to its lines and columns indexed by $X$. 
First, recall that if $\X \sim DPP(\bL)$, then the marginal distribution of the size $|\X|$ is given by
Eq.~\eqref{eq:distr-size-dpp}:
\begin{equation}
\label{eq:marginal-distr-size}
\Proba(|\X| = m) =  \frac{e_m(\bL)}{e_0(\bL) + e_1(\bL) + \ldots + e_n(\bL)}.
\end{equation}
where $e_m(\bL)$ is the $m$-th elementary symmetric polynomial of $\bL$ and for
consistency $e_0(\bL)=1$ for all matrices $\bL$. Here, we consider the rescaled kernel matrix $\alpha\varepsilon^{-p}\bL_\varepsilon$. 
Recall that $\det(\alpha\varepsilon^{-p} \bL_{\varepsilon,X}) =
\alpha^{|X|}\varepsilon^{-p|X|}\det(\bL_{\varepsilon,X})$. One thus has $\forall i$: $ e_i(\alpha\varepsilon^{-p} \bL_\varepsilon) = \alpha^i \varepsilon^{-ip} e_i(\bL_\varepsilon)$. 

Let $r\in\mathbb{N}^*$. In the flat limit, we can apply theorem \ref{thm:det_1d_smooth} for any set $X$ of size $i\leq r$:
\begin{align*}
\forall i\leq r,\quad e_i(\bL_\varepsilon) &= \sum_{|X|=i} \det \bL_{\varepsilon,X} = \varepsilon^{i(i-1)} \left( \sum_{|X|=i} \det(\matr{V}_{\le i-1}(X))^2 \det(\matr{W}_{\leq i-1}) + \O(\varepsilon) \right)\\
&=  \varepsilon^{i(i-1)} \left( \det(\matr{V}_{\leq i-1}^\top\matr{V}_{\leq i-1})\det( \matr{W}_{\leq i-1}) + \O(\varepsilon)  \right) \end{align*}
where we used Cauchy-Binet to write the second line. Denoting 
\begin{align}
\label{eq:etilde}
\forall i\leq r,\quad \tilde{e}_i=\det(\matr{V}_{\leq i-1}^\top\matr{V}_{\leq i-1})\det( \matr{W}_{\leq i-1})
\end{align}
one has:
\begin{align*}
\forall i\leq r,\quad e_i(\bL_\varepsilon) 
=  \varepsilon^{i(i-1)} \left( \tilde{e}_i + \O(\varepsilon)  \right).
\end{align*}
Also, we can apply theorem \ref{thm:det_1d_finite_smoothness} for any set $X$ of size $i\geq r$:
\begin{align*}
\forall i\geq r,\quad e_i(\bL_\varepsilon) &= \sum_{|X|=i} \det \bL_{\varepsilon,X} = \varepsilon^{i(2r-1)-r^2} \left( \sum_{|X|=i} \tilde{l}(X) + \O(\varepsilon) \right)\\
&=\varepsilon^{i(2r-1)-r^2} \left( \bar{e}_i + \O(\varepsilon) \right)
\end{align*}
where $\bar{e}_i$ verifies:
\begin{align}
\label{eq:ebar}
\forall i> r,\quad \bar{e}_i=\sum_{|X|=i} \tilde{l}(X)=(-1)^r\det(\matr{W}_{r-1,r-1})\sum_{|X|=i}\det\begin{bmatrix}
f_{2r-1}  \bD^{(2r-1)}(X) &  \matr{V}_{\le r-1}(X) \\
\matr{V}_{\le r-1}(X)^{\top} & 0 
\end{bmatrix}
\end{align}
Now, injecting into Eq.~\eqref{eq:marginal-distr-size}, we have:
\begin{align*}
{\rm for }\quad m\leq r: \quad \Proba(|\X| = m) &= \frac{\alpha^m\varepsilon^{-pm} \varepsilon^{m(m-1)}\left(\tilde{e}_m + \O(\varepsilon)  \right) }{1 + \sum_{i=1}^r \alpha^i\varepsilon^{-pi}\varepsilon^{i(i-1)} \left(\tilde{e}_i + \O(\varepsilon)  \right) + \sum_{i=r+1}^n \alpha^i\varepsilon^{-pi}\varepsilon^{i(2r-1)-r^2}\left(\bar{e}_i + \O(\varepsilon)  \right)}\\
{\rm for }\quad m\geq r: \quad \Proba(|\X| = m) &= \frac{\alpha^m\varepsilon^{-pm}\varepsilon^{m(2r-1)-r^2}\left(\bar{e}_m + \O(\varepsilon)  \right) }{1 + \sum_{i=1}^r \alpha^i\varepsilon^{-pi}\varepsilon^{i(i-1)} \left(\tilde{e}_i + \O(\varepsilon)  \right) + \sum_{i=r+1}^n \alpha^i\varepsilon^{-pi}\varepsilon^{i(2r-1)-r^2}\left(\bar{e}_i + \O(\varepsilon)  \right)}
\end{align*}
One can re-write these two equations as:
\begin{align*}
\forall m,\qquad \Proba(|\X| = m) &= \frac{\varepsilon^{\eta(m)}\left(f_0(m) + \O(\varepsilon)  \right) }{\sum_{i=0}^n \varepsilon^{\eta(i)}\left(f_0(i) + \O(\varepsilon)  \right) }
\end{align*}
where $\eta(\cdot)$ and $f_0(\cdot)$ are two $\varepsilon$-independent functions verifying:

\begin{equation}
  \label{eq:d_of_i_vsize_proof}
\eta(i) = \left\{ \begin{array}{ll} 
\eta_1(i)=i(i-p-1) & \mbox{if }  i\leq r \\ 
\eta_2(i)=i(2r-p-1)-r^2 & \mbox{if } i\geq r
\end{array} \right. 
\end{equation}
and
\begin{align*}
f_0(i) = \left\{ \begin{array}{ll} 
\alpha^i \tilde{e}_i & \mbox{if }  i\leq r \\ 
\alpha^i \bar{e}_i & \mbox{if } i\geq r.
\end{array} \right. 
\end{align*}
Note that we are precisely in the context of
lemma~\ref{lem:TV-convergence-diverging}, that we now apply. The question is:
what is $\argmin_{i\in[0,n]} \eta(i)$? The answer to this question depends on $p,
r$ and $n$ which explains the different cases of the theorem. Let us make first
two simple observations on $\eta_1$ and $\eta_2$ (refer to figure \ref{fig:PlotProofVariableSize} for an illustration)
\begin{itemize}
	\item $\eta_1$ is a second order polynomial, it is equal to $0$ at $i=0$ and then decreases until $i=\frac{p+1}{2}$, where it reaches its minimum and then increases again. 
	\item $\eta_2$ is a linear function with slope $2r-p-1$. The sign of that slope is equal to the sign of $r-\frac{p+1}{2}$
\end{itemize}	
We shall now explore all the cases of the theorem sequentially.

\begin{figure}
\begin{center}
\begin{picture}(0,0)\includegraphics{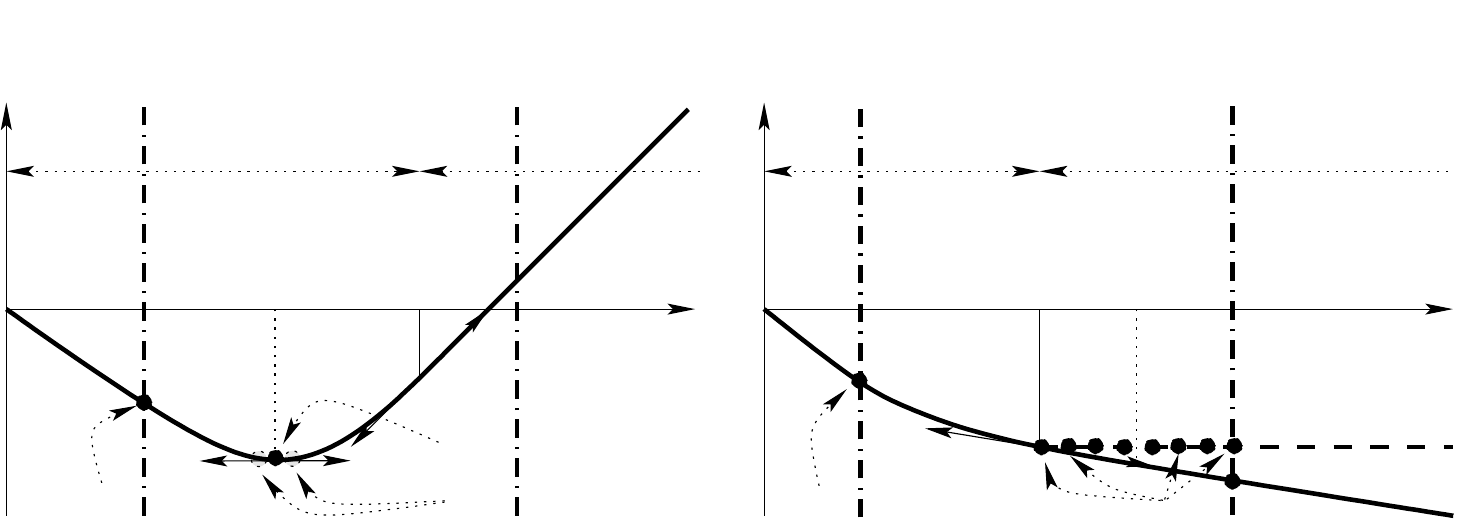}\end{picture}\setlength{\unitlength}{2901sp}\begingroup\makeatletter\ifx\SetFigFont\undefined \gdef\SetFigFont#1#2#3#4#5{\reset@font\fontsize{#1}{#2pt}\fontfamily{#3}\fontseries{#4}\fontshape{#5}\selectfont}\fi\endgroup \begin{picture}(9525,3413)(1309,-4167)
\put(9862,-1803){\makebox(0,0)[lb]{\smash{{\SetFigFont{8}{9.6}{\rmdefault}{\mddefault}{\updefault}{\color[rgb]{0,0,0}$\eta_2$}}}}}
\put(5626,-2986){\makebox(0,0)[lb]{\smash{{\SetFigFont{8}{9.6}{\rmdefault}{\mddefault}{\updefault}{\color[rgb]{0,0,0}$m$}}}}}
\put(4006,-2626){\makebox(0,0)[lb]{\smash{{\SetFigFont{8}{9.6}{\rmdefault}{\mddefault}{\updefault}{\color[rgb]{0,0,0}$r$}}}}}
\put(10486,-2986){\makebox(0,0)[lb]{\smash{{\SetFigFont{8}{9.6}{\rmdefault}{\mddefault}{\updefault}{\color[rgb]{0,0,0}$m$}}}}}
\put(8056,-2626){\makebox(0,0)[lb]{\smash{{\SetFigFont{8}{9.6}{\rmdefault}{\mddefault}{\updefault}{\color[rgb]{0,0,0}$r$}}}}}
\put(8506,-2626){\makebox(0,0)[lb]{\smash{{\SetFigFont{8}{9.6}{\rmdefault}{\mddefault}{\updefault}{\color[rgb]{0,0,0}$\frac{p+1}{2}$}}}}}
\put(2701,-961){\makebox(0,0)[lb]{\smash{{\SetFigFont{10}{12.0}{\rmdefault}{\mddefault}{\updefault}{\color[rgb]{0,0,0}Case $r>\frac{p+1}{2}$}}}}}
\put(7651,-961){\makebox(0,0)[lb]{\smash{{\SetFigFont{10}{12.0}{\rmdefault}{\mddefault}{\updefault}{\color[rgb]{0,0,0}Case $r\leq\frac{p+1}{2}$}}}}}
\put(2930,-2613){\makebox(0,0)[lb]{\smash{{\SetFigFont{8}{9.6}{\rmdefault}{\mddefault}{\updefault}{\color[rgb]{0,0,0}$\frac{p+1}{2}$}}}}}
\put(9822,-3526){\makebox(0,0)[lb]{\smash{{\SetFigFont{8}{9.6}{\rmdefault}{\mddefault}{\updefault}{\color[rgb]{0,0,0}$r=\frac{p+1}{2}$}}}}}
\put(2251,-1411){\makebox(0,0)[lb]{\smash{{\SetFigFont{10}{12.0}{\rmdefault}{\mddefault}{\updefault}{\color[rgb]{0,0,0}$n<\frac{p+1}{2}$}}}}}
\put(6931,-1411){\makebox(0,0)[lb]{\smash{{\SetFigFont{10}{12.0}{\rmdefault}{\mddefault}{\updefault}{\color[rgb]{0,0,0}$n\leq r$}}}}}
\put(9351,-1411){\makebox(0,0)[lb]{\smash{{\SetFigFont{10}{12.0}{\rmdefault}{\mddefault}{\updefault}{\color[rgb]{0,0,0}$n>r$}}}}}
\put(6533,-4094){\makebox(0,0)[lb]{\smash{{\SetFigFont{10}{12.0}{\rmdefault}{\mddefault}{\updefault}{\color[rgb]{0,0,0}min}}}}}
\put(8961,-4087){\makebox(0,0)[lb]{\smash{{\SetFigFont{10}{12.0}{\rmdefault}{\mddefault}{\updefault}{\color[rgb]{0,0,0}min}}}}}
\put(1870,-4094){\makebox(0,0)[lb]{\smash{{\SetFigFont{10}{12.0}{\rmdefault}{\mddefault}{\updefault}{\color[rgb]{0,0,0}min}}}}}
\put(4685,-1411){\makebox(0,0)[lb]{\smash{{\SetFigFont{10}{12.0}{\rmdefault}{\mddefault}{\updefault}{\color[rgb]{0,0,0}$n\geq\frac{p+1}{2}$}}}}}
\put(4311,-4080){\makebox(0,0)[lb]{\smash{{\SetFigFont{10}{12.0}{\rmdefault}{\mddefault}{\updefault}{\color[rgb]{0,0,0}min $p$ even}}}}}
\put(4312,-3739){\makebox(0,0)[lb]{\smash{{\SetFigFont{10}{12.0}{\rmdefault}{\mddefault}{\updefault}{\color[rgb]{0,0,0}min $p$ odd}}}}}
\put(1351,-1366){\makebox(0,0)[lb]{\smash{{\SetFigFont{8}{9.6}{\rmdefault}{\mddefault}{\updefault}{\color[rgb]{0,0,0}$\eta(m)$}}}}}
\put(2521,-1816){\makebox(0,0)[lb]{\smash{{\SetFigFont{8}{9.6}{\rmdefault}{\mddefault}{\updefault}{\color[rgb]{0,0,0}$\eta_1$}}}}}
\put(4816,-1816){\makebox(0,0)[lb]{\smash{{\SetFigFont{8}{9.6}{\rmdefault}{\mddefault}{\updefault}{\color[rgb]{0,0,0}$\eta_2$}}}}}
\put(6301,-1411){\makebox(0,0)[lb]{\smash{{\SetFigFont{8}{9.6}{\rmdefault}{\mddefault}{\updefault}{\color[rgb]{0,0,0}$\eta(m)$}}}}}
\put(7111,-1816){\makebox(0,0)[lb]{\smash{{\SetFigFont{8}{9.6}{\rmdefault}{\mddefault}{\updefault}{\color[rgb]{0,0,0}$\eta_1$}}}}}
\end{picture} \end{center}
\caption{The behavior of $|{\cal X}_\star|$ is governed by the argument minimizing the function $\eta(m)$ defined in the proof of Lemma \ref{lem:size_Xe_varying_univariate}. The curve of $\eta(m)$ is made of a parabola $\eta_1$ up to $m=r$ and then from a line $\eta_2$ with slope $\eta'_1(r)$ (extending integers to reals obviously!). Three cases appear depending of the relative position $(p+1)/2$ of the minimum of $\eta_1$ with respect to $r$. To study the behavior of   $|{\cal X}_\star| \leq n =|\Omega|$, we then have to locate $n$.  In the left plot, we observe that the minimum is for $m=n$ if $n<(p+1)/2$, whereas it is $m=(p+1)/2$ if $n\geq (p+1)/2$. Note in this situation that we have either one minimum if $p$ is odd, or two is $p$ is even, since we worl with integers. In the right plot, in the case $r<(p+1)/2$ (thick line),  whatever $n$ the minimum is attained at $m=n$ since $\eta$ strictly decreases. If  $r=(p+1)/2$ however (thick dashed horizontal line), the minimum is attained for the range $[(p+1)/2; n]$ if $n>(p+1)/2$, otherwise at $n$.}
\label{fig:PlotProofVariableSize}
\end{figure}

First of all, if $\frac{p+1}{2}\geq n$, there are two cases: either $r\geq n$, in which case $\eta=\eta_1$ for the whole interval $[0,n]$ and $\argmin{\eta}=n$; or $r\leq n$ in which case $\eta$ decreases up to $i=r$ and then continues to decrease (as the slope of $\eta_2$ is negative) up to $i=n$, implying $\argmin{\eta}=n$. Thus, whatever the value of $r$, $\argmin{\eta}=n$. Applying lemma~\ref{lem:TV-convergence-diverging}, for all values of $r$, as $\flatlim$, $|\Xe|=n$ with probability $1$. 

Now, if $\frac{p+1}{2}\leq n$, there are three scenarii depending on the value of $r$:
\begin{enumerate}
	\item if $r<\frac{p+1}{2}$, $d$ decreases up to $i=r$ and then continues to decrease (as the slope of $\eta_2$ is negative) up to $i=n$, implying $\argmin{d}=n$. Applying lemma~\ref{lem:TV-convergence-diverging}, as $\flatlim$, $|\Xe|=n$ with probability $1$.
	\item if $r>\frac{p+1}{2}$, $\eta$ decreases up to $\frac{p+1}{2}$, then increases up to $i=r$, and then continues to increase (as the slope of $\eta_2$ is now positive) up to $i=n$, implying $\argmin{\eta}=\frac{p+1}{2}$. Now,
	\begin{enumerate}
		\item If $p$ is odd, then $\frac{p+1}{2}$ is an integer. Applying lemma~\ref{lem:TV-convergence-diverging}, $|\Xe|=\frac{p+1}{2}$ with probability $1$, as $\flatlim$.
		\item   If $p$ is even, $\frac{p+1}{2}$ is not an integer. In that case $\argmin{\eta}$ has two integer solutions: $\frac{p}{2}$ and $\frac{p}{2}
		+ 1$. Let $l=p/2$. According to lemma \ref{lem:TV-convergence-diverging}, as $\flatlim$, $\Xe$ will be of size
		either $l$ or $l+1$ with probabilities given by
		\[ \Proba(|\Xs| = l) = \frac{\tilde{e}_{l}}{\tilde{e}_{l} +
			\alpha \tilde{e}_{l+1}}\quad \text{  and  }\quad \Proba(|\Xs| = l +1) =
		1 - \Proba(|\Xs| = l).\] 
		Injecting Eq.~\ref{eq:etilde} and simplifying, we
		find:
		\[ \Proba(|\Xs| = l) = \frac{1}{1+\alpha \gamma} \]
		with \begin{align}
		\label{eq:gamma_orig}
		\gamma &= \frac{\det (\bV_{\leq l}^\top \bV_{\leq
				l})\det \bW_{\leq l}}{\det (\bV_{\leq l-1}^\top \bV_{\leq
				l-1})\det \bW_{\leq l-1}}\\
           &= \frac{1}{((\bV_{\leq l}^\top \bV_{\leq l})^{-1})_{l+1,l+1} ((\bW_{\leq l})^{-1})_{l+1,l+1}}
		\end{align}
		where the last equality follows from Cramer's rule. Notice that $\gamma$
		depends on the Wronskian of the kernel and not just its order of regularity. 
	\end{enumerate}
 \item the last scenario, $r=\frac{p+1}{2}$, is the most involved. Indeed, in this case, the function $\eta$ decreases up to $i=r$ and then stays constant between $i=r$ and $i=n$ (as the slope of $\eta_2$ is null). Thus, $\argmin \eta$ has $n-r+1$ integer solutions: all the integers between $r$ and $n$. According to lemma \ref{lem:TV-convergence-diverging}, as $\flatlim$, the limiting distribution of $|\Xe|$ is:
 $$\forall m \mbox{  s.t.  } r\leq m\leq n,\quad \Proba(|\Xs| = m) = \frac{\alpha^m\bar{e}_m}{\sum_{i=r}^n \alpha^i\bar{e}_i}$$Now, consider the NNP $(f_{2r-1}\bD^{(2r-1)}; \matr{V}_{\leq r-1})$ as well as $f_{2r-1}\widetilde{\bD^{(2r-1)}}$ as defined in Definition~\ref{def:ext_Lens}. Note that the rank of $f_{2r-1}\widetilde{\bD^{(2r-1)}}$ is $n-r$. One may apply Corollary~\ref{cor:normalisation-ppDPP} and obtain, for all integer $i$ such that $r\leq i \leq n$:
 \begin{align*}
\sum_{|X|=i}\det\begin{bmatrix}
f_{2r-1}  \bD^{(2r-1)}(X) &  \matr{V}_{\le r-1}(X) \\
\matr{V}_{\le r-1}^{\T}(X) & 0 
\end{bmatrix} = (-1)^r e_{i-r}\left(f_{2r-1}\widetilde{\bD^{(2r-1)}}\right) \det \left(\left(\matr{V}_{\leq r-1}\right)^\top \matr{V}_{\leq r-1}\right)
\end{align*}
Injecting this in Eq.~\eqref{eq:ebar} and simplifying, one re-writes the limiting distribution of $|\Xe|$ as:
\begin{align*}
\forall m \mbox{  s.t.  } r\leq m\leq n,\quad \Proba(|\Xe| = m) = \frac{ e_{m-r}\left(\alpha f_{2r-1}\widetilde{\bD^{(2r-1)}}\right)}{\sum_{i=r}^n e_{i-r}\left(\alpha f_{2r-1}\widetilde{\bD^{(2r-1)}}\right)}.
\end{align*}
Noting that $\sum_{i=r}^n e_{i-r}\left(\alpha f_{2r-1}\widetilde{\bD^{(2r-1)}}\right)=\sum_{i=0}^{n} e_{i}\left(\alpha f_{2r-1}\widetilde{\bD^{(2r-1)}}\right)=\det\left(\bI + \alpha f_{2r-1}\widetilde{\bD^{(2r-1)}}\right)$ finishes the proof.
\end{enumerate}
\end{proof}

\subsubsection{Distribution of $\Xe$ in the flat limit}
\label{sec:1d-distr-joint}
Now that we have characterised the distribution of $|\Xe|$, we can prove the following:
\begin{theorem}
\label{thm:Xe_varying_univariate}
	Let $p\in\mathbb{N}$, $\alpha>0$, and $\Omega=\{x_1,\ldots,x_n\}$ a set of $n$ distinct points on the real line. Let $\bL_\varepsilon = [\kappa_\varepsilon(x_i,x_j)]_{i,j}$ with $\kappa$ a
	stationary kernel of smoothness order $r\in\mathbb{N}^*$.  Let $\Xe \sim
	DPP(\alpha\varepsilon^{-p}\bL_\varepsilon)$.  In the limit $\flatlim$, the distribution of $\Xe$ depends on the interplay between $p, r$ and $n$. First of all, if $\frac{p+1}{2}\geq n$ then, for any value of $r$, $\Xe$ has limit $\Xs=\Omega$ with probability one.
	If $\frac{p+1}{2}\leq n$, there are three scenarii depending on the value of $r$: 
	\begin{enumerate}
		\item if $r<\frac{p+1}{2}$, then $\Xe$ has limit $\Xs=\Omega$ with probability one.
		\item if $r>\frac{p+1}{2}$, $\Xe$ has a limiting distribution that depends on the parity of $p$:
		\begin{enumerate}
			\item If $p$ is odd, then $\Xe$ has limit $\Xs \sim \mDPP{l}(\bV_{\leq l-1}\bV_{\leq l-1}^\top)$ with $l=\frac{p+1}{2}$
			\item If $p$ is even then $\Xe$ has limit $\Xs \sim \ppDPP \ELE{\frac{\alpha}{\tilde{w}}  \vect{v}_{l+1}\vect{v}_{l+1}^\top}{\bV_{\leq
					l-1}}$ with $l=\frac{p}{2}$ and $\tilde{w}=((\bW_{\leq l})^{-1})_{l+1,l+1}$   
		\end{enumerate}
		\item if $r=\frac{p+1}{2}$, then $\Xe$ has limit $\Xs \sim DPP \ELE{\alpha f_{2r-1}
			\bD^{(2r-1)}}{\bV_{\leq  r-1}}$.
	\end{enumerate}
\end{theorem}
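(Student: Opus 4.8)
The plan is to exploit the mixture-over-size decomposition noted in Remark~\ref{rm:mixture}: for every $\varepsilon$ and every $X\subseteq\Omega$,
\[
\Proba(\Xe = X) = \Proba(|\Xe| = |X|)\,\Proba\big(\Xe = X \mid |\Xe| = |X|\big),
\]
where the conditional law $\Xe \mid |\Xe| = m$ is the fixed-size L-ensemble $\mDPP{m}(\alpha\varepsilon^{-p}\bL_\varepsilon)$. Since fixed-size L-ensembles are invariant under positive rescaling of their matrix, this conditional equals $\mDPP{m}(\bL_\varepsilon)$, so its flat limit is governed purely by the fixed-size theorems of Section~\ref{sec:univariate-fixed-size} and is independent of $\alpha$ and $p$. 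The size marginal, on the other hand, is exactly what Lemma~\ref{lem:size_Xe_varying_univariate} provides. Because the outcome space is finite, it suffices to pass to the limit in each factor: where the limiting size probability is positive the conditional converges by the fixed-size results, and where it is zero the bounded conditional is annihilated by the vanishing prefactor. Thus $\Xe$ converges to the process whose size law is the limit of Lemma~\ref{lem:size_Xe_varying_univariate} and whose conditional-on-size laws are the limiting fixed-size processes; the remaining work is to recognise this description as the claimed (pp-)DPP in each regime.

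The degenerate and odd regimes are then immediate. When $\tfrac{p+1}{2}\geq n$, or when $\tfrac{p+1}{2}\leq n$ and $r<\tfrac{p+1}{2}$, Lemma~\ref{lem:size_Xe_varying_univariate} gives $|\Xe|\to n$ almost surely, and the unique size-$n$ subset is $\Omega$, so $\Xs=\Omega$. When $r>\tfrac{p+1}{2}$ and $p$ is odd, $|\Xe|\to l=\tfrac{p+1}{2}$ almost surely; since $r\geq l$ the conditional limit is the smooth case (Theorem~\ref{thm:cs-case-fixed-size-1d}), giving $\Xs\sim\mDPP{l}(\bV_{\leq l-1}\bV_{\leq l-1}^\top)$. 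When $r=\tfrac{p+1}{2}$, the conditional limit for each admissible size $m\geq r$ is the finitely-smooth case (Theorem~\ref{thm:fs-case-fixed-size-1d}), namely $\mppDPP{m}\ELE{\bD^{(2r-1)}}{\bV_{\leq r-1}}$, which by positive-scale invariance equals $\mppDPP{m}\ELE{\alpha f_{2r-1}\bD^{(2r-1)}}{\bV_{\leq r-1}}$. Comparing with the size law of $\ppDPP\ELE{\alpha f_{2r-1}\bD^{(2r-1)}}{\bV_{\leq r-1}}$ (Eq.~\eqref{eq:prob_size_ppdpp}, whose projected matrix is precisely $\alpha f_{2r-1}\widetilde{\bD^{(2r-1)}}$) and using that conditioning a varying-size pp-DPP on its size returns the matching fixed-size pp-DPP (compare Corollaries~\ref{cor:mixture-rep-ppdpp} and~\ref{cor:mixture-rep-ppDPP-varying}), both the size marginal and all conditionals agree, so $\Xs\sim\ppDPP\ELE{\alpha f_{2r-1}\bD^{(2r-1)}}{\bV_{\leq r-1}}$.

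The one case needing a genuine computation is $r>\tfrac{p+1}{2}$ with $p$ even, where $l=p/2$ and the limiting size takes the two values $l,l+1$. Here I would show that the rank-one NNP $\ELE{\tfrac{\alpha}{\tilde w}\vect{v}_{l+1}\vect{v}_{l+1}^\top}{\bV_{\leq l-1}}$ reproduces both the two-point size law and the conditionals. Writing $\bQ$ for an orthonormal basis of $\mspan\bV_{\leq l-1}$ (so $p'=l$), the projected matrix $\tbL=\tfrac{\alpha}{\tilde w}(\bI-\bQ\bQ^\top)\vect{v}_{l+1}\vect{v}_{l+1}^\top(\bI-\bQ\bQ^\top)$ has rank $q=1$, so the admissible sizes are exactly $l$ and $l+1$, with $\Proba(|\X|=l+1)/\Proba(|\X|=l)$ equal to its unique nonzero eigenvalue $\tilde\lambda_1=\tfrac{\alpha}{\tilde w}\,\|(\bI-\bQ\bQ^\top)\vect{v}_{l+1}\|^2$. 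The key identity is that this squared norm is the Schur complement of the $\bV_{\leq l-1}$-block in the Gram matrix $\bV_{\leq l}^\top\bV_{\leq l}$, hence equals $1/\big((\bV_{\leq l}^\top\bV_{\leq l})^{-1}\big)_{l+1,l+1}$ by Cramer's rule; combined with $\tilde w=\big((\bW_{\leq l})^{-1}\big)_{l+1,l+1}$ this yields $\tilde\lambda_1=\alpha\gamma$, matching Lemma~\ref{lem:size_Xe_varying_univariate}. For the conditionals, the mixture representation (Corollary~\ref{cor:mixture-rep-ppdpp}) includes the $\bQ$-eigenvectors deterministically, so size $l$ gives $\mDPP{l}(\bQ\bQ^\top)=\mDPP{l}(\bV_{\leq l-1}\bV_{\leq l-1}^\top)$, while size $l+1$ additionally includes $\tilde{\vect u}_1\propto(\bI-\bQ\bQ^\top)\vect{v}_{l+1}$, so that $\bQ\bQ^\top+\tilde{\vect u}_1\tilde{\vect u}_1^\top$ projects onto $\mspan\bV_{\leq l}$ and gives $\mDPP{l+1}(\bV_{\leq l}\bV_{\leq l}^\top)$ by Lemma~\ref{lem:max-rank-dpp}; both match the smooth fixed-size limits at sizes $l$ and $l+1$. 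I expect this Schur-complement identification, together with the bookkeeping confirming that the projection-DPP conditionals span the correct polynomial spaces, to be the main obstacle; the remaining regimes follow mechanically once the size/conditional factorisation is in place.
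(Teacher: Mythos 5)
Your proposal is correct and follows essentially the same route as the paper's proof: the limiting size law from Lemma~\ref{lem:size_Xe_varying_univariate}, combined with the scale-invariance of the conditional fixed-size L-ensembles and their flat limits (Theorems~\ref{thm:cs-case-fixed-size-1d} and~\ref{thm:fs-case-fixed-size-1d}), followed by recognising the resulting size-mixture as the stated (pp-)DPP via the mixture representation. The only organisational difference is in case 2(b), where you \emph{verify} the candidate NNP $\ELE{\frac{\alpha}{\tilde{w}}\vect{v}_{l+1}\vect{v}_{l+1}^\top}{\bV_{\leq l-1}}$ by computing the unique nonzero eigenvalue of the rank-one projected matrix through the Schur complement of $\bV_{\leq l}^\top\bV_{\leq l}$ and Cramer's rule, whereas the paper \emph{derives} the same form from $DPP\ELE{\alpha\gamma\,\bQ'\bQ'^{\top}}{\bQ}$ using the invariance Remarks~\ref{rem:first_inv} and~\ref{rem:second_inv} — the underlying Gram-determinant identity $\norm{(\bI-\bQ\bQ^\top)\vect{v}_{l+1}}^2=\det(\bV_{\leq l}^\top\bV_{\leq l})/\det(\bV_{\leq l-1}^\top\bV_{\leq l-1})$ is the same in both.
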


\begin{proof}
We will prove each case sequentially. First of all, for all the cases in Lemma ~\ref{lem:size_Xe_varying_univariate} for which $|\Xe|=n$ in the limit $\flatlim$, the set $\Xe$ obviously tends to $\Omega$. Let us now focus on scenario number 2. 

In the case 2a, we know from Lemma ~\ref{lem:size_Xe_varying_univariate} that $|\Xs|=\frac{p+1}{2}$ with probability one. The limiting process is thus a fixed-size L-ensemble of size $l=\frac{p+1}{2}$. The fixed-size limit applies and theorem~\ref{thm:cs-case-fixed-size-1d} implies the result.

Case 2b needs a bit more work.  First of all, define the integer $l=\frac{p}{2}$ and consider an orthonormal basis $\bQ\in\mathbb{R}^{n\times l}$ of $\text{span}(\bV_{\leq l-1})$. Also, consider the vector $\vect{q}_{l+1}$ such that $\bQ'=\left[\bQ | \vect{q}_{l+1}\right]\in\mathbb{R}^{n\times (l+1)}$ is an orthonormal basis for ${\rm span}(\bV_{\leq l})$. 
From Lemma ~\ref{lem:size_Xe_varying_univariate} and Theorem ~\ref{thm:cs-case-fixed-size-1d}, we know
that in the limit $\flatlim$,  $\Xe$ is a mixture of two fixed-size L-ensembles (and hence a partial projection DPP): with probability
$\frac{1}{1+\alpha \gamma}$, it has size $l$ and distribution $\mDPP{l}(\bV_{\leq l-1}\bV_{\leq l-1}^\top)$. With probability $\frac{\alpha\gamma}{1+\alpha\gamma}$,
it has size $l+1$ and distribution $\mDPP{l+1}(\bV_{\leq l}\bV_{\leq l}^\top)$. Note that 
by lemma \ref{lem:marginal-kernel-proj}, these distributions are equivalent to
$\mDPP{l}(\bQ\bQ^\top)$ and $\mDPP{l+1}(\bQ'\bQ'^\top)$ respectively. Looking at the mixture representation of pp-DPPs described in Corollary~\ref{cor:mixture-rep-ppDPP-varying}, one observes that this limiting distribution can be succintly described as a pp-DPP $\Xs \sim DPP \ELE{\alpha\gamma\bQ'\bQ'^\top}{\bQ}$. 
Now, by the invariance property of remark~\ref{rem:second_inv}, this is equivalent to $\Xs  \sim \ppDPP \ELE{\alpha\gamma\vect{q}_{l+1}\vect{q}_{l+1}^\top}{\bQ}$. Also, by the invariance
property of remark~\ref{rem:first_inv}, this is in turn equivalent to $\Xs  \sim \ppDPP \ELE{\alpha\gamma\vect{q}_{l+1}\vect{q}_{l+1}^\top}{\bV_{\leq l-1}}$.
Finally, noting that 
\begin{align*}
\frac{\det (\bV_{\leq l}^\top \bV_{\leq l})}{\det (\bV_{\leq l-1}^\top \bV_{\leq l-1})}	\vect{q}_{l+1}\vect{q}_{l+1}^\top= 	\vect{v}_{l+1}\vect{v}_{l+1}^\top
\end{align*} 
and injecting in the expression of $\gamma$ of Eq.~\ref{eq:gamma_orig}, one obtains that
$
\gamma
\vect{q}_{l+1}\vect{q}_{l+1}^\top  
=\tilde{w}^{-1} \; \vect{v}_{l+1}\vect{v}_{l+1}^\top
$, 
finishing the proof that the limit in case 2b is $\Xs \sim \ppDPP \ELE{\frac{\alpha}{\tilde{w}}  \vect{v}_{l+1}\vect{v}_{l+1}^\top}{\bV_{\leq l-1}}$.

Let us finish with case 3. From a mixture point of view, the limiting process can be described by:
\begin{enumerate}
	\item draw the size $m$ of the set according to Eq.~\eqref{distrib:size_Xe_r=} of Lemma~\ref{lem:size_Xe_varying_univariate}:
	\begin{align}
	\Proba(|\Xe| = m) =  \left\{ \begin{array}{ll} 
	0 & \mbox{if }  m< r \\ 
	\frac{e_{m-r}\left(\alpha f_{2r-1}\widetilde{\bD^{(2r-1)}}\right)}{\det\left(\bI+\alpha f_{2r-1}\widetilde{\bD^{(2r-1)}}\right)} & \mbox{if } m\geq r
	\end{array} \right.
	\end{align}
	where $\widetilde{\bD^{(2r-1)}}=(\bI-\bQ\bQ^\top) \bD^{(2r-1)}(\bI-\bQ\bQ^\top)$, $\bQ$ being an orthonormal basis of ${\rm{span}}(\matr{V}_{\leq r -1})$.
	\item conditionally on the size, draw a fixed-size pp-DPP, which, according to theorem~\ref{thm:fs-case-fixed-size-1d}, reads
 $\Xs \sim \mppDPP{m} \ELE{\bD^{(2r-1)}} {\bV_{\leq r-1}}$. 
\end{enumerate}
Noting that  $\Xs \sim \mppDPP{m} \ELE{\bD^{(2r-1)}} {\bV_{\leq r-1}}$ is equivalent to $\Xs \sim \mppDPP{m} \ELE{\alpha f_{2r-1}\bD^{(2r-1)}} {\bV_{\leq r-1}}$, this mixture is precisely the mixture representation of $\Xs \sim \ppDPP \ELE{\alpha f_{2r-1}\bD^{(2r-1)}} {\bV_{\leq r-1}}$, ending the proof.
\end{proof}

\subsection{The multivariate case}
\label{sec:multivar-variable-size}

The multivariate case is a mostly straightforward generalisation of the
univariate case. The size of $\Xe$ is described in the following lemma, which
generalises lemma \ref{lem:size_Xe_varying_univariate}

\begin{lemma}
	\label{lem:size_Xe_varying_multivariate}
	Let $p\in\mathbb{N}$, $\alpha>0$, and $\Omega=\{\vect{x}_1,\ldots,\vect{x}_n\}$ a set of $n$
  distinct points in $\R^d$. Let $\bL_\varepsilon = [\kappa_\varepsilon(\vect{x}_i,\vect{x}_j)]_{i,j}$ with $\kappa$ a
	stationary kernel of smoothness order $r\in\mathbb{N}^*$.  Let $\Xe \sim
	DPP(\alpha\varepsilon^{-p}\bL_\varepsilon)$.  In the limit $\flatlim$, the distribution of the size of $\Xe$ depends on the interplay between $p, r$ and $n$. First of all, $p$ is either even or odd: only one out of the two following values $\left(\frac{p}{2},\frac{p+1}{2}\right)$ is an integer. We call that integer $l$. Now, if $\PP_{l-1,d}\geq n$  then, for any value of $r$, as $\flatlim$, $|\Xe|=n$ with probability one. 
	Otherwise, there are three scenarii depending on the value of $r$: 
		\begin{enumerate}
			\item if $r<\frac{p+1}{2}$, then, as $\flatlim$, $|\Xe|=n$ with probability one.
			\item if $r>\frac{p+1}{2}$, the size of $\Xe$ has a distribution that depends on the parity of $p$:
			\begin{enumerate}
				\item If $p$ is odd ($l=\frac{p+1}{2}$), then, as $\flatlim$, $|\Xe|=\PP_{l-1,d}$ with probability one. 
				\item If $p$ is even ($l=\frac{p}{2}$) then, as $\flatlim$, the distribution tends to:
				\begin{align*}
				\Proba(|\Xs| = m) =  \left\{ \begin{array}{ll} 
				0 & \text{ if } m < \PP_{l-1,d} \text{ or } m > \PP_{l,d}\\ 
				\frac{e_{m -
						\PP_{l-1,d}}(\alpha \widetilde{\bV_{l}\bar{\bW}\bV_{l}^\top})}{\det\left(\matr{I}+\alpha\widetilde{\bV_l \bar{\bW} \bV_l^\top}\right)} & otherwise\end{array} \right. 
				\end{align*}
        where $\bV_{l}\bar{\bW}\bV_{l}^\top$ is as in theorem \ref{thm:general-case-smooth-fixed-size}, and $\widetilde{\bV_{l}\bar{\bW}\bV_{l}^\top}=(\bI-\bQ_l\bQ_l^\top) \bV_{l}\bar{\bW}\bV_{l}^\top(\bI-\bQ_l\bQ_l^\top)$, $\bQ_l$ being an orthonormal basis of ${\rm{span}}(\matr{V}_{\leq l-1})$.
			\end{enumerate}
		\item if $r=\frac{p+1}{2}$, then, as $\flatlim$, the distribution tends to:
		\begin{align}
		\label{distrib:size_Xe_r=_multivar}
		\Proba(|\Xs| = m) =  \left\{ \begin{array}{ll} 
		0 & \mbox{if }  m< \PP_{r-1,d} \mbox{ or } m>n\\ 
		 \frac{e_{m-\PP_{r-1,d}}\left(\alpha f_{2r-1}\widetilde{\bD^{(2r-1)}}\right)}{\det\left(\bI+\alpha f_{2r-1}\widetilde{\bD^{(2r-1)}}\right)} & \mbox{otherwise}
		\end{array} \right. 
		\end{align}
		 where $\widetilde{\bD^{(2r-1)}}=(\bI-\bQ_r\bQ_r^\top) \bD^{(2r-1)}(\bI-\bQ_r\bQ_r^\top)$, $\bQ_r$ being an orthonormal basis of ${\rm{span}}(\matr{V}_{\leq r-1})$.
		\end{enumerate}
\end{lemma}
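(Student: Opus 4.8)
The plan is to follow the univariate argument of Lemma~\ref{lem:size_Xe_varying_univariate} almost verbatim, the only structural change being that the role played there by the consecutive integers $1,2,\dots$ is now played by the magic numbers $\magicn{d}=\{\PP_{k,d}\}_k$. Since $\Xe\sim DPP(\alpha\varepsilon^{-p}\bL_\varepsilon)$, the law of its size is governed by the elementary symmetric polynomials via Eq.~\eqref{eq:distr-size-dpp}:
\[
\Proba(|\Xe|=m)=\frac{e_m(\alpha\varepsilon^{-p}\bL_\varepsilon)}{\sum_{i=0}^n e_i(\alpha\varepsilon^{-p}\bL_\varepsilon)},\qquad e_m(\alpha\varepsilon^{-p}\bL_\varepsilon)=\alpha^m\varepsilon^{-pm}\sum_{|X|=m}\det\bL_{\varepsilon,X}.
\]
First I would determine, for every $m$, the leading order in $\varepsilon$ of $\sum_{|X|=m}\det\bL_{\varepsilon,X}$, write $e_m(\alpha\varepsilon^{-p}\bL_\varepsilon)=\varepsilon^{\eta(m)}(f_0(m)+\O(\varepsilon))$, and then read off the limiting size distribution directly from Lemma~\ref{lem:TV-convergence-diverging}: the support of $|\Xs|$ is $\{m:\eta(m)=\min_i\eta(i)\}$, and on that set the masses are proportional to the coefficients $f_0(m)$.

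The genuinely new ingredient is the shape of the order function $\eta$. In the \emph{smooth} regime $\PP_{k-1,d}<m\le\PP_{k,d}$ with $k\le r-1$, Case~2 of Theorem~6.1 of \cite{BarthelmeUsevich:KernelsFlatLimit} (underlying Theorem~\ref{thm:general-case-smooth-fixed-size}) gives the determinant order $2s(k,d)$ with $s(k,d)=d\binom{k+d}{d+1}-k(\PP_{k,d}-m)$, so that $\eta(m)=2s(k,d)-pm$ is affine in $m$ with slope $2k-p$ on each such interval. In the \emph{finitely-smooth} regime $m\ge\PP_{r-1,d}$, the expansion underlying Theorem~\ref{thm:nd-finite-smooth-magic-case} makes $\eta$ affine with slope $2r-1-p$. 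A short computation using Pascal's rule (which reduces to the identity $d\binom{l+d-1}{d}=l\binom{l+d-1}{d-1}$) shows the two pieces agree at every magic number, so $\eta$ is a continuous, piecewise-affine function, exactly the multivariate analogue of the ``parabola-then-line'' of figure~\ref{fig:PlotProofVariableSize}.

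The case analysis is then a sign analysis of these slopes. The slope $2k-p$ is negative for $k<p/2$, zero for $k=p/2$, and positive for $k>p/2$; the finitely-smooth slope $2r-1-p$ has the sign of $r-\tfrac{p+1}{2}$. Writing $l$ for the integer among $\tfrac{p}{2},\tfrac{p+1}{2}$, I would locate $\argmin_{0\le m\le n}\eta(m)$: when $\PP_{l-1,d}\ge n$ every admissible $m$ lies in the strictly decreasing part, so $\eta$ is minimized at $m=n$ and $|\Xs|=n$ almost surely, for all $r$. Otherwise, if $r<\tfrac{p+1}{2}$ all slopes are negative and the minimum is again at $n$; if $r>\tfrac{p+1}{2}$ with $p$ odd the slope flips sign at the magic number $\PP_{l-1,d}$, giving the unique minimizer $m=\PP_{l-1,d}$; if $r>\tfrac{p+1}{2}$ with $p$ even then $\eta$ is flat on $(\PP_{l-1,d},\PP_{l,d}]$ (slope $2l-p=0$), so the minimizers form the range $\{\PP_{l-1,d},\dots,\PP_{l,d}\}$; and if $r=\tfrac{p+1}{2}$ (forcing $p$ odd) the finitely-smooth slope vanishes, so $\eta$ is flat on $\{\PP_{r-1,d},\dots,n\}$.

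Finally, on the two ranges where the limit is non-degenerate (cases 2(b) and 3), I would compute $f_0(m)$ as the sum over subsets of the leading saddle-point determinant and apply Corollary~\ref{cor:normalisation-ppDPP} to the NNP $\ELE{\bV_l\bar{\bW}\bV_l^\top}{\bV_{\le l-1}}$ (resp.\ $\ELE{f_{2r-1}\bD^{(2r-1)}}{\bV_{\le r-1}}$) to rewrite it, up to the common factor $\det(\bV^\top\bV)$, as $e_{m-\PP_{l-1,d}}(\alpha\,\widetilde{\bV_l\bar{\bW}\bV_l^\top})$ (resp.\ the $f_{2r-1}\widetilde{\bD^{(2r-1)}}$ form); dividing numerator and denominator by that common factor and by the constant $\alpha^{\PP_{l-1,d}}$ yields exactly the stated normalized distributions. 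I expect the main obstacle to be bookkeeping rather than conceptual: carefully verifying the continuity of $\eta$ at the magic numbers, and — more delicately — checking that the leading coefficients $f_0(m)$ do not vanish, so that the computed $\eta(m)$ are the true orders. The latter holds because each $f_0(m)$ is a sum of nonnegative terms (squared Vandermonde / saddle-point determinants, nonnegative by Lemma~\ref{lem:equivalence_ppdpp_ensemble}), hence strictly positive whenever the relevant Wronskian and Vandermonde matrices have full rank.
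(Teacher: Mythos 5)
Your proposal follows essentially the same route as the paper's proof in Appendix~\ref{sec:proof-lemma-multivar-size}: the same piecewise-affine order function $\eta$ with slopes $2k-p$ on the intervals $[\PP_{k-1,d},\PP_{k,d}]$ and $2r-1-p$ beyond $\PP_{r-1,d}$, the same slope-sign case analysis combined with Lemma~\ref{lem:TV-convergence-diverging}, and the same rewriting of the leading coefficients via Corollary~\ref{cor:normalisation-ppDPP} applied to the NNPs $\ELE{\bV_l\bar{\bW}\bV_l^\top}{\bV_{\le l-1}}$ and $\ELE{f_{2r-1}\bD^{(2r-1)}}{\bV_{\le r-1}}$. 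The only cosmetic difference is that you make explicit the continuity check at the magic numbers (via Pascal's rule) and the nonvanishing of $f_0(m)$, which the paper leaves implicit, and your flat range in case 2(b) should formally be capped at $\min(\PP_{l,d},n)$ — though, as in the paper, the rank of $\widetilde{\bV_l\bar{\bW}\bV_l^\top}$ makes the stated formula correct regardless.
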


\begin{proof}
  In appendix, section \ref{sec:proof-lemma-multivar-size}
\end{proof}

With lemma \ref{lem:size_Xe_varying_multivariate} in hand, along with the fixed-size
results in section \ref{sec:multivar-poly} we can prove the following: 
\begin{theorem}
  \label{thm:Xe_varying_multivariate}
  Let $p\in\mathbb{N}$, $\alpha>0$, and $\Omega=\{\vect{x}_1,\ldots,\vect{x}_n\}$ a set of $n$
  distinct points in $\R^d$. Let $\bL_\varepsilon = [\kappa_\varepsilon(\vect{x}_i,\vect{x}_j)]_{i,j}$ with $\kappa$ a
	stationary kernel of smoothness order $r\in\mathbb{N}^*$.  Let $\Xe \sim
	DPP(\alpha\varepsilon^{-p}\bL_\varepsilon)$.  In the limit $\flatlim$, the
  distribution of $\Xe$ depends on the interplay between $p, r$ and $n$. First of all, $p$ is either even or odd: only one out of the two following values $\left(\frac{p}{2},\frac{p+1}{2}\right)$ is an integer. We call that integer $l$. Now, if $\PP_{l-1,d}\geq n$  then, for any value of $r$, $\Xe$ has limit $\Omega$ with probability one. 
  Otherwise, there are three scenarii depending on the value of $r$: 
	\begin{enumerate}
		\item if $r<\frac{p+1}{2}$, then $\Xe$ has limit $\Xs=\Omega$ with probability one.
		\item if $r>\frac{p+1}{2}$, $\Xe$ has a limiting distribution that depends on the parity of $p$:
		\begin{enumerate}
			\item If $p$ is odd ($l=\frac{p+1}{2}$), then $\Xe$ has limit $\Xs \sim \mDPP{\PP_{l-1,d}}(\bV_{\leq l-1}\bV_{\leq l-1}^t)$
			\item If $p$ is even ($l=\frac{p}{2}$) then $\Xe$ has limit $\Xs \sim \ppDPP \ELE{\alpha \bV_{l}\bar{\bW}\bV_{l}^t}{\bV_{\leq
            l-1}}$ with $\bV_{l}\bar{\bW}\bV_{l}^t$ as in theorem \ref{thm:general-case-smooth-fixed-size}.
		\end{enumerate}
		\item if $r=\frac{p+1}{2}$, then $\Xe$ has limit $\Xs \sim ppDPP \ELE{\alpha f_{2r-1}
			\bD^{(2r-1)}}{\bV_{\leq  r-1}}$.
	\end{enumerate}
\end{theorem}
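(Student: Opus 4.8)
The plan is to follow exactly the two-step strategy used for the univariate Theorem~\ref{thm:Xe_varying_univariate}, exploiting the mixture decomposition of Remark~\ref{rm:mixture}: a variable-size L-ensemble is a mixture over its size $m$, where $m$ is first drawn from the size marginal and then $\Xe \mid |\Xe|=m$ is sampled as a fixed-size L-ensemble. Since fixed-size L-ensembles are invariant under scaling of their matrix, $\Xe \mid |\Xe|=m$ is distributed as $\mDPP{m}(\bL_\varepsilon)$, whose flat limit is already known from the fixed-size theorems of Section~\ref{sec:results-multivariate}. Because our outcome space is finite, convergence of the size marginal (Lemma~\ref{lem:size_Xe_varying_multivariate}) together with convergence of each conditional law implies convergence of the joint law; so it suffices to identify the limiting mixture in each of the regimes of the statement. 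First I would dispatch the trivial regimes: whenever Lemma~\ref{lem:size_Xe_varying_multivariate} forces $|\Xe|\to n$ almost surely (the cases $\PP_{l-1,d}\geq n$ and $r<\tfrac{p+1}{2}$), the set $\Xe$ necessarily converges to $\Omega$.

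For case~2a ($p$ odd, $r>\tfrac{p+1}{2}$, $l=\tfrac{p+1}{2}$), Lemma~\ref{lem:size_Xe_varying_multivariate} gives $|\Xe|\to\PP_{l-1,d}$ with probability one. Since $r\geq l+1$ we have $l-1\leq r-1$, so $\PP_{l-1,d}$ is a magic number of admissible order and Theorem~\ref{thm:nd-smooth-magic-case} (with $k=l-1$) applies directly, yielding $\Xs\sim\mDPP{\PP_{l-1,d}}(\bV_{\leq l-1}\bV_{\leq l-1}^\top)$. This is the only case where the size concentrates on a single value, making the argument a one-liner.

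Case~2b ($p$ even, $l=\tfrac p2$) is the first genuine mixture. Here the size ranges over $\PP_{l-1,d}\leq m\leq \PP_{l,d}$, and for each $m$ with $\PP_{l-1,d}<m\leq\PP_{l,d}$ we are precisely in the situation of Theorem~\ref{thm:general-case-smooth-fixed-size} with $k=l$ (valid since $l\leq r-1$), so the conditional limit is $\mppDPP{m}\ELE{\bV_l\bar{\bW}\bV_l^\top}{\bV_{\leq l-1}}$. The task is then to show that this collection of fixed-size limits, weighted by the limiting size marginal, reassembles into the single process $\ppDPP\ELE{\alpha \bV_l\bar{\bW}\bV_l^\top}{\bV_{\leq l-1}}$. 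I would verify this against the mixture representation of pp-DPPs (Corollary~\ref{cor:mixture-rep-ppDPP-varying}): for that candidate NNP one has $p'=\PP_{l-1,d}$ and projected matrix $\widetilde{\alpha\bV_l\bar{\bW}\bV_l^\top}=\alpha\widetilde{\bV_l\bar{\bW}\bV_l^\top}$, so by Corollary~\ref{cor:normalisation-ppDPP} its size distribution is exactly the one displayed in Lemma~\ref{lem:size_Xe_varying_multivariate}, while its size-$m$ conditional is $\mppDPP{m}\ELE{\alpha\bV_l\bar{\bW}\bV_l^\top}{\bV_{\leq l-1}}$, which equals $\mppDPP{m}\ELE{\bV_l\bar{\bW}\bV_l^\top}{\bV_{\leq l-1}}$ by scale-invariance. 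Matching both marginal and conditionals closes this case.

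Case~3 ($r=\tfrac{p+1}{2}$, $p$ odd) is handled identically: the size ranges over $\PP_{r-1,d}\leq m\leq n$ with the marginal of Eq.~\eqref{distrib:size_Xe_r=_multivar}, each conditional limit being $\mppDPP{m}\ELE{(-1)^r\bD^{(2r-1)}}{\bV_{\leq r-1}}$ by Theorem~\ref{thm:nd-finite-smooth-magic-case}; folding the sign and the positive constant $\alpha f_{2r-1}$ into the NNP via Remarks~\ref{rem:first_inv} and~\ref{rem:second_inv} and matching the marginal through Corollary~\ref{cor:normalisation-ppDPP} identifies the limit as $\ppDPP\ELE{\alpha f_{2r-1}\bD^{(2r-1)}}{\bV_{\leq r-1}}$. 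I expect the main obstacle to be precisely this reassembly in cases~2b and~3: confirming that a mixture of fixed-size pp-DPPs of \emph{different} sizes coincides with one variable-size pp-DPP requires simultaneously matching the size marginal (via the elementary-symmetric-polynomial identity of Corollary~\ref{cor:normalisation-ppDPP}) and every conditional joint law, all while correctly tracking the scaling factor $\alpha$ and verifying that the absorbed constants ($(-1)^r$, $f_{2r-1}$) keep the resulting pair a valid NNP. The rest of the proof is bookkeeping delegated to the already-established fixed-size theorems.
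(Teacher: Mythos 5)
Your proposal is correct and takes essentially the same route as the paper, whose proof is literally ``repeats the univariate proof'': the same two-step scheme of first pinning down the size marginal via Lemma~\ref{lem:size_Xe_varying_multivariate}, then identifying each conditional with the appropriate fixed-size limit and reassembling the mixture through Corollary~\ref{cor:mixture-rep-ppDPP-varying}, Corollary~\ref{cor:normalisation-ppDPP} and the invariances of Remarks~\ref{rem:first_inv} and~\ref{rem:second_inv}. The only detail worth adding is that in case~2b the boundary size $m=\PP_{l-1,d}$ falls outside Theorem~\ref{thm:general-case-smooth-fixed-size} and is instead covered by Theorem~\ref{thm:nd-smooth-magic-case} with $k=l-1$, giving the projection DPP $\mDPP{\PP_{l-1,d}}(\bV_{\leq l-1}\bV_{\leq l-1}^\top)$, which coincides (by Lemma~\ref{lem:max-rank-dpp}) with your candidate pp-DPP conditioned on its minimal size.
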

\begin{proof}
  Repeats the univariate proof. 
\end{proof}

\begin{remark}
  The following (non-trivial) limit is universal: for odd $p$ and
  $r>\frac{p+1}{2}$, the limit process is $\Xs \sim \mDPP{\PP_{l-1,d}}(\bV_{\leq
    l-1}\bV_{\leq l-1}^t)$ which does not depend on the Wronskian. This means
  that L-ensembles in the flat limit tend to exhibit ``natural'' sizes, the set
  $\{\PP_{1,d},\PP_{2,d},\ldots \}$.

  Another limit
  exhibits only weak dependency on the Wronskian: if $r=\frac{p+1}{2}$, then
  $\Xe$ has limit $\Xs \sim DPP \ELE{\alpha f_{2r-1}
    \bD^{(2r-1)}}{\bV_{\leq  r-1}}$, where the Wronskian is only present via
  $f_{2r-1}$, a scaling parameter which can be compensated via $\alpha$.
\end{remark}

\section{To conclude}
\label{sec:discussion}

The results in this work can be summarised as follows. Two are very general observations,
namely that partial-projection DPPs form the closure of the set of DPPs under
pertubative limits, and that extended L-ensembles are a natural unifying
representation for DPPs and fixed-size DPPs. The rest concern the flat limit: as
$\flatlim$, L-ensembles formed from stationary kernels stay well-defined (and
meaningfully repulsive). In some cases we obtain universal limits where the
limit process depends only on $r$ and not the Wronskian of the kernel. In
dimension $d > 1$, these universal limits are obtained for certain natural
values of $m$ (for fixed-size L-ensembles) or when rescaling with $\varepsilon^{-p}$
for $p$ odd (varying-size L-ensembles).

The question of how fast L-ensembles converge to the limits given here requires
expansions to the next order, which we do not yet have. Empirically, we observe
that convergence is quite fast in the fixed-size case, but slower in the
varying-size case, at least in some instances. This means that the distribution
of the size of $\Xe$ may converge slowly to its limit. We hope to investigate
this further in future work.

In the interests of space we have left some topics aside. Our results on the
flat limit should apply as well to D-optimal design, and there is an interesting
connection to polyharmonic splines for kernels
with finite $r$ (see section \ref{sec:ppdpp-examples-cpdef}, and \cite{song2012multivariate}). We have also entirely skipped
the topic of computational applications of these results. Finally, the
univariate results point to possible connections with random matrix theory we
have yet to explore.

Directions for future work include extending the results to continuous DPPs, and
in a related vein letting $n \rightarrow \infty$ as $\flatlim$ in discrete DPPs.
This should let one take advantage of some results from the literature on the
asymptotics of Christoffel functions, as in \cite{tremblay2019determinantal}. It
would also be worth investigating the flat limit on Riemannian manifolds, rather
than on $\R^d$ as we do here. 

\section*{Acknowledgments}
We thank Guillaume Gautier for helpful comments on preliminary versions of this
manuscript.

This work was supported by ANR project GenGP (ANR-16-CE23-0008), ANR project
LeaFleT (ANR-19-CE23-0021-01), LabEx PERSYVAL-Lab (ANR-11-LABX-0025-01),
Grenoble Data Institute (ANR-15-IDEX- 02), LIA CNRS/Melbourne Univ Geodesic, and
partial funding from the IRS (Initiatives de Recherche Stratégiques) of the IDEX
Université Grenoble Alpes.

\begin{appendix}
  \label{sec:appendix}

\section{Inclusion probabilities in mixtures of projection DPPs}
\label{sec:marginal-kernel-ppDPPs-proof}

Here, we give formulas for inclusion probabilities valid for mixtures of projection
DPPs. These formulas yield the marginal kernels of L-ensembles and partial-projection
DPPs as a special case. We give a variant of a calculation in \cite{Barthelme:AsEqFixedSizeDPP},
appendix A.2.

Let $\bU$ be a fixed orthonormal basis of $\mathbb{R}^n$. We assume that $\X$ is generated according to the following mixture process:
\begin{enumerate}
\item Sample indices $\Y \sim \Proba(\Y)$
\item Form the projection matrix $\bM = \bU_{:,\Y} (\bU_{:,\Y})^\top$
\item Sample $\X | \Y \sim  \mDPP{m}(\bM)$
\end{enumerate}
We do not specify $\Proba(\Y)$ for now (it may be an L-ensemble, a fixed-size L-ensemble, etc.).

Since $\X$ is a mixture of projection-DPPs we can write 
\begin{eqnarray}
  \label{eq:incl-prob-mixture}
  \Proba(  \cW \subseteq \X)= \E_\Y[ \Proba( \cW \subseteq \X \vert \Y) ]
\end{eqnarray}
where the outer expectation is over $\Y$, the indices of the columns of $\bU$ 
sampled in the mixture process.
Since the innermost quantity is an inclusion probability for a projection DPP,
we have from lemma \ref{lem:marginal-kernel-proj}:
\begin{align*}
  \Proba( \cW \subseteq \X \vert \Y) & = \det \left(  \bM_\cW \right)\\
                                &=  \det \left( \bU_{\cW,\Y}(\bU_{\cW,\Y})^\top \right) \\
                                &=  \sum_{\cA \subseteq \Y,|\cA| = |\cW|}  \det\left( \bU_{\cW,\cA}\right)^2 
\end{align*}
where the last line follows from the Cauchy-Binet lemma (lemma
\ref{lem:cauchy-binet}). Injecting into \ref{eq:incl-prob-mixture}, we find:
\begin{align*}
  \Proba(  \cW \subseteq \X) &= \E_\Y\left[\sum_{\cA \subseteq \Y,|\cA| = |\cW|}  \det\left( \bU_{\cW,\cA}\right)^2 \right] \\
                        &= \sum_{\Y} \Proba(\Y) \sum_{\cA \subseteq \Y,|\cA| = |\cW|}  \det\left( \bU_{\cW,\cA}\right)^2 \\
                        &= \sum_{\Y,\cA \slash |\cA|=|\cW|} \det\left( \bU_{\cW,\cA}\right)^2 \Proba(\Y) \Ind\{\cA \subseteq \Y \} \\
                        &= \sum_{\cA \slash |\cA|=|\cW|} \det\left( \bU_{\cW,\cA}\right)^2 \Proba(\cA \subseteq \Y).
\end{align*}
In the case of L-ensembles and partial projection DPPs, we can go a bit further, since
the distribution of $\Y$ is a Bernoulli process (meaning that each element $i$
is included independently with probability $\pi_i$). In that case 
$\Proba(\cA \subseteq \Y) = \prod_{i \in \cA} \pi_i$, and using the Binet-Cauchy lemma 
once again we find: 
\begin{align*}
  \Proba(  \cW \subseteq \X) &= \sum_{\cA \slash |\cA|=|\cW|} \det\left( \bU_{\cW,\cA}\right)^2 \prod_{i \in \cA} \pi_i \\
                        &= \det \bU_{\cW,:} \diag(\pi_1,\ldots,\pi_n) (\bU_{\cW,:})^\top \\
                        &= \det \bK_{\cW} \numberthis \label{eq:marginal-kernel-generic}
\end{align*}
with $\bK = \bU \diag(\pi_1,\ldots,\pi_n) \bU^\top$.

\section{Size of $\Xe$ in the multivariate case }
\label{sec:proof-lemma-multivar-size}

We prove lemma \ref{lem:size_Xe_varying_multivariate}. In the following, $\bL_{\varepsilon,\X}$ stands for the matrix $\bL_{\varepsilon}$ reduced to its lines and columns indexed by $\X$. 
	First, recall that if $\X \sim DPP(\bL)$, then the marginal distribution of the size $|\X|$ is given by
	Eq.~\eqref{eq:distr-size-dpp}:
	\begin{equation}
	\label{eq:marginal-distr-size_multivar}
	\Proba(|\X| = m) =  \frac{e_m(\bL)}{e_0(\bL) + e_1(\bL) + \ldots + e_n(\bL)}.
	\end{equation}
	where $e_m(\bL)$ is the $m$-th elementary symmetric polynomial of $\bL$ and for
	consistency $e_0(\bL)=1$ for all matrices $\bL$. Here, we consider the $L$-ensemble $\alpha\varepsilon^{-p}\bL_\varepsilon$. 
	Recall that $\det(\alpha\varepsilon^{-p} \bL_{\varepsilon,\X}) =
	\alpha^{|\X|}\varepsilon^{-p|\X|}\det(\bL_{\varepsilon,\X})$. One thus has $\forall i$: $ e_i(\alpha\varepsilon^{-p} \bL_\varepsilon) = \alpha^i \varepsilon^{-ip} e_i(\bL_\varepsilon)$. \\	
	Let $r\in\mathbb{N}^*$, $d\geq 2$ and consider $i\leq \PP_{r-1,d}$. In the flat limit, we can apply theorem 6.1 in~\cite{BarthelmeUsevich:KernelsFlatLimit}. There are two cases: either $i$ is a magic number ($i\in\magicn{d}$) in which case $k\in\mathbb{N}$ will denote the integer verifying $i=\PP_{k,d}$, or it is a muggle number ($i\notin\magicn{d}$) in which case $k\in\mathbb{N}$ denotes the smallest integer such that $i\leq\PP_{k,d}$. In both cases, we denote by $M(i)$ the integer $M(i)=d {k+d \choose d+1}$. Combining points 1 and 2 of Theorem 6.1 in~\cite{BarthelmeUsevich:KernelsFlatLimit}, one has, $\forall \; 1\leq i\leq \PP_{r-1,d}$:
	\begin{align*}
	 e_i(\bL_\varepsilon) &= \sum_{|\X|=i} \det \bL_{\varepsilon,\X} \\
	&= \varepsilon^{2\left(M(i)+k(\PP_{k,d}-i)\right)} \left( \sum_{|\X|=i} \det(\matr{Y}\matr{W}_{\leq k}\matr{Y}^\top) \det(\matr{V}_{\le k-1}(\X)^\top \matr{V}_{\le k-1}(\X)) + \O(\varepsilon) \right)
	\end{align*}
	where $\matr{Y}$ is as in Eq.~\eqref{eq:def_Y}. Denoting 
	\begin{align}
	\label{eq:etilde_multivar}
	\forall\; 1\leq i\leq \PP_{r-1,d}\qquad \tilde{e}_i&=\sum_{|\X|=i} \det(\matr{Y}\matr{W}_{\leq k}\matr{Y}^\top) \det(\matr{V}_{\le k-1}(\X)^\top \matr{V}_{\le k-1}(\X)),
	\end{align}
	one has:
	\begin{align*}
\forall\; 1\leq i\leq \PP_{r-1,d}\qquad e_i(\bL_\varepsilon)=  \varepsilon^{2\left(M(i)+k(\PP_{k,d}-i)\right)} \left( \tilde{e}_i + \O(\varepsilon)  \right).
	\end{align*}
	Also, we can apply theorem 6.3 of~\cite{BarthelmeUsevich:KernelsFlatLimit} for any set $\X$ of size $i\geq \PP_{r-1,d}$:
	\begin{align*}
	\forall i\geq \PP_{r-1,d}\qquad e_i(\bL_\varepsilon) &= \sum_{|\X|=i} \det \bL_{\varepsilon,\X} = \varepsilon^{2d  {r+d-1 \choose d+1} +(2r-1)(i-\PP_{r-1,d})} \left( \sum_{|\X|=i} \tilde{l}(\X) + \O(\varepsilon) \right)\\
	&=\varepsilon^{2d  {r+d-1 \choose d+1} +(2r-1)(i-\PP_{r-1,d})} \left( \bar{e}_i + \O(\varepsilon) \right)
	\end{align*}
	where $\bar{e}_i$ verifies the same equation than in the univariate case, Eq.~\eqref{eq:ebar}, replacing $\matr{W}_{r-1,r-1}$, $\matr{V}_{\le r-1}$ and $\bD^{(2r-1)}$ by their multivariate counterparts. \\	
	Now, injecting into Eq.~\eqref{eq:marginal-distr-size_multivar}, and following the proof scheme of the univariate case, one shows that $\Proba(|\X| = m)$ may be written as:
	\begin{align*}
	\forall m,\qquad \Proba(|\X| = m) &= \frac{\varepsilon^{\eta(m)}\left(f_0(m) + \O(\varepsilon)  \right) }{\sum_{i=0}^n \varepsilon^{\eta(i)}\left(f_0(i) + \O(\varepsilon)  \right) }
	\end{align*}
where $\eta(\cdot)$ and $f_0(\cdot)$ are two $\varepsilon$-independent functions verifying:
	\begin{equation}
	\label{eq:d_of_i_vsize_proof_multivar}
	\eta(i) = \left\{ \begin{array}{ll} 
	\eta_0(i)=0 & \mbox{if }  i=0 \\ 
	\eta_1(i)=i(2-p)-2 & \mbox{if }  0<i\leq \PP_{1,d} \\ 
	\eta_2(i)=i(4-p)-2d-4 & \mbox{if }  \PP_{1,d}\leq i\leq \PP_{2,d} \\ 
	\vdots\\
	\eta_l(i)=i(2l-p)-2 {d+l \choose d+1} & \mbox{if }  \PP_{l-1,d}\leq i\leq \PP_{l,d} \\ 
	\vdots\\
	\eta_{r-1}(i)=i(2r-2-p)-2 {d+r-1 \choose d+1} & \mbox{if }  \PP_{r-2,d}\leq i\leq \PP_{r-1,d} \\ 
	\eta_{r}(i)=i(2r-1-p)- \left(2+\frac{d+1}{r-1}\right){d+r-1 \choose d+1} & \mbox{if }  i\geq \PP_{r-1,d} 
	\end{array} \right. 
	\end{equation}
	and
	\begin{align*}
	f_0(i) = \left\{ \begin{array}{ll} 
	1 & \mbox{if }  i=0 \\ 
	\alpha^i \tilde{e}_i & \mbox{if }  0<i\leq \PP_{r-1,d} \\ 
	\alpha^i \bar{e}_i & \mbox{if } i\geq \PP_{r-1,d}.
	\end{array} \right. 
	\end{align*}
	As in the univariate case, we will make use of lemma~\ref{lem:TV-convergence-diverging}. In order to apply it, one needs to find the integers between $0$ and $n$ for which $\eta(\cdot)$ is minimal:
	$$\argmin_{i\in\mathbb{N}, i\in[0,n]} \eta(i).$$ 
	The answer to this question depends on $p,
	r$ and $n$ which explains the different cases of the theorem. 	
	Let us make first
	a few simple observations on the function $\eta:\mathbb{R}^+\rightarrow \mathbb{R}$ :
	\begin{itemize}
		\item $\eta(\cdot)$ is continuous (everywhere except in $i=0$) and piecewise linear.
		\item the slope of each of the linear pieces of $\eta(\cdot)$ is strictly increasing, starting at $2-p$ for the first piece $0<i\leq \PP_{1,d}$ and finishing at $2r-1-p$ for the last piece $i\geq \PP_{r-1,d}$.
	\end{itemize}
	We shall now explore all the possible cases sequentially. 
\begin{enumerate}
\item if $r<\frac{p+1}{2}$, \textit{i.e.}, $2r-1-p<0$: the slope of all the pieces of $\eta(\cdot)$ are negative, and $\eta(\cdot)$ is thus strictly decreasing. In this case, the integer in $[0,n]$ minimizing $\eta$ is $i=n$. Applying lemma~\ref{lem:TV-convergence-diverging}, as $\flatlim$, $|\Xe|=n$ with probability $1$.
			\item if $r>\frac{p+1}{2}$:
			\begin{enumerate}
				\item if $p$ is odd, then $\frac{p-1}{2}$ is an integer and $\PP_{\frac{p-1}{2},d}$ is well defined. Trivially, $r>\frac{p+1}{2}$ implies $\PP_{\frac{p-1}{2},d}<\PP_{r-1,d}$. Also, note that $\eta(\cdot)$ decreases strictly between $0^+$ and $\PP_{\frac{p-1}{2},d}$, and then increases strictly after $\PP_{\frac{p-1}{2},d}$. The integer in the interval $[0,n]$ minimizing $\eta(\cdot)$ is thus $\min\left(\PP_{\frac{p-1}{2},d},n\right)$. Applying lemma~\ref{lem:TV-convergence-diverging}, as $\flatlim$, $|\Xe|=\min\left(\PP_{\frac{p-1}{2},d},n\right)$ with probability $1$.
				\item if $p$ is even (the case $p=0$ falls into this category, recall that $\PP_{-1,d}$ is by convention set to $0$), then $r > \frac{p+1}{2}$ implies $\frac{p}{2}\leq r-1$ and thus  $\PP_{\frac{p}{2},d}\leq\PP_{r-1,d}$. Also, note that $\eta(\cdot)$ decreases strictly between $0^+$ and $\PP_{\frac{p}{2}-1,d}$, is constant between $\PP_{\frac{p}{2}-1,d}$ and $\PP_{\frac{p}{2},d}$, and then increases strictly after $\PP_{\frac{p}{2},d}$. The integers in the interval $[0,n]$ minimizing $\eta(\cdot)$ are thus: 
				\begin{itemize}
					\item $\{n\}$ if $n\leq  \PP_{\frac{p}{2}-1,d}$. In this case, applying lemma~\ref{lem:TV-convergence-diverging}, as $\flatlim$, $|\Xe|=n$ with probability $1$.
					\item all the integers contained in the interval $\left[\PP_{\frac{p}{2}-1,d},\min\left(\PP_{\frac{p}{2},d},n\right)\right]$ if $n\geq  \PP_{\frac{p}{2}-1,d}$. In the following $I_{p,d}$ is the list of these integers. 
					Applying lemma~\ref{lem:TV-convergence-diverging}, as $\flatlim$:
					\begin{align}
					\forall m\in I_{p,d}\quad	\Proba(|\Xs|=m)=\frac{\alpha^m\tilde{e}_m}{\sum_{i\in I_{p,d}} \alpha^i\tilde{e}_i}
					\end{align}
					Now, using the same arguments as in the proof of theorem~\ref{thm:general-case-smooth-fixed-size}, note that Eq.~\eqref{eq:etilde_multivar} may be re-written as:
					\begin{align}
					\forall i\in I_{p,d}\qquad \tilde{e}_i=
					\det(\matr{W}_{\leq k-1})\sum_{|\X|=i} \det
					\begin{pmatrix}
					\bV_k(\X) \bar{\bW} \bV_k(\X)^\top & \bV_{\leq k-1}(\X) \\
					\bV_{\leq k-1}(\X)^\top  & \matr{0}
					\end{pmatrix}
					\end{align}
					where $\bar{\bW}$ is as in theorem~\ref{thm:general-case-smooth-fixed-size} and $k=\frac{p}{2}$. 
					Now, consider the NNP $(\bV_k \bar{\bW} \bV_k^\top; \matr{V}_{\leq k-1})$ as well as $\widetilde{\bV_k \bar{\bW} \bV_k^\top}$ as defined in Definition~\ref{def:ext_Lens}. Note that the rank of $\widetilde{\bV_k \bar{\bW} \bV_k^\top}$ is $\min\left(\HH_{k,d}, n-\PP_{k-1,d}\right)$. One may apply Corollary~\ref{cor:normalisation-ppDPP} and obtain, for all integer $i\in I_{p,d}$:
					\begin{align}
					\tilde{e}_i=
					\det(\matr{W}_{\leq k-1}) (-1)^{\PP_{k-1,d}} e_{i-{\PP_{k-1,d}}}\left(\widetilde{\bV_k \bar{\bW} \bV_k^\top}\right) \det \left(\left(\matr{V}_{\leq k-1}\right)^\top \matr{V}_{\leq k-1}\right).
					\end{align}
					Simplifying, one obtains:
					\begin{align}
					\label{eq:temp_result}
					\forall m\in I_{p,d}\quad \Proba(|\Xs|=m)=\frac{e_{m-{\PP_{k-1,d}}}\left(\alpha\widetilde{\bV_k \bar{\bW} \bV_k^\top}\right)}{\sum_{i\in I_{p,d}}e_{i-{\PP_{k-1,d}}}\left(\alpha\widetilde{\bV_k \bar{\bW} \bV_k^\top}\right)}.
					\end{align}
					Changing the summing index gives:
					\begin{align}
					\sum_{i\in I_{p,d}}e_{i-{\PP_{k-1,d}}}\left(\alpha\widetilde{\bV_k \bar{\bW} \bV_k^\top}\right)=
					\sum_{i=0}^{\min(\HH_{k,d},n-\PP_{k-1,d})} e_{i}\left(\alpha\widetilde{\bV_k \bar{\bW} \bV_k^\top}\right).
					\end{align}
					Finally, note that, as $\text{rank}\left(\widetilde{\bV_k \bar{\bW} \bV_k^\top}\right) = \min\left(\HH_{k,d}, n-\PP_{k-1,d}\right)$, all the elementary symmetric polynomials $e_i$ for $i>\min\left(\HH_{k,d}, n-\PP_{k-1,d}\right)$ are null. The denominator of Eq.~\eqref{eq:temp_result} is thus $\sum_{i=0}^n e_{i}\left(\alpha\widetilde{\bV_k \bar{\bW} \bV_k^\top}\right)=\det\left(\matr{I}+\alpha\widetilde{\bV_k \bar{\bW} \bV_k^\top}\right)$ and one obtains:
					\begin{align}
					\forall m\in I_{p,d}\quad \Proba(|\Xs|=m)=\frac{e_{m-{\PP_{k-1,d}}}\left(\alpha\widetilde{\bV_k \bar{\bW} \bV_k^\top}\right)}{\det\left(\matr{I}+\alpha\widetilde{\bV_k \bar{\bW} \bV_k^\top}\right)}.
					\end{align}
				\end{itemize}
			\end{enumerate}
			\item if $r=\frac{p+1}{2}$, $\eta(\cdot)$ decreases strictly between $0^+$ and $\PP_{r-1,d}$, and is constant after $\PP_{r-1,d}$. The integers in the interval $[0,n]$ minimizing $\eta(\cdot)$ are thus: 
				\begin{itemize}
					\item $\{n\}$ if $n\leq  \PP_{r-1,d}$. In this case, applying lemma~\ref{lem:TV-convergence-diverging}, as $\flatlim$, $|\Xe|=n$ with probability $1$.
					\item all those contained in the interval $\left[\PP_{r-1,d},n\right]$ if $n\geq  \PP_{r-1,d}$. In the following $I_{r,d}$ is the list of these integers. 
					Applying lemma~\ref{lem:TV-convergence-diverging}, as $\flatlim$:
					\begin{align*}
					\forall m\in I_{r,d}\quad	\Proba(|\Xs|=m)=\frac{\alpha^m\bar{e}_m}{\sum_{i\in I_{r,d}} \alpha^i\bar{e}_i}.
					\end{align*}
					Now, using the same line of arguments as in the proof of lemma~\ref{lem:size_Xe_varying_univariate}, one obtains:
					\begin{align}
					\Proba(|\Xe| = m) =  \left\{ \begin{array}{ll} 
					0 & \mbox{if }  m< \PP_{r-1,d}\\ 
					\frac{e_{m-\PP_{r-1,d}}\left(\alpha f_{2r-1}\widetilde{\bD^{(2r-1)}}\right)}{\det\left(\bI+\alpha f_{2r-1}\widetilde{\bD^{(2r-1)}}\right)} & \mbox{if } m\geq \PP_{r-1,d}\end{array} \right. 
					\end{align}
					where $\widetilde{\bD^{(2r-1)}}=(\bI-\bQ\bQ^\top) \bD^{(2r-1)}(\bI-\bQ\bQ^\top)$, $\bQ$ being an orthonormal basis of ${\rm{span}}(\matr{V}_{\leq r -1})$.
				\end{itemize}
		\end{enumerate}
	Finally, one may see that the three cases just described can in fact be equivalently stated in the form of the Lemma, finishing the proof.

  \section{Equivalence of extended L-ensembles and DPPs }
\label{sec:thm:K_to_L-ens_and_back}
We prove Th.~\ref{thm:L-ens_to_K} and \ref{thm:K_to_L-ens}.

\begin{proof}[{Proof of Th.~\ref{thm:L-ens_to_K} }]

	Let $\ELE{\bL}{\bV}$ be any NNP, and $\tbL$, $\bQ$, $\tbU$, $\tbLam$ and $q$ be as in Definition~\ref{def:ext_Lens}. 
	Let $\X\in\Omega$ be drawn according to the distribution:
	\begin{align}
	\forall X\subseteq\Omega,\qquad \Proba(\X=X) \propto (-1)^p \det
	\begin{pmatrix}
	\bL_{X} & \bV_{X,:} \\
	(\bV_{X,:})^\top & \matr{0}
	\end{pmatrix}.
	\end{align} 
	Using the generalized Cauchy-Binet formula (theorem~\ref{thm:equivalence-extended-spectral}), this can be re-written as
	\begin{align}
	\forall X\subseteq\Omega,\qquad \Proba(\X=X) \propto \det(\bV^\top\bV)\sum_{Y,|Y|=m-p} \det \left( 
	\begin{bmatrix}
	\bQ_{X,:} & \tbU_{X,Y}
	\end{bmatrix}  \right)^2
	\prod_{i \in Y} \widetilde{\lambda}_i.
	\end{align} 
	As made precise by corollary~\ref{cor:mixture-rep-ppDPP-varying}, this equation can be interpreted from a mixture point of view. As such, the generic inclusion probability formulas of Appendix~\ref{sec:marginal-kernel-ppDPPs-proof} are applicable and one obtains the result.	
\end{proof}

\begin{proof}[{Proof of Th.~\ref{thm:K_to_L-ens}}]
	Given a marginal kernel $\matr{K}$, we can always rewrite  its spectral factorisation in the form of  Eq.~\eqref{eq:dpDPPmarginalkernel}, by grouping all the eigenvectors corresponding to the eigenvalue $1$ in $\matr{Q}$; all the remaining eigenvalues can be always represented as $\widetilde{\lambda}_i/(1+\widetilde{\lambda}_i)$.
\end{proof}

\end{appendix}

\bibliographystyle{imsart-number}
\bibliography{flat_limit}

\end{document}